\title[Constructing metric spaces from systems of walls]{Constructing metric spaces from systems of walls}
\author{Harry Petyt}
\address{Mathematical Institute, University of Oxford, UK}
\email{petyt@maths.ox.ac.uk}
\author{Abdul Zalloum}
\address{Department of Mathematics and Statistics, Queen's University, Canada}
\email{az32@queensu.ca}
\address{Mathematical Institute, University of Oxford, UK}
\email{spriano@maths.ox.ac.uk}
\setlist{nosep} 
\newtheorem{theorem}{Theorem}[section]
\newtheorem{proposition}[theorem]{Proposition}
\newtheorem{corollary}[theorem]{Corollary}
\newtheorem{lemma}[theorem]{Lemma}
\newtheorem{introthm}{Theorem}
\newtheorem{introcor}[introthm]{Corollary}
\theoremstyle{definition}
\newtheorem{definition}[theorem]{Definition}
\newtheorem{example}[theorem]{Example}
\newtheorem{remark}[theorem]{Remark}
\newcounter{shcount}
\newcommand*{\bsh}[1]{\theoremstyle{definition}\newtheorem{subhead\theshcount}[theorem]{#1}
    \begin{subhead\theshcount}} 
\newcommand*{\esh}{\end{subhead\theshcount}\stepcounter{shcount}} 
\newcommand*{\ubsh}[1]{\theoremstyle{definition}\newtheorem*{subhead\theshcount}{#1}
    \begin{subhead\theshcount}} 
\newcommand*{\uesh}{\end{subhead\theshcount}\stepcounter{shcount}} 
\newcounter{claimcount}
\newenvironment{claim*}[1]{\par\vspace{2mm}\noindent
    \underline{Claim:}\hspace{2mm}#1}{}
\newcounter{enumlabelcount}
\newcommand\enumlabel[1][]{\item[#1]
    \refstepcounter{enumlabelcount}\def\@currentlabel{#1}}\makeatother
\renewcommand*{\backrefalt}[4]{\ifcase #1 (Not cited).\or (Cited p.~#2).\else (Cited pp.~#2).\fi} 
\def\subsection{\@startsection{subsection}{1}\z@{.7\linespacing\@plus\linespacing}
    {.5\linespacing}{\normalfont\scshape\centering}}\makeatother 
\def\part{\@startsection{part}{1}\z@{.7\linespacing\@plus\linespacing}
    {.5\linespacing}{\normalfont\large\scshape\centering}}\makeatother 
\DeclareMathOperator{\cur}{Cur}
\DeclareMathOperator{\dist}{\mathsf{d}}
\DeclareMathOperator{\Dist}{\mathsf{D}}
\DeclareMathOperator{\diam}{diam}
\DeclareMathOperator{\hull}{Hull}
\DeclareMathOperator{\isom}{Isom}
\DeclareMathOperator{\MCG}{MCG}
\DeclareMathOperator{\Out}{Out}
\DeclareMathOperator{\QM}{QM}
\DeclareMathOperator{\sign}{sign}
\newcommand*{\C}{\mathcal C}
\newcommand*{\g}{\mathfrak{g}}
\newcommand*{\hh}{\mathfrak{h}}
\newcommand*{\kk}{\mathfrak{k}}
\newcommand*{\R}{\mathbb R}
\newcommand*{\Z}{\mathbb Z}
\newcommand{\eps}{\varepsilon}
\newcommand*{\cal}{\mathcal}
\newcommand*{\ssm}{\smallsetminus}
\definecolor{harrycomment}{rgb}{0.6,0,0.4}
\begin{document}

\maketitle
\centerline{\textit{With an appendix with Davide Spriano}}

\begin{abstract}
We give a general procedure for constructing metric spaces from systems of partitions. This generalises and provides analogues of Sageev's construction of dual CAT(0) cube complexes for the settings of hyperbolic and injective metric spaces.

As applications, we produce a ``universal'' hyperbolic action for groups with strongly contracting elements, and show that many groups with ``coarsely cubical'' features admit geometric actions on injective metric spaces. In an appendix with Davide Spriano, we show that a large class of groups have an infinite-dimensional space of quasimorphisms.
\end{abstract}

{\hypersetup{hidelinks}\setcounter{tocdepth}{1}\tableofcontents\setcounter{tocdepth}{2}}

\section{Introduction}

A guiding principle of geometric group theory is the idea that one can learn about a group from its actions on nice metric spaces. There are two aspects to implementing this: producing the actions, and studying the properties of groups that possess them. In this article we are interested in the former.

Approaches to producing spaces for groups to act on take many forms, such as local-to-global results \cite{alexanderbishop:hadamard,leary:metric,haettel:link,bowditch:cartan--hadamard} and combination theorems \cite{bestvinafeighn:combination,dahmani:combination,mjreeves:combination,hsuwise:cubulating}. However, amongst such techniques, the stand-out examples are those with inter-categorical features, such as: hyperbolisation procedures \cite{gromov:hyperbolic,davisjanuszkiewicz:hyperbolization,charneydavis:strict,ontaneda:riemannian}, which convert simplicial complexes into aspherical manifolds of nonpositive curvature; Sageev's construction \cite{sageev:ends}, which outputs a CAT(0) cube complex from a system of partitions; and the projection complex machinery \cite{bestvinabrombergfujiwara:constructing,bestvinabrombergfujiwarasisto:acylindrical}, which can (amongst many other things) produce actions on quasitrees from simple geometric conditions.

We shall work in the same vein as these latter two, and provide a general framework that one can use for producing group actions on different types of metric spaces. The framework takes the same form of input as Sageev's construction, but we allow additional flexibility that allows for a much wider variety of metrics to be produced, and under weaker conditions. That is, we start with a set $S$ together with a collection $P$ of bipartitions. This pair has a naturally-defined dual median algebra associated with it, and by considering varying subsets of $2^P$ we obtain a range of metrics on this dual. Sageev's construction is recovered as a degenerate case. The principal idea of this article can therefore be phrased as follows. See Section~\ref{sec:construction} (and also  Definition~\ref{introdef:dualisable_system} later in the introduction) for more precise formulations of some initial statements.

\bsh{Construction} \label{constr:main}
Let $S$ be a set with a collection $P$ of bipartitions. By choosing an appropriate subset $\C\subset2^P$, one obtains a metric space $X$ and a natural map $S\to X$.
\esh

To obtain an action on $X$ from an action on $(S,P)$, one merely has to insist that $\C$ is preserved. The three main types of metric spaces that we systematically produce in this way are Helly graphs, coarsely injective spaces, and hyperbolic spaces. However, the set-up is quite general and can likely be used to provide actions on other interesting metric spaces.

Let us mention that Sageev's construction has also been generalised by Chatterji--Dru\c{t}u--Haglund, with improvements by Fioravanti \cite{chatterjidrutuhaglund:kazhdan,fioravanti:roller}. The two generalisations are of rather different flavours: the one in \cite{chatterjidrutuhaglund:kazhdan,fioravanti:roller} can be thought of as a ``continuous version'' of Sageev's construction, and it firmly belongs in the \emph{median} category; whereas our generalisation is still fundamentally discrete, but is not restricted to $\ell^1$ geometry.

\subsection{Hyperbolic duals}

Group actions on hyperbolic spaces are a major theme of research in the field, especially in the wake of the result of Masur--Minsky that the curve graph of a surface is hyperbolic \cite{masurminsky:geometry:1}. Indeed, this spawned the theory of acylindrical hyperbolicity \cite{sela:acylindrical,bowditch:tight,dahmaniguirardelosin:hyperbolically,osin:acylindrically}. In this context, the aforementioned projection complex machinery is a potent tool, because it allows one to produce actions on quasitrees such that a given \emph{strongly contracting} (i.e. ``negatively-curved'') group element has a so-called \emph{WPD} action \cite{bestvinafujiwara:bounded}, which is sufficient to establish acylindrical hyperbolicity \cite{dahmaniguirardelosin:hyperbolically}. Strongly contracting elements have been studied by many authors, including \cite{arzhantsevacashentao:growth,gekhtmanyang:counting,algomkfir:strongly,papin:strongly,coulon:ergodicity}.

One unsatisfactory aspect of the actions on quasitrees coming from projection complexes is that they are rather unnatural, in that they have little to do with the original space. The procedure also has the limitation of only being able to handle a finite number of strongly contracting elements at once. By building an appropriate collection of partitions, we are able to apply Construction~\ref{constr:main} to produce, for any geodesic metric space $S$, a hyperbolic space that is ``universal'', in the sense that all strongly contracting geodesics are witnessed there. We call it the \emph{hyperbolic core} of $S$. (See Theorem~\ref{thm:universal_contracting_characterisation} for the precise statement.) 

\begin{introthm} \label{introthm:universal}
Let $S$ be a geodesic metric space. Its hyperbolic core $X$ is a hyperbolic space with an $\isom S$--equivariant coarsely Lipschitz map $\pi:S\to X$ such that a geodesic $\gamma\subset S$ is strongly contracting if and only if $\pi\gamma$ is a quasigeodesic. Moreover, the correspondence is quantitative. 
\end{introthm}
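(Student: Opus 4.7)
The strategy is to apply Construction~\ref{constr:main} to a wall system built from the strongly contracting geodesics of $S$ itself, and then to select $\C$ using the hyperbolic-dual regime developed in Section~\ref{sec:construction}. Since the wall system will be $\isom S$-invariant by construction, the dual metric space $X$ automatically inherits an $\isom S$-action together with a natural equivariant map $\pi\colon S\to X$; the remaining work is to arrange that (i) $X$ is hyperbolic and $\pi$ is coarsely Lipschitz, and (ii) the image of a geodesic under $\pi$ detects strong contraction quantitatively.

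For the system of walls, I would fix a contraction parameter $C$ and associate a bipartition of $S$ to each pair $(\gamma,p)$ where $\gamma\subset S$ is a $C$-strongly contracting geodesic and $p\in\gamma$ is a cut point: declare two points of $S$ to lie in the same half according to which subray of $\gamma\ssm\{p\}$ their closest-point projection onto $\gamma$ hits, with a standard choice near the points whose projection meets $p$. Uniform coarse Lipschitzness of closest-point projection onto strongly contracting geodesics, together with the usual bounded-projection property, ensures this is a genuine, $\isom S$-permuted wall on $S$. Taking $P$ to be the union over all such $(\gamma,p)$, and then picking $\C\subset 2^P$ via the hyperbolic recipe of Section~\ref{sec:construction}, produces a hyperbolic space $X$ and an equivariant $\pi\colon S\to X$. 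Coarse Lipschitzness of $\pi$ is immediate, since a unit step in $S$ moves its projection onto any strongly contracting $\gamma$ by $O(1)$ and hence crosses only boundedly many walls from $P$, in a manner controlled by the way $\C$ was built.

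The forward implication is soft: if $\gamma$ is $C$-strongly contracting, then $\gamma$ itself contributes a one-parameter family of walls to $P$ indexed by its own cut points, and these walls are linearly separated along $\gamma$. Points of $\gamma$ at distance $\ell$ apart are therefore separated by $\Theta(\ell)$ pairwise-nested walls, which forces $\pi\gamma$ to be a quasigeodesic with constants depending only on $C$ and the contraction data of $\gamma$.

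The reverse implication is where the real work lies, and I expect this to be the main obstacle. Suppose $\pi\gamma$ is a quasigeodesic; one must upgrade this to strong contraction of $\gamma$ in $S$. Each wall of $P$ that $\gamma$ crosses is witnessed by a $C$-strongly contracting geodesic $\gamma'$ that $\gamma$ must traverse in a controlled fashion, and the hyperbolicity of the choice of $\C$ forces the collection of such witnessing $\gamma'$ along $\gamma$ to align consistently, essentially as a coarse chain. A bottleneck or Morse-style argument, applied to this aligned family of strongly contracting fellow-travellers, should then transfer the contracting property back to $\gamma$ and yield a quantitative relationship between the contraction constant of $\gamma$ and the quasigeodesic constants of $\pi\gamma$. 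The delicate point is to verify that the walls crossed by $\gamma$ are not merely Morse-aligned but admit uniform bounded projection in a way sufficient to conclude strong contraction, rather than only the Morse property; this is what the hyperbolic choice of $\C$ must be engineered to enforce.
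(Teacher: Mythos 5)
Your overall strategy --- walls obtained by cutting strongly contracting geodesics via closest-point projection, an invariant choice of $\C$ in the hyperbolic regime, forward direction by counting nested walls along $\gamma$, reverse direction by a bottleneck argument --- is the same as the paper's (Section~\ref{sec:contracting}, culminating in Theorem~\ref{thm:universal_contracting_characterisation}). However, there are two genuine gaps, and they sit exactly at the points you flag as delicate. First, you fix a single contraction parameter $C$ and then invoke ``the hyperbolic recipe'' for $\C$. A single $L$--separated system built from $C$--contracting geodesics cannot be universal: a $D$--strongly contracting geodesic with $D\gg C$ need not cross walls dual to $C$--contracting geodesics at a linear rate, so the ``only if'' direction fails for it. The paper takes curtains dual to $D$--strongly contracting geodesics for \emph{every} $D\ge1$, organises them into a \emph{graded system} $(\C_R)_R$ of increasingly separated chain systems (Definition~\ref{def:graded_system}, Lemma~\ref{lem:ball_graded}), and combines the resulting metrics as a weighted sum $\Dist=\sum\lambda_R\dist_R$; hyperbolicity of this graded dual is Proposition~\ref{prop:graded_hyperbolic}, not the single-system Proposition~\ref{prop:four-point}. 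Without this resolution step there is no single space $X$ witnessing all contraction constants at once. Relatedly, your walls are cut at a single point $p\in\gamma$; the paper's curtains are preimages of intervals of length $10D$, so that opposite halfspaces are at distance $\ge 3D\ge 1$. This thickness is what bounds the length of any chain separating two points by their distance (hence dualisability and coarse Lipschitzness of $\pi$); with zero-thickness walls, $\dist_\C(s,t)$ can be infinite for nearby $s,t$.

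Second, and more seriously, the reverse implication is not ``engineered'' by generic $L$--separation: the specific separation condition matters, and you correctly identify but do not resolve the problem of upgrading Morse-type alignment to \emph{strong} contraction. The paper's solution is \emph{ball-separation} (Definition~\ref{def:ball_sep}): two curtains are $R$--ball-separated if there is a ball of radius $R$ between them through which every geodesic from one far halfspace to the other must pass. This gives $L$--separation (Lemma~\ref{lem:ball_relative_separation}) \emph{and} supplies the concrete bottleneck balls $B_n$ used in the proof of Theorem~\ref{thm:universal_contracting_characterisation}: one covers $\gamma$ by points $x_n$, extracts ball-separated pairs of curtains between consecutive points, builds a piecewise geodesic $\gamma'$ threading the balls $B_n$, and verifies the two conditions of the contraction criterion Lemma~\ref{lem:contracting_iff_pass_close} directly from the fact that arbitrary geodesics must also pass through the $B_n$. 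Your sketch stops at ``a bottleneck or Morse-style argument should then transfer the contracting property back,'' which is precisely the step that requires choosing the separation condition to be ball-separation in the first place. As written, the proposal does not contain the idea needed to close this direction.
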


This can be compared to (and, via \cite{sistozalloum:morse}, be viewed as greatly extending) the situation for pseudo-Anosov mapping classes acting on the curve graph \cite{masurminsky:geometry:1}; outer space and the free factor complex \cite{bestvinafeighn:hyperbolicity,dowdalltaylor:contracting}; and rank-one isometries acting on the curtain model of a CAT(0) space \cite{petytsprianozalloum:hyperbolic}. 

The partitions that are used to construct the  hyperbolic core are obtained using closest-point projection to strongly contracting geodesics of $S$, similarly to the construction of the curtain model in \cite{petytsprianozalloum:hyperbolic}, but the choice of $\C\subset2^P$ is a little different to the notion of \emph{$L$--separation} used there; see Definition~\ref{def:ball_sep}.

The statement of Theorem~\ref{introthm:universal} is purely about the metric space $S$ and its isometries. Applying it to groups acting properly on $S$, we can obtain a simultaneous WPD action of \emph{all} strongly contracting elements; see Corollary~\ref{cor:equivariant_characterisation} and Proposition~\ref{prop:wpd}.

\begin{introthm} \label{introthm:WPD}
Let $S$ be a geodesic metric space. For any group $G$ acting properly on $S$, elements of $G$ are strongly contracting exactly when they act loxodromically on the hyperbolic core of $S$, and every such element is WPD.
\end{introthm}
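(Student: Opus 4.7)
The plan is to leverage Theorem~\ref{introthm:universal} together with properness of the $G$--action on $S$ and the specific structure of the partitions used to build $X$.

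For the equivalence of strongly contracting and loxodromic, the forward direction is almost immediate. If $g\in G$ is strongly contracting then it admits a strongly contracting quasi-axis $\gamma\subset S$, so by Theorem~\ref{introthm:universal} the image $\pi\gamma$ is a quasigeodesic in the hyperbolic space $X$. Since $\pi$ is $\isom S$--equivariant, $g$ preserves $\pi\gamma$ and shifts the orbit points $\pi(g^nx)$ along it by a positive amount (the lower quasigeodesic constant upgrades a positive $S$--translation length to a positive one in $X$), so $g$ is loxodromic on $X$.

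For the converse I would fix a basepoint $x_0\in S$ and take geodesics $\gamma_n\subset S$ from $g^{-n}x_0$ to $g^nx_0$. Because $\pi$ is coarsely Lipschitz and $g$ is loxodromic on $X$, the orbit $(\pi(g^ix_0))_i$ progresses linearly along a quasi-axis of $g$ in $X$; these points are evenly spaced parameters on $\pi\gamma_n$ (at gaps controlled by the $S$--translation length of $g$). Hyperbolicity of $X$ then forces each $\pi\gamma_n$ to be a quasigeodesic with constants independent of $n$. Feeding this into the quantitative half of Theorem~\ref{introthm:universal} makes $\gamma_n$ uniformly strongly contracting, and a standard limiting argument (invoking the proper $G$--action) produces a $g$--invariant strongly contracting bi-infinite geodesic in $S$. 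I expect this quasigeodesicity step to be the main technical hurdle, since the coarse-Lipschitz bound a priori allows $\pi\gamma_n$ to waste arbitrary length; the remedy is to exploit the many orbit sample points.

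For WPD, fix $\varepsilon>0$ and set $y=\pi(x_0)$; the task is to find $N$ such that only finitely many $h\in G$ satisfy $d_X(hy,y)<\varepsilon$ and $d_X(hg^Ny,g^Ny)<\varepsilon$. Since $X$ is hyperbolic, any such $h$ coarsely fixes the entire segment of the $g$--quasi-axis between $y$ and $g^Ny$. The partitions in $\C\subset 2^P$ defining $X$ come from closest-point projections to strongly contracting geodesics in $S$, most relevantly those near the axis of $g$; coarsely fixing this long axis segment therefore forces $h$ to preserve (or nearly preserve) a large family of walls crossed by it. Translating back to $S$, the element $h$ must coarsely fix a long portion of the strongly contracting $g$--axis there, and Morse-type rigidity for strongly contracting geodesics then pins $hx_0$ to a uniformly bounded neighbourhood of $x_0$. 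Properness of the $G$--action on $S$ finally reduces the set of admissible $h$ to a finite one, yielding WPD for large enough $N$.
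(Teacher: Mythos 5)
There is a genuine gap in your converse direction (loxodromic $\Rightarrow$ strongly contracting), and it stems from taking an unnecessary detour. In this paper a ``strongly contracting element'' is \emph{defined} by its orbit $\langle g\rangle\cdot x_0$ being a strongly contracting quasigeodesic, and the quantitative converse of Theorem~\ref{thm:universal_contracting_characterisation} applies to arbitrary quasigeodesics, not just geodesics. So the paper (Corollary~\ref{cor:equivariant_characterisation}) simply applies it to the orbit itself: loxodromicity plus the coarse Lipschitzness of $S\to X$ makes $n\mapsto g^nx_0$ a quasigeodesic in $S$ whose image is quasiisometrically embedded in $X$, and the theorem immediately says that orbit is strongly contracting. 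Your route through geodesics $\gamma_n$ from $g^{-n}x_0$ to $g^nx_0$ does not close: the orbit points $g^ix_0$ are \emph{not} known to lie on or near $\gamma_n$ (that is essentially equivalent to what you are trying to prove), so they cannot serve as ``evenly spaced parameters on $\pi\gamma_n$''; Proposition~\ref{prop:subspace_wrg} only makes $\pi\gamma_n$ an \emph{unparametrised} rough geodesic, and the step you yourself flag as the main hurdle --- ruling out long subsegments of $\gamma_n$ with small image in $X$ --- is exactly the contraction statement you want, so the argument is circular. Moreover, the final ``standard limiting argument'' to extract a $g$--invariant biinfinite geodesic requires properness (local compactness) of the space $S$, not properness of the $G$--action, and $S$ is not assumed proper.

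Your WPD paragraph has the right overall shape (long coarse coincidence along the axis $\Rightarrow$ bounded displacement in $S$ $\Rightarrow$ properness gives finiteness), but it is vague exactly where the work happens: ``Morse-type rigidity pins $hx_0$ near $x_0$'' is not a step one can execute from coarse information in $X$ alone. The paper's Proposition~\ref{prop:wpd} makes this precise via the wall structure: choosing $m$ with $\Dist(s,g^ms)\ge2\eps+2\Lambda$ and taking a realising perichain, any $h$ with $\Dist(s,hs),\Dist(g^ms,hg^ms)\le\eps$ must leave, for some $R$, at least $2R+4$ walls of $c^R$ separating $hs$ from $hg^ms$; $R$--ball-separation then forces a geodesic $\gamma$ from $s$ to $g^ms$ and its translate $h\gamma$ through a common ball of radius $R$, and the thickness $\dist(\hh^-,\hh^+)\ge1$ of curtains bounds $R$ by $\dist(s,g^ms)$, giving $\dist(s,hs)\le3\dist(s,g^ms)$ and hence finiteness by properness of the action. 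You should identify this ball-separation mechanism explicitly; without it the passage from ``$h$ nearly preserves many walls'' to an actual distance bound in $S$ is unjustified.
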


Furthermore, under the additional assumption that $G$ acts properly coboundedly on a metric space where all \emph{Morse} geodesics are strongly contracting, we show that the hyperbolic core of that space is a \emph{universal recognising space} for stable subgroups of $G$, in the sense of \cite{balasubramanyachesserkerrmangahastrin:non}. That is, every stable subgroup of $G$ has quasiisometrically embedded orbits in $X$. The following is Theorem~\ref{thm:universal_recognition} in the text. 

\begin{introthm} \label{introthm:stable}
Suppose that a group $G$ acts properly coboundedly on a metric space $S$ where all Morse geodesics are (quantitatively) strongly contracting. A finitely generated subgroup of $G$ is stable if and only if its orbits in the hyperbolic core of $S$ are quasiisometric embeddings.
\end{introthm}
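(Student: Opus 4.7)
The plan is to apply Theorem~\ref{introthm:universal} to $S$, obtaining a hyperbolic space $X$ with an $\isom S$-equivariant coarsely Lipschitz map $\pi\colon S\to X$, and to use the hypothesis (every Morse geodesic in $S$ is strongly contracting) to convert stability into a QI-embedded orbit in $X$. The forward direction is then almost immediate: if $H$ is stable then $H$ is undistorted in $G$ and its orbits are Morse-quasiconvex, so $S$-geodesics between any two orbit points are uniformly Morse, hence uniformly strongly contracting by hypothesis. Theorem~\ref{introthm:universal} sends each such geodesic $\gamma$ to a uniform quasigeodesic $\pi\gamma$ of length comparable to $\dist_S(h_1\cdot o,h_2\cdot o)$, which combined with undistortion of $H$ yields $\dist_X(\pi(h_1\cdot o),\pi(h_2\cdot o))\asymp\dist_H(h_1,h_2)$; the matching upper bound is free from coarse Lipschitzness of $\pi$.

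For the reverse direction, assume the $H$-orbit is QI embedded in $X$. Coarse Lipschitzness of $\pi$ combined with the QI embedding forces $\dist_H\asymp\dist_S$ on the orbit, so $H$ is undistorted in $G$; and since $H$ is a geodesic metric space that QI-embeds into the hyperbolic space $X$, $H$ is itself hyperbolic. It remains to show the $H$-orbit is Morse in $S$. By the standing hypothesis and Theorem~\ref{introthm:universal}, it suffices to prove that for any $S$-geodesic $\gamma$ between two orbit points, $\pi\gamma$ is a uniform quasigeodesic in $X$. The endpoints of $\pi\gamma$ are at $X$-distance $\asymp\dist_S(h_1\cdot o,h_2\cdot o)$ by the QI embedding of the orbit, and since $\pi$ is coarsely Lipschitz the path $\pi\gamma$ has length comparable to the distance between its endpoints. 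Moreover, the QI-embedded orbit is quasi-convex in the hyperbolic $X$, so the $X$-geodesic joining those endpoints stays in a bounded neighbourhood of $\pi(H\cdot o)$. I would then exploit the specific form of $\pi$ from Construction~\ref{constr:main} --- namely that $\pi$ essentially records crossings of the partitions in $\C$, and that an $S$-geodesic crosses the relevant partitions in a monotone fashion --- to force $\pi\gamma$ to hug this $X$-geodesic, and hence to be a uniform quasigeodesic.

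The main obstacle is precisely this last step, because a length-efficient coarsely Lipschitz path in a hyperbolic space need not itself be a quasigeodesic: it may backtrack substantially. The argument therefore cannot be purely coarse-geometric, and must instead exploit the concrete combinatorics of the dual construction --- in particular, the monotonicity of partition-crossings along $S$-geodesics --- to rule out large-scale backtracking of $\pi\gamma$ and close the loop with Theorem~\ref{introthm:universal}.
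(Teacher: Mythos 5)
Your proposal follows essentially the same route as the paper's proof of Theorem~\ref{thm:universal_recognition}. The forward direction matches: stability gives uniformly Morse, hence uniformly strongly contracting, geodesics between orbit points, and the forward direction of Theorem~\ref{thm:universal_contracting_characterisation} then gives the quasiisometric embedding into $X$. For the reverse direction, the obstacle you correctly isolate---that a length-efficient coarsely Lipschitz path in a hyperbolic space may backtrack---is resolved in the paper by Proposition~\ref{prop:subspace_wrg}, which states that geodesics of $(S,\dist)$ are uniform \emph{unparametrised rough geodesics} of $(X,\Dist)$; its proof is exactly the monotone-crossing argument you sketch (a point of an $S$--geodesic can only be separated from the two endpoints by boundedly many curtains of a ball-separated chain, by the gluing lemma). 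Granting that proposition, the paper's reverse direction is slightly more streamlined than yours: it first notes that the wall ``thickness'' $\dist(\hh^-,\hh^+)\ge1$ makes $S\to X$ coarsely Lipschitz, so the orbit QI-embedding in $X$ forces the orbit to be QI-embedded in $S$ (undistortion); then, for a geodesic $\alpha$ between orbit points, the unparametrised rough geodesic property together with the comparability of endpoint distances upgrades $\alpha\to X$ to a parametrised quasiisometric embedding, and the \emph{converse} direction of Theorem~\ref{thm:universal_contracting_characterisation} immediately yields that $\alpha$ is uniformly strongly contracting, hence uniformly Morse. No appeal to quasiconvexity of the orbit in $X$ or to hyperbolicity of $H$ is needed. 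So your plan is sound; the one step you leave open is precisely Proposition~\ref{prop:subspace_wrg}, which you would need to prove (or cite) to complete the argument.
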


(Here ``quantitatively'' means that the contracting constant depends only on the Morse gauge, not the particular Morse geodesic.) The list of spaces covered by Theorem~\ref{introthm:stable} includes CAT(0) spaces \cite{charneysultan:contracting}, injective spaces \cite{sistozalloum:morse}, Garside groups \cite{calvezwiest:morse}, certain small-cancellation groups \cite{zbinden:small}, and weakly modular graphs with convex balls \cite{soergelzalloum:morse}. It therefore generalises known results for mapping class groups \cite{kentleininger:shadows,durhamtaylor:convex}, hierarchically hyperbolic groups more generally \cite{abbottbehrstockdurham:largest,haettelhodapetyt:coarse,sistozalloum:morse}, and CAT(0) groups \cite{petytsprianozalloum:hyperbolic}.

\medskip

Another application of the hyperbolic core, and of the particulars of its construction, is investigated in the appendix, together with Davide Spriano. A \emph{quasimorphism} of a group $G$ is a map $f:G\to\R$ that fails to be a homomorphism by a bounded amount, in the sense that there is some $r$ such that $|f(gh)-f(g)-f(h)|\le r$ for all $g,h\in G$. There are two ``trivial'' classes of quasimorphisms, namely bounded functions and homomorphisms. The quotient of the space of quasimorphisms by these trivial classes is denoted $\widetilde{\QM}(G)$; it is intimately related with the second (real) cohomology of $G$.

For the most part, calculations of $\widetilde{\QM}(G)$ (when it is nontrivial) rely on $G$ acting with some form of properness on some nice metric space (a notable exception where properness is not needed is \cite{capracefujiwara:rank}). The main result of the appendix is the following, which has no properness assumption. See Theorem~\ref{thm:quasimorphisms}.

\begin{introthm} \label{introthm:quasimorphisms}
Let $S$ be a non-hyperbolic geodesic space where all Morse geodesics are (quantitatively) strongly contracting. If $S$ contains a biinfinite Morse geodesic, then $\widetilde{\QM}(G)$ is infinite-dimensional for every group $G$ acting coboundedly on $S$.
\end{introthm}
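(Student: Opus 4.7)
My plan is to apply a Bestvina--Fujiwara style construction of quasimorphisms on the universal hyperbolic space $X$ supplied by Theorem~\ref{introthm:universal}. Since the map $\pi:S\to X$ is $\isom S$--equivariant, cobounded action of $G$ on $S$ induces an action of $G$ on $X$. The first step is to manufacture a loxodromic element: the biinfinite Morse geodesic $\gamma\subset S$ is strongly contracting by hypothesis, so Theorem~\ref{introthm:universal} tells me $\pi\gamma$ is a quasigeodesic in $X$. A pigeonhole argument using coboundedness, applied to the $G$--orbit of a basepoint along $\gamma$, then produces an element $g\in G$ that translates by a definite amount along $\pi\gamma$ and hence acts loxodromically on $X$.

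The second step, which I expect to carry the real content, is to exploit non-hyperbolicity of $S$ to produce a conjugate $hgh^{-1}$ whose axis in $X$ is independent from that of $g$ (in the sense that the two endpoint pairs on $\partial X$ are disjoint). The heuristic is that if every $G$--translate of $\gamma$ mapped to a quasigeodesic asymptotic to $\pi\gamma$ in $X$, then by coboundedness $S$ would coarsely collapse onto a neighbourhood of $\gamma$, and since $\gamma$ is Morse this would force $S$ to be quasi-isometric to a hyperbolic space, contradicting the hypothesis. Hence some $h\in G$ moves $\gamma$ to a Morse geodesic whose image in $X$ is not fellow-travelling $\pi\gamma$, and after passing to sufficiently high powers of carefully chosen conjugates the standard ping-pong along the contracting directions upgrades this to two, and then infinitely many, pairwise independent loxodromic elements of $G$ on $X$.

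The third step is to verify a WWPD-type condition for these loxodromics and invoke the Bestvina--Bromberg--Fujiwara extension of the Bestvina--Fujiwara machinery, which manufactures an infinite-dimensional subspace of $\widetilde{\QM}(G)$ from such data without any properness assumption on the action. The WWPD verification should fall out of the construction of $X$ in Section~\ref{sec:construction}: the walls building $X$ are defined from closest-point projections to strongly contracting geodesics, so an element of $G$ that almost fixes the two endpoints of the axis of $g$ on $\partial X$ must coarsely preserve $\gamma$, and the quantitative strong contraction controls how many such elements can move a basepoint by a bounded amount.

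The main obstacle is the second step, namely converting the topological hypothesis of non-hyperbolicity of $S$ into genuine dynamical independence on $X$ of loxodromics coming from the cobounded $G$--action. Everything else is either a direct application of Theorem~\ref{introthm:universal} or standard once independent loxodromics with a WWPD-type property are in hand; but independence requires genuinely using all three of non-hyperbolicity, coboundedness, and the fine structure by which $X$ is built from the contracting walls of $S$.
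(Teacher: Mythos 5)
Your outline diverges from the paper's argument in a way that leaves two genuine gaps. The more serious one is in Step~2/3. You propose to obtain ``independence'' by taking a conjugate $hgh^{-1}$ whose endpoint pair on $\partial X$ is disjoint from that of $g$, and then to run a Bestvina--Fujiwara--type machine. But the equivalence relation in \cite{bestvinafujiwara:bounded} quantifies over \emph{all} $G$--translates of the quasi-axes: $g_1\sim g_2$ whenever arbitrarily long segments of an axis of $g_1$ lie in a uniform neighbourhood of some $G$--translate of an axis of $g_2$. Conjugate elements are therefore always equivalent, so no conjugate of $g$ can ever serve as the second element, and disjointness of endpoint pairs is not the relevant notion. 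The paper's way around this is Proposition~\ref{prop:bad_contracting_element}: using non-hyperbolicity, coboundedness, the divergence statement of \cite{goldsboroughhagenpetytsisto:induced}, and the Morse local-to-global property of $S$, it produces for every $D$ a strongly contracting element with uniform quasi-axis that is \emph{not} $D$--strongly contracting. Since the contracting constant is conjugation-invariant, one gets $g$ and $h$ with incomparably different constants, and Theorem~\ref{thm:quasimorphisms} then shows via the curtain chains that a long overlap between $A_g$ and a translate of $A_h$ in $X$ would force the contracting constants to be comparable --- a contradiction that yields $g\nsim h$. Your heuristic ``otherwise $S$ coarsely collapses onto $\gamma$ and is hyperbolic'' is not a substitute for this: a translate of $\gamma$ failing to be asymptotic to $\pi\gamma$ in $X$ says nothing about its position in $S$, and in any case you need a quantitative, conjugation-invariant distinction between two elements, not merely two non-asymptotic axes.

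The second gap is in Step~3. The theorem assumes only that $G$ acts \emph{coboundedly} on $S$; no properness is available. Your sketch of the WWPD verification (``the quantitative strong contraction controls how many such elements can move a basepoint by a bounded amount'') is a counting argument of exactly the kind that properness licenses and coboundedness does not --- compare Proposition~\ref{prop:wpd}, which explicitly requires a proper action. The paper avoids any WPD/WWPD-type condition altogether: once two non-equivalent loxodromics with uniform quasi-axes are in hand, \cite[Thm~1]{bestvinafujiwara:bounded} applies directly, with no properness input. If you want to keep a WWPD route via the Bestvina--Bromberg--Fujiwara machinery you would need a genuinely different verification of WWPD that does not count group elements, and you would still face the first gap when choosing the elements.
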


Note that non-hyperbolicity is essential - the conclusion of Theorem~\ref{introthm:quasimorphisms} can fail for automorphism groups of trees \cite{iozzipagliantinisisto:characterising}.

\ubsh{On Zbinden's work}
Results similar to Theorems~\ref{introthm:universal} and~\ref{introthm:WPD} are also proved in simultaneous work of Stefanie Zbinden \cite{zbinden:hyperbolic}. More precisely, Zbinden has a construction, rather different to the one employed in this article and more tailored to the setting of strongly contracting geodesics, that can produce, for any geodesic space $S$, a hyperbolic space $Z$ with the following features. 
\begin{itemize}
\item   A geodesic in $S$ is strongly contracting if and only if it projects to a quasigeodesic of~$Z$.
\item   Any group $G$ acting properly on $S$ acts \emph{non-uniformly acylindrically} on $Z$, and strongly contracting elements are loxodromic.
\item   If $S$ has the property that all Morse geodesics are strongly contracting, and if $G$ acts properly coboundedly on $S$, then all generalised loxodromics of $G$ are loxodromic and WPD on $Z$.
\end{itemize}
\uesh

\subsection{Injective duals}

Construction~\ref{constr:main} can also systematically produce Helly graphs and coarsely injective metric spaces. These are recent additions to the arsenal of geometric group theory that have proved to be rather powerful \cite{huangosajda:helly,haettel:lattices,haettelhodapetyt:coarse,haettel:group,zalloum:injectivity}. A geodesic metric space is (coarsely) injective if its metric balls satisfy a Helly property: any balls that pairwise intersect have nonempty total (coarse) intersection.

The theory appears to have much in common with that of CAT(0) cube complexes and CAT(0) spaces, but aside from the fact that every CAT(0) cube complex is injective when equipped with the $\ell^\infty$ metric \cite{maitang:oninjective,miesch:injective}, this is largely heuristic at this stage. In the cubical setting, it is Sageev's construction that is really the foundation on which many (often highly nontrivial) cubulation results are built, such as in \cite{bergeronwise:boundary,hsuwise:cubulating,hagenwise:cubulating:general}. 

This motivates the following, a precise version of which is stated as Theorem~\ref{thm:coarsely_injective}; also see Theorem~\ref{introthm:dual_spaces} below for more information.

\begin{introthm} \label{introthm:injective}
Under simple conditions on $\C$, the metric space $X$ produced by Construction~\ref{constr:main} is coarsely injective.
\end{introthm}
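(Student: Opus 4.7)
The plan is to adapt the template by which $\ell^\infty$ cube complexes are Helly/injective to the general framework of Construction~\ref{constr:main}. Points of $X$ should correspond to admissible orientations $\sigma$ of $P$ (each wall given one of its two half-spaces), and I would first express $\dist$ as an aggregate over $\C$ of disagreement counts—most plausibly a supremum, since the target property is $\ell^\infty$-flavoured rather than median. Under this dictionary, a closed ball $B(x,r) \subset X$ becomes the set of admissible orientations differing from $x$ on a $\C$-controlled collection of walls, and the statement of coarse injectivity reduces to the following constraint-satisfaction problem: given admissible $x_1,\dots,x_n$ and radii $r_i$ such that every pair $\{x_i,x_j\}$ admits a simultaneous $\C$-controlled witness, find a single admissible $y$ that is $\C$-controlled (up to a bounded error) with respect to all of the $x_i$'s.

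Next I would produce $y$ by a majority rule: for each wall $p$, orient $p$ on the side chosen by most of the $x_i$'s, breaking ties arbitrarily. A double-counting argument then controls the distance from this candidate to each $x_i$. For any $A \in \C$ and any $p \in A$ on which the majority choice disagrees with $x_i$, at least half of the other $x_j$'s also disagree with $x_i$ on $p$; these disagreements can therefore be charged to the pairwise witnesses guaranteed by the hypothesis, and summing over $A$ yields $\dist(y, x_i) \leq r_i + K$, where $K$ depends only on the structural constants of $\C$ and not on the number of balls.

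The hard part will be verifying admissibility of the majority candidate, which is precisely where the ``simple conditions on $\C$'' must enter. In the cubical case admissibility is automatic because each wall orientation is a fully local decision, but in the present generality the majority orientation can violate the consistency conditions cut out by $\C$. I expect the hypotheses of the theorem to take the form of a closure property—say, under intersections of $\C$-sets, or under passage to subfamilies of walls separating a given pair of half-spaces—that permits bounded \emph{local repairs} of the candidate orientation without harming the distance bound by more than an additive constant. Finally, for possibly infinite families of pairwise-intersecting balls, a standard compactness/ultrafilter limit applied on the index set $\C$ should reduce to the finite case treated above, completing the verification of coarse injectivity.
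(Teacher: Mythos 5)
Your setup is right: points of $X$ are consistent orientations of $P$, the metric is the supremum over $c\in\C$ of the number of walls of $c$ on which two orientations disagree, and the ``simple conditions'' are that $\C$ is an ($m$--gluable) system of chains. The gap is in the choice of candidate point. The majority-rule orientation is the median of the centres, and the median is the wrong point for an $\ell^\infty$--flavoured Helly property. Concretely, take $S=\Z^2$ with its cubical walls and $\C$ all chains, so that $\dist_\C$ is the $\ell^\infty$ metric; let $x_1=(0,0)$, $x_2=(2N,0)$, $x_3=(N,N)$ with radii $r_1=r_2=N$ and $r_3=0$. The pairwise condition holds, so any Helly point must be (close to) $x_3$; but the majority orientation is $(N,0)$, at distance $N$ from $x_3$, with $N$ arbitrary. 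Your double-counting step fails at exactly this point: a wall on which the majority disagrees with $x_i$ separates $x_i$ from some other centre $x_j$, so charging a chain $A$ of such walls to the pairwise witness only yields $|A|\le r_i+r_j$, not $|A|\le r_i+K$ with $K$ structural. In the example the offending chain has length $N=r_3+r_1$, and no bounded ``local repair'' of the majority candidate can fix this.

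The paper's proof uses a different candidate. For each $i$, let $Q_i$ be the set of walls $w$ such that some chain $c\in\C$ with $|c|\ge r_i+2m$ separates $x_i$ from $w$; orient every $w\in Q_i$ towards $x_i$ and leave all other walls undecided. The $m$--gluability of chains is used twice: once to show this partial orientation is well defined on $Q_i\cap Q_j$, and once to show it is a filter --- in either failure case one concatenates the witnessing chains into a single chain separating $x_i$ from $x_j$ of length greater than $r_i+r_j$, contradicting the pairwise hypothesis. The ultrafilter lemma then extends this filter to a genuine point $z$, and by construction no chain of length greater than $r_i+2m$ separates $z$ from $x_i$, i.e.\ $z\in\bigcap B(x_i,r_i+2m)$. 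So the consistency issue you flag as ``the hard part'' is resolved not by repairing a total orientation but by committing only to the far walls and invoking the ultrafilter lemma; this also handles infinite families of balls directly, with no separate compactness step.
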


It should be emphasised that the assumptions in Theorem~\ref{introthm:injective} are weaker than those of Sageev's construction.

As a first implementation, we consider a class of groups that naturally generalises hyperbolic spaces and CAT(0) cube complexes. In $\delta$--hyperbolic spaces, Gromov's tree approximation lemma states that the quasiconvex hull of every finite set $A$ is quasiisometric to a tree, where the quasiisometry constant depends only on $\delta$ and the cardinality of $A$ \cite{gromov:hyperbolic}. In view of results for mapping class groups and Teichmüller space \cite{behrstockminsky:centroids,eskinmasurrafi:large,durham:augmented}, the fact that CAT(0) cube complexes can be considered as higher-dimensional versions of trees led Bowditch to introduce the class of \emph{coarse median} spaces \cite{bowditch:coarse}, which are defined by a \emph{cubical} approximation property. 

There are multiple viable ways to formulate such a cubical approximation property, and Bowditch opts for perhaps the most general possibility, which contains precious little metric information. In spite of this, he is able to obtain some remarkably strong results; see especially \cite{bowditch:quasiflats}. To our knowledge, though, all coarse median spaces of prior interest satisfy a seemingly much stronger property that more faithfully generalises Gromov's lemma \cite{behrstockhagensisto:quasiflats,bowditch:convex}. Moreover, this stronger property is sometimes very useful \cite{haettelhodapetyt:coarse,durhamminskysisto:stable,durhamzalloum:geometry}.

We therefore make a small tweak to the definition of coarse median spaces, and introduce the class of \emph{strong coarse median} spaces (these have also been called \emph{locally quasicubical} spaces in \cite{durhamzalloum:geometry}, based on a draft version of the present work). Essentially, a strong coarse median space is a geodesic space where every finite set $A$ has an appropriate ``coarsely convex'' hull that is \emph{median-preservingly} quasiisometric to a CAT(0) cube complex, with the constants depending only on the cardinality of $A$. A \emph{strong-coarse-median group} is then a finitely generated group possessing an equivariant strong coarse median structure. Aside from hyperbolic and cubical groups, the class includes toral relatively hyperbolic groups, mapping class groups, extra-large--type Artin groups, and hierarchically hyperbolic groups more generally \cite{behrstockhagensisto:hierarchically:2,behrstockhagensisto:quasiflats,hagenmartinsisto:extra}. Note that some cubical groups do not \emph{naturally} admit hierarchically hyperbolic structures \cite{shepherd:cubulation}.

Our main result about strong-coarse median groups is the construction of good sets $P$ and $\C$, leading to the following, which generalises and gives a hierarchy-free proof of the main result of \cite{haettelhodapetyt:coarse}; it appears as Corollary~\ref{cor:scm_group_injective}. 

\begin{introthm} \label{introthm:scm_inj}
For any strong coarse median space $S$, Theorem~\ref{introthm:injective} produces a coarsely injective space $X_\C(S)$ such that the natural map $S\to X_\C(S)$ is a quasiisometry. In particular, if $S$ is a strong-coarse-median group, then $S$ acts properly coboundedly on an injective space.
\end{introthm}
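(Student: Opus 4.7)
The plan is to extract a set $P$ of bipartitions of $S$ from the strong coarse median structure, choose a suitable $\C\subset 2^P$, and then check both the hypotheses of Theorem~\ref{introthm:injective} and that the resulting map $S\to X_\C(S)$ is a quasiisometry. For each finite subset $A\subset S$, the strong coarse median axiom supplies a median-preserving quasiisometry $f_A:\hull(A)\to Q_A$ onto a CAT(0) cube complex, with constants depending only on $|A|$. Each hyperplane of $Q_A$ partitions $Q_A$ into two halfspaces; pulling back via $f_A$ and extending to the rest of $S$ (for instance by a closest-point rule) produces a bipartition of $S$. I would let $P$ be the union, over all finite $A$ and all hyperplanes of $Q_A$, of these bipartitions.

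The collection $\C$ should be engineered so that the conditions of Theorem~\ref{introthm:injective} hold. Modelled on what happens inside a cube complex, I would declare a family of walls to lie in $\C$ precisely when the corresponding hyperplanes, viewed in a common approximating $Q_A$ containing enough witnesses, are pairwise transverse (or at least behave cubically up to uniform error depending on the cardinality involved). Feeding this into Theorem~\ref{introthm:injective} then gives coarse injectivity of $X_\C(S)$ for free. For the quasiisometry claim, coarse Lipschitzness of $\pi:S\to X_\C(S)$ follows from the general framework, since a unit-length move in $S$ can cross only a bounded number of walls from any single $\C$-family (again by reading it inside an approximating $Q_A$). For the lower bound, apply the strong coarse median axiom to a finite set $A$ containing $x,y$ and a coarse geodesic between them: in $Q_A$, the number of hyperplanes separating $f_A(x)$ from $f_A(y)$ is comparable to $\dist_S(x,y)$, and a chain of such hyperplanes along a $Q_A$-geodesic yields a $\C$-family of walls separating $x$ from $y$ in $S$, whose size controls $\dist_{X_\C(S)}(\pi x,\pi y)$.

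The ``in particular'' clause follows because, when $S$ is a strong-coarse-median group, the whole construction is canonical in the coarse median structure, hence equivariant; thus $G$ acts properly coboundedly on the coarsely injective space $X_\C(S)$, and an equivariant quasiisometry to an injective metric space is then obtained by the coarse injective hull construction of Chepoi--Descombes--Lang.

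The main obstacle will be the bookkeeping needed to compare bipartitions extracted from different approximating cube complexes $Q_A$ and $Q_{A'}$: the lower bound on $\dist_{X_\C(S)}(\pi x,\pi y)$ requires not merely that many hyperplanes separate $f_A(x)$ from $f_A(y)$ in a single $Q_A$, but that the resulting bipartitions of $S$ genuinely form a $\C$-family, which in turn demands uniform control on how hyperplanes from different approximations interact. This uniformity is essentially guaranteed by the strong coarse median axiom, but extracting it in a form compatible with the abstract definition of $\C$ is where the bulk of the technical work — in particular, verifying the precise hypotheses of Theorem~\ref{introthm:injective} — will sit.
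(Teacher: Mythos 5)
Your overall strategy (extract walls from the cubical approximations, choose $\C$, feed into Theorem~\ref{introthm:injective}, then check the map is a quasiisometry) matches the paper's, but there are two genuine gaps.

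First, your choice of $\C$ is backwards. Theorem~\ref{introthm:injective} (i.e.\ Theorem~\ref{thm:coarsely_injective}) requires every element of $\C$ to be a \emph{chain} of walls --- a nested family $h_{i-1}^-\subset h_i^-\subset h_{i+1}^-$ --- together with gluability. You propose to put pairwise \emph{transverse} families into $\C$. That mirrors the $\ell^1$/Sageev picture, where the metric counts all separating walls and maximal cubes come from transverse families; but the $\ell^1$ metric on a cube complex is not injective. Coarse injectivity comes from the $\ell^\infty$ picture, where $\dist_\C(x,y)$ is the length of the longest nested chain separating $x$ from $y$. (You in fact slip into the correct convention later when you say ``a chain of such hyperplanes along a $Q_A$-geodesic yields a $\C$-family'', which contradicts your stated definition.) The paper takes $P$ to be the bipartitions induced by \emph{curtains} --- preimages, under the median projection $s\mapsto\mu(a,b,s)$ followed by $\hat f$, of long chains of hyperplanes in the cube complex approximating a \emph{pair} $\{a,b\}$ --- and takes $\C$ to be all chains of such walls that are induced by chains of curtains; gluability is then Lemma~\ref{lem:scm_gluable}. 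Note also that the extension of a hyperplane of $Q_A$ to a bipartition of all of $S$ is done via the coarse median (so that Lemma~\ref{lem:joins_near_ends} controls hulls of halfspaces), not by a closest-point rule, which is uncontrolled in this generality.

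Second, your argument for the quasiisometry only establishes that $S\to X_\C(S)$ is a quasiisometric embedding (the paper's Lemma~\ref{lem:scm_qmqie}); you never address coarse surjectivity. This is the hardest part of the paper's proof: Proposition~\ref{prop:scm_dense} shows $S$ is coarsely dense in $X_\C(S)$ by an inductive construction of points $s_1,\dots,s_{n+1}$ whose realising chains are forced to pairwise cross, combined with Proposition~\ref{prop:strong_cross_rank}, which bounds the number of pairwise strongly crossing curtains by the rank $n$. Without some such argument, an ultrafilter in $X_\C(S)$ could a priori lie far from the image of $S$, and the ``quasiisometry'' claim fails. Your closing paragraph correctly identifies that comparing walls coming from different approximating complexes is delicate, but the density step is a separate and essential missing ingredient, not just bookkeeping. (Your handling of the ``in particular'' clause via equivariance and the injective hull is essentially the paper's, modulo the caveat of Remark~\ref{rem:choice_of_cube_complexes} that the curtain set must be symmetrised to be $\isom S$--equivariant.)
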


Since $\C$ is constructed from metric and median data, every median-preserving isometry of $S$ induces an isometry of $X_\C(S)$. As a special case of this, known results about groups acting properly coboundedly on injective spaces lead to the following, which notably does not need the quasiisometry to be equivariant \cite{lang:injective,keppelermollervarghese:automatic,alonsobridson:semihyperbolic} (see also \cite{haettel:group}).

\begin{introcor} \label{introcor:quasicube}
Suppose that $G$ is quasiisometric to a CAT(0) cube complex $Q$. If the induced quasiaction of $G$ on $Q$ is coarsely median-preserving, then:
\begin{itemize}
\item   $G$ has finitely many conjugacy classes of finite subgroups;
\item   $\mathbb Q$ is not a subgroup of $G$;
\item   polycyclic subgroups of $G$ are virtually abelian and undistorted;
\item   $G$ is semihyperbolic.
\end{itemize}
\end{introcor}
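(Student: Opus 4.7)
The plan is to reduce to Theorem~\ref{introthm:scm_inj} by showing that $G$ inherits a strong coarse median structure compatible with its self-action, and then to extract each of the four conclusions from known properties of groups that act properly coboundedly on injective metric spaces. The CAT(0) cube complex $Q$ is naturally a strong coarse median space: its combinatorial median is exactly preserved by the inclusion of the $0$--skeleton, so the required hulls can be taken to be genuine convex hulls. Fixing a quasi-inverse $\psi:Q\to G$ of the given quasiisometry $\phi:G\to Q$, I would push the median of $Q$ forward via $\psi$ to obtain a coarse median operation on $G$. Since $\phi$ is a quasiisometry and the cubical hulls of finite sets in $Q$ satisfy the strong CAT(0)-cube approximation demanded by the definition, the pushed-forward structure is again strong coarse median on $G$, with constants depending only on the cardinality of the input and on the quasiisometry constants.

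The next step is equivariance. Via conjugation by $\phi$, left multiplication of $G$ on itself corresponds precisely to the given quasiaction on $Q$, so the hypothesis that this quasiaction is coarsely median-preserving translates directly into the statement that left multiplication in $G$ preserves the pushed-forward coarse median up to a uniform additive error. This is exactly what is needed for $G$ to qualify as a strong-coarse-median group in the sense of the introduction. Theorem~\ref{introthm:scm_inj} therefore applies, producing a coarsely injective space $X_\C(G)$ that is $G$--equivariantly quasiisometric to $G$; passing to the injective hull, $G$ acts properly coboundedly by isometries on an injective metric space $X$.

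Each of the four bullets is now a standard consequence of that fact. Injective spaces carry a canonical conical geodesic bicombing (Lang~\cite{lang:injective}), so finiteness of conjugacy classes of finite subgroups and semihyperbolicity are immediate, the latter via Alonso--Bridson~\cite{alonsobridson:semihyperbolic} applied to this bicombing. The structure theorems for virtually polycyclic subgroups acting on injective spaces obtained by Haettel~\cite{haettel:group} and Keppeler--Möller--Varghese~\cite{keppelermollervarghese:automatic} give both the undistortion and the virtual abelianness of polycyclic subgroups, and the absence of $\mathbf{Q}$ as a subgroup follows from the same structure theorems. The main obstacle is the first step, namely transporting a strong coarse median structure along a potentially non-equivariant quasiisometry while keeping the cubical hull estimates under control; this is precisely where the \emph{strong} coarse median condition, rather than the weaker Bowditch-style definition, becomes indispensable, because the weaker definition would not propagate through the quasiisometry with the uniformity that Theorem~\ref{introthm:scm_inj} requires as input.
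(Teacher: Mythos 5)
Your argument is correct and follows the same route the paper intends: transport the cubical median to $G$ along a quasi-inverse of the quasiisometry, use the coarsely median-preserving hypothesis to make the resulting strong coarse median structure equivariant, invoke Theorem~\ref{introthm:scm_inj} (via Corollary~\ref{cor:scm_group_injective}) to get a proper cobounded action on an injective space, and read off the four bullets from \cite{lang:injective} and \cite{alonsobridson:semihyperbolic}. Only your closing remark is slightly misplaced: ordinary coarse medians also transport perfectly well along quasiisometries, so the role of the \emph{strong} condition is not to make the transport work but to supply the metric control (genuine quasiisometric cubical approximations of hulls) that the curtain construction behind Theorem~\ref{introthm:scm_inj} requires as input.
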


Already this corollary applies to mapping class groups, and many hierarchically hyperbolic groups more generally \cite{petyt:mapping}, overlapping with \cite{haettelhodapetyt:coarse}. In particular, the statement about polycyclic subgroups provides a large generalisation of \cite[Thm~1.2]{farblubotzkyminsky:rank}. In view of the fact that not all cubical groups are naturally hierarchically hyperbolic, it seems likely that there are non-cubical groups that are covered by Corollary~\ref{introcor:quasicube} but not \cite{haettelhodapetyt:coarse}. 

\medskip

Returning to the generality of Theorem~\ref{introthm:injective}, which outputs a coarsely injective space $X$, we build a natural family of paths in $X$, which we call \emph{normal wall paths}. These paths simultaneously behave well with respect to the metric on $X$ and with respect to its structure as a \emph{median algebra} (Propositions~\ref{prop:nwp_rough_geodesic},~\ref{prop:bicombing}). Degenerating to the setting of CAT(0) cube complexes exactly gives the Niblo--Reeves \emph{normal cube paths} \cite{nibloreeves:geometry}. 

One simple use for normal wall paths is that they provide a nice melding of results from \cite{durhamminskysisto:stable} and \cite{haettelhodapetyt:coarse}. The main result of each of those articles is semihyperbolicity of mapping class groups, but this is achieved in different ways. Semihyperbolicity asks for an equivariant set of paths with good fellow-travelling properties. In \cite{haettelhodapetyt:coarse}, these come from Lang's bicombing of injective spaces \cite{lang:injective}, whereas \cite{durhamminskysisto:stable} contains a direct construction that yields paths compatible with the Masur--Minsky hierarchy structure \cite{masurminsky:geometry:2}. It is not clear that these systems of paths are related to each other: the former may not be hierarchical, and the latter may be unnatural to the injective space. The following shows that normal wall paths satisfy both. (See Propositions~\ref{prop:nwp_rough_geodesic} and~\ref{prop:bicombing}, as well as Lemma~\ref{lem:scm_qmqie}.)

\begin{introthm} \label{introthm:mcg_inj} 
Let $S$ be either the mapping class group $\MCG\Sigma$ of a finite-type surface $\Sigma$, or the corresponding Teichmüller space with the Teichmüller metric. Normal wall paths in the dual space $X_\C(S)$ are median paths, rough geodesics, and yield hierarchy paths of $S$: their images under subsurface projections are unparametrised quasigeodesics. Furthermore, they form a $\MCG\Sigma$--invariant bicombing.
\end{introthm}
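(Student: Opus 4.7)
The plan is to apply the general machinery of the paper by exhibiting $S$ as a strong coarse median space, and then to reduce the hierarchy-path statement to a statement about subsurface projections of coarse median paths in hyperbolic spaces. Both $\MCG\Sigma$ and Teichmüller space with the Teichmüller metric are strong coarse median spaces: the underlying coarse median is the one built from the hierarchically hyperbolic structure of Behrstock--Hagen--Sisto (with Durham handling the Teichmüller case), and the strong refinement is provided by the cubical approximation results of Bowditch and Behrstock--Hagen--Sisto. Theorem~\ref{introthm:scm_inj} therefore outputs a coarsely injective dual $X_\C(S)$ together with a quasiisometry $\iota:S\to X_\C(S)$, which by Lemma~\ref{lem:scm_qmqie} is coarsely median-preserving; and since $\MCG\Sigma$ acts on $S$ by median-preserving isometries, the naturality of Construction~\ref{constr:main} promotes this to a $\MCG\Sigma$-action on $X_\C(S)$ by median-preserving isometries. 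Granted this setup, the median-path, rough-geodesic, and bicombing assertions, as well as $\MCG\Sigma$-invariance of the bicombing, are immediate from Propositions~\ref{prop:nwp_rough_geodesic} and~\ref{prop:bicombing}.

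The substantive content is the hierarchy-path assertion: for every subsurface $Y\subseteq\Sigma$, one must show that the image of a normal wall path under the subsurface projection $\pi_Y:S\to\mathcal{C}(Y)$ is an unparametrised quasigeodesic. Fix a normal wall path $\gamma\subset X_\C(S)$ and let $\gamma'\subset S$ be a coarse preimage under the quasiisometry $\iota$. By the preceding paragraph, $\gamma'$ is a rough quasigeodesic in $S$ that is, up to uniformly bounded error, a coarse median path in Bowditch's sense. The external input I would invoke is that in a hierarchically hyperbolic space, every subsurface projection $\pi_Y$ is coarsely median-preserving into the uniformly hyperbolic space $\mathcal{C}(Y)$. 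Consequently $\pi_Y\gamma'$ is a coarse median path in a uniformly hyperbolic space, and in such a space a coarse median path is automatically an unparametrised quasigeodesic, because coarse medians of triples in a hyperbolic space lie uniformly close to coarse centres of the corresponding triangles, which forces $\pi_Y\gamma'$ to progress monotonically along any geodesic joining its endpoints.

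The principal obstacle is the verification that the coarse median on $S$ implicit in $X_\C(S)$ (via the walls defining $\C$) coincides, up to uniform error, with the HHS-induced coarse median used in the subsurface projection argument. Both medians originate from the same HHS data, so the comparison should reduce to routine bookkeeping via the realisation and consistency axioms; the delicate part is ensuring that the error terms remain uniform when transported first through $\iota$ and then through $\pi_Y$, so that the quasigeodesic constants for $\pi_Y\gamma'$ depend only on the HHS structure and not on $Y$ or on the specific path. Once this compatibility is secured, the four conclusions assemble from the ingredients identified above.
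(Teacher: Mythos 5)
Your proposal is correct and follows essentially the route the paper intends: the median-path, rough-geodesic, and bicombing claims come from Propositions~\ref{prop:nwp_rough_geodesic} and~\ref{prop:bicombing}, the compatibility of the wall median on $X_\C(S)$ with the HHS coarse median is exactly Lemma~\ref{lem:scm_qmqie}, and the hierarchy-path claim is deduced, as you do, from the fact that quasimedian images of median paths in the uniformly hyperbolic spaces $\mathcal C(Y)$ are unparametrised quasigeodesics. The only point to flag is that the $\MCG\Sigma$--equivariance is not automatic for the curtains of Definition~\ref{def:scm_curtains}; one must use one of the equivariant variants listed in Remark~\ref{rem:choice_of_cube_complexes}.
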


The statement of Theorem \ref{introthm:mcg_inj} applies to all strong-coarse-median groups, whereas \cite{durhamminskysisto:stable} and \cite{haettelhodapetyt:coarse} cover (colourable) hierarchically hyperbolic groups. In that generality, normal wall paths witness semihyperbolicity. For the Teichmüller metric, a system of quasigeodesics with similar properties to those of Theorem~\ref{introthm:mcg_inj} has been announced by Kapovich--Rafi. 

The interplay between the median and the metric on $X_\C(S)$ has consequences for the geometry of its balls. For instance, following Cannon \cite{cannon:almost}, we say that a metric space $M$ is \emph{almost convex} if its spheres have the following property: for every $k$ there exists $N(k)$ such that for all $r\ge0$ and $m\in M$, if $x,y\in S_r(m)\subset M$ have $\dist(x,y)=k$, then $x$ and $y$ can be joined by a path in the ball $B_r(m)$ of length at most $N(k)$. The following shows that this holds for $X_\C(S)$ in a strong way. (See Lemma~\ref{lem:convex_balls} and Proposition~\ref{prop:nwp_rough_geodesic}.)

\begin{introthm}
If $S$ is a strong coarse median space, then normal wall paths make $X_\C(S)$ almost convex, with $N(k)=k+6$.
\end{introthm}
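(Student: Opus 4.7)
The plan is to take two points $x,y\in S_r(m)$ with $\dist(x,y)\le k$ and use the normal wall path $\gamma$ from $x$ to $y$ as the path witnessing almost convexity. Two things need to be checked: that $\gamma$ has length at most $k+6$, and that $\gamma$ stays inside the ball $B_r(m)$.

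For the length bound, I would invoke Proposition~\ref{prop:nwp_rough_geodesic}, which says normal wall paths are rough geodesics. Read as a $(1,c)$--rough geodesic statement, the length of the normal wall path from $x$ to $y$ is bounded by $\dist(x,y)+c$, so the task reduces to verifying that the additive constant coming from the rough-geodesic estimate in Proposition~\ref{prop:nwp_rough_geodesic} is at most $6$. I would expect this to drop out of counting how many walls may be ``crossed'' by a normal wall path per unit length, compared to the lower bound on $\dist(x,y)$ coming from the number of walls separating $x$ from $y$; a small slack (at most $6$) arises from the rounding in each endpoint wall-count.

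For the containment in $B_r(m)$, I would appeal to Lemma~\ref{lem:convex_balls}: balls in $X_\C(S)$ are (coarsely) median-convex. Since normal wall paths are median paths (again from Proposition~\ref{prop:nwp_rough_geodesic}), every vertex of $\gamma$ lies in the median hull of $\{x,y\}$ (or more precisely in an iterated median construction using $x$ and $y$). Combining these, any point on $\gamma$ is a median of points in $B_r(m)$ and therefore lies in $B_r(m)$ itself. Because normal wall paths are discrete paths with unit-length edges in $X_\C(S)$, staying at vertices inside $B_r(m)$ is enough to keep the entire path in $B_r(m)$.

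The main obstacle is pinning down the exact constant $6$: it is an effective statement, not just a qualitative one, so both the rough-geodesic constant in Proposition~\ref{prop:nwp_rough_geodesic} and the convexity slack in Lemma~\ref{lem:convex_balls} need to be tracked numerically rather than existentially. Once those explicit estimates are in hand, the proof is essentially a one-line combination: take the normal wall path, note it is $(1,6)$--roughly geodesic and median, and conclude via median-convexity of balls.
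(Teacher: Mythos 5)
Your approach is exactly the paper's: the theorem is obtained by combining Lemma~\ref{lem:convex_balls} (balls in $X_\C$ are gated, hence median-convex, and normal wall paths are median paths contained in the interval $[x,y]$, so they stay inside the ball) with Proposition~\ref{prop:nwp_rough_geodesic} (normal wall paths are $3m$--rough geodesics). The constant you flag as the main obstacle is immediate from the cited results: Lemma~\ref{lem:scm_gluable} shows the system $\C$ of a strong coarse median space is $2$--gluable, so $3m=6$; note only that consecutive points of a normal wall path are at distance at most $1+2m$ rather than exactly $1$, and that Lemma~\ref{lem:convex_balls} gives genuine (not merely coarse) convexity of balls, which is what keeps the path inside $B_r(m)$ on the nose.
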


In \cite[Qns~3.4,~3.5]{farb:some}, Farb asks whether there exist almost convex Cayley graphs of mapping class groups, and whether the Teichmüller metric is almost convex. Although the above theorem does not directly answer Farb's questions, it shows that almost convexity holds in the space $X_\C(S)$, which is equivariantly quasiisometric to the desired spaces, and is witnessed by hierarchy paths.

Let us mention one more application of Construction~\ref{constr:main}, which both makes use of normal wall paths and brings us back to the setting of hyperbolic duals. By using the same set of walls as is used to establish Theorem~\ref{introthm:scm_inj} but a set $\C$ more along the lines of Theorem~\ref{introthm:universal}, one can, for any strong coarse median space, produce a hyperbolic space $Y(S)$ associated with it. In the hierarchically hyperbolic setting, a proof involving normal wall paths shows that $Y(S)$ coarsely recovers the largest hyperbolic space of \cite{abbottbehrstockdurham:largest}. The following summarises Section~\ref{subsec:scm_hyp}.

\begin{introthm}
 For every strong coarse median space $S$, all median-preserving isometries of $S$ are isometries of the hyperbolic space $Y(S)$. If $S$ is the mapping class group of a finite-type surface $\Sigma$, or its Teichmüller space with the Teichmüller metric, then $Y(S)$ is $\MCG\Sigma$--equivariantly quasiisometric to the curve graph of $\Sigma$.
\end{introthm}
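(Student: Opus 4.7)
The first assertion follows directly from the equivariance built into Construction~\ref{constr:main}. The wall system $P$ used to define $Y(S)$ is the same one used for the injective dual, and so is built from the metric and the coarse median structure of $S$; the subcollection $\C\subset 2^P$ selecting ``hyperbolic walls'' is cut out by a metric condition analogous to the ball-separation of Definition~\ref{def:ball_sep}. Consequently any median-preserving isometry $g$ of $S$ permutes $P$ and preserves $\C$, and hence, by the equivariance feature of Construction~\ref{constr:main}, induces an isometry of $Y(S)$. The verification is bookkeeping: one checks that each wall in $P$ is defined in purely median/metric terms (for instance as a half-space associated to a gate projection onto a median-convex subset), after which the ball-based selection rule for $\C$ transports verbatim.

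For the mapping class group and Teichm\"uller statements, the plan is to route through the ``largest hyperbolic space'' $\mathcal{H}$ of the Abbott--Behrstock--Durham construction, which in these cases is $\MCG\Sigma$--equivariantly quasiisometric to the curve graph $\mathcal{C}(\Sigma)$ by \cite{abbottbehrstockdurham:largest}. Thus it suffices to build an $\MCG\Sigma$--equivariant quasiisometry $Y(S)\to\mathcal{H}$. The strategy exploits the dual role of $P$: since $P$ has been shown in Theorem~\ref{introthm:mcg_inj} to yield, via normal wall paths, paths whose subsurface projections are unparametrised quasigeodesics, the same wall system already sees the entire HHS hierarchy. Under the hyperbolic-style choice of $\C$, only those walls registering ``top-level'' progress---roughly, those that cannot be separated by projections to proper subsurfaces---should survive, so distance in $Y(S)$ coarsely counts the number of such walls crossed.

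Concretely, I would first show that the natural map $S\to Y(S)$ factors through the top-level projection up to bounded error: each wall in $\C$ corresponds, via HHS projections, to a coarsely separating family of curves in $\mathcal{C}(\Sigma)$, so crossing many $\C$-walls forces large curve-graph distance, giving a Lipschitz comparison in one direction. For the reverse inequality, I would use normal wall paths: by Theorem~\ref{introthm:mcg_inj} a normal wall path from $x$ to $y$ projects to an unparametrised quasigeodesic in $\mathcal{C}(\Sigma)$, and the number of $\C$-walls it crosses dominates the length of that projection, giving the other direction. Equivariance is automatic from the first part of the theorem together with equivariance of subsurface projection.

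The main obstacle is pinning down the precise compatibility between the hyperbolic-style choice of $\C$ and the top-level projection $\pi_\Sigma\colon S\to\mathcal{C}(\Sigma)$. One has to argue that walls capturing only proper-subsurface data are rejected by the ball-separation condition---because large proper-subsurface projections can be ``bypassed'' by balls in $S$---while walls surviving the condition yield definite curve-graph progress. This is essentially a restricted distance-formula argument at the top HHS level, and it is where the technical work lies; the rest of the proof is then a clean two-sided estimate combining this identification of $\C$ with the hierarchy-path property of normal wall paths.
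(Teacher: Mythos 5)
Your overall architecture matches the paper's. For the first assertion, the paper likewise reduces to the fact that the curtains and the selection rule are definable in purely median/metric terms (with the caveat, flagged in Remark~\ref{rem:choice_of_cube_complexes}, that one must use one of the listed equivariant variants of the curtain construction, since the default construction fixes particular cube-complex approximations of pairs of points). For the second assertion, the paper also routes through the Abbott--Behrstock--Durham space $Z$ and proves a two-sided coarse Lipschitz comparison between $Y(S)$ and $Z$ (Proposition~\ref{prop:abd}), then invokes the known identification of $Z$ with the curve graph.

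There is, however, a concrete misidentification that undermines your mechanism for the key direction. The systems $\C_R$ defining $Y(S)$ in Section~\ref{subsec:scm_hyp} are not cut out by ball-separation (Definition~\ref{def:ball_sep} belongs to the strongly-contracting construction of Section~\ref{sec:contracting}); they are cut out by $R$--separation via \emph{strong crossing}: two disjoint curtains are $R$--separated if no chain of more than $R$ curtains strongly crosses both. Consequently your explanation of why proper-subsurface progress is rejected --- that large subsurface projections can be ``bypassed by balls in $S$'' --- is not the argument that applies. The paper's mechanism is product-theoretic: if a relevant domain $U$ has large projection while the corresponding region has bounded image in $Z$, then by \cite[Thm~3.7]{abbottbehrstockdurham:largest} $U$ is a factor of a nontrivial product, and in a product any two curtains from one factor are strongly crossed by arbitrarily long chains from the other factor, so they are not $R$--separated for any $R$. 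Separately, your reverse direction (``the number of $\C$--walls a normal wall path crosses dominates the length of its curve-graph projection'') asserts exactly what needs proving; the paper instead takes the chain of curtains furnished by the cube complex approximating $\hull\{s,t\}$ and shows directly that any two of its elements with far-apart images in $Z$ are $R$--separated, which gives the lower bound on $\dist_Y$ in terms of $\dist_Z$. With these two corrections your plan coincides with the paper's (itself only outlined) proof.
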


This provides a systematic construction of the curve graph from only median and metric information. It also lets the curve graph be viewed as a kind of quotient of the coarsely injective space $X_\C(S)$ of Theorem~\ref{introthm:mcg_inj}, because the underlying walls are the same. It should be noted that the hyperbolic space $Y(S)$ is not actually isometric to the curve graph of $S$, because it is not a geodesic space, only a roughly geodesic space.

\subsection{Further discussion of the construction} \label{subsec:further}

In this subsection, we discuss Construction \ref{constr:main} in more detail and provide several applications. Given a set $S$ and a collection $P$ of bipartitions, an \emph{ultrafilter} is a ``consistent orientation'' of the elements of $P$; see Definition~\ref{def:filter}. Each $s\in S$ determines a \emph{principal} ultrafilter $\phi_s$ by orienting each $h \in P$ towards $s$. Let $\hat X$ be the set of all ultrafilters on $P$. We say that $x,y\in\hat X$ are \emph{separated} by $c\subset P$ if they orient every element of $c$ differently. Given $\C\subset 2^P$, let $\dist_\C$ be the function on $\hat X$ given by $\dist_\C(x,y)=\sup\{|c|:c\in\C\text{ separates }x\text{ from }y\}$, which takes values in $\mathbb N\cup\{\infty\}$.

\begin{definition}[Dualisable system] \label{introdef:dualisable_system} 
Let $(S,P)$ be a set with walls. Suppose that $\C\subset2^P$ is closed under taking subsets and contains all singletons. We say that $\C$ is a \emph{dualisable system} on $P$ if $\dist_\C(\phi_s,\phi_t)<\infty$ for all $s,t\in S$. The \emph{$\C$--dual} of $S$ is $X_\C=(X,\dist_\C)$, where $X=\{x\in\hat X\,:\dist_\C(x,\phi_s)<\infty\}$.
\end{definition}

The following theorem summarises the correspondences between combinatorics of $\C$ and metric properties of $X_\C$ obtained in this paper; see Theorem~\ref{thm:coarsely_injective} and Corollary~\ref{cor:rough_geodesic_hyperbolic}.

\begin{introthm} \label{introthm:dual_spaces}
For any dualisable system $\C$ on $(S,P)$, the space $X_{\C}$ is a metric space, and
\begin{itemize}
\item   if $\C=2^P,$ then $X_{\C}$ is Sageev's dual CAT(0) cube complex;
\item   if $\C$ consists of all chains in $P$, then $X_{\C}$ is the vertex set of a Helly graph.
\end{itemize}
Moreover, assuming that $\C$ is \emph{$m$--gluable}:
\begin{itemize}
\item   if each $c \in \C$ is a chain, then $X_{\C}$ is coarsely injective;
\item   if $\C$ is $L$--separated, then $X_{\C}$ is hyperbolic.
\end{itemize}
\end{introthm}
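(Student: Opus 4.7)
Symmetry and non-negativity are immediate from the definition. Non-degeneracy uses $P\subset\C$: if $x\ne y$ in $X$ disagree on $h\in P$, then $\{h\}\in\C$ gives $\dist_\C(x,y)\ge 1$. The substantive axiom is the triangle inequality: given $x,y,z\in X$ and $c\in\C$ separating $x$ from $z$, partition $c=c_{xy}\sqcup c_{yz}$ according to which of $x$ or $z$ the ultrafilter $y$ agrees with on each wall of $c$. Since $x$ and $z$ disagree on every element of $c$, this is a genuine partition, and closure of $\C$ under subsets places both parts in $\C$, giving $|c|\le\dist_\C(x,y)+\dist_\C(y,z)$. Finiteness of $\dist_\C$ on $X$ is built into the definition of $X$.

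\textbf{Sageev and Helly identifications.} For $\C=2^P$ the maximal separating set for $x,y$ is the full collection of walls on which they disagree, so $\dist_\C$ becomes the $\ell^1$-distance on the Sageev dual, and the standard ultrafilter conditions (DCC, consistency) identify $X$ with the $0$-skeleton of a CAT(0) cube complex. When $\C$ consists of all chains in $P$, $\dist_\C(x,y)$ is the length of the longest chain of separating walls. I would verify that this agrees with the graph metric on the graph whose edges join ultrafilters disagreeing on exactly one wall, and then obtain the Helly property by constructing common centres of pairwise-intersecting balls via wall-orientations that the chain hypothesis forces to exist with bounded displacement.

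\textbf{Coarse injectivity.} With every $c\in\C$ a chain and $\C$ being $m$-gluable, I would apply the coarse Helly characterisation of coarse injectivity. For a pairwise $r$-close family $\{x_i\}\subset X_\C$, the candidate common $O(r)$-centre is obtained by orienting each wall $h\in P$ by majority among the $x_i$. For a fixed $x_j$, a maximal separating chain between the candidate and $x_j$ consists entirely of walls where the majority opposes $x_j$, and, chain-wise, each such wall can be matched against chains from $x_i$ to $x_j$ of length at most $r$, giving the bound. The role of $m$-gluability is precisely to promote this combinatorial majority orientation into an honest ultrafilter in $X$, finite distance from the principal ultrafilters; this is the delicate promotion step.

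\textbf{Hyperbolicity and main obstacle.} For $L$-separated $\C$ I would verify Gromov's four-point condition with constant depending only on $L$. Given $x,y,z,w$, extract $L$-separated chains realising $\dist_\C(x,y)$ and $\dist_\C(z,w)$, split each chain according to whether its walls separate the pair $\{x,z\}$-from-$\{y,w\}$ or $\{x,w\}$-from-$\{y,z\}$, and argue that $L$-separation forbids more than a controlled number of walls from being ``wasted'' on pairings that do not contribute to either diagonal distance, since consecutive walls in an $L$-separated chain cannot all agree with a common ultrafilter without forcing a long subchain of separations. This last step is the main obstacle: the $L$-separated case is the only one where the $\ell^1$/median flavour of $\dist_\C$ is genuinely absent, and quantifying the four-point defect from $L$-separation alone is likely the technical heart of the proof, paralleling the delicate portions of the curtain-model hyperbolicity argument of \cite{petytsprianozalloum:hyperbolic}.
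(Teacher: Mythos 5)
Your metric-axiom and Sageev paragraphs match the paper's arguments. The genuine gap is in the coarse injectivity step: the majority-vote orientation is the wrong candidate centre. First, the Helly-type condition must be verified for an arbitrary (possibly infinite) family of balls with \emph{varying} radii $r_i$, where "majority" is not even defined; second, even for finitely many equal-radius balls the majority point only satisfies a multiplicative bound (a chain separating the majority point from $x_j$ consists of walls each opposing $x_j$ together with more than half the $x_i$, which by pigeonhole only yields $|c|\lesssim\max_i\dist_\C(x_i,x_j)$, not $r_j+O(m)$), and a radius-$0$ ball shows the majority point can be far from the required centre. The paper instead orients a wall $w$ towards $x_i$ exactly when some $c\in\C$ of length at least $r_i+2m$ separates $x_i$ from $w$; $m$--gluability (applied two and three times) is what shows this assignment is well defined and is a filter, and the Ultrafilter Lemma then produces a point of $\bigcap B(x_i,r_i+2m)$. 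You correctly sensed that gluability is the "promotion" ingredient, but the orientation being promoted has to be the depth-threshold one, not the majority one. Relatedly, your description of the Helly graph (edges joining ultrafilters "disagreeing on exactly one wall") describes the cube complex $1$--skeleton, not the Helly graph: when $\C$ is all chains, two ultrafilters are at distance $1$ whenever the walls they disagree on are pairwise crossing, so cubes get thickened to complete graphs.

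For the hyperbolic bullet you only offer a sketch and flag it as the main obstacle, so let me record what the paper actually does. It introduces maximal $\times$--chains $\chi=\bigsqcup\chi_i$ for a quadruple (each $\chi_i$ separating $x_i$ from the other three, with $\chi_i\cup\chi_j\in\C$), proves a comparison lemma between $|\chi_i|+|\chi_j|$ and realising chains (Lemma~\ref{lem:cross_vs_L}, via the gluing Lemma~\ref{lem:gluing}), and then exploits the fact that $L$--separation forbids two of the three pairings $12|34$, $13|24$, $14|23$ from simultaneously admitting long separating chains, since such chains would cross each other. This yields the four-point inequality with constant $22(L+m+1)$. Note also that four-point hyperbolicity alone is not the stated conclusion: the paper combines it with Proposition~\ref{prop:nwp_rough_geodesic} (normal wall paths are rough geodesics, using gluability again) to conclude that $X_\C$ is a roughly geodesic hyperbolic space; your proposal omits any geodesicity statement, without which "hyperbolic" has little content.
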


The set $\C$ is said to be \emph{$m$--gluable} if it is closed under taking certain unions (see Definition~\ref{def:gluable}). It is $L$--separated if each $c \in \C$ is a chain and for any $\{h_1,h_2\}\in\C$, every $c\in\C$ whose elements all \emph{cross} both $h_1$ and $h_2$ has $|c|\le L$.

The final bullet of Theorem~\ref{introthm:dual_spaces} bears similarity to the construction of \cite{bowditch:topological}. Bowditch starts with a group $G$ acting properly discontinuously and cocompactly on the space of triples of a \emph{perfect} metric compactum. He then constructs ``annulus systems'', which are certain collections of bipartitions that play the role of $\C$, and shows that they are $L$--separated, using the geometry of the action.

Whilst Construction~\ref{constr:main} is based on Sageev's construction, one fundamental difference is that it does not require local finiteness of the collection of walls. Of course, there is a trade-off between how general the input can be and how strong the conclusions are. Consider hyperbolic groups, for instance. Adding the assumption of a cocompact cubulation to hyperbolicity has some very strong consequences \cite{agol:virtual,wise:structure}, but there are many hyperbolic groups with property~(T) \cite{zuk:property,kotowskikotowski:random}, and these have no unbounded actions on CAT(0) cube complexes \cite{nibloreeves:groups}. Sageev's construction itself (and even the continuous version of \cite{chatterjidrutuhaglund:kazhdan,fioravanti:roller}) cannot, therefore, convey any of the geometry of such groups. Nevertheless, Theorem~\ref{introthm:universal} outputs a metric space $X$ that is quasiisometric to $G$ and still has several fine properties in common with CAT(0) cube complexes, as we now discuss. 

For one, in the full generality of Construction~\ref{constr:main}, every finite group acting on $S$ fixes a point in a ``first subdivision'' of $X$ (Proposition~\ref{prop:finite_fix}); and so a standard argument shows that if $G$ acts properly coboundedly on $X$ then it has finitely many conjugacy classes of finite subgroups. Even in $\delta$--hyperbolic spaces, one can only guarantee an orbit of diameter roughly $\delta$. In this regard, $X$ can be thought of as playing a similar role to the \emph{injective hull} \cite{lang:injective}. That said, neither the median structure on $X$ nor constructions such as normal wall paths have analogues in the injective hull, as they rely on the walls. Also, $X$ better reflects the coarse geometry of $S$. For instance, if $S$ is the standard cubulation of $\R^3$, then $X$ is quasiisometric to $S$, whereas the injective hull is $\R^4$.

Furthermore, the dual space $X_{\C}(S)$ of a strong coarse median space $S$ has a median algebra structure that is uniformly close to the coarse median structure on $S$ (Lemma~\ref{lem:scm_qmqie}). Each \emph{coarsely convex} set $Y \subset S$ (such as quasiconvex subsets of hyperbolic spaces) is at a uniform Hausdorff-distance from a wall-theoretically convex set $Z$ in $X_\C(S)$: an intersection of half spaces (Definition~\ref{def:convex}). The median algebra structure on $X_\C(S)$ then allows us to produce a \emph{gate} map $\g:X_\C(S)\to Z$. This gate map is a closest-point projection in $X_{\C}(S)$ and is 1-Lipschitz. To summarise, one can replace a strong coarse median space $S$ by a space $X_{\C}(S)$ equivariantly quasiisometric to $S$ and with upgraded fine properties.

This highlights a philosophical difference between this article and \cite{haettelhodapetyt:coarse} compared to \cite{durhamminskysisto:stable,durham:cubulating}. In the former articles, a given space is extended to a larger one with good properties, whereas in the latter ones, subspaces of a given group are accurately approximated by cube complexes.

As a final instance of the improved fine properties of $X_\C(S)$, we mention the following rank-rigidity result for hierarchically hyperbolic groups, which improves upon the coarse product structure of \cite{durhamhagensisto:boundaries,petytspriano:unbounded}.

\begin{introcor}
Let $G$ be a hierarchically hyperbolic group. There is a natural choice of $\C$ such that $X_\C(G)$ is $G$--equivariantly quasiisometric to $G$ and either:
\begin{itemize}
\item   $G$ contains a Morse element; or
\item   $X_\C(G)=X_{\C_1}(S_1)\times X_{\C_2}(S_2)$, where $S_1$ and $S_2$ are unbounded hierarchically hyperbolic spaces and $\C=\C_1\sqcup\C_2$.
\end{itemize}
\end{introcor}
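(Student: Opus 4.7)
The strategy is to combine the strong-coarse-median structure of hierarchically hyperbolic groups with the existing rank-rigidity dichotomy for HHGs, and to verify that the wall construction of Theorem~\ref{introthm:scm_inj} respects product decompositions.

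Every HHG is a strong coarse median space, so Theorem~\ref{introthm:scm_inj} produces a natural set of walls $P$ and dualisable system $\C$ on $G$ such that $X_\C(G)$ is $G$--equivariantly quasiisometric to $G$; this settles the quasiisometry assertion. I would then invoke the rank-rigidity dichotomy for HHGs from \cite{petytspriano:unbounded} (building on \cite{durhamhagensisto:boundaries}): either the action on the top-level hyperbolic space of the HHG structure has a loxodromic element, which pulls back to a Morse element of $G$ (and remains Morse in the equivariantly quasiisometric space $X_\C(G)$), or the HHG structure admits two unbounded orthogonal top-level domains, in which case $G$ is coarsely equivalent to a product $S_1 \times S_2$ of unbounded HHSs on which $G$ acts. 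The first case gives the first bullet of the statement.

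In the product case, I would verify that $P$ and $\C$ split along the product. The walls in Theorem~\ref{introthm:scm_inj} arise from coarsely convex subsets together with the median operation; in a coarse product of strong coarse median spaces, coarsely convex subsets are at uniform Hausdorff-distance from products of coarsely convex subsets of the factors. Consequently each wall of $P$ is, up to bounded error, of the form $A \times S_2$ with $A$ a wall in $S_1$, or symmetrically with the roles swapped, giving $P = P_1 \sqcup P_2$ with $P_i$ the walls coming from $S_i$. Walls from different factors automatically cross by the product structure, so no chain in $P$ mixes the two families; hence $\C = \C_1 \sqcup \C_2$ with $\C_i \subset 2^{P_i}$. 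An ultrafilter on $P$ then decouples as a pair of ultrafilters on the $P_i$, and because every $c \in \C$ lies in a single factor,
\[
\dist_\C(x,y) \;=\; \max\bigl\{\dist_{\C_1}(x_1,y_1),\,\dist_{\C_2}(x_2,y_2)\bigr\}.
\]
This identifies $X_\C(G)$ with the $\ell^\infty$ product $X_{\C_1}(S_1) \times X_{\C_2}(S_2)$, giving the second bullet.

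The hard step will be the wall-splitting itself. One must show that the recipe used to build $P$ in the HHG setting never produces genuinely ``diagonal'' walls, that is, walls whose defining coarsely convex subsets fail to be uniformly close to products. This should follow from the fact that in an HHS the factor projections and the orthogonality relation determine the median-convex structure up to uniform error, so every coarsely convex subset of a product HHS is product-like; but passing from ``uniformly close to a product'' to the honest disjoint-union decomposition $\C = \C_1 \sqcup \C_2$ (which is needed for the dual to split on the nose) requires a careful reindexing of the walls, possibly discarding or replacing walls within bounded error, and this bookkeeping is the delicate point.
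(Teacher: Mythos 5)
Your overall architecture --- the strong coarse median structure of HHGs, the rank-rigidity dichotomy of \cite{durhamhagensisto:boundaries,petytspriano:unbounded}, and a splitting of the wall system in the product case --- is the intended one: the corollary is stated in the introduction precisely as an upgrade of the coarse product structure from those references, and the Morse-element branch together with the quasiisometry assertion follow from Theorem~\ref{thm:scm_injective} exactly as you say. The problem is the product branch, where the step you yourself flag as ``the delicate point'' is a genuine gap as written, and it is a gap created by your choice of route: you start from the canonical system $\C$ built on $G$ and try to prove \emph{a posteriori} that it decomposes. The curtains of Definition~\ref{def:scm_curtains} are cut out by quasimedian maps to approximating cube complexes, so in a coarse product they are only \emph{coarsely} of the form $A\times S_2$ or $S_1\times B$; the bipartitions themselves are honest subsets of $G$ and need not literally be pullbacks from a factor. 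Hence the identities $P=P_1\sqcup P_2$ and $\C=\C_1\sqcup\C_2$, which you need \emph{on the nose} for the dual to split as a genuine product, do not follow from ``uniformly close to product-like'', and no amount of bounded-error perturbation is supplied to bridge this.

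The repair is to exploit the freedom granted by the phrase ``a natural choice of $\C$'' together with Remark~\ref{rem:choice_of_cube_complexes}: in the non-Morse case, first pass to the product decomposition $G\sim S_1\times S_2$ into unbounded HHSs, build the wall systems $P_i,\C_i$ on each factor $S_i$ separately, and \emph{define} $P:=P_1\sqcup P_2$ and $\C:=\C_1\sqcup\C_2$ by pullback. With that choice, everything in your final display is correct and essentially formal: pulled-back walls from different factors have all four quarterspaces nonempty, so they cross, no chain mixes the two families, and an ultrafilter on $P$ decouples into a pair of ultrafilters on the $P_i$; since every $c\in\C$ lies in a single $\C_i$, one gets $\dist_\C=\max\{\dist_{\C_1},\dist_{\C_2}\}$ and hence the $\ell^\infty$ identification $X_\C(G)=X_{\C_1}(S_1)\times X_{\C_2}(S_2)$; and the quasiisometry to $G$ follows by applying Theorem~\ref{thm:scm_injective} to each unbounded factor. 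In short: keep your computation of the product metric and the decoupling of ultrafilters, but obtain the decomposition of $\C$ by construction rather than by the unfinished reindexing of the canonical walls.
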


\medskip

The article is divided into two parts. The first part focuses on general properties of Construction~\ref{constr:main}, and natural conditions one can consider on $\C$. The intention is to be self-contained, with a view to being applicable more widely. The outcome is a number of recipes that, given collections of walls satisfying certain combinatorial conditions, will produce associated spaces with good properties.

In the second part, we apply these generalities in two main settings, in order to deduce the results discussed above. In a short final section we point to some potential further directions that could be pursued. The appendix with Davide Spriano concerns quasimorphisms.

\ubsh{Acknowledgements}
We are happy to thank Stefanie Zbinden for friendly discussions about her work and ours. We thank Kasra Rafi for conversations about Teichmüller space, and Matt Durham, Anthony Genevois, and Mark Hagen for comments on an earlier version of this article. We also thank Indira Chatterji for suggesting the name ``hyperbolic core''. We are grateful to the organisers of the thematic program Geometric Group Theory in Montreal in 2023, where this work was started. Finally, we thank the anonymous referee for their very thorough reading of the paper and useful comments.
\uesh

\section{Background on medians and CAT(0) cube complexes} \label{sec:background}

An authoritative general reference for the material discussed here is \cite{bowditch:median:book}. A \emph{median algebra} is a set $X$ together with a ternary operation $\mu$ such that for all $a,b,c,d,e\in X$ we have
\[
\mu(a,b,c)=\mu(b,a,c)=\mu(c,a,b), \quad \mu(a,a,b)=a, \quad \mu(a,b,\mu(c,d,e))=\mu(\mu(a,b,c),\mu(a,b,d),e).
\]
The latter equality is called the \emph{five-point condition}. (See also \cite{roller:poc,bandelthedlikova:median}.) One family of median algebras is provided by \emph{median graphs}. A graph $X$ is median if for each three vertices $v_1,v_2,v_3$ there is a unique point lying on some geodesic from $v_i$ to $v_j$ for all $i,j$. By a result of Chepoi \cite[Thm~6.1]{chepoi:graphs}, a graph is median if and only if it is the 1--skeleton of some CAT(0) cube complex (see also \cite{genevois:algebraic}).

CAT(0) cube complexes can equivalently be characterised in terms of their \emph{hyperplanes}. (See \cite{wise:structure} for thorough information on cubical hyperplanes.) Indeed, \emph{Sageev's construction} \cite{sageev:ends}, as clarified in \cite{nica:cubulating,chatterjiniblo:from}, shows how to reconstruct the one-skeleton from the combinatorics of the hyperplanes. Each hyperplane $h$ has two corresponding \emph{halfspaces}, $h^-$ and $h^+$, which are the components of its complement. 

The median $\mu$ on a CAT(0) cube complex $Q$ can be described in terms of hyperplanes as follows: given vertices $x_1,x_2,x_3\in Q$, the point $\mu(x_1,x_2,x_3)$ is the unique vertex that, for every hyperplane $h$, lies on the same side of $h$ as the majority of the $x_i$.

A subcomplex $C$ of a CAT(0) cube complex $Q$ is \emph{convex} if any of the following equivalent conditions hold:
\begin{itemize}
\item   $\mu(c_1,c_2,x)\in C$ for all $c_1,c_2\in C$, $x\in Q$ (this is called \emph{median-convexity});
\item   $C$ is the largest subcomplex contained in the intersection of some collection of halfspaces;
\item   $C$ is (geodesically) convex with respect to either the CAT(0) or the $\ell^1$ metric on $Q$.
\end{itemize}
The \emph{convex hull}, $\hull_QA$, of $A\subset Q$ is the smallest convex subcomplex of $Q$ containing $A$.

Given a convex subcomplex $C$ of a CAT(0) cube complex $Q$, there is a natural projection map $\g_C:Q\to C$, called the \emph{gate} map. Here are three equivalent descriptions of the gate of a vertex $x\in Q$:
\begin{itemize}
\item   $\g_C(x)$ is the unique closest point in $C$ to $x$ when $Q$ is equipped with the combinatorial metric;
\item   $\g_C(x)$ is the unique vertex of $Q$ such that a hyperplane $h$ separates $x$ from $\g_C(x)$ if and only if $h$ separates $x$ from $C$;
\item   $\g_C(x)$ is the unique point of $C$ with the property that $\mu(c,\g_C(x),x)=\g_C(x)$ for all $c\in C$.
\end{itemize}
Combining the first and third characterisations explains the terminology: every point $c\in C$ has a geodesic to $x$ that passes through $\g_C(x)$.

The following technical lemma will not be needed until Section~\ref{sec:scm}.

\begin{lemma} \label{lem:gate_via_medians}
Let $Q$ be a CAT(0) cube complex, and let $a_1,\dots,a_n\in Q$, for $n\geq2$. The gate map $Q\to\hull_Q(a_1,\dots,a_n)$ can be expressed as $x\mapsto\mu(a_n,x,\mu(a_{n-1},x,\mu(\dots,\mu(a_2,x,a_1)\dots)$.
\end{lemma}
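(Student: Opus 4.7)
The plan is to prove the identity by induction on $n$, using the hyperplane characterisation of the gate (the second bullet in the list describing $\g_C$) and the fact that $\mu(a,b,c)$ lies, with respect to any hyperplane, on the same side as the majority of $\{a,b,c\}$. Write $b_k = \mu(a_k,x,\mu(a_{k-1},x,\dots,\mu(a_2,x,a_1)\dots))$ for $k\ge 2$, set $b_1=a_1$, and let $C_k=\hull_Q(a_1,\dots,a_k)$. The claim becomes $b_n=\g_{C_n}(x)$.

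For the base case $n=2$, fix a hyperplane $h$ and distinguish two cases. If $a_1$ and $a_2$ lie on the same side $s$ of $h$, then no point of $C_2$ crosses $h$ to the other side, so $h$ separates $x$ from $C_2$ iff $x\notin s$; the median $\mu(a_2,x,a_1)$ is on side $s$ by majority, matching the gate. If instead $h$ separates $a_1$ from $a_2$, then $h$ crosses $C_2$, so $h$ does not separate $x$ from $C_2$; in this case $x$ is the sole occupant of its side among $\{a_1,x,a_2\}$, so the majority side agrees with $x$, again matching the gate characterisation.

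For the inductive step, assume $b_{n-1}=\g_{C_{n-1}}(x)$, and consider a hyperplane $h$. There are three possibilities. (i) $h$ does not cross $C_n$: then all of $a_1,\dots,a_n$ lie on a common side $s$, and by the inductive hypothesis $b_{n-1}$ also lies on side $s$; so $\mu(a_n,x,b_{n-1})$ lies on $s$ by majority, matching $\g_{C_n}(x)$. (ii) $h$ crosses $C_{n-1}$ (and hence $C_n$): by the inductive hypothesis $b_{n-1}$ lies on $x$'s side of $h$, so two of $\{a_n,x,b_{n-1}\}$ lie on $x$'s side, forcing $\mu(a_n,x,b_{n-1})$ onto $x$'s side, matching $\g_{C_n}(x)$. (iii) $h$ crosses $C_n$ but not $C_{n-1}$: then $a_1,\dots,a_{n-1}$ lie on a common side $s$ and $a_n$ lies on the opposite side, and by induction $b_{n-1}\in s$; thus among $\{a_n,x,b_{n-1}\}$ the two ``non-$x$'' points lie on opposite sides, so $x$ alone decides the majority, and again $\mu(a_n,x,b_{n-1})$ lies on $x$'s side, as required.

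In each case the vertex $b_n=\mu(a_n,x,b_{n-1})$ lies on the same side of $h$ as $\g_{C_n}(x)$; since a vertex of a CAT(0) cube complex is determined by its halfspace choices, $b_n=\g_{C_n}(x)$, completing the induction. The only subtle point is case (iii), where one must notice that although $h$ fails to cross $C_{n-1}$, it nonetheless crosses $C_n$ precisely because $a_n$ sits on the opposite side from the common side of the other $a_i$; apart from this observation, the argument is a routine majority count on halfspaces.
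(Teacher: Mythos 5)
Your proof is correct. It is a close cousin of the paper's argument---both rest on the hyperplane characterisation of the gate and the majority-vote description of the cubical median---but the organisation differs. The paper fixes a single hyperplane $h$ separating $x$ from the full nested expression and peels the medians from the outside in to conclude that every $a_i$ lies on the far side of $h$; combined with the separately verified fact that the expression lies in $\hull_Q(a_1,\dots,a_n)$, this gives the result in one pass, without ever identifying the partial expressions. You instead prove the stronger inductive statement that each partial expression $b_k$ equals the gate to $C_k=\hull_Q(a_1,\dots,a_k)$, checking every hyperplane via the trichotomy (crosses neither hull, crosses both, crosses only the larger one). Your version buys a cleaner invariant at each stage and makes explicit how $C_{n-1}\subset C_n$ interacts with the new point $a_n$, at the cost of a slightly longer case analysis. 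One slip in the base case: when $h$ separates $a_1$ from $a_2$, the point $x$ is not ``the sole occupant of its side''---rather, exactly one of $a_1,a_2$ shares $x$'s side, which is precisely what makes $x$'s side the majority. Your stated conclusion there is right, but the justification as written would give the opposite parity, so it should be corrected.
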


\begin{proof}
Write $\g'(x)$ for the map in the statement of the lemma. Starting at the ``inner level'' of the expression for $\g'(x)$, observe that $\mu(a_2,x,a_1)\in\hull_Q(a_1,a_2)\subset\hull_Q(a_1,\dots,a_n)$. Proceeding outwards one level at a time, we see inductively that $\g'(x)\in\hull_Q(a_1,\dots,a_n)$.

Suppose that a hyperplane $h$ of $Q$ has $x\in h^-$, $\g'(x)\in h^+$. 
Consider the ``outer level'' of the expression for $\g'(x)$. The majority of the arguments of that median must lie on the same side of $h$ as $\g'(x)$, so we must have $a_n\in h^+$, and also the nested expression must determine a point of $h^+$. The same argument then applies at the next level of the expression to give $a_{n-1}\in h^+$, and successively we find that $a_i\in h^+$ for all $i$. 

We have shown that the only hyperplanes separating $x$ from the point $\g'(x)\in\hull_Q(a_1,\dots,a_n)$ are those that separate $x$ from $\hull_Q(a_1,\dots,a_n)$ itself. By the second characterisation above, $\g'(x)$ is the gate of $x$ to $\hull_Q(a_1,\dots,a_n)$.
\end{proof}

\part{Generalising Sageev's construction}

\section{Ultrafilters and dualisable systems} \label{sec:construction}

In this section we introduce the core framework within which we operate.  

A \emph{set with walls} is a pair $(S,P)$, where $S$ is a set and $P$ is a set of bipartitions $h=\{h^+,h^-\}$ of $S$ (notice that by definition, a partition always consists of non-empty subsets and so $h^+, h^-$ must both be non-empty). That is, $h^-,h^+\subset S$ have $S=h^-\cup h^+$ and $h^-\cap h^+=\varnothing$. We refer to $h$ as a \emph{wall}, and to $h^\pm$ as the \emph{halfspaces} of $h$.

\begin{definition}[Ultrafilter] \label{def:filter}
A \emph{filter} $\phi$ on $P$ consists of a subset $Q\subset P$ and a choice of halfspace $\phi(h)\in\{h^+,h^-\}$ for each $h\in Q$ such that: 
\[
\text{if }h_1,h_2\in Q\text{ have }h_1^+\subset h_2^+\subset S,\text{ then }\phi(h_1)=h_1^+\text{ implies that }\phi(h_2)=h_2^+.
\]
We say that $\phi$ is \emph{supported} on $Q$. An \emph{ultrafilter} is a filter whose support is $P$. 
\end{definition}

The terminology comes from the fact that one can see these filters as being filters on the Boolean subalgebra of $2^S$ generated by the halfspaces of $S$. In a Boolean algebra every filter extends to an ultrafilter, giving the following.

\begin{lemma}[Ultrafilter lemma] \label{lem:ultrafilter_lemma}
Every filter on $P$ extends to an ultrafilter.
\end{lemma}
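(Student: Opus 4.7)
The plan is to deduce this from Zorn's Lemma applied to the poset of filter extensions, with the nontrivial content residing in a one-step extension argument.

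First I would set up the poset. Let $\mathcal F_\phi$ be the collection of filters on $P$ that extend $\phi$, ordered by extension (so $\psi_1\le\psi_2$ when $\psi_2$ is supported on a superset of the support of $\psi_1$ and agrees with $\psi_1$ on common walls). Any chain $(\psi_i)_{i\in I}$ in $\mathcal F_\phi$ has an upper bound: define $\psi$ on $Q=\bigcup_i\mathrm{supp}(\psi_i)$ by $\psi(h)=\psi_i(h)$ for any $i$ with $h\in\mathrm{supp}(\psi_i)$. This is well-defined by the chain property, and the consistency condition of Definition~\ref{def:filter} is preserved because any two walls $h_1,h_2\in Q$ with $h_1^+\subset h_2^+$ lie in the support of a common $\psi_i$. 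Zorn's Lemma then produces a maximal element $\psi\in\mathcal F_\phi$, supported on some $Q'\subset P$.

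The main obstacle, and the only interesting step, is showing $Q'=P$. Suppose toward contradiction that there exists $h\in P\smallsetminus Q'$, and attempt to extend $\psi$ by choosing $\psi(h)\in\{h^+,h^-\}$. An extension setting $\psi(h)=h^+$ fails exactly when there exist $h'\in Q'$ and a sign $\delta$ with $h^+\subset h'^\delta$ but $\psi(h')=h'^{-\delta}$; likewise, setting $\psi(h)=h^-$ fails exactly when there exist $h''\in Q'$ and $\gamma$ with $h^-\subset h''^\gamma$ but $\psi(h'')=h''^{-\gamma}$. I would then derive a contradiction by showing both failures cannot occur simultaneously: taking complements of $h^+\subset h'^\delta$ gives $h'^{-\delta}\subset h^-$, and chaining with $h^-\subset h''^\gamma$ yields $h'^{-\delta}\subset h''^\gamma$. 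Since $\psi(h')=h'^{-\delta}$, the consistency condition forces $\psi(h'')=h''^\gamma$, contradicting $\psi(h'')=h''^{-\gamma}$. Hence at least one of the two choices produces a strictly larger filter in $\mathcal F_\phi$, contradicting the maximality of $\psi$. Therefore $Q'=P$ and $\psi$ is the required ultrafilter.

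The harder step here is just keeping the sign bookkeeping straight; everything else is standard Zorn-plus-finite-extension formalism. One tidy way to organise the argument is to note first that the consistency condition is symmetric under relabelling $\{+,-\}$ independently on each wall, so the two failure cases can be treated by a single combined statement rather than two separate case analyses, which shortens the final contradiction to one line about transitivity of halfspace inclusion.
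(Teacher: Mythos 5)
Your proof is correct. The paper does not give a written-out argument for this lemma: it simply remarks that these filters can be viewed as filters on the Boolean subalgebra of $2^S$ generated by the halfspaces and invokes the Boolean ultrafilter lemma. Your direct Zorn argument is therefore a genuinely different (and more self-contained) route, and it is the standard one for Sageev-style consistent orientations. It also sidesteps a small imprecision in the paper's reduction: a filter in the sense of Definition~\ref{def:filter} only satisfies a \emph{pairwise} consistency condition, not the finite intersection property, so it need not literally generate a proper filter of the Boolean algebra (three pairwise-crossing walls can have empty triple intersection of the chosen halfspaces while still forming a valid ultrafilter here). Your one-step extension argument is exactly what is needed instead, and the key points are all in order: the chain-union step works because consistency is a condition on pairs of walls; the two failure conditions for orienting $h$ towards $h^+$ (namely $h$ in the role of $h_1$ and $h$ in the role of $h_2$) collapse to a single condition by complementation, as you note; and if both orientations of $h$ failed, then $h^+\subset h'^\delta$ and $h^-\subset h''^\gamma$ would chain to $h'^{-\delta}\subset h''^\gamma$, forcing $\psi(h'')=h''^\gamma$ by consistency of $\psi$ and contradicting $\psi(h'')=h''^{-\gamma}$ (this also covers the case $h'=h''$). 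The only implicit convention you rely on, which is the intended reading of the definition since walls are unordered bipartitions, is that the consistency condition holds for all labellings of the halfspaces, not just the displayed $+$ choice.
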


\ubsh{Space of ultrafilters}
Write $\hat X$ for the set of ultrafilters on $S$ defined by $P$. Every point $s\in S$ defines an ultrafilter by setting $\phi_s(h)$ to be the halfspace of $h$ that contains $s$, for every $h\in P$. If each pair of points in $S$ is separated by some element of $P$, then $s\mapsto\phi_s$ is injective. Even when this is not the case we tend to abuse notation and fail to distinguish between $S$ and its image in $\hat X$.
\uesh

Another way of casting Lemma~\ref{lem:ultrafilter_lemma} is that it says that any collection of halfspaces that intersect pairwise in $S$ has nonempty total intersection in $\hat X$: it is a kind of Helly property.

We say that $h_1,h_2\in P$ \emph{cross} if all four orientations of $h_1$ and $h_2$ are filters. Equivalently, all four quarterspaces $h_1^\pm\cap h_2^\pm$ are nonempty, in either $S$ or $\hat X$. Equivalently, there is no pair of orientations, say $h_1^+$ and $h_2^+$, such that $h_1^+\subset h_2^+$ as subsets of~$S$.

Let $h\in P$. If $s$ and $t$ are points of $S$ that lie in different halfspaces of $h$, then we say that $h$ \emph{separates} $s$ from $t$. More generally, given subsets $A,B\subset\hat X$, we say that $h$ \emph{separates} $A$ from $B$ if $x_1(h)=x_2(h)\ne y_1(h)=y_2(h)$ for all $x_1,x_2\in A$ and all $y_1,y_2\in B$. Similarly, we say that $h_1\in P$ separates $x\in\hat X$ from $h_2\in P$ if $x(h_1)\subset x(h_2)$.

\subsection{Dualisable systems}

So far, the set $\hat X$ is both extremely large (in general) and lacking any metric structure. The following definition lets us deal with both of these issues simultaneously. 

\begin{definition} \label{def:dualisable_system}
A \emph{dualisable system} for $(S,P)$ is a subset $\C\subset2^P$ such that the following hold.
\begin{itemize}
\item   $\C$ contains all singletons and is closed under taking subsets.
\item   For each pair $s,t\in S$, there is a number $M_{st}$ such that $|c|\le M_{st}$ for all $c\in\C$ with the property that every $h\in c$ separates $s$ from $t$. 
\end{itemize}
\end{definition}


\ubsh{The dual space} 
Given a dualisable system $\C$ for $(S,P)$, consider the function on $\hat X\times\hat X$ given by 
\[
\dist_\C(x,y) \,=\, \sup\{|c|\,:\,c\in\C\text{ and every }h\in c\text{ separates }x\text{ from }y\}.
\]
The \emph{$\C$--dual} of $S$ is the space $X_\C=\{x\in\hat X\,:\,\dist_\C(x,s)<\infty\text{ for all }s\in S\}$, equipped with the function $\dist_\C$.
\uesh

\begin{lemma}
The function $\dist_\C$ is an extended metric. Its restrictions to $X_\C$ and $S$ (strictly speaking its image in $\hat X$) are metrics. 
\end{lemma}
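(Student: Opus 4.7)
My plan is to verify the four axioms of an extended metric for $\dist_\C$ on $\hat X$ and then deduce finiteness of the restrictions from the dualisability hypothesis together with the triangle inequality. Three of the axioms are essentially automatic. Symmetry is immediate from the symmetry of the relation ``$h$ separates $x$ from $y$'' in $x$ and $y$. Non-negativity is trivial since $\varnothing\in\C$ (closure under taking subsets, applied to any singleton), so the defining supremum is always at least $0$. For non-degeneracy, $\dist_\C(x,x)=0$ because no wall can separate $x$ from itself; conversely, if $x\ne y$ as functions $P\to\{+,-\}$ then some $h\in P$ has $x(h)\ne y(h)$, and because $\{h\}\in\C$, this gives $\dist_\C(x,y)\geq 1$.

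The substantive step is the triangle inequality $\dist_\C(x,z)\leq\dist_\C(x,y)+\dist_\C(y,z)$. Given any $c\in\C$ every element of which separates $x$ from $z$, I would partition $c=c_1\sqcup c_2$, where $c_1=\{h\in c\,:\,x(h)\ne y(h)\}$. The key observation is that each wall has only two halfspace values, so if $h\in c\smallsetminus c_1$ then $y(h)=x(h)$, and since $x(h)\ne z(h)$ we conclude $y(h)\ne z(h)$. Closure of $\C$ under taking subsets puts both $c_1$ and $c_2$ in $\C$, so $|c_1|\leq\dist_\C(x,y)$ and $|c_2|\leq\dist_\C(y,z)$; summing and taking the supremum over all admissible $c$ yields the inequality. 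This partition argument is the only step with real content, and the main ``obstacle'' is simply spotting the right way to split $c$.

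For the two finiteness assertions, the second bullet of Definition~\ref{def:dualisable_system} directly gives $\dist_\C(\phi_s,\phi_t)\leq M_{st}<\infty$ for all $s,t\in S$, so the restriction to $S$ (identified with its image under $s\mapsto\phi_s$) is a genuine metric. For $X_\C$, I would fix any $s\in S$ and appeal to the triangle inequality established above: for $x,y\in X_\C$,
\[
\dist_\C(x,y)\leq\dist_\C(x,\phi_s)+\dist_\C(\phi_s,y)<\infty
\]
by the very definition of $X_\C$. Hence the restriction of $\dist_\C$ to $X_\C$ is also finite-valued, completing the proof.
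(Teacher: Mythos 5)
Your proof is correct and follows essentially the same route as the paper: symmetry and point-separation from singletons, the triangle inequality via partitioning a separating chain $c$ according to which side the intermediate point orients each wall (using closure of $\C$ under subsets), and finiteness on $S$ from the second bullet of Definition~\ref{def:dualisable_system}, extended to $X_\C$ by the triangle inequality. The only differences are cosmetic: you make explicit the appeal to subset-closure and the finiteness argument for $X_\C$, both of which the paper leaves implicit.
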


\begin{proof}
Because $\C$ contains all singletons, the function $\dist_\C$ separates points of $\hat X$. It is evidently symmetric. The fact that the restrictions take only finite values follows from the second bullet point of Definition~\ref{def:dualisable_system}. It remains to show that $\dist_\C$ satisfies the triangle inequality.

Let $x,y,z\in\hat X$. Every $c\in\C$ separating $x$ from $y$ can be partitioned as $c=c_x\sqcup c_y$, where every element of $c_x$ separates $x$ from $z$ and every element of $c_y$ separates $y$ from $z$. By definition, $\dist_\C(x,z)\ge|c_x|$ and $\dist_\C(z,y)\ge|c_y|$. Since this holds for every $c$ separating $x$ from $y$, we have $\dist_\C(x,y)\le\dist_\C(x,z)+\dist_\C(z,y)$. 
\end{proof}

By a \emph{chain} in $P$, we mean a sequence $c=(h_i)_{i\in I}$, where $I$ is some (finite or infinite) interval in $\Z$ and $h_i\in P$, such that $h_{i-1}^-\subset h_i^-\subset h_{i+1}^-$ for all $i$.

\begin{example} \label{eg:sageevable}
In order to be able to apply Sageev's construction to a set with walls $(S,P)$, it must be assumed that each pair of points in $S$ is separated by finitely many walls. This assumption is exactly equivalent to the statement that $2^P$ is a dualisable system. If we then take $\C=2^P$, then $X_\C$ is exactly the dual CAT(0) cube complex of Sageev. If $X_\C$ contains no infinite cubes, then taking $\C'$ to be the set of all chains in $P$ we get that $X_{\C'}$ is exactly the Helly graph obtained from $X_\C$ by \emph{thickening} each cube (i.e. replacing it by a complete graph) \cite{bandeltvandevel:superextensions}. Otherwise $X_{\C'}$ will be bigger. Indeed, if $S=(\bigoplus_{\mathbb N}\{0,1\},\ell^1)$ is an infinite cube and $P$ is the set of cubical walls, then $X_\C=S$, whereas $X_{\C'}$ is metrically an uncountable clique.

The set of all chains can be a dualisable system even if $2^P$ fails to be. For instance, in the above example, $\C'$ is dualisable for $(X_{\C'},P)$, whereas $\C$ is not. The following simple example illustrates how configurations of this type can arise naturally in continuous settings. Let $S=\mathbb C$ and let $P$ be the set of walls induced by taking all lines through 0. The points 1 and $-1$ are separated by uncountably many walls, but no two walls form a chain.
\end{example}

Recall that a subset $Y$ of a median algebra $\hat X$ is said to be \emph{median-convex} if for any $x,y \in Y$ and $z \in \hat X$, we have $\mu(x,y,z) \in Y$.

\begin{lemma}[Median algebra] \label{lem:dual_median}
The set $P$ gives $\hat X$ the structure of a median algebra. If $\C$ is a dualisable system, then $X_\C$ is median-convex. In particular, $X_\C$ is itself a median algebra.
\end{lemma}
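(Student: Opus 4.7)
The plan is to define a median operation on $\hat X$ pointwise on walls via the majority rule, verify the axioms by reduction to the trivial median on a two-element set, and then establish median-convexity of $X_\C$ by a three-case pigeonhole over which pair of arguments agrees on each wall.

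For three ultrafilters $x_1, x_2, x_3 \in \hat X$ and any $h \in P$, I set $\mu(x_1,x_2,x_3)(h)$ to be whichever halfspace of $h$ agrees with $x_i(h)$ for at least two of $i \in \{1,2,3\}$. The one nontrivial check is that this assignment really is an ultrafilter: if $h_1, h_2 \in P$ satisfy $h_1^+ \subset h_2^+$ and $\mu(h_1) = h_1^+$, then at least two of the $x_i$ have $x_i(h_1) = h_1^+$; since each such $x_i$ is itself a filter, those same indices satisfy $x_i(h_2) = h_2^+$, forcing $\mu(h_2) = h_2^+$ by majority. The three median axioms then reduce to pointwise checks on walls: symmetry and $\mu(a,a,b) = a$ are immediate, and for the five-point condition I fix $h$ and note that both sides depend only on $a(h), b(h), c(h), d(h), e(h) \in \{h^+, h^-\}$, so it becomes an identity about the majority function on a two-element set, which is trivial.

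For median-convexity of $X_\C$, take $x, y \in X_\C$, $z \in \hat X$, and $s \in S$. Given any $c \in \C$ all of whose elements separate $\mu(x,y,z)$ from $\phi_s$, each $h \in c$ has at least two of $x(h), y(h), z(h)$ disagreeing with $\phi_s(h)$, so $c \subseteq c_{xy} \cup c_{xz} \cup c_{yz}$, where $c_{wv}$ denotes the set of $h \in c$ on which both $w$ and $v$ disagree with $\phi_s$. Since $\C$ is closed under subsets each $c_{wv}$ is in $\C$, and every element of $c_{xy}, c_{xz}$ separates $x$ from $\phi_s$ while every element of $c_{yz}$ separates $y$ from $\phi_s$, so $|c_{xy}|, |c_{xz}| \leq \dist_\C(x, \phi_s)$ and $|c_{yz}| \leq \dist_\C(y, \phi_s)$. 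Thus $|c| \leq 2\dist_\C(x,\phi_s) + \dist_\C(y,\phi_s) < \infty$, and taking the supremum over $c$ shows $\mu(x,y,z) \in X_\C$. The final conclusion is then automatic, since median-convex subsets of median algebras inherit the median structure.

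The only real subtlety is the filter-consistency check for the majority rule, which happily reduces directly to the filter property of the inputs; beyond that, the proof is essentially a single pigeonhole argument plus bookkeeping.
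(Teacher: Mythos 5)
Your proof is correct and follows the same route as the paper: the majority-vote ultrafilter, pointwise verification of the median axioms, and the observation that a wall separating $\mu(x,y,z)$ from a basepoint must separate at least two of $x,y,z$ from it. The only (immaterial) difference is that the paper bounds $\dist_\C(x_1,\mu(x_1,x_2,x_3))\le\dist_\C(x_1,x_2)$ and then appeals to the triangle inequality, whereas you bound $\dist_\C(\mu(x,y,z),\phi_s)$ directly by a three-way decomposition of the chain.
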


\begin{proof}
Given three ultrafilters $x_1,x_2,x_3\in\hat X$, define an orientation $\phi$ of each $h\in P$ by setting $\phi(h)$ to be the halfspace that contains the majority of the $x_i$. This is clearly an ultrafilter on $P$, and it is straightforward to check that this assignment $\mu:(x_1,x_2,x_3)\mapsto\phi$ satisfies the five-point condition. 

If $x_1,x_2\in X_\C$ and $z\in\hat X$, then $x_2$ is at finite $\dist_\C$--distance from $x_1$. If $c\in\C$ separates $x_1$ from $\mu(x_1,x_2,x_3)$ then it separates $x_1$ from $\{x_2,x_3\}$. In particular $\dist_\C(x_1,\mu(x_1,x_2,x_3))\le\dist_\C(x_1,x_2)$ is finite.
\end{proof}

The median defined in the proof of Lemma~\ref{lem:dual_median}, which is also known as the ``majority vote'' median, will be denoted $\mu:\hat X^3\to\hat X$ throughout. 

Although it is both a median algebra and a metric space, in general $X_\C$ will not be a \emph{median metric space}, as can be seen from Example~\ref{eg:sageevable}. Note the following simple observation.

\begin{lemma}
If a group $G$ acts on $S$ and preserves the dualisable system $\C$, then $G$ acts by isometries on $X_\C$.
\end{lemma}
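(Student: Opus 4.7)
The plan is to unpack what it means for $G$ to preserve $\C$, transport the action from $P$ up to $\hat X$, and then verify that the definition of $\dist_\C$ is manifestly invariant. Throughout I think of $G$ acting on bipartitions via $g\cdot h=\{g\cdot h^+,g\cdot h^-\}$, which is induced by $g$ acting on $2^S$.

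First, since $P\subset\C$ and $\C$ is $G$--invariant, $G$ permutes $P$ (the singletons in $\C$ are precisely the elements of $P$, or one can simply note $P=\bigcup_{c\in\C}c$ and that $G$ preserves the bipartitions in $P$ because it preserves those in $\C$). Given $g\in G$ and an ultrafilter $x\in\hat X$, I define $(g\cdot x)(h)=g\cdot x(g^{-1}h)$ for each $h\in P$. I would check this is an ultrafilter: if $h_1^+\subset h_2^+$ in $S$, then applying $g^{-1}$ gives $(g^{-1}h_1)^\epsilon\subset(g^{-1}h_2)^\delta$ for suitable signs, so the consistency condition of Definition~\ref{def:filter} transfers from $x$ to $g\cdot x$. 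This yields an action of $G$ on $\hat X$ by bijections.

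Next I would observe that separation is preserved under the action. For $h\in P$ and $x,y\in\hat X$, the wall $h$ separates $x$ from $y$ if and only if $x(h)\ne y(h)$, which by definition is equivalent to $(g\cdot x)(gh)\ne(g\cdot y)(gh)$, i.e.\ $gh$ separates $gx$ from $gy$. Hence, for any $c\in\C$, every $h\in c$ separates $x$ from $y$ if and only if every $h'\in gc$ separates $gx$ from $gy$. Since $g\C=\C$ and $|gc|=|c|$, the suprema defining $\dist_\C(x,y)$ and $\dist_\C(gx,gy)$ are taken over collections in bijection, yielding $\dist_\C(gx,gy)=\dist_\C(x,y)$.

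Finally, to see that the action restricts to $X_\C$, note that $g$ sends $\phi_s$ to $\phi_{gs}$ (the principal ultrafilter assigns each wall the halfspace containing the basepoint, and both $g$ and the $G$--action on halfspaces commute with this), so for any $x\in X_\C$ and $s\in S$,
\[
\dist_\C(gx,\phi_s)=\dist_\C(gx,g\phi_{g^{-1}s})=\dist_\C(x,\phi_{g^{-1}s})<\infty,
\]
so $gx\in X_\C$. Thus $G$ acts by isometries on $X_\C$. There is no real obstacle here: the only subtle point is to confirm that preservation of $\C$ forces preservation of $P$, after which every step is a direct unwinding of definitions.
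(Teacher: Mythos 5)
Your proof is correct: the paper states this lemma as a ``simple observation'' with no proof supplied, and your argument is exactly the direct definitional verification the authors intend (the induced action on ultrafilters preserves separation, hence $\dist_\C$, and sends principal ultrafilters to principal ultrafilters, so it restricts to $X_\C$). The only cosmetic point is that preservation of $P$ is really a precondition for the phrase ``preserves $\C\subset 2^P$'' to make sense, rather than something to be deduced from it, but your reading via singletons is harmless.
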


\subsection{Convexity and gates} \label{subsec:gate}

Given a dualisable system $\C$ on $(S,P)$, aside from the notion of median-convexity on $X_\C$ that is available thanks to Lemma~\ref{lem:dual_median}, there is another, stronger version of convexity that is perhaps more natural. 

\begin{definition}[$P$-convex] \label{def:convex}
Let $(S,P)$ be a set with walls. A subset $\hat C\subset\hat X$ is \emph{$P$-convex} if there is a subset $Q\subset P$ and a choice $\hat C(h)\in\{h^+,h^-\}$ for all $h\in Q$ such that $\hat C=\{v\in\hat X\,:\,v(h)=\hat C(h)\text{ for all }h\in Q\}$. In other words, $\hat C$ is an intersection of $P$-halfspaces in $\hat X$. 
\end{definition}

Note that $\hat C$ is nonempty if and only if the choices $\hat C(h)$ define a filter with support $Q$.

\begin{remark}[$P$--convexity vs median-convexity] \label{rem:median_convex_vs_wall_convex}
Whilst it is true that every $P$-convex set in $\hat X$ is also median-convex with respect to $\mu$, the converse can fail in general. For instance, the set $\bigoplus_{\mathbb N}\{0,1\}\subset \prod_{\mathbb N}\{0,1\}$ is median-convex but is not an intersection of halfspaces. In this example, $\bigoplus_{\mathbb N}\{0,1\}$ is exactly the set of points in $\prod_{\mathbb N}\{0,1\}$ that are at finite $\ell^1$ distance from the zero sequence. Another, perhaps simpler, example appears in Remark~\ref{rem:gatedness}.
\end{remark}

We now define a projection map to $P$-convex sets inside $\hat X$.

\begin{proposition}[Gates] \label{prop:gate}
Given a nonempty $P$-convex set $\hat C\subset\hat X$ and an ultrafilter $z\in\hat X$, let $w$ be the orientation of all elements of $P$ obtained from $z$ by switching the orientation of every element of $P$ that separates $z$ from $\hat C$. The orientation $w$ is an ultrafilter, and $w\in\hat C$.
\end{proposition}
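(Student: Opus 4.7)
The plan is to reduce everything to a case analysis based on whether walls belong to the defining set $Q$ of $\hat C$. First, I would take $Q$ in Definition~\ref{def:convex} to be maximal, so that $h\in Q$ if and only if $\hat C$ is contained in a single halfspace of $h$; this is permissible because enlarging $Q$ to include all walls that consistently orient $\hat C$ does not shrink the intersection. With this choice, a wall $h$ separates $z$ from $\hat C$ precisely when $h\in Q$ and $\hat C(h)\neq z(h)$. Consequently $w(h)=\hat C(h)$ for all $h\in Q$ and $w(h)=z(h)$ for $h\notin Q$, so $w\in\hat C$ will be immediate from the defining property of $\hat C$ as soon as $w$ is shown to be an ultrafilter.

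To verify the filter condition from Definition~\ref{def:filter}, suppose $h_1,h_2\in P$ satisfy $h_1^+\subset h_2^+$ in $S$ and $w(h_1)=h_1^+$; the goal is to deduce $w(h_2)=h_2^+$. I would run a four-way case analysis on whether each $h_i$ lies in $Q$. If $h_1,h_2\in Q$, then any $v\in\hat C$ is an ultrafilter with $v(h_1)=h_1^+$, whence $v(h_2)=h_2^+$, so $\hat C(h_2)=h_2^+$. If $h_1,h_2\notin Q$, the conclusion is immediate because $z$ itself is an ultrafilter. If $h_1\notin Q$ and $h_2\in Q$, then by maximality of $Q$ both halfspaces of $h_1$ meet $\hat C$, so one can pick $v\in\hat C$ with $v(h_1)=h_1^+$; then $v(h_2)=h_2^+$, forcing $\hat C(h_2)=h_2^+$. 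The remaining case $h_1\in Q$, $h_2\notin Q$ cannot arise: every $v\in\hat C$ would satisfy $v(h_1)=\hat C(h_1)=h_1^+$, hence $v(h_2)=h_2^+$, putting $\hat C$ inside $h_2^+$ and placing $h_2$ in $Q$ by maximality.

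The conceptual crux, and the only step where a genuine obstruction could appear, is guaranteeing that in the mixed case $h_1\notin Q$, $h_2\in Q$ there really exists an ultrafilter $v\in\hat C$ realising the orientation $v(h_1)=h_1^+$. Taking $Q$ maximal resolves this cleanly; without maximality one could a priori have $\hat C$ entirely on one side of $h_1$ while $h_1$ is not recorded in $Q$, and then no direct bridge between the orientations of $z$ and of $\hat C$ would be available to settle $\hat C(h_2)$.
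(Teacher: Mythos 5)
Your proof is correct and follows essentially the same route as the paper's: both reduce to checking the filter condition for $w$, and both settle it by producing an ultrafilter in $\hat C$ lying in $h_1^+$ whose consistency then forces the orientation of $h_2$. Your normalisation of $Q$ to be maximal (which you correctly justify) merely repackages the paper's case split on whether $h_1$ separates $z$ from $\hat C$, since with maximal $Q$ a wall $h$ separates $z$ from $\hat C$ exactly when $h\in Q$ and $\hat C(h)\ne z(h)$.
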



\begin{proof}
Let $Q\subset P$ be the subset witnessing the $P$-convexity of $\hat C$. Note that $z\in \hat C$ if and only if no $h\in Q$ separates $z$ from $\hat C$, in which case $w=z$. Otherwise, it is clear from the construction that if $w$ is an ultrafilter then it lies in $\hat C$. It therefore suffices to show that $w$ is a filter, because its support is $P$. That is, supposing that $h_1,h_2\in P$ satisfy $h_1^+\subset h_2^+$, we must show that if $w(h_1)=h_1^+$, then $w(h_2)=h_2^+$.

First suppose that $h_1$ does not separate $z$ from $\hat C$. By Lemma~\ref{lem:ultrafilter_lemma} there must be some $c\in\hat C$ such that $c(h_1)=z(h_1)$. From the construction of $w$, we have $c(h_1)=z(h_1)=w(h_1)=h_1^+$. Since $c$ and $z$ are ultrafilters, $c(h_2)=h_2^+=z(h_2)$, so $h_2$ also does not separate $z$ from $\hat C$. We therefore have $w(h_2)=z(h_2)=h_2^+$, as desired.

Alternatively, $h_1$ separates $z$ from $\hat C$. Because $\hat C$ is nonempty, there exists $c\in\hat C$. We have $c(h_1)\ne z(h_1)$ and $z(h_1)\ne w(h_1)=h_1^+$. Because $c$ is an ultrafilter, we therefore have $c(h_2)=h_2^+$. If $h_2$ separates $z$ from $\hat C$, then $c(h_2)$ disagrees with $z(h_2)$, which disagrees with $c(h_2)$, so $w(h_2)=h_2^+$. Otherwise, $h_2$ does not separate $z$ from $\hat C$, and so $c(h_2)$ agrees with $z(h_2)$, which agrees with $w(h_2)$. Thus we have $w(h_2)=h_2^+$ in either case.
\end{proof}

A subset $A$ of a median algebra $(M,m)$ is called \emph{gated} if for every $x\in M$ there exists a unique $g\in A$ such that $m(a,g,x)=g$ for all $a\in A$. In view of the following lemma, we shall call the ultrafilter $w$ constructed in Proposition~\ref{prop:gate} the \emph{gate} of $z$ to $\hat C$, and write $w=\g_{\hat C}(z)$.

\begin{lemma} \label{lem:P-convex_gated}
A subset $A \subset \hat{X}$ is gated if and only if it is $P$-convex.    
\end{lemma}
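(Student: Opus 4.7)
The plan is to handle both directions using the \emph{majority-vote} description of $\mu$ on $\hat X$: for every $h\in P$, $\mu(x,y,z)(h)$ is the halfspace of $h$ containing at least two of $x,y,z$.

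For the ``$P$--convex $\Rightarrow$ gated'' direction, suppose $A\subset\hat X$ is $P$--convex and nonempty. For $z\in\hat X$, I will take $w=\g_A(z)$ from Proposition~\ref{prop:gate} and show that $\mu(a,w,z)=w$ for every $a\in A$, which amounts to checking that the majority of $\{a(h),w(h),z(h)\}$ equals $w(h)$ for every $h\in P$. If $h$ does not separate $z$ from $A$, then $w(h)=z(h)$ by construction and the majority is trivially $w(h)$. If $h$ does separate $z$ from $A$, then $a(h)\ne z(h)$ for every $a\in A$, while $w(h)$ is obtained by flipping $z(h)$; hence $a(h)=w(h)$ and the majority is again $w(h)$. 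For uniqueness of the gate, if $g,g'\in A$ both satisfy the median condition, then substituting $a=g'$ in the condition for $g$ yields $\mu(g',g,z)=g$, while substituting $a=g$ in the condition for $g'$ yields $\mu(g,g',z)=g'$; the symmetry $\mu(g',g,z)=\mu(g,g',z)$ then forces $g=g'$.

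For the converse, suppose $A$ is gated. I will define the candidate $P$--convex hull by $Q=\{h\in P\colon \text{all }a\in A\text{ orient }h\text{ the same way}\}$, with $A(h)$ denoting this common halfspace for $h\in Q$. Then $\hat A=\{v\in\hat X:v(h)=A(h)\text{ for all }h\in Q\}$ is $P$--convex and $A\subseteq\hat A$, so it remains to prove $\hat A\subseteq A$. For $v\in\hat A$, let $g\in A$ be the gate provided by gatedness. For $h\in Q$, both $v(h)$ and $g(h)$ equal $A(h)$. For $h\notin Q$, I pick $a_1,a_2\in A$ with $a_1(h)\ne a_2(h)$; applying $\mu(a_i,g,v)(h)=g(h)$ for $i=1,2$ and noting that $\{a_1(h),a_2(h)\}$ exhausts both halfspaces of $h$, the majority-vote rule forces $v(h)=g(h)$ (whichever $a_i$ disagrees with $g$ on $h$ must be outvoted by the other two). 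Hence $v=g\in A$.

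The main obstacle is choosing the correct set of defining walls $Q$ in the forward direction and confirming no further walls are needed: a priori, $A$ could be strictly smaller than the intersection of halfspaces that it determines, but gatedness supplies exactly the median constraint required to reconstruct each $v\in\hat A$ from its gate $g$, thereby ruling this out.
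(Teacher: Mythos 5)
Your proof is correct and follows essentially the same route as the paper's: both directions reduce to a wall-by-wall majority-vote computation combined with the gate map of Proposition~\ref{prop:gate}. The only (cosmetic) difference is in the converse, where you take $Q$ to be the set of all walls on which $A$ is constant and show any $v$ in the resulting intersection equals its gate, whereas the paper builds the defining halfspaces from the walls separating each $z\notin A$ from its gate $g_z$; the two organizations are interchangeable.
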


\begin{proof} 
It is easy to check that if $A$ is $P$-convex, then the map $\g_A$ makes $A$ gated. Conversely, suppose that $A\subset\hat X$ is gated. For each $z\in\hat X\ssm A$, let $g_z$ be its gate to $A$, and set $P_z=\{h\in P\,:\,h\text{ separates }z\text{ from }g_z\}$. If $h\in P_z$ for some $z$, then orient it so that $z\in h^+$, $g_z\in h^-$. The set $A$ must be contained in $h^-$, for otherwise we could find $a\in A\cap h^+$, and then $\mu(a,z,g_z)\in h^+$ cannot be $g_z$. Hence $A$ is contained in an intersection of $P$-halfspaces in $\hat X$. But we showed that any wall separating a point from its gate must separate that point from $A$, so in fact $A$ is equal to that intersection of $P$-halfspaces.
\end{proof}

\begin{remark}[Median algebras, convexity, and gatedness] \label{rem:gatedness}
Let $(M,m)$ be a median algebra. Recall that a subset $B\subset M$ is \emph{median-convex} if $m(b,b',x)\in B$ for all $b,b'\in B$, $x\in M$. If we take a pair of points $b,b'\in B$, then they have a natural hull, namely $\{m(a,b,b')\,:\,a\in M\}$, which is necessarily contained in $B$. The map $m(b,b',\cdot):M\to B$ is a gate map to the hull of this pair of points. Via Lemma~\ref{lem:gate_via_medians}, the four-point condition then implies that $B$ is ``finitely gated'': it contains the hull of each of its finite subsets, which are all gated. In a sense, this means that we can gate to the ``finitely supported'' parts of $B$. 

In the discrete, finite-rank case, median-convexity and gatedness agree, so if $B$ is median-convex then there is a gate map to it. In general, though, this is not the case - we give an example below. In a sense, as seen in the above discussion, this is because median-convexity is a ``limit of finite subsets'' type of property. This perspective shows that gatedness is perhaps more natural than median-convexity, because it is an absolute property without cardinality restrictions.

The two notions give two families of walls that one can consider on a given median algebra $M$. The more common choice is the set of all \emph{median-convex} walls: bipartitions $\{h^+,h^-\}$ where both $h^+$ and $h^-$ are nonempty median-convex subsets \cite{roller:poc,fioravanti:roller}. Alternatively, one can consider \emph{gated} walls, where $h^+$ and $h^-$ are required to be nonempty and gated. In the case where $M$ is a \emph{Stone} median algebra (i.e. a compact and totally disconnected topological median algebra, see \cite[\S12.5]{bowditch:median:book}), one can see that a wall is gated exactly when it is a \emph{clopen} wall: a partition into two nonempty open halfspaces. The clopen walls are the ones that are used in duality statements. 

A good example to keep in mind is the following. Let $S=\Z$, with $P$ its usual cubical walls. The space $\hat X$ is the Roller compactification \cite{roller:poc}. It is a topological median algebra whose points can be thought of as living in $\Z\cup\{-\infty,\infty\}$. There are median-convex walls in $\hat X$ not coming from $P$. For instance, consider $h^+=\{\infty\}$, $h^-=\hat X\ssm h^+$. This defines a median-convex wall of $\hat X$, but note that it is not clopen: the halfspace $h^+$ is not open. Correspondingly, the halfspace $h^-$ is not gated. The clopen walls of $\hat X$ are exactly the gated walls, which are exactly those coming from $P$. This is part of what makes the $P$-convexity in Definition~\ref{def:convex} the appropriate notion here, even though $S$ itself can sometimes fail to be $P$-convex in $\hat X$.
\end{remark}

To clarify, if $(S,P)$ is a set with walls, then the term ``wall'' will always mean an element of $P$, even though the median algebra $\hat X$ may have additional median-convex walls.

\begin{lemma} \label{lem:gate_X}
If $C=X_\C\cap\hat C$ is nonempty, where $\hat C$ is some $P$-convex subset of $\hat X$, then $\g_{\hat C}(z)\in C$ for all $z\in X_\C$.
\end{lemma}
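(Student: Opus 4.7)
The plan is to show that $\g_{\hat C}(z)$ lies at finite $\dist_\C$--distance from some point known to be in $X_\C$, so that it satisfies the defining condition of $X_\C$; combined with the fact that $\g_{\hat C}(z)\in\hat C$ from Proposition~\ref{prop:gate}, this gives membership in $C$.

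The key observation from the construction in Proposition~\ref{prop:gate} is that a wall $h\in P$ separates $z$ from $\g_{\hat C}(z)$ if and only if $h$ separates $z$ from $\hat C$. Now pick any $c\in C$, which is possible by the hypothesis that $C$ is nonempty. Since $c\in\hat C$, any wall that separates $z$ from $\hat C$ necessarily separates $z$ from $c$. Therefore the set of walls separating $z$ from $\g_{\hat C}(z)$ is contained in the set of walls separating $z$ from $c$, and consequently every $c'\in\C$ that separates $z$ from $\g_{\hat C}(z)$ also separates $z$ from $c$. Taking the supremum in the definition of $\dist_\C$ gives
\[
\dist_\C(z,\g_{\hat C}(z))\le\dist_\C(z,c).
\]

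Since both $z$ and $c$ lie in $X_\C$, picking any $s\in S$ and applying the triangle inequality gives $\dist_\C(z,c)\le\dist_\C(z,s)+\dist_\C(s,c)<\infty$, so $\dist_\C(z,\g_{\hat C}(z))<\infty$. For every $s\in S$, another application of the triangle inequality yields $\dist_\C(\g_{\hat C}(z),s)\le\dist_\C(\g_{\hat C}(z),z)+\dist_\C(z,s)<\infty$, so $\g_{\hat C}(z)\in X_\C$ by definition, and hence $\g_{\hat C}(z)\in X_\C\cap\hat C=C$.

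There is no real obstacle here: the argument is a direct combination of the characterisation of the gate from Proposition~\ref{prop:gate} (separating walls are precisely those separating $z$ from $\hat C$) with the triangle inequality for $\dist_\C$ and the fact that distances between points of $X_\C$ and $S$ are finite by definition. The only mild subtlety is to remember that we do not know a priori that $\dist_\C$ restricted to $X_\C$ takes finite values between arbitrary pairs, but this follows by routing through any point of $S$.
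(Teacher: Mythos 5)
Your proof is correct and follows essentially the same route as the paper's: both arguments use the fact that the gate is obtained by switching only walls that separate $z$ from $\hat C$ (hence from any fixed $c\in C$) to deduce finiteness of the relevant $\dist_\C$--distances, then conclude via the triangle inequality. The only cosmetic difference is that you bound $\dist_\C(z,\g_{\hat C}(z))$ whereas the paper bounds $\dist_\C(\g_{\hat C}(z),c)$ directly; both are fine.
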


\begin{proof}
Let $x\in C$. Any $z\in X_\C$ lies at finite distance from $x$. As $\g_{\hat C}(z)$ is obtained from $z$ by switching the orientations of a subset of the walls separating $z$ from $x$, every wall separating $\g_{\hat C}(z)$ from $x$ must separate $z$ from $x$. Hence $\g_{\hat C}(z)$ is at finite distance from $x$, and so lies in both $X_\C$ and $\hat C$. 
\end{proof}

In view of Lemmas~\ref{lem:P-convex_gated} and~\ref{lem:gate_X}, a subset $C\subset X_\C$ is gated in the median algebra $X_\C$ exactly when there exists some $P$-convex set $\hat C\subset\hat X$ such that $C=X_\C\cap\hat C$. Indeed, if $C\subset X_\C$ is gated then the argument of Lemma~\ref{lem:P-convex_gated} shows that $C=X_\C\cap\hat C$ for some $P$-convex $\hat C\subset\hat X$, and if the latter holds then Lemma~\ref{lem:gate_X} shows that $C$ is gated.

If $C=X_\C\cap\hat C$ for some $P$-convex $\hat C\subset\hat X$ and $C$ is nonempty, then we write $\g_C(z)=\g_{\hat C}(z)$ for $z\in X_\C$. 


Both of the next two lemmas could equally well be stated for $P$-convex subsets of $\hat X$, but that is not to our purposes. The following is immediate from the construction of the gate and the definition of $\dist_\C$.

\begin{lemma}[Closest-point] \label{lem:gate_closest_point}
Let $C\subset X_\C$ be gated, and let $x\in X_\C$. If $h\in P$ separates $x$ from $\g_C(x)$, then $h$ separates $x$ from $C$. In particular, $\dist_\C(x,c)\ge\dist_\C(x,\g_C(x))$ for all $c\in C$.
\end{lemma}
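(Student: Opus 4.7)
The plan is to unpack the construction of the gate from Proposition~\ref{prop:gate} and observe that the first statement is essentially built into that construction; the second statement then follows from the definition of $\dist_\C$.

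Let $\hat C \subset \hat X$ be the $P$-convex set with $C = X_\C \cap \hat C$, as provided by Lemma~\ref{lem:P-convex_gated} and Lemma~\ref{lem:gate_X}. By the construction in Proposition~\ref{prop:gate}, the ultrafilter $\g_C(x)$ is obtained from $x$ by switching the orientation of exactly those walls that separate $x$ from $\hat C$. So if $h \in P$ separates $x$ from $\g_C(x)$, then by construction $h$ is one of the walls that separates $x$ from $\hat C$. Since $C \subset \hat C$, every such $h$ also separates $x$ from every point of $C$. This proves the first statement.

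For the second statement, I would fix any $c \in C$ and any $c' \in \C$ witnessing $\dist_\C(x, \g_C(x))$, i.e.\ a collection $c' \in \C$ all of whose elements separate $x$ from $\g_C(x)$. By the first part, every $h \in c'$ also separates $x$ from $c$, so $c'$ witnesses a lower bound $\dist_\C(x,c) \ge |c'|$. Taking the supremum over all such $c'$ yields $\dist_\C(x,c) \ge \dist_\C(x,\g_C(x))$.

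There is no serious obstacle here: the content is really just tracing through the definition of the gate, since Proposition~\ref{prop:gate} was set up precisely so that the set of walls separating $x$ from $\g_C(x)$ is a subset of the walls separating $x$ from $\hat C$. The only small point to be careful about is invoking Lemma~\ref{lem:gate_X} at the outset to ensure that $\g_C(x)$ genuinely lies in $C \subset X_\C$, so that $\dist_\C(x,\g_C(x))$ is meaningful and finite.
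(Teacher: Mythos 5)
Your proof is correct and is exactly the argument the paper intends: the paper states the lemma is ``immediate from the construction of the gate and the definition of $\dist_\C$,'' and your write-up is precisely that unpacking, including the correct observation that a wall separates $x$ from $\g_C(x)$ only if it separates $x$ from all of $\hat C\supset C$.
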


\begin{lemma}[Lipschitz] \label{lem:gate_lipschitz}
Let $C\subset X_\C$ be gated and let $A,B\subset X_\C$. If $h\in P$ separates $\g_C(A)$ from $\g_C(B)$, then $h$ separates $A$ from $B$. In particular, $\g_C$ is 1--Lipschitz.
\end{lemma}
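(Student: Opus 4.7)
The plan is to reduce the set-level statement to the case of single points $a\in A$ and $b\in B$, and then extract everything from Lemma~\ref{lem:gate_closest_point}. So suppose $h\in P$ separates $\g_C(A)$ from $\g_C(B)$, meaning (by convention for sets) that $\g_C(A)$ is contained in one halfspace of $h$, say $h^+$, and $\g_C(B)$ in the other, $h^-$. Fix arbitrary $a\in A$ and $b\in B$; it suffices to show $a\in h^+$ and $b\in h^-$.

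The main step is to argue that $h$ cannot separate $a$ from $\g_C(a)$. The idea is that such a separation would, by Lemma~\ref{lem:gate_closest_point}, force $h$ to separate $a$ from all of $C$; but $\g_C(b)\in C$ lies in $h^-$, so this would place $a\in h^+$ and simultaneously $\g_C(a)\in h^-$, contradicting $\g_C(a)\in h^+$. Hence $a$ and $\g_C(a)$ lie on the same side of $h$, giving $a\in h^+$. The symmetric argument on the $B$ side gives $b\in h^-$. Since $a\in A$ and $b\in B$ were arbitrary, this yields $A\subset h^+$ and $B\subset h^-$, so $h$ separates $A$ from $B$. (If either set is empty the statement is vacuous.)

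For the ``in particular'' clause, specialise to $A=\{x\}$ and $B=\{y\}$ with $x,y\in X_\C$. The statement just proved says every wall separating $\g_C(x)$ from $\g_C(y)$ also separates $x$ from $y$. Consequently, any $c\in\C$ witnessing separation of $\g_C(x)$ and $\g_C(y)$ is automatically a separating family for $x$ and $y$. Taking the supremum of $|c|$ over such $c$ gives $\dist_\C(\g_C(x),\g_C(y))\le\dist_\C(x,y)$, i.e. $\g_C$ is $1$--Lipschitz.

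The only place where care is needed is the contradiction step: one must be sure that the existence of some $\g_C(b)\in h^-\cap C$ is enough to rule out ``$h$ separates $a$ from $C$'', which is immediate from the definition of separation of a point from a set (the whole set must lie in a single halfspace). After that, everything is bookkeeping, so I do not anticipate any genuine obstacle.
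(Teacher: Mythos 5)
Your proof is correct and is essentially the paper's argument: both rest on the observation that, since the gates land on opposite sides of $h$, the gated set $C$ meets both halfspaces, so $h$ cannot separate any point of $X_\C$ from $C$ and gating therefore preserves the orientation of $h$. The paper states this directly from the construction of the gate, whereas you package it as a contradiction via Lemma~\ref{lem:gate_closest_point}; the content is the same.
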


\begin{proof}
If $h$ separates $\g_C(A)$ from $\g_C(B)$, then there are points of $C$ on both sides of $h$, so $h$ cannot separate any point of $X_\C$ from $C$. In particular,  for all $z\in X_\C$ the orientation of $h$ determined by $z$ is the same as that determined by $\g_C(z)$, so $A$ and $B$ are on different sides of $h$.
\end{proof}

We finish this section with a simple observation about finite group actions. Given a dualisable system $\C$ for $(S,P)$, one could define a \emph{first subdivision} of $X_\C$ by ``doubling'' each $h\in P$ into two identical partitions $h_1,h_2$ and leaving all other crossing relations the same. We refrain from making this formal, because we generally wish to work with $P$ being a \emph{set} of bipartitions, rather than a multiset, but it is completely analogous to the first cubical subdivision of a CAT(0) cube complex. After simply noting the following, subdivisions will not be mentioned again.

\begin{proposition} \label{prop:finite_fix}
Let $\C$ be a dualisable system on $(S,P)$. If $G$ is a finite group acting on $S$ that preserves $P$, then $G$ fixes a point in the first subdivision of $X_\C$.
\end{proposition}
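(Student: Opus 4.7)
The plan is to adapt the classical majority-vote argument for fixed points of finite group actions on CAT(0) cube complexes, where the role of the first subdivision is precisely to remove the obstruction posed by \emph{inversions}: group elements that fix a wall setwise but swap its two halfspaces. Fix any $s_0 \in S$ and consider its finite orbit $A = G \cdot s_0 \subset S$. For each wall $h \in P$, set $n^\pm_h := |A \cap h^\pm|$. When $n^+_h \ne n^-_h$, taking $\psi(h)$ to be the majority halfspace gives a well-defined, $G$-equivariant orientation (since $G$ permutes $A$), and it satisfies the filter condition of Definition~\ref{def:filter} among such walls: if $h^+ \subsetneq {h'}^+$ and more than half of $A$ lies in $h^+$, the same holds for ${h'}^+$.

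The obstruction within $X_\C$ itself is the set of tied walls $h$ with $n^+_h = n^-_h$. Such a tie is forced whenever some $g \in G$ inverts $h$ (i.e., $g \cdot h = h$ but $g \cdot h^+ = h^-$), because then $g$ bijects $A \cap h^+$ with $A \cap h^-$. The first subdivision handles this by doubling each wall into copies $h_1, h_2$ and arranging the $G$-action so that any inversion of $h$ simultaneously swaps $h_1, h_2$; then setting $\psi(h_1) = h^+$ and $\psi(h_2) = h^-$ produces an orientation invariant under the inversion, exactly analogous to subdividing an edge with a reflection action so that its midpoint becomes fixed. To extend this choice equivariantly, pick one representative per $G$-orbit of tied walls and transport the labeling using the action on $P'$, exploiting that the stabiliser of each tied wall acts compatibly on $\{h_1,h_2\}$ and $\{h^+,h^-\}$ via the appropriate $\mathbf{Z}/2$.

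Finally, one checks that $\psi$ defines an ultrafilter on $P'$: the filter condition for pairs involving only majority walls was noted above, and for nested tied walls $h \subsetneq h'$ one uses that both sides have $|A|/2$ elements of $A$, forcing $A \cap h^+ = A \cap {h'}^+$ and pinning down the labeling of the copies of $h'$ compatibly with those of $h$. The ultrafilter $\psi$ lies in the dual of the subdivided system because $\dist_\C(\phi_a,\phi_{s_0}) < \infty$ for each $a \in A$, and this finiteness transfers. The main obstacle is the combinatorial bookkeeping in the equivariant extension step: labeling the doubled tied walls in a way that is consistent with every nesting relation in $P'$ while respecting the $G$-action. The underlying idea, however, is the standard one — majority vote would already produce a fixed ultrafilter but for the $\mathbf{Z}/2$ inversion obstruction, and the first subdivision is designed precisely to absorb it.
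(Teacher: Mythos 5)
There is a genuine gap in your treatment of the tied walls, and it is located exactly where you flag ``the main obstacle'': the equivariant extension step. Your prescription is to double every tied wall $h$ and orient the two copies oppositely ($\psi(h_1)=h^+$, $\psi(h_2)=h^-$), then transport this by picking one representative per $G$--orbit. This fails to produce a filter whenever a tied wall has a \emph{disjoint} (distinct, non-crossing) $G$--translate. Concretely, take $S=\{0,1,2,3\}$ with its three interval walls, $G=\Z/2$ acting by $x\mapsto 3-x$, and $A=\{0,3\}$. All three walls are tied. The walls $h=h_{01}$ and $gh=h_{23}$ are nested, and your rule forces some copy of $h_{01}$ to point towards $\{0\}$ while, by equivariance, some copy of $h_{23}$ points towards $\{3\}$; since $\{0\}\subset\{0,1,2\}$, the filter condition of Definition~\ref{def:filter} is violated. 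Indeed no $G$--equivariant filter can orient the two copies of such a wall oppositely; the only consistent equivariant choice is to orient \emph{both} copies of $h_{01}$ towards $\{1,2,3\}$ and both copies of $h_{23}$ towards $\{0,1,2\}$. Your closing remark that the tie forces $A\cap h^+=A\cap h'^+$ for nested tied walls and thereby ``pins down the labeling'' does not resolve this: the constraint it pins down is incompatible with orienting the copies oppositely.

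The paper's proof avoids this by splitting the tied walls into two classes. For a tied wall $h$ admitting some $g$ with $gh\ne h$ and $gh$ not crossing $h$, one orients $h$ (undoubled) towards its disjoint translate; well-definedness and $G$--invariance of this choice use finiteness of $G$ (if $(gh)^-\ne g(h^-)$ one would get $g(h^+)\subsetneq h^+$, forcing $g$ to have infinite order), and one then checks this is a filter. The subdivision trick --- doubling and pointing the copies at each other --- is reserved for tied walls all of whose distinct $G$--translates \emph{cross} them, precisely because crossing walls impose no nesting constraints on each other, so the oppositely-oriented copies cannot conflict. Your identification of the inversion obstruction and the role of the subdivision is correct in spirit, and your majority-vote step and the verification that the resulting ultrafilter lies in (the subdivision of) $X_\C$ are fine; but without the case distinction above, the construction you describe does not yield an ultrafilter.
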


\begin{proof}
Let $A$ be a $G$--orbit in $S$. For each $h\in P$, let $h^+$ be the halfspace containing more than half of the elements of $A$, if it exists. Let $Q$ be the set of such $h$. It is easy to see that $\phi(h)=h^+$ is a filter supported on $Q$, and any ultrafilter extending it lies in $X_\C$, because it is separated from each element of $A$ only by elements of $P$ separating points of $A$. Also note that $\phi$ is fixed by $G$. If $P=Q$ then we are done.

Otherwise, $Q\ne P$. Let $Q'$ be the set of all $h\in P\ssm Q$ such that there is some $g\in G$ for which $gh\ne h$ and $gh$ does not cross $h$. Note that since $h$ divides $A$ into equal halves, both $h$ and $gh$ define the same partition of $A$. By definition, there are halfspaces $h^-$ and $(gh)^-$ that are disjoint. We must have $(gh)^-=g(h^-)$, for otherwise we would have $g(h^+)\subsetneq h^+$, which would imply that $g$ had infinite order. One similarly argues that this labelling did not depend on the choice of $g$. In other words, the allocation $\phi(h)=h^+$ on $Q'$ is $G$--invariant. 

Let us show that this allocation $\phi$ is a filter on $Q'$. If not, then there are $h_1,h_2\in Q'$ such that $h_1^+$ and $h_2^+$ are disjoint. Let $g\in G$ be such that $gh_1\ne h_1$ and $gh_1$ does not cross $h_1$. We have $h_2^+\subset h_1^-\subset gh_1^+$, and so we must have $g^{-1}h_2^+\subset h_1^+$. But this shows that $h_2^+$ is disjoint from its translate by $g^{-1}$, contradicting $G$--invariance of $\phi$. Hence if $P=Q\cup Q'$, then $\phi$ is an ultrafilter that is fixed by $G$.

The final case is that there is some $h\in P\ssm Q$ such that for each $g\in G$, either $h$ crosses $gh$ or is equal to it. If $k\in P\ssm(Q\cup Q')$ does not cross $h$, then set $\phi(k)$ to be the halfspace containing a halfspace of $h$, and extend to $G\cdot\{k\}$ equivariantly. Let $Q_h$ be the set of elements of $P\ssm(Q\cup Q')$ whose $G$--translates all cross $h$. We obtain an ultrafilter extending $\phi$ in the first subdivision of $X_\C$ by pointing the ``doubled'' copies of the elements of $Q_h$ towards each other, and $\phi$ is fixed by $G$.
\end{proof}

The corresponding statement does not always hold for \emph{bounded} group actions, as can be seen from a transitive action of $\Z$ on an infinite clique.

\section{Systems of chains} \label{sec:properties}

In the applications we have in mind for the constructions of Section~\ref{sec:construction}, the dualisable system $\C$ will be a set of chains in $P$. Recall that by a chain in $P$, we mean a sequence $c=(h_i)_{i\in I}$, where $I$ is some (finite or infinite) interval in $\Z$ and $h_i\in P$, such that $h_{i-1}^-\subset h_i^-\subset h_{i+1}^-$ for all $i$.

\begin{definition}[System of chains]
A dualisable system $\C$ is a \emph{system of chains} if every element of $\C$ is a chain in $P$.
\end{definition}

An immediate advantage of assuming that elements of $\C$ are chains is that every element of $\C$ then comes with a total order, so that one can speak of the \emph{minimal} and \emph{maximal} element of a finite chain $c\in\C$. In this section, we investigate some of the metric properties that one can deduce about the dual space $X_\C$ when $\C$ is a system of chains.

\begin{lemma} \label{lem:convex_balls}
If $\C$ is a system of chains on $(S,P)$, then every ball in $X_\C$ is gated.
\end{lemma}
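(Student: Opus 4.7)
The plan is to reduce to a statement about $P$-convexity and then exploit the nested structure of chains.

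First, by Lemmas~\ref{lem:P-convex_gated} and~\ref{lem:gate_X}, to show that the ball $B_r(x) \subset X_\C$ is gated, it suffices to exhibit a $P$-convex set $\hat C \subset \hat X$ with $B_r(x) = \hat C \cap X_\C$. The natural candidate is the ``$P$-convex hull'' of the ball: set
\[
\hat C = \bigcap\{h^+ : h \in P,\ h^+ \supset B_r(x)\},
\]
where for each wall $h \in P$ at most one halfspace can contain $B_r(x)$ (since $B_r(x)$ is nonempty and the halfspaces are disjoint), so this is a genuine intersection of halfspaces and hence $P$-convex in the sense of Definition~\ref{def:convex}. The inclusion $B_r(x) \subset \hat C \cap X_\C$ is immediate from the definition.

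For the reverse inclusion, take $y \in \hat C \cap X_\C$ and let $c = (h_1, \ldots, h_n) \in \C$ be any chain separating $y$ from $x$, ordered so that $h_1^-\subset h_2^-\subset\dots\subset h_n^-$ and $y \in h_i^-$, $x \in h_i^+$ for each $i$. The first wall $h_1$ cannot have $h_1^+ \supset B_r(x)$: if it did, then $h_1^+$ would be one of the halfspaces defining $\hat C$, forcing $y \in h_1^+$, contrary to $y \in h_1^-$. Consequently there exists a point $u \in B_r(x) \cap h_1^-$.

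The chain condition $h_1^- \subset h_i^-$ for every $i$ now gives $u \in h_i^-$ for all $i$, so the whole chain $c$ separates $u$ from $x$. Therefore
\[
n = |c| \le \dist_\C(u,x) \le r,
\]
since $u \in B_r(x)$. Taking the supremum over chains $c$ separating $y$ from $x$ yields $\dist_\C(y,x) \le r$, i.e.\ $y \in B_r(x)$, completing the reverse inclusion. The only step that requires any real thought is the extraction of $u$ from $P$-convexity of $\hat C$; once chosen, the nesting built into the definition of a chain collapses what would otherwise be a Helly-type argument into the trivial observation that a point in the innermost halfspace automatically lies in all the others.
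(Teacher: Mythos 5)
Your proof is correct. The overall strategy coincides with the paper's -- both reduce, via Lemmas~\ref{lem:P-convex_gated} and~\ref{lem:gate_X}, to exhibiting a $P$-convex $\hat C\subset\hat X$ with $B_r(x)=\hat C\cap X_\C$ -- but the choice of $\hat C$ and the verification of the nontrivial inclusion are genuinely different. The paper takes $Q$ to be the set of walls $h$ admitting a chain $c\in\C$ with $|c|\ge r$ separating $x$ from $h$, orients each such wall towards $x$, and argues directly with these ``far'' walls (which requires first reducing to integer $r$). You instead take the $P$-convex hull of the ball and show it adds nothing: given $y$ in the hull and a separating chain $c=(h_1,\dots,h_n)$, the innermost wall $h_1$ cannot have its $x$-side halfspace containing all of $B_r(x)$, so you extract a witness $u\in B_r(x)$ with $u(h_1)=h_1^-$, and the nesting $h_1^-\subset h_i^-$ together with the filter condition carries $u$ across the entire chain, giving $|c|\le\dist_\C(u,x)\le r$. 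This has the small advantages of working for arbitrary real $r$ and of handling the extremal case $|c|=r$ cleanly, since the full chain (not a truncation) is shown to separate a point of the ball from $x$. The one notational caution is that ``$h^+\supset B_r(x)$'' must be read as containment of $B_r(x)$ in the halfspace $\{v\in\hat X : v(h)=h^+\}$ of $\hat X$, not in the subset $h^+\subset S$; your subsequent uses (``forcing $y\in h_1^+$'', ``a point $u\in B_r(x)\cap h_1^-$'') make clear this is what you intend, and it matches the abuse of notation the paper itself employs.
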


\begin{proof}
Let $x\in X_\C$ and $r\ge0$. Since $\dist_\C$ is integer-valued, we may assume that $r\in\Z$. Let $Q$ be the set of all $h\in P$ such that there exists some $c\in\C$ containing $h$, with $|c|=r+1$ and all other elements of $c$ separating $x$ from $h$. For each $h\in Q$, let $\hat C(h)=x(h)$ be the halfspace containing $x$. This defines a $P$-convex set $\hat C\subset\hat X$. If $z\in\hat C$, then because $\C$ is a system of chains, we have $\dist_\C(x,z)\le r$. In particular, $\hat C\subset X$. Conversely, if $\dist_\C(x,z)\le r$, then no element of $Q$ can separate $z$ from $x$, so $z\in\hat C$. Thus the $P$-convex set $\hat C$ is the $r$--ball about $x$ in $X$. But $\hat C\subset X_\C$, so it is also the $r$--ball about $x$ in $X_\C$.
\end{proof}

For systems of chains, we now describe a family of paths that are analogues of the \emph{normal cube paths} introduced by Niblo--Reeves in \cite{nibloreeves:geometry}. In fact, in the case where $2^P$ is dualisable (see Example~\ref{eg:sageevable}) and $\C$ is the set of all chains, $X_\C$ is a CAT(0) cube complex with the $\ell^\infty$ metric and the paths we shall define will exactly be normal cube paths. 

\begin{definition}[Normal wall path]
Let $\C$ be a system of chains. Given $x,y\in X_\C$, the \emph{normal wall path} $\sigma(x,y)=\sigma_{xy}$ from $x$ to $y$ is the sequence
\[
(x=\g_{B(x,0)}(y),\, \g_{B(x,1)}(y),\, \dots,\, \g_{B(x,n)}(y)=y),
\]
where $B(x,r)$ denotes the $r$--ball in $X_\C$ centred on $x$, which is gated by Lemma~\ref{lem:convex_balls}. See Figure~\ref{fig:normal_wall_paths}.
\end{definition}

\begin{figure}[ht]
\includegraphics[width=14cm, trim = 2.5cm 9cm 7.5cm 6cm, clip]{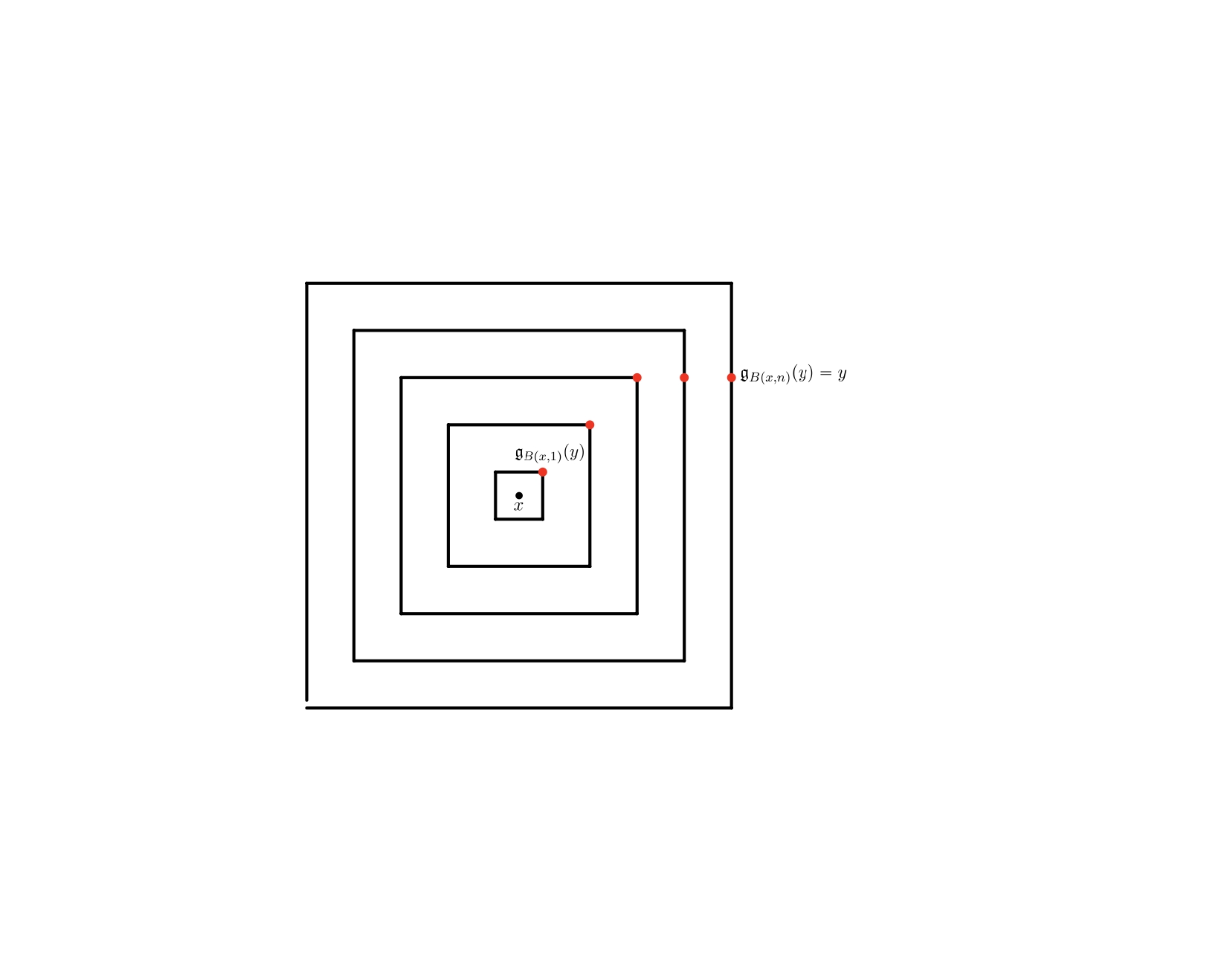}\centering
\caption{$\sigma(x,y)$ is constructed by gating $y$ to integer balls centred on $x$.} \label{fig:normal_wall_paths}
\end{figure}

As with normal cube paths in CAT(0) cube complexes, $\sigma(x,y)$ need not agree with $\sigma(y,x)$.

By the definition of the gate map, $\sigma_{xy}(r)$ is obtained from $y$ by switching the orientations of exactly the walls separating $y$ from $B(x,r)$. Hence if $r_1<r_2<r_3$, then $\sigma_{xy}(r_2)$ is equal to its median with $\sigma_{xy}(r_1)$ and $\sigma_{xy}(r_3)$. In other words, $\sigma_{xy}$ is a \emph{median path} and cannot cross any wall twice. The following lemma can be viewed as a strengthening of this observation.

\begin{lemma} \label{lem:one_sided_consistent}
Let $x,y\in X_\C$. If $z\in[\sigma_{xy}(r),y]$, then $\sigma_{xz}(t)=\sigma_{xy}(t)$ for all $t\le r$. In particular, $\sigma(x,\sigma_{xy}(r))\subset\sigma(x,y)$.
\end{lemma}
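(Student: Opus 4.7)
The plan is to reduce to showing the gate equality $\g_{B(x,t)}(y)=\g_{B(x,t)}(z)$ for each $t\le r$, and then to deduce this from Lemma~\ref{lem:gate_lipschitz} combined with the structure of the median interval $[\sigma_{xy}(r),y]$. Once this is established, the first assertion follows directly from the definitions $\sigma_{xy}(t)=\g_{B(x,t)}(y)$ and $\sigma_{xz}(t)=\g_{B(x,t)}(z)$. The \emph{in particular} clause is then immediate: taking $z=\sigma_{xy}(r)$, which trivially lies in $[\sigma_{xy}(r),y]$, one has $z\in B(x,r)$ and hence $\dist_\C(x,z)\le r$, so the sequence $\sigma(x,z)=(\sigma_{xz}(0),\dots,\sigma_{xz}(\dist_\C(x,z)))$ is a prefix of $\sigma(x,y)$.

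The key observation for the gate equality is that, because $\sigma_{xy}(r)=\g_{B(x,r)}(y)$, the walls separating $y$ from $\sigma_{xy}(r)$ are exactly those separating $y$ from $B(x,r)$, so on every such wall the entire ball $B(x,r)$ lies on the $\sigma_{xy}(r)$--side. The hypothesis $z\in[\sigma_{xy}(r),y]$ means every wall separating $y$ from $z$ must place $z$ on the opposite side from $y$, hence on the same side as $\sigma_{xy}(r)$; in particular every such wall already separates $y$ from $\sigma_{xy}(r)$. Combining these two points, every wall separating $y$ from $z$ has $B(x,r)\supset B(x,t)$ entirely on the $z$--side, so no such wall crosses $B(x,t)$.

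To finish, I would apply Lemma~\ref{lem:gate_lipschitz} with $C=B(x,t)$ (gated by Lemma~\ref{lem:convex_balls}) and the singletons $\{y\}$ and $\{z\}$: any wall separating $\g_{B(x,t)}(y)$ from $\g_{B(x,t)}(z)$ would have to separate $y$ from $z$ and, since both gates lie in $B(x,t)$, would also have to cross $B(x,t)$. The previous paragraph rules this out, so the two gates coincide. The main obstacle I expect is simply noticing that the median-interval hypothesis on $z$ is precisely what transfers the ``$B(x,r)$ lies on one side'' property from walls separating $\sigma_{xy}(r)$ from $y$ over to walls separating $z$ from $y$; beyond that, each step is a single invocation of an earlier lemma or a definitional unwinding.
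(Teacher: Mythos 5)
Your proof is correct and follows essentially the same route as the paper's: both arguments use that membership in $[\sigma_{xy}(r),y]$ forces every wall separating $z$ from $y$ to separate $\sigma_{xy}(r)$ from $y$, hence (by the gate characterisation) to have all of $B(x,r)\supset B(x,t)$ on the $z$--side, so the gates of $y$ and $z$ to $B(x,t)$ coincide. Your write-up is merely a little more explicit than the paper's, which only spells out the case $t=r$.
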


\begin{proof}
The point $\sigma_{xy}(r)$ is obtained from $y$ by switching the orientations of exactly those walls that separate $y$ from the ball $B(x,r)$. Thus the fact that $z\in[\sigma_{xy}(r),y]$ means both that the walls separating $z$ from $B(x,r)$ are a subset of those separating $y$ from $B(x,r)$, and that every wall separating $x$ from $\sigma_{xy}(r)$ separates $x$ from $z$. Hence no wall separates $\sigma_{xz}(r)$ from $\sigma_{xy}(r)$, so the two are equal. The rest follows.
\end{proof}

An extremely useful property that a system of chains can have is the ability to combine certain elements of $\C$ to obtain a larger element of $\C$. This allows one to make certain constructions piecewise, which is often necessary when several points are involved. If $c$ is a chain in $P$ with maximal element $h$, then we write $c^+$ to mean the subset $h^+$ of $S$. If $k$ is the minimal element of $c$, then we similarly write $c^-$ for the subset $k^-$ of $S$. If $c_1,c_2\in\C$, then we write $c_2\subset c_1^+$ to mean that every element of $c_2$ has a halfspace contained in $c_1^+$.

\begin{definition}[Gluable] \label{def:gluable}
Let $\C$ be a system of chains. We say that $\C$ is \emph{$m$--gluable} if the following holds. Suppose that $c_1=\{,\dots,h_{-2},h_{-1}\}$ and $c_2=\{h_1,h_2,\dots\}$ are elements of $\C$ such that $c_1\cup c_2$ is a chain, $c_2\subset c_1^+$, and $c_1\subset c_2^-$. There exists a subset $b\subset \{h_{-m},\dots,h_{-1},h_1,\dots,h_m\}$ of consecutive halfspaces, of size at most $m$, such that $(c_1\cup c_2)\ssm b$ is an element of $\C$.
\end{definition}

We shall primarily be interested in $m$--gluable systems with $m\le3$. Note that the only 0--gluable system of chains in $P$ containing all singletons is the set of all chains in $P$.

A \emph{discrete geodesic} in an integer-valued metric space $(Y,\dist)$ is the image of an isometric embedding of an interval in $\Z$. A \emph{$k$--rough geodesic} is the image of a $(1,k)$--quasiisometric embedding of an interval in $\Z$. A \emph{$k$--weak rough geodesic} from $x$ to $y$ is a sequence $(z_r)_{r=0}^n$ with $z_0=x$ and $z_n=y$ such that $|\dist(x,z_r)-r|\le k$ and $|\dist(z_r,y)-(n-r)|\le k$, where $n=\dist(x,y)$. Note that the metric space dual to any dualisable system has integer-valued metric.

\begin{proposition} \label{prop:nwp_rough_geodesic}
If $\C$ is an $m$--gluable system of chains, then normal wall paths are $m$--weak rough geodesics and $3m$--rough geodesics. 

Furthermore, if $x,y\in X_\C$ have $\dist_\C(x,y)=n$, then for any $r\in[0,n]$ we have $r-m\le\dist_\C(x,\sigma_{xy}(r))\le r$ and $n-r\le\dist_\C(\sigma_{xy}(r),y)\le n-r+m$.
\end{proposition}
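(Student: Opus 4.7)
The plan is to prove the four explicit distance estimates directly; the rough geodesic statements then fall out easily. The key observation, from the proof of Lemma~\ref{lem:convex_balls}, is that $B(x,r)$ is the $P$-convex set determined by $Q_r = \{h \in P : \exists\, c_h \in \C,\ |c_h| \ge r,\ c_h \text{ separates } x \text{ from } h\}$, and Proposition~\ref{prop:gate} then identifies the walls separating $y$ from $\sigma_{xy}(r)$ as exactly the walls in $Q_r$ that also separate $x$ from $y$. The two ``inner'' bounds are now free: $\sigma_{xy}(r) = \g_{B(x,r)}(y) \in B(x,r)$ gives $\dist_\C(x,\sigma_{xy}(r)) \le r$, and the triangle inequality at $x,\sigma_{xy}(r),y$ yields $\dist_\C(\sigma_{xy}(r),y) \ge n-r$.

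For the ``outer'' bound $\dist_\C(\sigma_{xy}(r),y) \le n-r+m$, I would take any chain $c' = (k_1, \ldots, k_p) \in \C$ among the walls separating $\sigma_{xy}(r)$ from $y$, ordered so that $k_1$ is closest to $x$. Since $k_1 \in Q_r$, extract some $c^* \in \C$ of length $\ge r$ whose walls all lie strictly below $k_1$; then $c^* \cup c'$ is a chain in $P$ with $c^*$ entirely below $c'$, so Definition~\ref{def:gluable} applies and produces a chain in $\C$ of length at least $r+p-m$ separating $x$ from $y$. Hence $p \le n-r+m$. The matching lower bound $\dist_\C(x,\sigma_{xy}(r)) \ge r-m$ then follows at once from the triangle inequality at $x,\sigma_{xy}(r),y$.

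For the $3m$-rough geodesic property, fix $r_1 \le r_2$. Since $Q_{r_2} \subset Q_{r_1}$, the walls separating $\sigma_{xy}(r_1)$ from $\sigma_{xy}(r_2)$ are exactly those in $Q_{r_1} \setminus Q_{r_2}$ that separate $x$ from $y$. I would rerun the gluing argument from the previous paragraph with such a chain $c' = (k_1, \ldots, k_p) \in \C$ and a chain $c^* \in \C$ of length $\ge r_1$ below $k_1$; the resulting chain in $\C$ has length at least $r_1 + p - m$ and consists of walls at most $k_p$, so removing $k_p$ from it (if still present) yields a chain in $\C$ of length at least $r_1 + p - m - 1$ lying strictly below $k_p$. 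This witnesses $k_p \in Q_{r_1 + p - m - 1}$, and $k_p \notin Q_{r_2}$ then forces $p \le r_2 - r_1 + m$. The complementary inequality $\dist_\C(\sigma_{xy}(r_1),\sigma_{xy}(r_2)) \ge r_2 - r_1 - m$ follows from the triangle inequality combined with the weak rough geodesic bounds just established, so the normal wall path is in fact an $m$-rough geodesic, and a fortiori a $3m$-rough geodesic.

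The principal obstacle is verifying the hypotheses of Definition~\ref{def:gluable} carefully: one must unpack the condition $k_1 \in Q_r$ from Lemma~\ref{lem:convex_balls} to confirm that the chain $c^*$ lies strictly on the $x$-side of $k_1$ (so that $c^* \cup c'$ is genuinely a chain with $c^*$ below $c'$), and one must keep track of the at-most-$m$ walls that the gluability axiom may delete so that the length estimate survives, in particular whether $k_p$ itself can be among them.
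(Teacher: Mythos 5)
Your proposal is correct. For the four explicit estimates it follows the paper's proof almost verbatim: the bound $\dist_\C(x,\sigma_{xy}(r))\le r$ is immediate from gatedness of balls, $\dist_\C(\sigma_{xy}(r),y)\ge n-r$ is the triangle inequality, and the bound $\dist_\C(\sigma_{xy}(r),y)\le n-r+m$ is obtained by exactly the same gluing of a witness chain for $k_1\in Q_r$ onto a chain realising $\dist_\C(\sigma_{xy}(r),y)$. The one organisational difference is that you deduce $\dist_\C(x,\sigma_{xy}(r))\ge r-m$ from the other three bounds by the triangle inequality, whereas the paper proves it directly by a second gluing contradiction applied to a chain realising $\dist_\C(x,y)$; your shortcut is valid and slightly leaner. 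For the rough-geodesic statement your route genuinely diverges: the paper invokes Lemma~\ref{lem:one_sided_consistent} to get $\sigma(x,\sigma_{xy}(r_2))\subset\sigma(x,y)$ and then chains triangle inequalities through the weak-rough-geodesic bounds, losing a factor that produces the constant $3m$; you instead identify the walls separating $\sigma_{xy}(r_1)$ from $\sigma_{xy}(r_2)$ as exactly those in $Q_{r_1}\setminus Q_{r_2}$ crossing $[x,y]$ and bound any chain of such walls by gluing a length-$r_1$ witness below $k_1$ and reading off that $k_p\in Q_{r_1+p-m-1}\setminus Q_{r_2}$. This is a sound argument (the caveat you flag about $k_p$ possibly being deleted by gluability is harmless, since in that case $p\le m$ and the bound is trivial), and it yields the sharper conclusion that normal wall paths are $m$--rough geodesics, which of course implies the stated $3m$ bound. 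What the paper's approach buys is that it reuses Lemma~\ref{lem:one_sided_consistent}, a statement needed again later (e.g.\ in Proposition~\ref{prop:bicombing}); what yours buys is a better constant and independence from that lemma.
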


\begin{proof}
Let us write $z_r=\sigma_{xy}(r)$. As in the proof of Lemma~\ref{lem:convex_balls}, let $Q$ be the set of all $h\in P$ such that there exists some $c\in\C$ with $|c|\ge r$ that separates $x$ from $h$.

First we control $\dist_\C(x,z_r)$. By definition, $z_r$ lies in the $r$--ball about $x$, so we just need to lower-bound $\dist_\C(x,z_r)$. Recall that $z_r$ is obtained from $y$ by switching the orientations of exactly the elements of $Q$ that separate $x$ from $y$. Let $\{h_1,\dots,h_n\}\in\C$ realise $\dist_\C(x,y)$. If $h_{r-m}$ does not separate $z_r$ from $x$, then it separates $z_r$ from $y$, so we must have $h_{r-m}\in Q$. Let $\{k_1,\dots,k_r\}\in\C$ separate $x$ from $h_{r-m}$. Because $\C$ is $m$--gluable, there is then a subset of $\{k_1,\dots,k_r,h_{r-m},\dots,h_n\}$ of size at least $n+1$ that is an element of $\C$. But this contradicts the fact that $\dist_\C(x,y)=n$. Hence $h_{r-m}$ separates $z_r$ from $x$, and in particular $\dist_\C(x,z_r)\ge r-m$.

Now we control $\dist_\C(z_r,y)$. Because $\dist_\C(x,z_r)\le r$, the triangle inequality gives $\dist_\C(z_r,y)\ge n-r$. Now let $\{h'_1,\dots,h'_p\}\in\C$ realise $\dist_\C(z_r,y)$. Every element of $P$ separating $z_r$ from $y$ is in $Q$, so in particular there is some $\{k'_1,\dots,k'_r\}\in\C$ separating $x$ from $h'_1$. By $m$--gluability of $\C$, we get $n=\dist_\C(x,y)\ge r+p-m$, and hence $\dist_\C(z_r,y)=p\le(n-r)+m$. This in particular shows that $(z_r)$ is an $m$--weak rough geodesic. 

%
%
%
To complete the proof, let $r_1<r_2$. By Lemma~\ref{lem:one_sided_consistent}, $\sigma(x,z_{r_2})\subset\sigma(x,y)$, so using the above inequalities, we compute
\begin{align*}
\dist_\C(x,y) &=\, r_2+(n-r_2+m)-m \\
\,&\ge\, \dist_\C(x,z_{r_2})+\dist(z_{r_2},y)-m \\
&\ge\, \dist_\C(x,z_{r_1})+\dist_\C(z_{r_1},z_{r_2})+\dist_\C(z_{r_2},y)-2m \\
&\ge\, r_1+\dist_\C(z_{r_1},z_{r_2})+n-r_2-3m,
\end{align*}
which shows that $\dist_\C(z_1,z_2)\le r_2-r_1+3m$. A similar computation shows that $\dist_\C(z_{r_1},z_{r_2})\ge r_2-r_1-3m$, and hence $\sigma(x,y)$ is a $3m$--rough geodesic.
\end{proof}

A particular case of Proposition~\ref{prop:nwp_rough_geodesic} is that if $\C$ is the set of all chains, then $X_\C$ is discretely geodesic. In other words, it is the vertex set of a graph.

A \emph{bicombing} $\sigma$ of a metric space $Y$ is the choice of a path $\sigma(x,y)$ from $x$ to $y$ for each $x,y\in Y$. This notion is very general, so one often speaks of a bicombing by, say, geodesics, where the $\sigma(x,y)$ are required to be geodesics; or one imposes certain fellow-travelling conditions on the various paths in $\sigma$.

For convenience, if $r>\dist_\C(x,y)$, then we define $\sigma_{x,y}(r)$ to be equal to $y$.

\begin{proposition} \label{prop:bicombing}
If $\C$ is an $m$--gluable system of chains, then normal wall paths form a bicombing of $X_\C$ by rough geodesics. Moreover, for every $r$ we have 
\[
\dist_\C(\sigma_{x_1y_1}(r),\sigma_{x_2y_2}(r)) \,\le\, \max\{\dist_\C(x_1,x_2), \dist_\C(y_1,y_2)\}+3m.
\]
\end{proposition}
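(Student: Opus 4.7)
The first assertion is immediate from Proposition~\ref{prop:nwp_rough_geodesic}, which already shows each $\sigma_{xy}$ is a $3m$--rough geodesic from $x$ to $y$; the map $(x,y)\mapsto\sigma_{xy}$ is a bicombing by construction.

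For the fellow-travelling inequality, my plan is to analyse a maximal chain in $\C$ witnessing the distance between the intermediate points. Set $p_i = \sigma_{x_iy_i}(r) = \g_{B(x_i,r)}(y_i)$ and $n=\dist_\C(p_1,p_2)$, and fix a chain $c=\{h_1,\dots,h_n\}\in\C$ of length $n$ separating $p_1$ from $p_2$, oriented so that $p_1\in h_i^-$ and $p_2\in h_i^+$ for every $i$. The gate description of Proposition~\ref{prop:gate} gives that $p_j(h)=y_j(h)$ if and only if $h$ does not separate $y_j$ from $B(x_j,r)$, so I would classify each $h\in c$ into one of four types according to which of the relations $p_1\sim y_1$ and $p_2\sim y_2$ hold on $h$.

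Two of these four types (where the agreement pattern matches for both $j=1$ and $j=2$) force $h$ to separate $y_1$ from $y_2$; the remaining two force $B(x_1,r)\subset h^-$ (so $x_1\in h^-$) or $B(x_2,r)\subset h^+$ (so $x_2\in h^+$). Using the total order on $c$ coming from the nesting, these types must appear in consecutive intervals along $c$: a central block of walls that separate $y_1$ from $y_2$ (which, again by nesting, can only involve one of the two such types), flanked by at most two peripheral blocks of the remaining types. The central block is a sub-chain of $c$, hence in $\C$, and separates $y_1$ from $y_2$, so has size at most $\dist_\C(y_1,y_2)$. When both peripheral blocks are non-empty, the conclusions $x_1\in h^-$ and $x_2\in h^+$ propagate through every wall of $c$ by nesting, so $c$ itself separates $x_1$ from $x_2$ and $n\le\dist_\C(x_1,x_2)$, which gives the claimed bound without even invoking $m$.

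The hard part, and the reason for the $3m$ error term, is the remaining case where at least one peripheral block is empty. Here the plan is to invoke the $m$-gluability of $\C$ to extend $c$ on its deficient end(s). Specifically, on any end where the peripheral block is missing, I would glue $c$ to a maximal chain in $\C$ separating $x_i$ from $p_i$; by Proposition~\ref{prop:nwp_rough_geodesic} the auxiliary chain has length at most $r$, and the fact that $p_i\in[x_i,y_i]$ (a consequence of $p_i$ being a gate of $y_i$ to a set containing $x_i$) ensures the two chains nest into a single chain, so Definition~\ref{def:gluable} applies with a loss of at most $m$ walls. Performing this on either deficient end and using one further gluing to reattach the central $y$-separating block yields a chain in $\C$ separating $x_1$ from $x_2$ whose length is at least $n-(\text{central block size})-3m$. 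Combining this with the central-block bound gives $n\le\max(\dist_\C(x_1,x_2),\dist_\C(y_1,y_2))+3m$. The delicate step, which I expect to be the main obstacle, is verifying the nesting and orientation hypotheses of Definition~\ref{def:gluable} at each gluing junction to ensure the glued chain genuinely lies in $\C$.
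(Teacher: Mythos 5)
Your reduction of the first assertion to Proposition~\ref{prop:nwp_rough_geodesic} is fine, and your four-type classification of the walls of $c$ — together with the observations that the types occur in consecutive blocks and that when both peripheral blocks are nonempty every wall of $c$ separates $x_1$ from $x_2$ — is a correct reorganisation of the decomposition $c=c_x\cup c_y\cup c'$ used in the paper. The gap is in the remaining case, which is the actual content of the proposition. A peripheral wall $h$ on a deficient end satisfies (up to symmetry) $x_1,x_2,\sigma_{x_1y_1}(r)\in h^-$ and $y_1,y_2,\sigma_{x_2y_2}(r)\in h^+$: this is exactly the paper's set $c'$, the walls separating neither $x_1$ from $x_2$ nor $y_1$ from $y_2$. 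Any wall $k$ appended after $h$ in a chain has $k^+\subset h^+$, so if $x_2\in k^+$ then $x_2\in h^+$ — contradicting $x_2\in h^-$. Hence the glued chain you describe cannot exist: no chain containing a $c'$--wall separates $x_1$ from $x_2$. The supporting claim that ``$p_i\in[x_i,y_i]$ ensures the two chains nest'' is also false: a chain realising $\dist_\C(x_i,p_i)$ may consist of walls crossing every wall of $c$ (in a grid, the walls between $x_2$ and $p_2$ can be orthogonal to those between $p_1$ and $p_2$). Finally, even granting the gluing, your inequality $\dist_\C(x_1,x_2)\ge n-|M|-3m$ combined with $|M|\le\dist_\C(y_1,y_2)$ only yields $n\le\dist_\C(x_1,x_2)+\dist_\C(y_1,y_2)+3m$, not the stated maximum.

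The missing idea is a cancellation of the radius $r$. The walls of $c'$ lie beyond $B(x_1,r)$ on the $x_1$--side yet still separate $x_2$ from $z^2_r=\sigma_{x_2y_2}(r)$, and $\dist_\C(x_2,z^2_r)\le r$. The paper exploits this by gating $z^1_r$ across the minimal wall $h$ of $c'$ to a point $w$; Lemma~\ref{lem:one_sided_consistent} and Proposition~\ref{prop:nwp_rough_geodesic} give $\dist_\C(x_1,w)\ge r+|c_x|+1-2m$, and a chain $b$ realising this distance does satisfy the hypotheses of Definition~\ref{def:gluable} against $(c'\ssm\{h\})\cup c_y$, because every element of $b$ separates $x_1$ from the entire halfspace containing $w$. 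The resulting chain separates $x_1$ from $z^2_r$ and has length at least $r+|c|-3m$; discarding its at most $\dist_\C(x_1,x_2)$ walls that separate $x_1$ from $x_2$ and comparing with $\dist_\C(x_2,z^2_r)\le r$, the two $r$'s cancel and one obtains $|c|\le\dist_\C(x_1,x_2)+3m$ whenever $c'\ne\varnothing$ — bounding the \emph{whole} chain, central block included, by the $x$--distance alone. A separate but similar gating argument is needed even when $c'=\varnothing$ but $c_x$ and $c_y$ interleave, which your case analysis also does not cover. You would need to incorporate both steps for the argument to close.
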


\begin{proof}
Let us write $\dist_\C(x_1,x_2)=R_x$ and $\dist_\C(y_1,y_2)=R_y$. After relabelling, we may assume that $\dist_\C(x_1,y_1)\le\dist_\C(x_2,y_2)$, which in turn is at most $\dist_\C(x_1,y_1)+R_x+R_y$ by the triangle inequality. Let us write $z^i_r=\sigma_{x_iy_i}(r)$.

Given $r\le\dist_\C(x_2,y_2)$, let $c\in\C$ realise $\dist_\C(z^1_r,z^2_r)$. Let $c_x\subset c$ be the subchain consisting of all elements that separate $x_1$ from $x_2$, of which there are at most $R_x$. Define $c_y$ similarly, and let $c'=c\ssm(c_x\cup c_y)$, which has $|c\ssm c'|\le R_x+R_y$. Because $z^i_r\in[x_i,y_i]$, every $h\in P$ that separates $z^i_r$ from either $x_i$ or $y_i$ also separates $x_i$ from $y_i$. Thus every element of $c'$ either separates $\{z^1_r,x_1,x_2\}$ from $\{z^2_r,y_2,y_1\}$ or separates $\{z^1_r,y_1,y_2\}$ from $\{z^2_r,x_2,x_1\}$. Because $c$ is a chain, only one of these options can occur. The two cases are similar, so let us assume the former. See Figure~\ref{fig:fellow_travel}. 

\begin{figure}[ht]
\includegraphics[width=7cm]{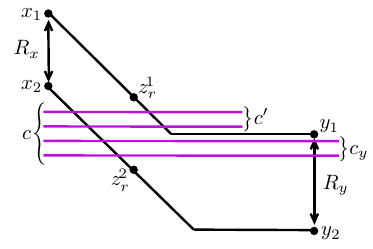}\centering
\caption{The configuration considered in the proof of Proposition \ref{prop:bicombing}.}  \label{fig:fellow_travel}
\end{figure}

Suppose that $c'\ne\varnothing$. Because $c$ is a chain, the sets $c_x$ and $c_y$ are disjoint. Let $h$ be the minimal element of $c'$: it separates $c'\ssm\{h\}$ from $x_2$. We know that $h$ separates $z^1_r$ from $y_1$. The set $H=\{v\in X_\C\,:\,v(h)\ne z^1_r(h)\}$ is gated, so we can consider the point $w=\g_H(z^1_r)\in H$. Every wall separating $z^1_r$ from $w$ separates $z^1_r$ from $H$, by Lemma~\ref{lem:gate_closest_point}. Also, since every element of $c_x$ separates $z^1_r$ from $c'$, it also separates $z^1_r$ from $w$, and from this we see that $\dist_\C(z^1_r,w)\ge|c_x|+1$. 

Note that $w\in[z^1_r,y_1]$, because every wall separating $z^1_r$ from $w$ also separates $z^1_r$ from $y_1$. Because $z^1_r=\sigma_{x_1y_1}(r)$, Lemma~\ref{lem:one_sided_consistent} tells us that $\sigma_{x_1w}(t)=z^1_t$ for $t\le r$. By Proposition~\ref{prop:nwp_rough_geodesic}, we have
\[
\dist_\C(x_1,w) \,\ge\, \dist_\C(x_1,z^1_r)+\dist_\C(z^1_r,w)-m \,\ge\, r+|c_x|+1-2m.
\]
From the fact that $w\in[z^1_r,y_1]\ssm\{z^1_r\}$, we also have that $\dist_\C(x_1,w)\ge r+1$.

Let $b\in\C$ realise $\dist_\C(x_1,w)$. Because $|b|\ge r+1$, at least one element of $b$ separates $z^1_r$ from $w$. It follows that $b$ separates $x_1$ from $H$, so $b\cup(c'\ssm\{h\})\cup c_y$ is a chain separating $x_1$ from $z^2_r$. By $m$--gluability of $\C$, there is some element of $\C$ that can be obtained from $b\cup(c'\ssm\{h\})\cup c_y$ by removing at most $m$ elements. Letting $b_x$ be the subchain of $b$ consisting of all elements separating $x_1$ from $x_2$, which has cardinality at most $R_x$, we therefore have
\begin{align*}
r \,\ge\, \dist_\C(x_2,z^2_r) \,&\ge\, |b\cup(c'\ssm\{h\})\cup c_y| - m - |b_x| \\
	&=\, \dist_\C(x_1,w)+(|c|-|c_x|-1)-m-|b_x| \\
    &\ge\, (r+|c_x|+1-2m)+|c|-|c_x|-1-m-R_x \,=\, r+|c|-3m-R_x.
\end{align*}
This implies that $|c|\le R_x+3m$. We have shown that if $c'\ne\varnothing$, then $\dist_\C(z^1_r,z^2_r)\le R_x+3m$.

\medskip

Now suppose that $c'=\varnothing$ but $|c|>m+\max\{R_x,R_y\}$. As $c=c_x\cup c_y$, each element of $c$ either separates $x_1$ from $x_2$ or separates $y_1$ from $y_2$ (possibly both). Since $|c|>R_y$, at least one element of $c$ lies in $c_x\ssm c_y$. Any such element $h$ must either separate $x_1$ from $\{x_2,y_1,y_2\}$ or $x_2$ from $\{x_1,y_1,y_2\}$. Since the argument is similar in either case, let us assume that the former holds. Given this, let $h\in c$ be the minimal element of $c$: it separates $z^1_r$ from $c\ssm\{h\}$, and necessarily lies in $c_x\ssm c_y$. Similarly to the $c'\ne\varnothing$ case, let $H=\{v\in X\,:\,v(h)\ne z^1_r(h)\}$ and let $w=\g_H(z^1_r)\in H$. 

Because $h$ separates $x_1$ from $y_1$, we have $w\in[z^1_r,y_1]$, and so $\dist_\C(x_1,w)>r$. Let $b\in\C$ realise $\dist_\C(x_1,w)$. Every element of $b$ separates $x_1$ from $H$, and thus at least one separates $z_r$ from $H$. In particular, $b\cup c\ssm\{h\}$ is a chain separating $x_1$ from $z^2_r$. By $m$--gluability of $\C$, there is some element of $\C$ that can be obtained from $b\cup c\ssm\{h\}$ by removing at most $m$ elements. We therefore compute
\begin{align*}
\dist_\C(x_2,z^2_r) \,&\ge\, \dist_\C(x_1,z^2_r)-\dist_\C(x_1,x_2) \\
	&\ge\, (|b|+|c|-1-m)-R_x \\
    &\ge\, (r+1)+(m+1+\max\{R_x,R_y\})-1-m-R_x \,\ge\, r+1,
\end{align*}
which is a contradiction. Hence $|c|\le m+\max\{R_x,R_y\}$ if $c'=\varnothing$.
\end{proof}

One cannot expect stronger properties of the bicombing: for normal cube paths in CAT(0) cube complexes the inequality of Proposition~\ref{prop:bicombing} is optimal, as $m=0$ in that case \cite{nibloreeves:geometry}.

\begin{definition}
A metric space $Y$ is \emph{$k$--coarsely injective} if for each collection of balls $B(x_i,r_i)$ with $r_i+r_j\ge\dist(x_i,x_j)$ for all $i,j$, there is some point $z\in\bigcap B(x_i,r_i+k)$. A graph is \emph{Helly} if this same property holds with $k=0$ when one considers only balls of integer radius centred on vertices.
\end{definition}

\begin{theorem}[Coarsely injective] \label{thm:coarsely_injective}
Let $\C$ be an $m$--gluable system of chains. If $m>0$, then $X_\C$ is $2m$--coarsely injective. If $m=0$, then $X_\C$ is a (not necessarily locally finite) Helly graph.
\end{theorem}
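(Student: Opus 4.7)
The strategy for both parts is to realize the witness point $z$ as an ultrafilter in $\hat X$, invoking Lemma~\ref{lem:ultrafilter_lemma} once I specify a consistent family of halfspaces. The key input is Lemma~\ref{lem:convex_balls}: for integer $r$, the ball $B(x,r)$ is cut out in $\hat X$ by the halfspaces $\{x(h)\,:\,h\in Q(x,r)\}$, where $Q(x,r)$ is the set of walls $h$ separated from $x$ by some $c\in\C$ with $|c|\ge r$. Since $\dist_\C$ is integer-valued, I can replace each $r_i$ by $\tilde{r}_i:=\lfloor r_i\rfloor$ and aim to produce $z$ agreeing with each $x_i$ on $Q_i:=Q(x_i,\tilde{r}_i+2m)$, which will place $z\in B(x_i,\tilde{r}_i+2m)\subseteq B(x_i,r_i+2m)$.

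When $m\ge1$, the main task is consistency. Suppose $h\in Q_i\cap Q_j$ but $x_i(h)\ne x_j(h)$: there would be chains $c_i,c_j\in\C$ with $|c_i|\ge\tilde{r}_i+2m$ and $|c_j|\ge\tilde{r}_j+2m$ separating $x_i,x_j$ from $h$ on opposite sides, so $c_i\cup c_j$ forms a chain satisfying the nesting hypotheses of Definition~\ref{def:gluable}. Applying $m$-gluability then produces an element of $\C$ of size at least $\tilde{r}_i+\tilde{r}_j+3m$, every wall of which separates $x_i$ from $x_j$. This gives $\dist_\C(x_i,x_j)\ge\tilde{r}_i+\tilde{r}_j+3m$, whereas the hypothesis $\dist_\C(x_i,x_j)\le r_i+r_j$ combined with integrality of $\dist_\C$ forces $\dist_\C(x_i,x_j)\le\tilde{r}_i+\tilde{r}_j+1$, contradicting $3m\ge3$. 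With consistency established, the ultrafilter lemma provides $z\in\hat X$, and $z\in X_\C$ because $\dist_\C(z,x_i)\le\tilde{r}_i+2m<\infty$.

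For $m=0$, the remark following Definition~\ref{def:gluable} identifies $\C$ with the set of all chains in $P$, and the $0$-weak rough geodesic conclusion of Proposition~\ref{prop:nwp_rough_geodesic} promotes normal wall paths to genuine discrete geodesics, so $X_\C$ is the vertex set of a graph with $\dist_\C$ as graph metric. The input for the Helly property is a family of integer-radius balls with pairwise nonempty intersections, and consistency of the prescribed orientations is now automatic: for any $h\in Q(x_i,r_i)\cap Q(x_j,r_j)$, the two balls lie on the sides $x_i(h)$ and $x_j(h)$, and if these disagreed the balls could not intersect, contrary to hypothesis. The ultrafilter lemma again delivers $z\in\bigcap B(x_i,r_i)$ inside $X_\C$. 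The only substantive step in either case is the consistency argument; the buffer $2m$ in the coarse case is chosen so that the $m$ walls lost to gluing and the up to $1$ wall lost to integer rounding are overwhelmed by the $4m$ extra walls supplied by the two stretched radii.
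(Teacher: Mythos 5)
Your overall strategy is the same as the paper's: define $Q_i$ as the walls cut off from $x_i$ by a chain of length at least $r_i+2m$, orient each such wall towards $x_i$, verify consistency using gluability, and extend to an ultrafilter via Lemma~\ref{lem:ultrafilter_lemma}. The accounting in your well-definedness check is correct (one gluing costs $m$ against the $4m$ surplus), and your observation that the $m=0$ case reduces to $P$-convexity of balls plus discrete geodesicity is a clean variant of the paper's direct chain-concatenation argument.

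However, there is a genuine gap: you only verify that the orientation $\phi$ is \emph{well-defined}, i.e.\ that a single wall $h$ lying in $Q_i\cap Q_j$ receives the same orientation from $x_i$ and $x_j$. That is not enough to invoke Lemma~\ref{lem:ultrafilter_lemma}, which requires $\phi$ to be a \emph{filter} in the sense of Definition~\ref{def:filter}: for two \emph{distinct} walls $w_i\in Q_i$ and $w_j\in Q_j$ with $w_i^+\subset w_j^+$ and $\phi(w_i)=w_i^+$, one must show $\phi(w_j)=w_j^+$. A well-defined assignment of halfspaces need not satisfy this nesting condition, and without it the extension to an ultrafilter $z$ is unjustified. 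The paper devotes a separate paragraph to exactly this step: assuming $\phi(w_j)=w_j^-$, the wall $w_j$ separates $c_j$ from $w_i\cup c_i$, and three applications of $m$-gluability to $c_i\cup\{w_i,w_j\}\cup c_j$ produce an element of $\C$ of length at least $r_i+r_j+2$ separating $x_i$ from $x_j$, contradicting $\dist_\C(x_i,x_j)\le r_i+r_j$. (In your $m=0$ case the filter condition can alternatively be deduced from the fact that each ball is contained in a single halfspace of each wall of its $Q_i$ and the balls pairwise intersect, but you would need to say this; as written, your ``consistency is automatic'' claim addresses only the single-wall case.) With this step added, your argument goes through with the same constants.
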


\begin{proof}
Let $\{B_X(x_i,r_i)\}$ be a collection of balls with the property that $r_i+r_j\ge\dist_\C(x_i,x_j)$ for all $i,j$. If $m=0$, assume that each $r_i$ is an integer. (As noted above, if $m=0$ then $\C$ is the set of all chains and $X_\C$ is the vertex set of a graph.) For each $i$, let 
\[
Q_i \,=\, \{w\in P\,:\, \text{ there is some }c\in\C\text{ separating }x_i\text{ from }w\text{ with }|c|\ge r_i+2m\}.
\]
Define an orientation $\phi$ of $\bigcup Q_i$ by pointing each $w\in Q_i$ towards $x_i$. 

We must first show that $\phi$ is well defined. For this, suppose that $w\in Q_i\cap Q_j$. Let $c_i,c_j\in \C$ be given by the definitions of $Q_i$ and $Q_j$, so that $|c_i|\ge r_i+2m$ and $|c_j|\ge r_j+2m$. If $w$ separates $c_i$ from $c_j$, then $c_i\cup\{w\}\cup c_j$ is a chain. But then by twice applying $m$--gluability of $\C$, we find some $b\subset c_i\cup\{w\}\cup c_j$ such that $b\in\C$ and $|b|\ge|c_i|+1+|c_j|-2m\ge r_i+r_j+1$. As $b$ separates $x_i$ from $x_j$, this contradicts our assumption that $r_i+r_j\ge\dist_\C(x_i,x_j)$. Thus $w$ cannot separate $c_i$ from $c_j$, and so cannot separate $x_i$ from $x_j$. This shows that $\phi$ is well defined.

Next we show that $\phi$ is a filter. Suppose that we have walls $w_i\in Q_i$ and $w_j\in Q_j$ such that $w_i^+\subset w_j^+$ and $\phi(w_i)=w_i^+$. Let $c_i$ and $c_j$ be given by the definitions of $Q_i$ and $Q_j$. If $\phi(w_j)=w_j^-$, then $w_j$ separates $c_j$ from $w_i$ and $c_i$. With three applications of $m$--gluability of $\C$, we find some $b\subset c_i\cup\{w_i,w_j\}\cup c_j$ such that $b\in\C$ and $|b|\ge|c_i|+2+|c_j|-3m\ge r_i+r_j+2$. This contradicts the assumption that $r_i+r_j\ge\dist_\C(x_i,x_j)$. Thus $\phi(w_j)=w_j^+$, which shows that $\phi$ is a filter.

Let $z$ be an ultrafilter extending $\phi$. By construction, for each $i$ there is no element of $\C$ of length greater than $r_i+2m$ that separates $z$ from $x_i$. In other words, $z\in\bigcap B_X(x_i,r_i+2m)$, as desired.
\end{proof}

Theorem~\ref{thm:coarsely_injective} also implies the existence of a good bicombing on $X_\C$, thanks to work of Lang \cite{lang:injective}. While less related to the wall structure, it has the advantage of being symmetric and \emph{roughly conical}, and so will not in general be the same as the bicombing by normal wall paths.

\section{Producing hyperbolic spaces} 

One use for the construction in Section~\ref{sec:construction} is to produce hyperbolic spaces that can help study a given space. For that we need a source of negative curvature, which is provided by \emph{$L$--separation}. There may be many different natural hyperbolic spaces that one could produce in this way, and Section~\ref{subsec:unification} gives a way to combine them into a single hyperbolic space subsuming them.

\subsection{Separated systems} \label{subsec:relative_separation}

Recall that walls $h_1$ and $h_2$ are said to cross if all four quarterspaces $h_1^\pm\cap h_2^\pm$ are nonempty in $S$.

\begin{definition}
We say that a system of chains $\C$ is \emph{$L$--separated} if the following holds for every $\{h_1,h_2\}\in\C$. If $c\in\C$ is such that every $h\in c$ crosses both $h_1$ and $h_2$, then $|c|\le L$. 
\end{definition}

\begin{remark}
In many situations, something stronger than $L$--separation holds. Indeed, there may be some larger set $\cal D\subset 2^P$ with the property that for every $\{h_1,h_2\}\in\C$, if every element of $d\in\cal D$ crosses both $h_1$ and $h_2$, then $|d|\le L$. It may even be that one can take $\cal D$ to be the set of all chains in $P$, as is the case in \cite{genevois:hyperbolicities} and \cite{petytsprianozalloum:hyperbolic}. Whilst these stronger properties may be useful, they are not needed for the arguments in this section.
\end{remark}

We make a simple observation regarding elements of a gluable, separated system.

\begin{lemma}[Gluing] \label{lem:gluing}
Suppose that $\C$ is an $L$--separated, $m$--gluable system of chains. If $c=\{\dots,h_{-1}\}$ and $c'=\{k_1,\dots\}$ are elements of $\C$ such that 
\[
h_{-1}^+\cap k_j^\pm\ne\varnothing \quad\text{and}\quad h_i^\pm\cap k_1^-\ne\varnothing,
\]
for all $i,j$, then there is a subset $b\subset\{h_{-m-1},\dots,h_{-1},k_1,\dots, k_{L+m}\}$ of cardinality at most $L+m+1$ such that $c\cup c'\ssm b\in\C$.
\end{lemma}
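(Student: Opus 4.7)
The plan is to carve a chain in $P$ out of $c \cup c'$ by deleting a controlled set of walls near the join, and then to apply $m$-gluability (Definition~\ref{def:gluable}) to land in $\C$. The combinatorial heart of the argument is a staircase property for the crossing region between $c$ and $c'$, together with a single application of $L$-separation.

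First, I would unpack the hypotheses to classify the interaction between walls of $c$ and walls of $c'$. Using $h_{-1}^+ \cap k_j^\pm \ne \varnothing$ and $h_i^\pm \cap k_1^- \ne \varnothing$, together with the chain relations $h_{-1}^+ \subset h_i^+$ and $k_1^- \subset k_j^-$ for $i \le -1$ and $j \ge 1$, three of the four quarterspaces $h_i^\pm \cap k_j^\pm$ are automatically nonempty; only $h_i^- \cap k_j^+$ is in question. So for each such $(i,j)$, either $h_i$ and $k_j$ cross, or $h_i^- \subset k_j^-$ (the correct chain nesting). Combined with the chain structures, this yields a staircase property: if $h_i$ crosses $k_j$, then so does every pair $(i', j')$ with $i \le i' \le -1$ and $1 \le j' \le j$.

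Assuming $|c| \ge 2$ and $|c'| \ge L+1$ (the degenerate cases are trivial, as explained below), I would then apply $L$-separation to the 2-chain $\{h_{-2}, h_{-1}\} \in \C$. The walls of $c'$ that cross both $h_{-2}$ and $h_{-1}$ form a subchain of $c'$, hence lie in $\C$, so they number at most $L$; by the staircase this set coincides with $\{k_j : k_j \text{ crosses } h_{-2}\}$. Consequently, for every $j > L$, the wall $k_j$ crosses no $h_i$ with $i \le -2$, so by the dichotomy $h_i^- \subset k_j^-$ for all such pairs. Therefore, after deleting $h_{-1}$ from $c$ and the initial segment $k_1, \dots, k_L$ from $c'$, the remaining set $c_1 \cup c_2 := (c \ssm \{h_{-1}\}) \cup \{k_{L+1}, k_{L+2}, \dots\}$ is a chain in $P$ with the correct halfspace ordering.

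Since $\C$ is closed under subsets, both $c_1$ and $c_2$ lie in $\C$, and the nesting makes the positional hypotheses of Definition~\ref{def:gluable} hold. Applying $m$-gluability of $\C$ produces a subset $b' \subset \{h_{-m-1}, \dots, h_{-2}, k_{L+1}, \dots, k_{L+m}\}$ of size at most $m$ with $(c_1 \cup c_2) \ssm b' \in \C$. Setting $b = \{h_{-1}, k_1, \dots, k_L\} \cup b'$ gives a subset of $\{h_{-m-1}, \dots, h_{-1}, k_1, \dots, k_{L+m}\}$ of size at most $L + m + 1$ with $(c \cup c') \ssm b \in \C$, as required. The degenerate cases $|c| = 1$ (take $b = \{h_{-1}\}$) and $|c'| \le L$ (take $b = c'$) are immediate, since then $c$ or $c'$ alone already lies in $\C$. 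The main obstacle is executing the staircase argument and picking the right 2-chain for $L$-separation: taking $\{h_{-2}, h_{-1}\}$ controls crossings ``just below'' $h_{-1}$, and it is the staircase that converts this into control over all remaining crossings, making the bound $L + m + 1$ come out exactly.
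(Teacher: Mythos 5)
Your proof is correct and follows essentially the same route as the paper's: apply $L$--separation to the pair $\{h_{-2},h_{-1}\}$ (via the staircase property of crossings near the join) to conclude that no $h_i$ with $i\le-2$ crosses any $k_j$ with $j\ge L+1$, then finish with $m$--gluability. If anything, your bookkeeping is slightly sharper — the paper discards $k_1,\dots,k_{L+1}$ where discarding $k_1,\dots,k_L$ suffices, and your count lands exactly on $L+m+1$.
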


\begin{proof}
The wall $h_{-2}$ cannot cross $k_{L+1}$, for then so would $h_{-1}$, contradicting $L$--separation of $c$. Hence $c\ssm\{h_{-1}\}$ and $c'\smallsetminus\{k_1,\dots,k_L\}$ satisfy the hypotheses of the $m$--gluability assumption on $\C$.
\end{proof}

With this in hand, we turn to showing that $X_\C$ is hyperbolic.

\begin{definition}[Cross chain] \label{def:cross_chain}
Let $\C$ be a system of chains. A $\times$--chain (cross chain) for $x_1,x_2,x_3,x_4\in X_\C$ is a subset $\chi\subset P$ with a decomposition $\chi=\bigsqcup_{i=1}^4\chi_i$ such that every element of $\chi_i$ separates $x_i$ from $\{x_{i+1},x_{i+2},x_{i+3}\}$; and such that $\chi_i\cup\chi_j\in\C$ for all $i,j$.

We say that a $\times$--chain is \emph{maximal} if it is not a proper subset of another $\times$--chain.
\end{definition}

Note that the notion of $\times$--chain makes sense even when $\C$ is not separated. However, it is at its most useful in this setting, because maximal $\times$--chains can be effectively compared with chains realising the distances between the defining points.

\begin{lemma} \label{lem:cross_vs_L}
Let $\C$ be an $L$--separated, $m$--gluable system of chains. If $c\in\C$ realises $\dist_\C(x_1,x_2)$ and $\chi$ is a maximal $\times$--chain for $x_1,x_2,x_3,x_4\in X_\C$, then
\[
|\chi_1|+|\chi_2|-2(L+m+1) \,\le\, |\{h\in c\,:\, h\text{ does not separate }x_3\text{ from }x_4\}|
    \,\le\, |\chi_1|+|\chi_2|+4(L+m+1).
\]
\end{lemma}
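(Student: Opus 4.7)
The plan is to partition the chain $c$ according to how each wall treats $\{x_3,x_4\}$, and compare each piece separately to the corresponding part of $\chi$. Write $c=c^+\sqcup c^*\sqcup c^-$, where $c^+$ consists of walls of $c$ putting both $x_3,x_4$ on the $x_1$-side (so they separate $x_2$ from $\{x_1,x_3,x_4\}$, matching the type of $\chi_2$), $c^-$ is the symmetric analogue for $\chi_1$, and $c^*$ consists of walls of $c$ separating $x_3$ from $x_4$. So $c'=c^+\sqcup c^-$, and each of $c^\pm$ is a subchain of $c$, hence lies in $\C$. I aim to prove $|\chi_2|-(L+m+1)\le|c^+|\le|\chi_2|+2(L+m+1)$, and the symmetric statement for $c^-$ and $\chi_1$; summing yields the stated inequalities, with total errors $2(L+m+1)$ below and $4(L+m+1)$ above.

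For the lower bound, suppose $|c^+|<|\chi_2|-(L+m+1)$. Since $c^+$ and $\chi_2$ are chains of the same type (both separating $x_2$ from $\{x_1,x_3,x_4\}$), I would use Lemma~\ref{lem:gluing} to graft the surplus portion of $\chi_2$ onto $c$ at the $c^+$--$c^*$ interface, losing at most $L+m+1$ walls in the gluing. The resulting chain lies in $\C$, still separates $x_1$ from $x_2$, and has length strictly greater than $|c|$, contradicting $|c|=\dist_\C(x_1,x_2)$.

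For the upper bound, suppose $|c^+|>|\chi_2|+2(L+m+1)$ and exploit maximality of $\chi$. Extending $\chi_2$ by a surplus portion of $c^+$ produces a candidate $\chi'_2$. To turn $(\chi_1,\chi'_2,\chi_3,\chi_4)$ into a $\times$-chain I need $\chi'_2\cup\chi_j\in\C$ for $j\ne2$; each of the cases $j=1$ and $j\in\{3,4\}$ is handled by one application of Lemma~\ref{lem:gluing}, costing up to $L+m+1$ walls. Hence after discarding at most $2(L+m+1)$ walls from $c^+$, the remainder gives a strictly larger $\times$-chain, contradicting the maximality of $\chi$.

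The main obstacle is verifying, at each application of Lemma~\ref{lem:gluing}, that the chain-ordering hypotheses on halfspaces actually hold: this requires choosing the ``gluing endpoint'' as the appropriate extreme wall of $\chi_2$ (the one deepest on the $x_2$-side) and using the chain order on $c$ to situate the junction correctly. Walls in $c^*$ present a subtle point, as they sit between $c^+$ and $c^-$ in the chain $c$ and must not disrupt the grafting; restricting attention to the $c^+$--$c^*$ boundary sidesteps this. Finally, the factor-of-two asymmetry between the upper and lower bounds traces back precisely to the fact that the upper bound uses Lemma~\ref{lem:gluing} twice per piece—once to certify compatibility with $\chi_1$ and once with $\chi_3\cup\chi_4$—whereas the lower bound invokes it only once.
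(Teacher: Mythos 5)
Your decomposition of $c$ into $c^-\sqcup c^*\sqcup c^+$ is exactly the paper's, but both halves of your argument rely on a ``grafting'' mechanism that does not work as described. You cannot attach a ``surplus portion'' of $\chi_2$ to $c$ while keeping $c^+$ (nor extend $\chi_2$ by a surplus of $c^+$): the walls of $\chi_2$ and of $c^+$ all separate $x_2$ from $\{x_1,x_3,x_4\}$ and may cross one another arbitrarily, so there is no well-defined surplus of one that prolongs the other as a chain. For the lower bound the fix is easy and recovers your constant: discard $c^+$ wholesale and apply Lemma~\ref{lem:gluing} once to the pair $(c^-\cup c^*,\,\chi_2)$ (the junction hypotheses are witnessed by the four points $x_i$). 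This produces an element of $\C$ separating $x_1$ from $x_2$ of size at least $|c^-|+|c^*|+|\chi_2|-(L+m+1)$; since $c$ realises $\dist_\C(x_1,x_2)$, this forces $|c^+|\ge|\chi_2|-(L+m+1)$, and summing the two sides gives the stated $2(L+m+1)$. (The paper does both sides in one step by gluing $\chi_1\cup c^*\cup\chi_2$.)

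The upper bound has a genuine counting gap that the wholesale-replacement fix does not repair. Replacing only $\chi_2$ by a trimmed $c^+$ while keeping $\chi_1,\chi_3,\chi_4$ requires certifying \emph{three} new pairwise conditions, namely $\chi_2'\cup\chi_1\in\C$, $\chi_2'\cup\chi_3\in\C$, and $\chi_2'\cup\chi_4\in\C$. The cases $j=3$ and $j=4$ cannot be merged into one application of Lemma~\ref{lem:gluing}: they are separate gluings at separate junctions, and the lemma does not allow you to confine the deleted walls to $\chi_2'$ (the $m$-gluability step may remove walls of $\chi_3$ or $\chi_4$). Done correctly, each side costs $3(L+m+1)$, so your route yields $r\le|\chi_1|+|\chi_2|+6(L+m+1)$, which is weaker than the stated inequality. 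The paper avoids the extra gluings by substituting for $\chi_1$ and $\chi_2$ \emph{simultaneously}: the candidate $\times$--chain is $(c^-,c^+,\chi_3,\chi_4)$ after trimming, where the condition $c^-\cup c^+\in\C$ is free because $c^-\cup c^+$ is a subchain of $c$, leaving only the four gluings $c^\pm\cup\chi_3$ and $c^\pm\cup\chi_4$ and hence the constant $4(L+m+1)$. You should adopt this simultaneous substitution for the upper bound.
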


\begin{proof}
Let $r=|\{h\in c\,:\, h\text{ does not separate }x_3\text{ from }x_4\}|$. Let $c_1$ be the subset of $c$ consisting of those elements that do not separate $x_2$ from either $x_3$ or $x_4$, and define $c_2$ similarly, so that $r=|c_1|+|c_2|$. See Figure~\ref{fig:cross_chain}. By Lemma~\ref{lem:gluing}, after removing at most $L+m+1$ elements of each of $c_1$, $c_2$, $\chi_3$, and $\chi_4$, we obtain a $\times$--chain. Maximality of $\chi$ then implies that $r\le|\chi_1|+|\chi_2|+4(L+m+1)$. 

\begin{figure}[ht]
\includegraphics[width=12cm, trim = 0cm 6.5cm 0cm 4.5cm, clip]{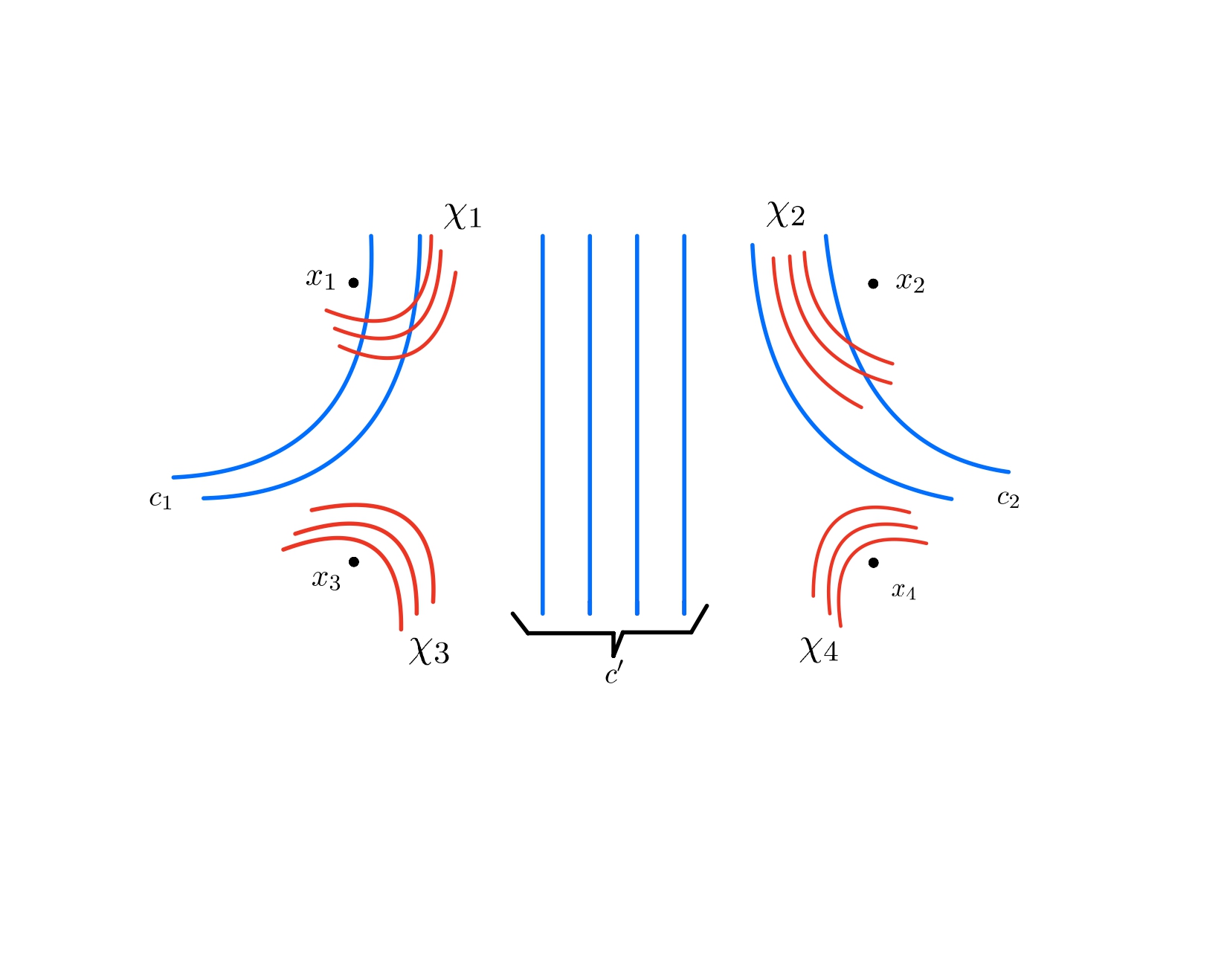}\centering
\caption{Lemma~\ref{lem:cross_vs_L}. $\,\chi$ is a union of ``corners''.} \label{fig:cross_chain}
\end{figure}

On the other hand, let $c'=c\smallsetminus(c_1\cup c_2)$ be the subset of $c$ consisting of those elements that separate $x_3$ from $x_4$. After relabelling, every element of $c'$ separates $\{x_1,x_3\}$ from $\{x_2,x_4\}$. From Lemma~\ref{lem:gluing} and the fact that $\chi_1\cup\chi_2\in\C$, there is some $b\subset\chi_1\cup c'\cup\chi_2$ such that $b\in\C$ separates $x_1$ from $x_2$ and $|b|\ge|\chi_1|+|c'|+|\chi_2|-2(L+m+1)$. Since $c$ realises $\dist_\C(x_1,x_2)$, we have $|b|\le|c|$, so $r\ge|\chi_1|+|\chi_2|-2(L+m+1)$.
\end{proof}

A metric space $(Y,\dist)$ is said to be \emph{four-point hyperbolic} with constant $\delta$ if for every $x_1,x_2,x_3,x_4\in Y$ we have $\dist(x_1,x_2)+\dist(x_3,x_4)\le\delta+\max\{\dist(x_1,x_3)+\dist(x_2,x_4),\,\dist(x_1,x_4)+\dist(x_2,x_3)\}$.

\begin{proposition} \label{prop:four-point}
If $\C$ is an $L$--separated, $m$--gluable set of chains, then the metric $\dist_\C$ is four-point hyperbolic with constant $30(L+m+1)$.
\end{proposition}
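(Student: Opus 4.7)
The plan is to fix a maximal $\times$-chain $\chi = \chi_1 \sqcup \cdots \sqcup \chi_4$ for $x_1,x_2,x_3,x_4$ with $n_i = |\chi_i|$, choose chains $c_{ij} \in \C$ realising $d_{ij}$ for each pair, and apply Lemma~\ref{lem:cross_vs_L} simultaneously to all six pairs. Writing $K = L + m + 1$ and letting $q_{ij} \subset c_{ij}$ be the subchain of walls that \emph{also} separate the complementary pair $\{x_k,x_l\}$, the lemma gives $d_{ij} - (n_i + n_j + |q_{ij}|) \in [-4K,\,2K]$. Summing these for the two pairs in each of $A = d_{12}+d_{34}$ and $B = d_{13}+d_{24}$, and assuming WLOG $A \ge B \ge C$, the desired inequality reduces to the combinatorial bound
\[
|q_{12}| + |q_{34}| - \bigl(|q_{13}| + |q_{24}|\bigr) \,\le\, 10K,
\]
together with an analogous inequality when $\max(B,C) = C$.

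The key structural observation is that each wall of any $q_{ij}$ separates all four points $x_i$, and so induces a 2--2 partition of exactly one of the three types $\tau_A = x_1 x_2 | x_3 x_4$, $\tau_B = x_1 x_3 | x_2 x_4$, or $\tau_C = x_1 x_4 | x_2 x_3$. A direct check of the four quarterspaces shows that any two walls of distinct $\tau$-types cross, so within the chain $c_{ij}$ the $\tau$-walls share a single type, which must be one of the two types separating $x_i$ from $x_j$. Combined with $L$-separation, this yields: whenever two of the chains $q_{ij}$ have distinct $\tau$-types and each has at least two elements, both have size at most $L$.

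The proof then splits on the $\tau$-types of $q_{12}$ and $q_{34}$, both of which lie in $\{\tau_B, \tau_C\}$. If they differ, the observation above immediately gives $|q_{12}| + |q_{34}| \le O(K)$ and we are done. If they agree, say both are $\tau_B$, the strategy is to combine $q_{12} \cup q_{34}$, using $m$-gluability together with a further application of $L$-separation to absorb the crossings between walls of the two chains, into a single $\tau_B$-chain $c^* \in \C$ with $|c^*| \ge |q_{12}| + |q_{34}| - O(K)$; since $\tau_B$-walls separate $\{x_1,x_3\}$ from $\{x_2,x_4\}$, $c^*$ also separates $x_1$ from $x_4$ and $x_2$ from $x_3$, so $|c^*| \le \min(d_{14}, d_{23})$. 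Chasing the Lemma~\ref{lem:cross_vs_L} estimate for $C = d_{14}+d_{23}$, together with the WLOG $C \le A$, then bounds $|q_{12}| + |q_{34}|$ in terms of $|q_{13}| + |q_{24}|$ plus an error. The main obstacle lies in this merging step: while each chain $q_{12}, q_{34}$ is individually nested (being a subchain of some $c_{ij}$), walls from $q_{12}$ may cross walls from $q_{34}$ in the ambient wall configuration, and these crossings must be discarded carefully using $L$-separation before $m$-gluability can be applied. Keeping track of the accumulated errors---roughly $12K$ from the initial applications of Lemma~\ref{lem:cross_vs_L} and a further $10K$ absorbed by merging and the bounded-type estimates---produces the final constant $22K = 22(L+m+1)$.
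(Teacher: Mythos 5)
Your overall strategy coincides with the paper's: pass to a maximal $\times$--chain, apply Lemma~\ref{lem:cross_vs_L} to each realising chain $c_{ij}$, observe that the walls of $c_{ij}$ separating all four points carry a single $2$--$2$ partition ``type'' (because distinct types cross and $c_{ij}$ is nested), and use $L$--separation to rule out two long chains of crossing types. However, two steps of your plan are not actually carried out, and one of them is the crux of the whole argument. The smaller issue is the ``types differ'' case: your observation only bounds \emph{both} of $q_{12},q_{34}$ when each has at least two elements. If, say, $q_{12}=\varnothing$, then nothing prevents $q_{34}$ from being long, and you are back to needing the quantitative comparison $|q_{13}|,|q_{24}|\ge|q_{34}|-O(L+m)$ that you only discuss in the ``types agree'' case; so this branch is not ``immediate'' as claimed.

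The serious gap is the merging step that you yourself flag as the main obstacle. The gluability hypothesis (Definition~\ref{def:gluable}) requires $c_1\cup c_2$ to already be a chain arranged end-to-end, and Lemma~\ref{lem:gluing} only relaxes this to allow crossings in a bounded window near the junction, under explicit halfspace conditions ($h_{-1}^+\cap k_j^\pm\ne\varnothing$ for \emph{all} $j$, etc.) that two same-type chains $q_{12}$ and $q_{34}$ need not satisfy: the elements of $q_{34}$ that cross a given wall of $q_{12}$ need not form a tail or an initial segment, so there is no licence in the definitions for ``discarding the crossings and gluing the rest''. The paper's proof circumvents exactly this point. It never merges $c'_{12}$ with $c'_{34}$; instead it glues $c'_{12}$ with the two \emph{non-crossing} ends $c_{34}\ssm c'_{34}$ (which sandwich $c'_{12}$, so Lemma~\ref{lem:gluing} applies directly) to produce a competitor chain separating $x_3$ from $x_4$, and then invokes maximality of $c_{34}$ to conclude $|c'_{34}|\ge|c'_{12}|-2(L+m+1)$, and symmetrically; the same trick handles $c'_{14}$ and $c'_{23}$ (with a small extra case when $|c'_{14}|\le L$). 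That indirect comparison is the idea your write-up is missing, and without it or a substitute the error accounting for $d_{12}+d_{34}$ and $d_{14}+d_{23}$ does not close.
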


\begin{proof}
Given $x_1,x_2,x_3,x_4\in X_\C$, let $\chi$ be a maximal $\times$--chain for them. If $\dist_\C(x_i,x_j)$ and $|\chi_i|+|\chi_j|$ differ by at most $4L+4m+5$ for all $i,j$ then we are done. Otherwise we can relabel so that $\dist_\C(x_1,x_2)\ge|\chi_1|+|\chi_2|+4L+4m+6$. Let $c_{12}\in\C$ realise $\dist_\C(x_1,x_2)$, and let $c'_{12}$ be the subchain consisting of those elements that separate $x_3$ and $x_4$. By Lemma~\ref{lem:cross_vs_L} we have $|c'_{12}|\ge 2$. Perhaps after relabelling $x_3$ and $x_4$, the chain $c'_{12}$ separates $\{x_1,x_3\}$ from $\{x_2,x_4\}$. 

Now let $c_{13}\in\C$ realise $\dist_\C(x_1,x_3)$. If $|c_{13}|-|\chi_1|-|\chi_3|>5L+4m+5$, then Lemma~\ref{lem:cross_vs_L} implies the existence of a subchain $c'_{13}$ of length $L+1$ whose elements all separate $\{x_1,x_4\}$ from $\{x_2,x_3\}$, and hence cross every element of $c'_{12}$, which contradicts the assumption that $\C$ is $L$--separated. A similar argument shows that if $c_{24}\in\C$ realises $\dist_\C(x_2,x_4)$, then $|c_{24}|-|\chi_2|-|\chi_4|\le5L+4m+5$. In the other direction, we can apply Lemma~\ref{lem:gluing} to $\chi_1$ and $\chi_3$, and similarly to $\chi_2$ and $\chi_4$. This shows that $\dist_\C(x_1,x_3)+\dist_\C(x_2,x_4)$ and $|\chi|$ differ by at most $2(5L+4m+5)+2(L+m+1)$.

Now let $c_{34}\in\C$ realise $\dist_\C(x_3,x_4)$, and let $c'_{34}$ be the subchain consisting of those elements that separate $x_1$ from $x_2$. By applying Lemma~\ref{lem:gluing} twice to each of the pairs $(c'_{12},c_{34}\ssm c'_{34})$ and $(c'_{34},c_{12}\ssm c'_{12})$, one finds that $|c'_{12}|$ and $|c'_{34}|$ can differ by at most $2(L+m+1)$. Combining with Lemma~\ref{lem:cross_vs_L}, one can then compute that $\dist_\C(x_1,x_2)+\dist_\C(x_3,x_4)$ differs from $|\chi|+2|c'_{12}|$ by at most $14(L+m+1)$.

Finally consider the analogously defined $c_{14}$, $c'_{14}$, $c_{23}$, and $c'_{23}$. Both of these are treated the same, so let us just consider the former. Because $c'_{12}$ separates $x_1$ from $x_4$, we can use Lemma~\ref{lem:gluing} to see that $|c'_{14}|\ge|c'_{12}|-2(L+m+1)$. If $|c'_{14}|\le L$, then certainly $|c'_{14}|\le|c'_{12}|+L$, so suppose otherwise. If $c'_{14}$ separates $x_1$ from $x_3$ then its elements all cross $c'_{12}$, which contradicts the fact that $c'_{12}\in\C$. Hence $c'_{14}$ separates $x_1$ from $x_2$. By Lemma~\ref{lem:gluing} we thus have $|c'_{12}|\ge|c'_{14}|-2(L+m+1)$. We have shown that $|c'_{14}|$ differs from $|c'_{12}|$ by at most $2(L+m+1)$. 

By a similar argument to above, Lemma~\ref{lem:cross_vs_L} now lets us show that $\dist_\C(x_1,x_4)+\dist_\C(x_2,x_3)$ differs from $|\chi|+2|c'_{12}|$ by at most $16(L+m+1)$. 
The result follows by combining the various estimates.
\end{proof}

Together with Proposition~\ref{prop:nwp_rough_geodesic}, this shows that $X_\C$ is hyperbolic in the appropriate sense for non-geodesic spaces (\emph{cf.} \cite[Prop.~A.2]{petytsprianozalloum:hyperbolic}).

\begin{corollary} \label{cor:rough_geodesic_hyperbolic}
If $\C$ is a separated, gluable system of chains, then $X_\C$ is a roughly geodesic hyperbolic space.
\end{corollary}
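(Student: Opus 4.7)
The plan is to simply combine the two preceding results, since together they give exactly what the definition of a roughly geodesic hyperbolic space requires.

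First I would invoke Proposition~\ref{prop:nwp_rough_geodesic}: because $\C$ is $m$--gluable, the normal wall path $\sigma_{xy}$ is a $3m$--rough geodesic from $x$ to $y$ for every pair $x,y\in X_\C$. Since these paths are defined for all pairs of points in $X_\C$ and their endpoints are $x$ and $y$ themselves, this shows that $X_\C$ is a $3m$--roughly geodesic metric space.

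Next I would invoke Proposition~\ref{prop:four-point}: because $\C$ is additionally $L$--separated, the metric $\dist_\C$ satisfies the four-point condition with constant $22(L+m+1)$. Combined with the previous step, this gives that $X_\C$ is a roughly geodesic metric space satisfying the four-point hyperbolicity condition, which is precisely what it means to be a roughly geodesic hyperbolic space in the sense of \cite[Appendix~A]{petytsprianozalloum:hyperbolic}. There is no real obstacle here beyond knowing that in the roughly geodesic setting the four-point condition is equivalent to the usual notions of Gromov hyperbolicity (thin triangles along rough geodesics), a fact that is standard and referenced in the cited appendix. In short, the corollary is immediate from the combination of Propositions~\ref{prop:nwp_rough_geodesic} and~\ref{prop:four-point}.
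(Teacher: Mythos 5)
Your proposal is correct and is exactly the paper's argument: the corollary is stated as an immediate consequence of Proposition~\ref{prop:four-point} (four-point hyperbolicity from separation and gluability) together with Proposition~\ref{prop:nwp_rough_geodesic} (normal wall paths are rough geodesics), with the passage from the four-point condition to hyperbolicity for roughly geodesic spaces handled by the cited \cite[Prop.~A.2]{petytsprianozalloum:hyperbolic}. Nothing further is needed.
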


Since four-point hyperbolicity passes to subsets, $S\subset X$ is four-point hyperbolic. However, four-point hyperbolicity is rather weak on its own, so it is interesting to know when $(S,\dist_\C)$ is roughly geodesic. One can also ask the \emph{a priori} stronger question of when $S$ is coarsely dense in $X$. The following proposition shows that these two properties are in fact equivalent.

\begin{proposition}[Coarsely dense] \label{prop:sufficient_dense_L}
Let $\C$ be an $L$--separated, $m$--gluable system of chains. If $(S,\dist_\C)$ is $k$--weakly roughly geodesic, then $S$ is $(3k+4(L+m+1))$--coarsely dense in $X_\C$.
\end{proposition}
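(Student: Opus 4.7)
The plan is: given $x \in X_\C$, find a point of $S$ within $3k + 4(L+m+1)$ of $x$. Since $\dist_\C$ is integer-valued on $X_\C$ (as each $\dist_\C(y,z)$ is a supremum of cardinalities of finite chains), the infimum $n := \inf_{s \in S}\dist_\C(s,x)$ is attained at some $s \in S$; it suffices to bound $n$. The strategy is, starting from such a minimizer, to walk in $S$ toward a point $t$ lying on the ``$x$-side'' of the chain realizing $\dist_\C(s,x)$, and use the cross-chain machinery of Lemma~\ref{lem:cross_vs_L} to exhibit a point along the walk lying within the claimed bound of $x$. Minimality of $s$ then forces $n$ to satisfy the bound as well.

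First I would fix $s$ and choose a chain $c = \{h_1, \dots, h_n\} \in \C$ realizing $\dist_\C(s,x)$, with $x \in h_n^+$. Since $h_n$ is a genuine bipartition of $S$, there exists $t \in h_n^+ \cap S$; the chain $c$ separates $s$ from $t$, so $N := \dist_\C(s,t) \geq n$. By the $k$-weak rough geodesic hypothesis, there is a sequence $s = p_0, p_1, \dots, p_N = t$ in $S$ with $|\dist_\C(p_r,s) - r| \leq k$ and $|\dist_\C(p_r,t) - (N-r)| \leq k$ for all $r$. I would then examine the point $p_n$ and aim to show $\dist_\C(p_n,x) \leq 3k + 4(L+m+1)$; minimality of $s$ will force the same bound on $n$.

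To estimate $\dist_\C(p_n,x)$, take a chain $b \in \C$ realizing it, and apply Lemma~\ref{lem:cross_vs_L} to the four points $(s,x,p_n,t)$, using a maximal cross-chain $\chi = \chi_s \sqcup \chi_x \sqcup \chi_{p_n} \sqcup \chi_t$. The lemma pairs the corner chains of $\chi$ with the walls of $b$, $c$, and chains realizing $\dist_\C(s,p_n)$ and $\dist_\C(p_n,t)$, each pairing costing at most $2(L+m+1)$ (yielding the total $4(L+m+1)$ via two applications). The three factors of $k$ enter from the weak-rough-geodesic estimates on $\dist_\C(s,p_n)$ and $\dist_\C(p_n,t)$ and their consistency with $N = \dist_\C(s,t)$. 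The main obstacle is the careful bookkeeping of these errors: a na\"ive split of $b$ by whether its walls also separate $s$ from $x$ gives only $|b| \leq 2n + k$, while a direct application of four-point hyperbolicity (Proposition~\ref{prop:four-point}) yields the weaker $22(L+m+1) + k$. Lemma~\ref{lem:cross_vs_L}'s two-sided inequality, applied simultaneously with $c$ realizing $\dist_\C(s,x)$ and with $b$ realizing $\dist_\C(p_n,x)$, is precisely what is needed to land on the sharp constant $3k + 4(L+m+1)$.
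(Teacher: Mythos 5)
Your setup coincides with the paper's: a basepoint $s$ (for you, a minimiser, which is harmless but also unnecessary), a chain $c$ realising $\dist_\C(s,x)$, a point $t$ beyond all of $c$, and the weak-rough-geodesic point at parameter $|c|$ (your $p_n$ is exactly the paper's $s'$). The gap is in the one step that carries the proof: the bound $\dist_\C(p_n,x)\le 3k+4(L+m+1)$. You propose to extract this from Lemma~\ref{lem:cross_vs_L} applied to $(s,x,p_n,t)$, but that lemma only controls the part of a realising chain $b$ for $\dist_\C(p_n,x)$ that does \emph{not} separate $s$ from $t$, bounding it by $|\chi_{p_n}|+|\chi_x|$ up to error; it says nothing about the ``middle'' walls of $b$ that do separate $s$ from $t$ (in Proposition~\ref{prop:four-point} those are handled by separate $L$--separation arguments, and they are the source of the much larger constant there). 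Moreover, to make the corner bound useful you need individual upper bounds on $|\chi_{p_n}|$ and $|\chi_x|$; Lemma~\ref{lem:cross_vs_L} and the definition of a $\times$--chain only give upper bounds on sums such as $|\chi_s|+|\chi_x|\le\dist_\C(s,x)$, so you would need a matching \emph{lower} bound on $|\chi_s|$, i.e.\ a long chain separating $s$ from all of $\{p_n,t,x\}$. That amounts to showing that all but boundedly many elements of $c$ separate $s$ from $p_n$ --- which is precisely the first gluing argument in the paper's proof (carried out against a chain realising $\dist_\C(t,x)$, an object your outline never introduces), not a consequence of Lemma~\ref{lem:cross_vs_L}. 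Finally, your constant bookkeeping (``each pairing costing at most $2(L+m+1)$'') is inconsistent with Lemma~\ref{lem:cross_vs_L} itself, whose upper bound already costs $4(L+m+1)$ in a single application.

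For comparison, the paper avoids cross-chains entirely here. It first shows via Lemma~\ref{lem:gluing}, by gluing $c$ with a chain $b$ realising $\dist_\C(t,x)$, that all but $k+L+m+1$ elements of $c$ separate $s'$ from $s$; it then splits a chain $c'$ realising $\dist_\C(s',x)$ according to whether its walls separate $s$ from $s'$, and bounds the two pieces by two further gluing arguments against $c$, giving $\bigl(k+2(L+m+1)\bigr)+\bigl(2k+2(L+m+1)\bigr)=3k+4(L+m+1)$. To complete your argument you should replace the appeal to Lemma~\ref{lem:cross_vs_L} by these gluing estimates.
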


\begin{proof}
Let $x\in X_\C$. Pick an arbitrary point $s\in S$, and let $c\in\C$ realise $\dist_\C(s,x)$. If $|c|\le3k+4(L+m+1)$, then we are done, so assume otherwise. Because $c$ is a finite chain, there is some $t\in S$ that is separated from $s$ by every element of $c$. Let $b\in\C$ realise $\dist_\C(t,x)$. Again, we can assume that $|b|>3k+4(L+m+1)$. Because $S$ is $k$--weakly roughly geodesic, there exists $s'\in S$ such that $\dist_\C(s,s')\le|c|+k$ and $\dist_\C(s',t)\le\dist_\C(s,t)-|c|+k\le|b|+k$. See Figure~\ref{fig:density_hyp}. 


No element of $c$ separates $x$ from $t$, whereas every element of $b$ separates $x$ from $t$. Therefore, by Lemma~\ref{lem:gluing} there is some $a\subset b\cup c$ with $a\in\C$ and $|a|\ge|b|+|c|-L-m-1$. In particular, the fact that $\dist_\C(s',t)\le|b|+k$ implies that all but at most $k+L+m+1$ elements of $c$ must separate $s'$ from $s$. Let $c_s\subset c$ consist of those elements that do separate $s$ from $s'$.

Let $c'\in\C$ realise $\dist_\C(s',x)$. Consider the subset $c'_s\subset c'$ consisting of those elements that do not separate $s$ from $s'$. We can apply Lemma~\ref{lem:gluing} to the pair $(c_s,c'_s)$ to obtain an element of $\C$ that separates $s$ from $x$ and has cardinality at least $|c_s|+|c'_s|-L-m-1$. Because $c$ realises $\dist_\C(s,x)$, this implies that $|c'_s|\le k+2(L+m+1)$. 

\begin{figure}[ht]
\includegraphics[width=10cm, trim = 4cm 3cm 4cm 3.5cm, clip]{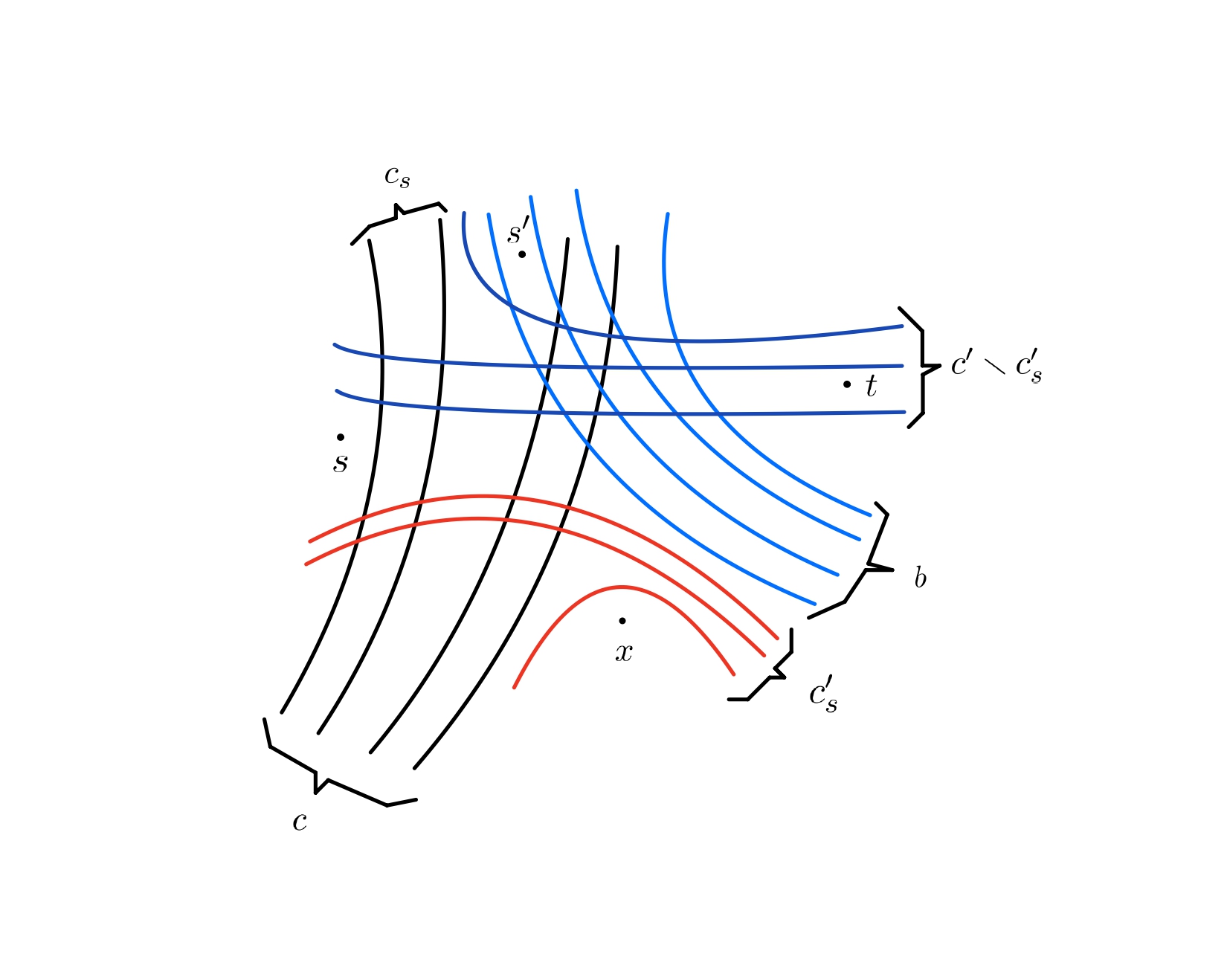}
\caption{Proposition~\ref{prop:sufficient_dense_L}. The chain $c$ realises $\dist_\C(s,x)$, yielding $t$, and then $b$ realises $\dist_\C(t,x)$. Weak rough geodesicity gives $s'$, and $c'$ realises $\dist_\C(s',x)$.} \label{fig:density_hyp}
\end{figure}

Finally, consider $c'\ssm c'_s\in\C$. Every element of this chain separates $\{s,x\}$ from $s'$, so we can apply Lemma~\ref{lem:gluing} to the pair $(c_s,c'\smallsetminus c'_s)$ to find an element of $\C$ that separates $s$ from $s'$ and has cardinality at least $|c_s|+|c'\ssm c'_s|-L-m-1$. From the fact that $\dist_\C(s,s')\le|c|+k$, we deduce that $|c'\smallsetminus c'_s|\le2(k+L+m+1)$. We therefore have $|c'|\le3k+4(L+m+1)$, completing the proof.
\end{proof}

In practice it can often be verified from additional information about the pair $(S,P)$ that $(S,\dist_\C)$ is weakly roughly geodesic. Proposition~\ref{prop:sufficient_dense_L} then shows that working with $(S,\dist_\C)$ is largely the same as working with $(X,\dist_\C)$, though the latter has the advantage that it has better fine properties, as illustrated by Section~\ref{sec:properties}.

The following example shows how $S$ can fail to be weakly roughly geodesic or coarsely dense in $(X,\dist_\C)$.

\begin{example}
Let $S$ be a copy of the real line, and identify $S$ with a horocycle in the hyperbolic plane $\mathbb H^2$. Let $P$ be the set of partitions obtained from the set of all $\mathbb H^2$ geodesics intersecting $S$ by viewing each such geodesic as dividing $\mathbb H^2$ into two halves. Let $\C$ be the set of chains such that each pair of defining geodesics is at $\mathbb H^2$--distance at least 1. With this system, the metric $\dist_\C$ on $S$ is quasiisometric to the subspace metric from $\mathbb H^2$.
\end{example}

\subsection{Graded systems} \label{subsec:unification}

In \cite{petytsprianozalloum:hyperbolic}, a key step in producing the curtain model of a CAT(0) space is the resolution of an infinite sequence of increasingly informative hyperbolic models into a single hyperbolic space. The goal of this section is to give that procedure a more general framing, which will be useful in Section~\ref{sec:contracting}. 

\begin{definition}[Graded system] \label{def:graded_system}
A sequence $(\C_R)$ of dualisable systems on $(S,P)$ is said to be a \emph{graded system} on $(S,P)$ if there are numbers $L_R\ge1$, $m_R\ge0$ and a sequence $(\kappa_R)\subset(0,\infty)$ such that the following hold.
\begin{itemize}
\item   $\C_R\subset\C_{R+1}$ for all $R$.
\item   $\C_R$ is an $L_R$--separated, $m_R$--gluable system of chains for each $R$.
\item   For each $s,t\in S$ there exists $M_{st}$ such that for every sequence $(c^R)$ with $c^R\in\C_R$ separating $s$ from $t$, we have $\sum_{R=1}^\infty\kappa_R|c^R|\le M_{st}$.
\end{itemize}
\end{definition}

Note that if we assume $L_R$ to be minimal such that $\C_R$ is $L_R$--separated, then the condition that $\C_R\subset\C_{R+1}$ implies that $L_{R+1}\ge L_R$, but we could potentially have $m_{R+1}<m_R$. 

Recall that $\hat X$ denotes the set of all ultrafilters defined by $P$. If $(\C_R)$ is a graded system, then the constructions of Section~\ref{sec:construction} produce a sequence of metric spaces $X_R=(X_{\C_R},\dist_{\C_R})$, which will be hyperbolic by the results of Section~\ref{subsec:relative_separation}. However, the sets $X_R$ in general will be getting smaller (setwise, whilst the metric grows) as $R$ increases, because as $\C_R$ gets larger the condition that $\dist_\C(s,x)<\infty$ becomes more stringent. We unify these spaces as follows.

\ubsh{The graded dual}
Given a graded system $(\C_R)$ on $(S,P)$, let $(\lambda_R)$ be a sequence of positive numbers such that $\lambda_R\le\kappa_R$ and $\sum_{R=1}^\infty\lambda_R(L_R+m_R+1)=\Lambda<\infty$. Consider the extended metric on $\hat X\times\hat X$ given by
\[
\Dist(x,y) \,=\, \sum_{R=1}^\infty\lambda_R\dist_R(x,y).
\]
The \emph{graded dual} of $S$ with respect to $(\C_R)$ is the space $X=\{x\in\hat X\,:\,\Dist(s,x)<\infty\text{ for all }s\in S\}$, equipped with the metric $\Dist$.
\uesh

The final assumption in the definition of a graded system is what ensures that $\Dist(s,t)<\infty$ for all $s,t\in S$, so every principal ultrafilter lies in $X$. 

\begin{remark} \label{rem:X_smaller}
If $x\in X$ then certainly $x\in X_R$ for every $R$, but it can happen that $X$ is a strict subset of $\bigcap_{R=1}^\infty X_R$. For instance, if $S$ is the region of the euclidean plane bounded between the $x$-axis and a sufficiently slowly growing sublinear function, $P$ is the set of partitions induced by CAT(0) curtains, and $\C_R$ is as in \cite{petytsprianozalloum:hyperbolic}, then any ultrafilter extending the filter defined by the unique CAT(0) boundary point of $S$ lies in every $X_R$ but not in $X$.
\end{remark}

\begin{definition}[Perichain]
Given a graded system $(\C_R)$ on $(S,P)$, a \emph{perichain} $\bar c$ is a choice of element $c^R\in\C_R$ for each $R$. A $\times$--perichain is a choice of $\times$--chain $\chi^R$ from $\C_R$ for each $R$.
\end{definition}

For a perichain $\bar c$, write $\|\bar c\|=\sum\lambda_R|c^R|$. Note that if $c^R$ realises $\dist_R(x,y)$ for all $R$, then $\|\bar c\|=\Dist(x,y)$. 

\begin{lemma} \label{lem:cross_corners}
Let $(\C_R)$ be a graded system. If $\bar c$ is a perichain realising $\Dist(x_1,x_2)$ and $\bar\chi$ is a maximal $\times$--perichain for $x_1,x_2,x_3,x_4\in X$, then 
\[
\|\bar\chi_1\|+\|\bar\chi_2\|-2\Lambda \,\le\, \|\bar c'\| \,\le\, \|\bar\chi_1\|+\|\bar\chi_2\|+4\Lambda,
\]
where ${c'}^R\subset c^R$ consists of all elements of $c^R$ not separating $x_3$ from $x_4$.
\end{lemma}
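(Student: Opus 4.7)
The plan is to reduce the assertion to a grade-by-grade application of Lemma~\ref{lem:cross_vs_L} and then to sum the resulting inequalities, weighted by $\lambda_R$. Since the Lemma already packages the hard four-points combinatorial content at a single grade, essentially nothing new has to be proved; the task is only one of bookkeeping.

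First I would unpack the two hypotheses level-wise. The condition that the perichain $\bar c$ realises $\Dist(x_1,x_2)$ says
\[
\sum_R \lambda_R |c^R| \,=\, \Dist(x_1,x_2) \,=\, \sum_R \lambda_R \dist_R(x_1,x_2),
\]
while by definition of $\dist_R$ each term satisfies $|c^R| \le \dist_R(x_1,x_2)$. Since every $\lambda_R$ is positive, equality forces $|c^R| = \dist_R(x_1,x_2)$ for every $R$; that is, each $c^R$ realises $\dist_R(x_1,x_2)$ inside $\C_R$. Analogously, maximality of the $\times$-perichain $\bar\chi$ translates to maximality of the $\times$-chain $\chi^R$ in $\C_R$ at each grade $R$: if some $\chi^R$ could be enlarged within $\C_R$, then the perichain $\bar\chi$ could be enlarged too.

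Next I would fix $R$ and apply Lemma~\ref{lem:cross_vs_L} to the $L_R$-separated, $m_R$-gluable system $\C_R$, with realising chain $c^R$ and maximal $\times$-chain $\chi^R = \bigsqcup_{i=1}^4 \chi_i^R$. By construction, the subset of $c^R$ consisting of walls not separating $x_3$ from $x_4$ is exactly ${c'}^R$, so the Lemma yields
\[
|\chi_1^R| + |\chi_2^R| - 2(L_R + m_R + 1) \,\le\, |{c'}^R| \,\le\, |\chi_1^R| + |\chi_2^R| + 4(L_R + m_R + 1).
\]

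Finally I would multiply both inequalities by $\lambda_R$ and sum over $R$. The middle sum becomes $\|\bar c'\|$, the $|\chi_i^R|$-terms assemble into $\|\bar\chi_1\| + \|\bar\chi_2\|$, and the error is controlled by $\sum_R \lambda_R(L_R + m_R + 1) = \Lambda$, which is finite by the choice of $(\lambda_R)$. This produces precisely the stated bound
\[
\|\bar\chi_1\| + \|\bar\chi_2\| - 2\Lambda \,\le\, \|\bar c'\| \,\le\, \|\bar\chi_1\| + \|\bar\chi_2\| + 4\Lambda.
\]

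The only step that might require even a moment's thought is the translation of the hypotheses to each grade, but as argued above both translations are forced by the definitions. The genuine difficulty has already been absorbed into Lemma~\ref{lem:cross_vs_L}, so no separate obstacle arises here.
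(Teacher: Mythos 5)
Your proof is correct and follows exactly the paper's approach: the paper's entire proof is ``Apply Lemma~\ref{lem:cross_vs_L} for each $R$,'' and you have simply spelled out the bookkeeping (that realising $\Dist$ forces realising each $\dist_R$, that maximality passes to each grade since the grades are chosen independently, and that the weighted sum of the error terms is $\Lambda$). No gaps.
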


\begin{proof}
Apply Lemma~\ref{lem:cross_vs_L} for each $R$.
\end{proof}

The proof of the following proposition is similar to that of Proposition~\ref{prop:four-point}, but one has to ensure some compatibility along the terms of the graded system.

\begin{proposition} \label{prop:graded_hyperbolic}
If $(\C_R)$ is a graded system, then $(X,\Dist)$ is four-point hyperbolic, with constant $16\Lambda$.
\end{proposition}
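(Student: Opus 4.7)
The plan is to adapt the argument of Proposition~\ref{prop:four-point} to the graded setting, using perichains and the weighted norm $\|\cdot\|$ in place of individual chains and their cardinalities. Fix four points $x_1, x_2, x_3, x_4 \in X$, and choose a maximal $\times$--perichain $\bar\chi = (\chi^R)_R$ for them, together with a perichain $\bar c=(c^R)_R$ whose $R$th term realises $\dist_R(x_1,x_2)$, so that $\|\bar c\| = \Dist(x_1,x_2)$; similarly for the other pairs. For each pair $(i,j)$, let $\bar c'^{ij}$ be the sub-perichain of $\bar c^{ij}$ consisting of those elements which separate the two points not appearing in the index.

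The target is to establish, after a suitable relabelling of the indices, inequalities of the form
\[
\Dist(x_1, x_2) + \Dist(x_3, x_4) \,\le\, \|\bar\chi\| + 2\|\bar c'^{12}\| + \alpha\Lambda,
\]
\[
\Dist(x_1, x_4) + \Dist(x_2, x_3) \,\ge\, \|\bar\chi\| + 2\|\bar c'^{12}\| - \beta\Lambda,
\]
with $\alpha + \beta \le 16$, which immediately delivers four-point hyperbolicity with constant $16\Lambda$.

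To select the correct relabelling, observe that $\|\bar\chi\| = \|\bar\chi_1\|+\|\bar\chi_2\|+\|\bar\chi_3\|+\|\bar\chi_4\|$ is independent of how the indices are paired. By Lemma~\ref{lem:cross_corners}, $\|\bar c'^{12}\|$ is within $4\Lambda$ of $\Dist(x_1,x_2) - \|\bar\chi_1\| - \|\bar\chi_2\|$, and analogously for the other pairs. If all four pair-distances lie within $O(\Lambda)$ of their corner sums the desired inequality is immediate. Otherwise relabel so that $\Dist(x_1,x_2)$ exceeds $\|\bar\chi_1\|+\|\bar\chi_2\|$ by enough that $\bar c'^{12}$ has mass in some level $R$. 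In that level, the $L_R$--separation of $\C_R$ prevents $\bar c'^{12}$ from containing long chains separating the four points in the ``opposite'' configuration, so after possibly swapping $x_3$ and $x_4$ the whole perichain $\bar c'^{12}$ separates $\{x_1,x_3\}$ from $\{x_2,x_4\}$ consistently across levels.

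The two displayed inequalities are then obtained by running the chain manipulations from Proposition~\ref{prop:four-point} levelwise and summing against the weights $\lambda_R$. Each application of Lemma~\ref{lem:gluing} at level $R$ contributes at most $L_R + m_R + 1$, and thus at most $\Lambda$ after summation; each appeal to Lemma~\ref{lem:cross_corners} costs an additional $4\Lambda$. The main obstacle I anticipate is exactly this bookkeeping: a naive levelwise application of Proposition~\ref{prop:four-point} gives $22\Lambda$, so reaching $16\Lambda$ requires one to avoid redundant gluings by exploiting the fact that $\bar c^{ij}$ globally realises $\Dist(x_i,x_j)$ rather than being assembled from independently optimal chains at each level. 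Concretely, the terms $c'^{ij,R}$ of a single $\bar c'^{ij}$ play the role simultaneously of $c'_{13}$, $c'_{14}$, etc.\ across all levels, letting one trade in several of the loose estimates that arise in the single-level argument for a more economical set of four perichain-gluings.
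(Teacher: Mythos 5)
Your outline follows the right general strategy (perichains, Lemma~\ref{lem:cross_corners}, levelwise gluing), but as written it has two genuine gaps. The first is the cross-level consistency claim: you assert that after one swap of $x_3$ and $x_4$, ``the whole perichain $\bar c'^{12}$ separates $\{x_1,x_3\}$ from $\{x_2,x_4\}$ consistently across levels.'' This is false as stated. Separation only forbids \emph{both} of the configurations $13|24$ and $14|23$ from being realised by \emph{long} chains; at any given level $R$ the term $c'^{12,R}$ may well lie in the ``wrong'' configuration provided it is short (length at most $2L_R$, say). What one actually needs is the quantitative version: introduce $S^R(1i|jk)$, the maximal length of an element of $\C_R$ separating $\{x_1,x_i\}$ from $\{x_j,x_k\}$; observe that $L_R$--separation forces at most one of the three quantities to exceed $2L_R$ at each level; and use the nesting $\C_{R_0}\subset\C_R$ together with minimality of the first level $R_0$ at which some pattern exceeds $2L_{R_0}$ to conclude that the \emph{same} pattern is the only one that can be large at every level, the others contributing at most $2L_R$ per level and hence at most $2\Lambda$ after weighting. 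This replaces your all-or-nothing relabelling and is where the grading hypothesis genuinely enters.

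The second gap is the constant. You concede that a naive levelwise transcription of Proposition~\ref{prop:four-point} yields $22\Lambda$ and defer the improvement to $16\Lambda$ to unspecified bookkeeping; that improvement is precisely the content that is missing, and it is not obtained by trimming redundant gluings from the rank-one argument. The paper's proof instead abandons the structure of Proposition~\ref{prop:four-point} and proves a two-sided estimate for each pair directly: with $\Sigma=\sum_R\lambda_R S^R(12|34)$ one shows
\[
\|\bar\chi_i\|+\|\bar\chi_j\|+\Sigma_{ij}-2\Lambda \,\le\, \Dist(x_i,x_j) \,\le\, \|\bar\chi_i\|+\|\bar\chi_j\|+\Sigma_{ij}+6\Lambda,
\]
where $\Sigma_{ij}=0$ for the pairs $\{1,2\}$ and $\{3,4\}$ and $\Sigma_{ij}=\Sigma$ for the four cross pairs (the lower bound via two applications of Lemma~\ref{lem:gluing}, the upper bound via maximality of $\bar\chi$ and the bound $S^R(14|23)\le2L_R$). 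Summing these immediately gives the four-point inequality with constant $16\Lambda$. If you only need hyperbolicity with \emph{some} uniform constant your plan can likely be completed, but it does not deliver the stated $16\Lambda$, and the step you flag as ``the main obstacle'' is exactly the step that requires a different argument rather than more careful accounting.
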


\begin{proof}
Fix $x_1,x_2,x_3,x_4\in X$, and let $\bar\chi$ be a maximal $\times$-perichain for them. For each distinct $i,j,k$, let $S^R(1i|jk)$ be the cardinality of a maximal element of $\C_R$ separating $\{x_1,x_i\}$ from $\{x_j,x_k\}$. Observe that since $\C_R$ is $L_R$--separated, at most one of the $S^R(1i|jk)$ can be greater than $2L_R$ for any given $R$.

Suppose that $i$ is such that there is no value of $R$ for which $S^R(1i|jk)>2L_R$. Let $\bar c_{jk}$ be a perichain realising $\Dist(x_j,x_k)$, and let $\bar b_{jk}$ be the subperichain consisting of all elements of $\bar c_{jk}$ that either separate $x_j$ from all three other $x_l$ or separate $x_k$ from all three other $x_l$. We have $\|\bar b_{jk}\|\ge\|\bar c_{jk}\|-2\Lambda$. According to Lemma~\ref{lem:cross_corners}, this gives
\begin{align} \label{eqn:no_relevant_L}
\|\bar\chi_j\|+\|\bar\chi_k\|-2\Lambda \,\le\, \Dist(x_j,x_k) \,\le\, \|\bar\chi_j\|+\|\bar\chi_k\|+6\Lambda. 
\end{align}
We similarly have the corresponding inequality for $\Dist(x_1,x_i)$.

If there is no $i$ for which there is some value of $R$ for which $S^R(1i|jk)>2L_R$, then the inequality~\eqref{eqn:no_relevant_L} holds for all pairs $(x_j,x_k)$. This implies the four-point inequality with constant $8\Lambda$.

Otherwise, let $R_0$ be minimal such that some $S^{R_0}(1i|jk)>2L_{R_0}$. After relabelling, we may assume that $i=2$. In other words, there is an element of $\C_{R_0}$ of length greater than $2L_{R_0}$ that separates $x_1$ and $x_2$ from $x_3$ and $x_4$. Because $\C_{R_0}\subset\C_R$ for all $R\ge R_0$, we can apply $L_R$--separation to a pair of walls in a chain realising $S^{R_0}(12|34)$ to see that if $R\ge R_0$ then $\max\{S^R(13|24),S^R(14|23)\}\le L_R$. In particular, minimality of $R_0$ means that $\max\{S^R(13|24),S^R(14|23)\}\le2L_R$ for all $R$. Thus inequality~\eqref{eqn:no_relevant_L} holds for the pairs $(x_1,x_2)$ and $(x_3,x_4)$.

We now bound the distance between each of the other four pairs of points. The argument is the same for each, so for ease of notation we shall work with the pair $(x_1,x_3)$. By applying Lemma~\ref{lem:gluing} twice, we see that $\dist_R(x_1,x_3)\ge|\chi_1^R|+|\chi_3^R|+S^R(12|34)-2(L_R+m_R+1)$, and hence
\[
\|\bar\chi_1\|+\|\bar\chi_3\|+\sum_{R=1}^\infty\lambda_R S^R(12|34) -2\Lambda \,\le\, \Dist(x_1,x_3).
\]

On the other hand, let $c^R_{13}\in\C_R$ realise $\dist_R(x_1,x_3)$, and let $b^R_{13}$ be the subset consisting of those walls that do not separate $x_2$ from $x_4$. By Lemma~\ref{lem:gluing}, after removing at most $L_R+m_R+1$ elements from each of $\chi^R_2$ and $\chi^R_4$ and at most $2(L_R+m_R+1)$ elements of $b^R_{13}$, we are left with a $\times$-chain. By maximality of $\bar\chi$, it follows that $|\chi^R|\ge|\chi^R_2|+|\chi^R_4|+|b^R_{13}|-4(L_R+m_r+1)$, or in other words that 
\[
|b^R_{13}|\,\le\,|\chi^R_1|+|\chi^R_3|+4(L_R+m_R+1).
\]
Every element of $c^R_{13}\ssm b^R_{13}$ either separates $\{x_1,x_2\}$ from $\{x_3,x_4\}$, or separates $\{x_1,x_4\}$ from $\{x_2,x_3\}$. But $S^R(14|23)\le2L_R$, and so $|c^R_{13}\ssm b^R_{13}|\le S^R(12|34)+2L_R$. Hence for every $R$ we have 
\[
\dist_R(x_1,x_3) \,\le\, |b^R_{13}|+S^R(12|34)+2L_R \,\le\, |\chi_1^R|+|\chi_3^R|+S^R(12|34)+6(L_R+m_R+1).
\]
In tandem with the above lower bound on $\Dist(x_1,x_3)$, summing this yields
\[
\|\bar\chi_1\|+\|\bar\chi_3\|+\sum_{R=1}^\infty \lambda_RS^R(12|34)-2\Lambda \,\le\, \Dist(x_1,x_3) \,\le\, \|\bar\chi_1\|+\|\bar\chi_3\|+\sum_{R=1}^\infty \lambda_RS^R(12|34)+6\Lambda.
\]
As noted, the same argument holds for each of the pairs $(x_1,x_4)$, $(x_2,x_3)$, and $(x_2,x_4)$. The four-point inequality for $x_1,x_2,x_3,x_4$ follows, with constant $16\Lambda$.
\end{proof}

\begin{proposition} \label{prop:graded_wrg}
If $(\C_R)$ is a graded system such that the sequence $(m_R)$ is bounded above by some number $M$, then $(X,\Dist)$ is $4M\Lambda$--weakly roughly geodesic.
\end{proposition}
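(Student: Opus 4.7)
Set $n = \Dist(x, y)$ and $d_R = \dist_R(x, y)$. For each integer $r \in \{0, 1, \ldots, \lceil n \rceil\}$, the plan is to produce a point $z_r \in X$ satisfying $|\Dist(x, z_r) - r| \le 4M\Lambda$ and $|\Dist(z_r, y) - (n - r)| \le 4M\Lambda$.

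The starting observation is a tripartition estimate: for any $z \in \hat X$ whose walls partition those separating $x$ from $y$, applying Lemma~\ref{lem:gluing} in each $\C_R$ (to chains realising $\dist_R(x, z)$ and $\dist_R(z, y)$) and summing with weights $\lambda_R$ yields
\[
n \;\le\; \Dist(x, z) + \Dist(z, y) \;\le\; n + \Lambda.
\]
Thus, once $\Dist(x, z_r) \in [r - C, r + C]$ is established for some $C$, the triangle inequality combined with this estimate supplies a comparable bound on $\Dist(z_r, y)$.

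For the construction, I would combine the normal wall paths $\sigma^R_{xy}$ from each $X_R$, which are $m_R$--weak rough geodesics by Proposition~\ref{prop:nwp_rough_geodesic}, and whose points all lie in $X$ because the walls separating $x$ from any $\sigma^R_{xy}(t)$ form a subset of those separating $x$ from $y$. Given $r$, greedily choose integers $\rho_R = \rho_R(r) \in \{0, \ldots, d_R\}$ so that $\sum_R \lambda_R \rho_R \in [r - M\Lambda, r]$, form the intersection $A_r := \bigcap_R B_R(x, \rho_R)$, which is $P$--convex by Lemma~\ref{lem:convex_balls} and Definition~\ref{def:convex}, and set $z_r := \g_{A_r}(y)$. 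The inclusion $z_r \in A_r$ gives $\dist_R(x, z_r) \le \rho_R$ for every $R$, hence $\Dist(x, z_r) \le \sum_R \lambda_R \rho_R \le r$, and the walls separating $z_r$ from $y$ form a subset of those separating $x$ from $y$, placing $z_r$ in $X$.

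The matching lower bound $\Dist(x, z_r) \ge r - O(M\Lambda)$ is the main obstacle. Iterated gating into $A_r$ can cause $z_r$ to agree with $x$ on more walls than the single-ball gate $\pi_R(\rho_R) := \g_{B_R(x, \rho_R)}(y) = \sigma^R_{xy}(\rho_R)$ does, thereby shrinking $\dist_R(x, z_r)$ below the bound $\rho_R - m_R$ supplied by Proposition~\ref{prop:nwp_rough_geodesic}. To handle this, I would compare $z_r$ with each $\pi_R(\rho_R)$ and use Lemma~\ref{lem:one_sided_consistent} together with Lemma~\ref{lem:gluing} applied across different $\C_{R'}$ to bound the coordinate-wise collapse by $O(m_R)$. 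Summing $\lambda_R \cdot O(m_R)$ over $R$ and using $m_R \le M$ with $\sum_R \lambda_R \le \Lambda$ produces the factor $M\Lambda$, giving the claimed constant $4M\Lambda$ overall.
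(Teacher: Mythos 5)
Your set-up is fine as far as it goes: the tripartition estimate $n\le\Dist(x,z)+\Dist(z,y)\le n+\Lambda$ for $z$ between $x$ and $y$ is exactly the paper's first step, the points you build do lie in $X$, and the upper bound $\Dist(x,z_r)\le r$ is correct. But the step you yourself flag as ``the main obstacle'' is not just unproved --- the strategy you sketch for it is false. The collapse caused by gating into $A_r=\bigcap_R B_R(x,\rho_R)$ is \emph{not} bounded coordinate-wise by $O(m_R)$: a wall can separate $y$ from $B_{R'}(x,\rho_{R'})$ at some other scale $R'$ without separating $y$ from $B_R(x,\rho_R)$, and there is no gluability or separation axiom controlling how many such walls form a chain in $\C_R$. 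Concretely, take a single chain $w_1,\dots,w_N$ separating $x$ from $y$, let $\C_2$ be all subchains and $\C_1$ be all singletons together with all subsets of $\{w_{N/2+1},\dots,w_N\}$; this is a graded system with $m_1=1$, $m_2=0$. With $\lambda_1=\lambda_2$ and $r=3\lambda N/4$, the choice $\rho_1=N/2$, $\rho_2=N/4$ satisfies your greedy constraint, yet $B_2(x,N/4)$ forces every wall $w_j$ with $j>N/4$ to be flipped, so $\dist_1(x,z_r)=1$ and $\Dist(x,z_r)\approx\lambda N/4$, a deficit of order $\lambda N/2$ that no function of $M$ and $\Lambda$ can absorb. (A degenerate version of the same problem: your constraint permits $\rho_R=0$ for some $R$, which forces $A_r=\{x\}$ and $z_r=x$ outright.) The difficulty is precisely the interaction between scales, and intersecting balls across all scales with independently chosen radii does not control it.

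The paper sidesteps this entirely by never mixing scales in the construction: it picks a single $R_0$ with $\sum_{R>R_0}\lambda_R\dist_R(x,y)\le\Lambda$ and takes the normal wall path $\sigma_{xy}$ in $X_{R_0}$ alone. The jump $\Dist(z_r,z_{r+1})$ is then bounded by using $\C_R\subset\C_{R_0}$ (hence $\dist_R(z_r,z_{r+1})\le\dist_{R_0}(z_r,z_{r+1})\le1+2m_{R_0}$) for $R\le R_0$, and by the negligible tail together with $\dist_R(z_r,z_{r+1})\le\dist_R(x,y)$ for $R>R_0$; combined with the tripartition estimate you already have, this yields the weak rough geodesic. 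If you want to salvage your approach, you would need to tie the radii $\rho_R$ to one another (e.g.\ by taking them all to be radii of balls containing a fixed point of a single normal wall path), at which point you have essentially rediscovered the paper's argument.
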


\begin{proof}
Given $x,y\in X$, let $R_0$ be sufficiently large so that $\sum_{R>R_0}\lambda_R\dist_R(x,y)\le\Lambda$. 
Let $\sigma_{xy}$ be the normal wall path in $(X_{R_0},\dist_{R_0})$ from $x$ to $y$, as constructed in Section~\ref{sec:properties}, and write $z_i=\sigma_{xy}(i)$. We do not know exactly how $\Dist$ compares with $\dist_{R_0}$ on $X$, so this path could \emph{a priori} fail to be a rough geodesic of $X$, but we shall nonetheless use it to show that $X$ is weakly roughly geodesic. This will be a consequence of the following observations, which are built into the definition of the $z_r$.
\begin{itemize}
\item   Every element of $P$ separating $x$ from $z_r$ separates $x$ from $\{z_r,y\}$.
\item   Every element of $P$ separating $z_r$ from $z_{r+1}$ separates $\{x,z_r\}$ from $\{z_{r+1},y\}$.
\item   Every element of $P$ separating $z_r$ from $y$ separates $\{x,z_r\}$ from $y$.
\end{itemize}

First observe that these imply that $z_r$ is indeed an element of $X$, because we must have $\Dist(x,z_r)\le\Dist(x,y)<\infty$, and $x$ lies at finite distance from $S$. In order to establish the proposition, it suffices to bound $\Dist(z_r,z_{r+1})$ and $\Dist(x,z_r)+\Dist(z_r,y)-\Dist(x,y)$. 

For the latter, note that for each $R$ we can apply Lemma~\ref{lem:gluing} to find that $\dist_R(x,y)\ge\dist_R(x,z_r)+\dist_R(z_r,y)-L_R-m_R-1$. Summing over $R$, we find that $\Dist(x,y)\ge\Dist(x,z_r)+\Dist(z_r,y)-\Lambda$, as desired.

It remains to bound $\Dist(z_r,z_{r+1})$. We know from Proposition~\ref{prop:nwp_rough_geodesic} that $\dist_{R_0}(z_r,z_{r+1})\le1+2m_{R_0}\le1+2M$. Because $\C_R\subset\C_{R+1}$ for all $R$, this implies that $\dist_R(z_r,z_{r+1})\le1+2M$ for all $R\le R_0$. On the other hand, if $R>R_0$, then by the above observations we have that $\dist_R(x,y)\ge\dist_R(z_r,z_{r+1})$. We can therefore compute
\begin{align*}
\Dist(z_r,z_{r+1}) \,&=\, \sum_{R=1}^{R_0}\lambda_R\dist_R(z_r,z_{r+1}) \,+\, \sum_{R>R_0}\lambda_R\dist_R(z_r,z_{r+1}) \\
&\le\, \sum_{R=1}^{R_0}\lambda_R(1+2M) \,+\, \sum_{R>R_0}\lambda_R\dist_R(x,y) \,\le\, 4M\Lambda. \qedhere
\end{align*}
\end{proof}

\begin{remark} \label{rem:graded_roughly_geodesic}
It follows that in the situation of Proposition~\ref{prop:graded_wrg}, $(X,\Dist)$ is a coarsely injective, hence roughly geodesic, hyperbolic space \cite[Prop.~A.2]{petytsprianozalloum:hyperbolic}. Moreover, in the proof of Proposition~\ref{prop:graded_wrg}, we showed that given $x,y\in X$, there is some $R$ such that the normal wall path in $X_R$ from $x$ to $y$ is a uniform weak rough geodesic in $X$. We therefore have \emph{a posteriori} that these paths are uniform rough geodesics in $X$.
\end{remark}

As in Section~\ref{subsec:relative_separation}, it is also desirable to know that the original set $S$ is a roughly geodesic hyperbolic space with respect to the metric $\Dist$. Interestingly, in contrast to Proposition~\ref{prop:sufficient_dense_L}, it is uncertain whether $(S,\Dist)$ being roughly geodesic is equivalent to its being dense in $(X,\Dist)$.





\part{Applications}

\section{The hyperbolic core} \label{sec:contracting}

The goal of this section is to construct, for a given geodesic space $S$, a hyperbolic space $X$ with the property that every \emph{strongly contracting} quasigeodesic in $S$ is witnessed as a (parametrised) quasigeodesic in $X$. Any quasigeodesic in $S$ can be perturbed to one whose image is a closed subset of $S$, so we shall always assume that quasigeodesics are closed. In particular, this means that every point in $S$ has a nonempty set of closest points in each quasigeodesic.

\begin{definition}[Strongly contracting]
Let $\alpha$ be a quasigeodesic in a geodesic space $S$. Given $x\in S$, let $\pi_\alpha(x)$ be the (nonempty) set of closest points in $\alpha$ to $x$. We say that $\alpha$ is \emph{$D$--strongly contracting} if for any ball $B\subset S$ disjoint from $\alpha$ we have $\diam(\pi_\alpha(B))<D$.
\end{definition}

If $\alpha$ is strongly contracting, then although $\pi_\alpha$ may send points to sets of cardinality greater than one, the image of a point has bounded diameter. For a set $I\subset\alpha$, we write $\pi^{-1}(I)$ to mean the set of all points $x\in S$ such that $\pi_\alpha(x)\subset I$.

For the remainder of this section, fix a geodesic space $S$. In order to apply the methods of the previous sections, we need two things: a good set of bipartitions of $S$, and a choice of which collections of bipartitions should be counted. As an intermediate step towards these goals, we consider a natural collection of subspaces of $S$, which we call \emph{curtains}, in analogy with \cite{petytsprianozalloum:hyperbolic}.

\begin{definition}[Curtains]
Suppose that $\alpha$ is a $D$--strongly contracting geodesic of length $20D$, with $D\ge1$. A \emph{curtain} dual to $\alpha$ is a set $\pi_\alpha^{-1}(I)$, where $I\subset\alpha$ is a subgeodesic of length $10D$ not containing any endpoint of $\alpha$. 
\end{definition}

Let us write $\cur S$ for the set of all curtains in $S$. That is, $\cur S$ contains every curtain dual to every $D$--strongly contracting geodesic in $S$, for every $D\ge1$. Note that $\cur S$ may very well be empty: this happens if $S$ has no strongly contracting geodesics that are long compared to their strong-contracting constant, for instance in the euclidean plane. 

The fact that we only consider strongly contracting \emph{geodesics} in the construction of curtains is justified by the following well-known lemma.

\begin{lemma} \label{lem:contracting_iff_pass_close} 
A $q$-quasigeodesic $\alpha$ is strongly contracting if and only if there is a constant $D'$ such that the following hold.
\begin{itemize}
\item   For every $x\in S$ we have $\diam\pi_\alpha(x)\le D'$.
\item   For every $x,y\in S$ with $\dist(\pi_\alpha(x),\pi_\alpha(y))>20D'$, and for every geodesic $\beta$ from $x$ to $y$, every subpath of $\alpha$ from $\pi_\alpha(x)$ to $\pi_\alpha(y)$ lies in the $5D'$--neighbourhood of $\beta$.
\end{itemize}
\end{lemma}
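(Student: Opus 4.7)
This is the standard characterisation of strong contraction in terms of (a) bounded projection of single points and (b) a bounded-geodesic-image property. I would prove each direction in turn, with the forward implication being rather direct and the backward one being the main technical work.

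For the forward implication, suppose $\alpha$ is $D$-strongly contracting. The first bullet is immediate from the definition: for any $x\in S$, the open ball $B(x,\dist(x,\alpha))$ is disjoint from $\alpha$ and contains $x$, so $\diam\pi_\alpha(x)\le\diam\pi_\alpha(B(x,\dist(x,\alpha)))<D$. For the second bullet, the plan is to exploit a coarse Lipschitz behaviour of $\pi_\alpha\circ\beta$: whenever $\beta(s)$ and $\beta(t)$ both lie in the open ball $B(\beta(s),\dist(\beta(s),\alpha))$, which is disjoint from $\alpha$ by construction, strong contraction forces $\dist(\pi_\alpha(\beta(s)),\pi_\alpha(\beta(t)))<D$. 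I would then argue by contradiction: if some $z$ on the subpath $\gamma$ of $\alpha$ from $\pi_\alpha(x)$ to $\pi_\alpha(y)$ is at distance more than a sufficient multiple of $D$ from $\beta$, then covering $\beta$ by a finite chain of such balls tracks $\pi_\alpha\circ\beta$ from near $\pi_\alpha(x)$ to near $\pi_\alpha(y)$ in steps of size at most $D$. At some step the projection must jump across $z$ along $\alpha$; but the ball witnessing that step is centred at a $\beta(t_0)$ far from $z$, and since $z$ is a candidate projection for $\beta(t_0)$, the jump is bounded by $D$, a contradiction. Choosing $D'$ a suitable multiple of $D$ (and of $q$) then delivers bullet 2.

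For the backward implication, assume (i) and (ii) hold with constant $D'$. Given a ball $B=B(c,r)$ disjoint from $\alpha$ and $x,y\in B$, I would show $\dist(\pi_\alpha(x),\pi_\alpha(y))$ is bounded uniformly in $r$. If $\dist(\pi_\alpha(x),\pi_\alpha(y))\le 20D'$ we are done; otherwise bullet 2 applied to $\beta=[x,y]$ places $\gamma$ in the $5D'$-neighbourhood of $\beta$. The plan is to pick $z\in\gamma$ equidistant (along the $q$-quasigeodesic) from $\pi_\alpha(x)$ and $\pi_\alpha(y)$ and a point $\beta(t_0)$ with $\dist(\beta(t_0),z)\le 5D'$. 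The closest-point property gives $\dist(x,\pi_\alpha(x))\le\dist(x,z)\le\dist(x,\beta(t_0))+5D'$ and symmetrically for $y$; adding yields $\dist(x,\pi_\alpha(x))+\dist(y,\pi_\alpha(y))\le\dist(x,y)+10D'$. Applying bullet 2 a second time to a pair such as $(c,x)$ pins down the position of $B$ relative to $\alpha$ in the regime where projections are spread out, and combining this with the fact that $\gamma$ is a $q$-quasigeodesic confined to the thin tube $N_{5D'}(\beta)$ produces the desired uniform bound on $\dist(\pi_\alpha(x),\pi_\alpha(y))$.

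The main obstacle is the backward direction: the naive triangle-inequality estimate via the midpoint $z$ of $\gamma$ gives only a bound on $\dist(\pi_\alpha(x),\pi_\alpha(y))$ that grows with $r$, which is insufficient for strong contraction (whose constant must be independent of the ball). Overcoming this requires simultaneously using both bullets together with the $q$-quasigeodesic condition: bullet 2 is used iteratively to force $\gamma$ into a narrow tube whose length is controlled universally, and the quasigeodesic constraint prevents $\gamma$ from filling the tube wastefully. Packaging these ingredients carefully is the only nontrivial point, since each piece in isolation is classical.
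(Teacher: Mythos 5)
Your forward direction is fine: bullet~1 follows immediately from the definition exactly as you say, and your sketch for bullet~2 is the standard chaining argument; the paper simply cites \cite[Lem.~4.5]{charneysultan:contracting} for this implication, so your outline is essentially a proof of the lemma the paper outsources.

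The reverse direction, however, is not a proof as written, and that is where the content lies. You derive the midpoint estimate $\dist(x,\pi_\alpha(x))+\dist(y,\pi_\alpha(y))\le\dist(x,y)+10D'$, correctly note that it only bounds $\dist(\pi_\alpha(x),\pi_\alpha(y))$ in terms of the radius $r$, and then defer the repair to an unspecified combination of iterated applications of bullet~2, control of the length of $\gamma$ inside the tube $N_{5D'}(\beta)$, and the quasigeodesic constant $q$. None of that machinery is needed, and the route you indicate would be awkward to close. The missing observation is that one should apply bullet~2 to the pair consisting of the \emph{centre} $c$ of the ball and the offending point, and measure the forced detour from the \emph{endpoint} $\pi_\alpha(c)$ of $\gamma$ rather than from its midpoint. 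Concretely: if $x\in B(c,r)$ satisfies $\dist(\pi_\alpha(c),\pi_\alpha(x))>20D'$ and $z\in\pi_\alpha(c)$, then bullet~2 forces the geodesic $[c,x]$ to pass within $5D'$ of $z$ and also within $5D'$ of $\pi_\alpha(x)$, so
\[
\dist(c,x)\,\ge\,\big(\dist(c,z)-5D'\big)+\big(20D'-10D'\big)\,=\,\dist(c,\alpha)+5D'\,>\,r,
\]
contradicting $x\in B(c,r)$. Hence every $x\in B$ has $\dist(\pi_\alpha(c),\pi_\alpha(x))\le20D'$, and bullet~1 then gives $\diam\pi_\alpha(B)\le50D'$, with no appeal to $q$ and no iteration. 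This one-step contradiction is precisely the paper's proof of the reverse direction (there the ball is written $B(x,r)$ with $x$ its centre, and the contradiction is phrased as the closest point $z$ being forced into $B$). You do mention applying bullet~2 to the pair $(c,x)$ as one ingredient, but since you leave the ``packaging'' open and point towards unnecessary tools, the argument as proposed has a genuine gap.
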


\begin{proof}
The forward direction is given by \cite[Lem.~4.5]{charneysultan:contracting}. For the reverse direction suppose that the two conditions hold for the $q$--quasigeodesic $\alpha$, and let $B=B(x,r)$ be a ball in $S$ that is disjoint from $\alpha$. If $\diam\pi_\alpha(B)>50D'$, then by the first assumption there is some $y\in B$ such that $\dist(\pi_\alpha(x),\pi_\alpha(y))>20D'$. Let $\beta$ be a geodesic from $x$ to $y$, and let $z\in\pi_\alpha(x)$. By the second assumption, $\beta$ comes $5D'$--close to both $z$ and $\pi_\alpha(y)$. But then the length of $\beta$ must be at least $(\dist(x,z)-5D')+20D'-5D'>\dist(x,z)$, which implies that $z\in B$, in conflict with the choice of $B$. Thus $\alpha$ is $50D'$--strongly contracting.
\end{proof}

Every curtain $\hh$ has two nonempty \emph{halfspaces}, $\hh^+$ and $\hh^-$: if $\hh$ is dual to the strongly contracting geodesic $\alpha$ at an interval $I$, then they are the sets of points $x$ such that $\pi_\alpha(x)$ intersects one of the two components of the complement of $I$ in $\alpha$. Note that the choice of length of $I$ means that $x\in X$ cannot simultaneously be in $\hh^+$ and $\hh^-$, and in particular, $\{\hh^-,\hh,\hh^+\}$ is a tripartition of $S$. More strongly, we have $\dist(\hh^-,\hh^+)\ge3D>1$. Note that this ``thickness'' increases with the contracting constant.

We say that $\hh$ \emph{separates} two points or subsets of $S$ if they lie in opposite halfspaces of $\hh$. A \emph{chain} of curtains is then a sequence $(\hh_i)$ such that $\hh_i$ separates $\hh_{i-1}$ from $\hh_{i+1}$ for all $i$.

\begin{definition}[Ball-separation] \label{def:ball_sep}
For a natural number $R$, we say that disjoint curtains $\hh_1$ and $\hh_2$ are \emph{$R$--ball-separated} if there exists a ball $B\subset\hh_1^+\cap\hh_2^-$ with radius at most $R$ and such that every geodesic from $\hh_1^-$ to $\hh_2^+$ meets $B$. 

An \emph{$R$--chain} is a chain $(\hh_i)$ of curtains such that $\hh_i$ and $\hh_j$ are $R$--ball-separated for all $i,j$.
\end{definition}

We say that an $R$--chain $(\hh_i)$ \emph{crosses} a curtain $\kk$ if all four quarterspaces $\hh_i^\pm\cap\kk^\pm$ are nonempty for all $i$.

\begin{lemma} \label{lem:ball_relative_separation}
If curtains $\kk_1$ and $\kk_2$ are $R$--ball-separated, then every $R$--chain crossing both $\kk_1$ and $\kk_2$ has length at most $3R+5$.
\end{lemma}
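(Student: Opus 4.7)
The plan is to take a geodesic $\gamma$ linking the two ends of the chain across $\kk_1$ and $\kk_2$, and track how its crossings of the $\hh_i$ are forced to cluster near the ball witnessing $R$-ball-separation of $\kk_1,\kk_2$. Concretely, I would pick $a\in\hh_1^-\cap\kk_1^-$ and $b\in\hh_n^+\cap\kk_2^+$, which exist because $\hh_1$ crosses $\kk_1$ and $\hh_n$ crosses $\kk_2$. Since $\hh_1^-\subset\hh_i^-$ and $\hh_n^+\subset\hh_i^+$ for every $i$, a geodesic $\gamma$ from $a$ to $b$ crosses each $\hh_i$; and since $a\in\kk_1^-$, $b\in\kk_2^+$, it also passes through the ball $B$ from the $R$-ball-separation of $\kk_1,\kk_2$. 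Parametrising $\gamma$ by arclength, write $[u_i^-,u_i^+]$ for the subarc along which $\gamma$ crosses $\hh_i$: these arcs are disjoint and ordered by the chain, and of length at least $3D_i\ge3$ because $\dist(\hh_i^-,\hh_i^+)\ge3D_i$. The shortest subarc $[t_0,t_1]$ of $\gamma$ containing $\gamma\cap B$ has length at most $2R$.

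Next I would split $\{1,\ldots,n\}$ into three consecutive blocks: the prefix $I^b$ of indices whose crossings end before $t_0$, the middle $I^m$ of indices whose crossings meet $[t_0,t_1]$, and the suffix $I^a$ of indices whose crossings start after $t_1$. For $|I^m|$, a direct arclength count yields a bound of order $R$: at most one crossing can contain $[t_0,t_1]$, and otherwise each crossing in $I^m$ contributes either a left-endpoint or a right-endpoint inside that arc; consecutive endpoints of either type are at least $3$ apart along $\gamma$, so one gets roughly $2R/3$ of each, giving $|I^m|\le2\lfloor 2R/3\rfloor+2$.

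The bounds on $|I^b|$ and $|I^a|$ are the delicate part, and I expect this to be the main obstacle. The key observation is that $\gamma(t_0)\in B$ lies in $\hh_i^+$ for every $i\in I^b$, while $\gamma(t_1)\in B$ lies in $\hh_i^-$ for every $i\in I^a$; in particular $B$ contains points in both $\hh_{\max I^b}^+$ and $\hh_{\min I^a}^-$, and these two witnesses are at distance at most $2R$. Invoking the $R$-ball-separation of $\hh_{\max I^b}$ and $\hh_{\min I^a}$ produces a separating ball of radius at most $R$ in $\hh_{\max I^b}^+\cap\hh_{\min I^a}^-$ that must intercept $\gamma$; comparing this ball with $B$, and pushing the arclength estimate through the crossings in the prefix and suffix blocks, one should bound $\max I^b$ and $n-\min I^a$ linearly in $R$. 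Summing the three estimates yields $n=|I^b|+|I^m|+|I^a|\le3R+5$, with the additive constant $5$ absorbing the boundary contribution of $I^m$ (notably a possible crossing that contains $[t_0,t_1]$) together with off-by-one adjustments intrinsic to the discrete chain. The main delicacy is in orchestrating the two systems of balls, namely $B$ coming from $(\kk_1,\kk_2)$ and the family coming from $R$-ball-separations within the chain, so that the arclength bookkeeping along $\gamma$ yields exactly $3R+5$ rather than some looser linear bound.
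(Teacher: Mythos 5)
Your decomposition along a single geodesic $\gamma$ from $a\in\hh_1^-\cap\kk_1^-$ to $b\in\hh_n^+\cap\kk_2^+$ has a genuine gap, and it sits exactly where you flagged it: the prefix and suffix blocks. Because $\gamma$ crosses each $\hh_i$ exactly once and monotonically, its existence imposes no constraint whatsoever on $n$ -- a chain of any length admits such a geodesic, which is simply long. The ball furnished by the $R$--ball-separation of $\hh_{\max I^b}$ and $\hh_{\min I^a}$ lies in the middle region $\hh_{\max I^b}^+\cap\hh_{\min I^a}^-$, i.e.\ near $B$; knowing that $\gamma$ passes through it tells you nothing about how many curtains $\gamma$ crossed before arriving there, so it cannot bound $\max I^b$ or $n-\min I^a$. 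There is no ``arclength estimate to push through the prefix'' because nothing caps the arclength of the prefix.

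The missing idea is a \emph{returning} geodesic, and this is how the paper argues. Since $\hh_1$ crosses both $\kk_1$ and $\kk_2$, one can choose $x\in\hh_1^-\cap\kk_1^-$ and $y\in\hh_1^-\cap\kk_2^+$, so that \emph{both} endpoints lie in $\hh_1^-$. A geodesic $\alpha$ from $x$ to $y$ is still forced through $B$ by the ball-separation of $\kk_1,\kk_2$; after arranging (as in your middle-block count) that at least half of the non-$B$-meeting curtains lie on one side, say $\hh_1,\dots,\hh_n$ disjoint from $B$ with $B\subset\hh_n^+$, the point $z\in\alpha\cap B$ lies deep in $\hh_n^+$. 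Now the two subsegments $\alpha[x,z]$ and $\alpha[z,y]$ are each geodesics from $\hh_1^-$ to $\hh_2^+$, so the $R$--ball-separation of $\hh_1$ and $\hh_2$ forces \emph{both} strands of the detour through a single ball $B'$ of diameter at most $2R$. Hence the excursion of $\alpha$ into $\hh_2^+$ has length at most $2R$, yet it must traverse $\hh_3,\dots,\hh_n$ (each of width at least $1$) on the way in and again on the way out, giving $n\le R+2$ and, combined with the bound on curtains meeting $B$, the stated $3R+5$. Your monotone geodesic never returns, so this mechanism -- the only place the ball-separation of the chain itself gets used with teeth -- is unavailable to it.
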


\begin{proof}
Let $B$ be a ball of radius at most $R$ such that every geodesic from $\kk_1^-$ to $\kk_2^+$ passes through $B$. Let $\mathfrak c=(\hh_1,\dots,\hh_{2n+R+1})$ be an $R$--chain that crosses both $\kk_1$ and $\kk_2$. Because $\dist(\hh_i^-,\hh_i^+)\ge1$ for each $i$, at most $R+1$ elements of $\mathfrak c$ can intersect $B$. After switching the order of the $\hh_i$, we therefore have that $\hh_1,\dots,\hh_n$ are all disjoint from $B$, and $B\subset\hh_n^+$. 

\begin{figure}[ht]
\includegraphics[width=4.5cm, trim = 4cm 6.5cm 4cm 5.5cm]{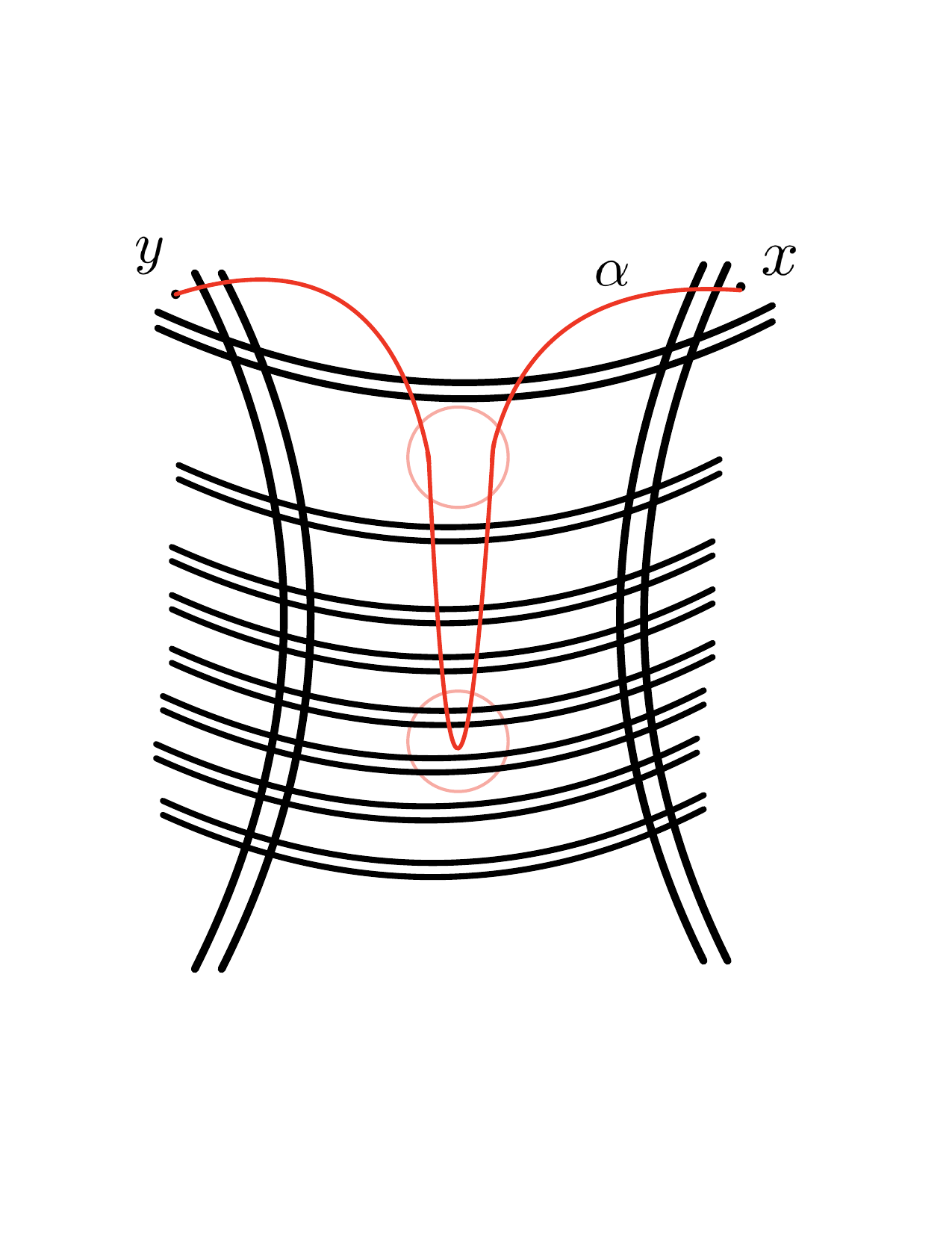}\centering
\caption{Lemma~\ref{lem:ball_relative_separation}. Geodesics from $x$ to $y$ have to pass through two balls of radius $R$.} \label{fig:returning_geodesic}
\end{figure}

Since $\hh_1$ and $\hh_2$ are $R$--ball-separated, there is a ball $B'$ of radius at most $R$ such that every geodesic from $\hh_1^-$ to $\hh_2^+$ must pass through $B'$. Because $\hh_1$ crosses both $\kk_1$ and $\kk_2$, there are points $x\in\hh_1^-\cap\kk_1^-$ and $y\in\hh_1^-\cap\kk_2^+$. If $\alpha$ is a geodesic from $x$ to $y$, then $\alpha$ must meet $B$. Let $z\in B\cap\alpha$, and let $\alpha_1=\alpha[x,z]$, $\alpha_2=\alpha[z,y]$. Both $\alpha_1$ and $\alpha_2$ must pass through $B'$. But $\diam B'\le2R$, so the subsegment of $\alpha$ lying in $\hh_2^+$ must have length at most $2R$. Because $z\in\hh_n^+$ and $\dist(\hh_i^-,\hh_i^+)\ge1$, it follows that $n\le R+2$.
\end{proof}

\ubsh{The hyperbolic core}
Each curtain $\hh\in\cur S$ induces two natural bipartitions of $S$, namely $(\hh^-\cup\hh,\hh^+)$ and $(\hh^-,\hh\cup\hh^+)$. We let $P$ be the \emph{set} of all bipartitions of $S$ induced by curtains in this way (different curtains can induce a common bipartition). Let $\C_R$ be the set of all chains $\{h_i\}\subset P$ such that there exists an $R$--chain $(\hh_i)\subset\cur S$ with $\hh_i$ inducing $h_i$. 

According to Lemma~\ref{lem:ball_graded} below, the sequence $(\C_R)$ is a graded system. For each $R$, let $X_R$ be the $\C_R$--dual of $S$. The \emph{hyperbolic core} $X$ is the graded dual of $S$ with respect to $(\C_R)$.
\uesh

\begin{lemma} \label{lem:ball_graded}
The sequence $(\C_R)$ defined above is a graded system on $(S,P)$, and each $\C_R$ is 3--gluable.
\end{lemma}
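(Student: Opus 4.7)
The plan is to verify the three conditions of Definition~\ref{def:graded_system} together with 3-gluability. The nesting $\C_R\subset\C_{R+1}$ is immediate, since any ball of radius $\le R$ is a ball of radius $\le R+1$, so every $R$-chain of curtains is an $(R+1)$-chain. Each $\C_R$ is a system of chains closed under subsets and containing all singletons by construction (the $R$-ball-separation condition is pairwise, and a single curtain is vacuously an $R$-chain); and $L_R$-separation with $L_R=3R+5$ is exactly Lemma~\ref{lem:ball_relative_separation}.

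The first substantive step is a uniform length bound: if $(\hh_i)_{i=1}^n$ is an $R$-chain of curtains separating $s$ from $t$, then $n\le\tfrac{1}{3}\dist(s,t)+2$, with no dependence on $R$. The argument is that consecutive witnessing balls $B_i\subset\hh_i^+\cap\hh_{i+1}^-$ and $B_{i+1}\subset\hh_{i+1}^+\cap\hh_{i+2}^-$ are forced onto opposite sides of $\hh_{i+1}$, hence $\dist(B_i,B_{i+1})\ge\dist(\hh_{i+1}^-,\hh_{i+1}^+)\ge3$; since $s\in\hh_1^-$ and $t\in\hh_n^+$ (using the chain nestings), every geodesic from $s$ to $t$ meets the balls in order, and the bound follows. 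This handles the dualisable-system bound on $\C_R$ at once; and because it is uniform in $R$, choosing $\kappa_R=2^{-R}$ (or any summable sequence) gives $\sum_R\kappa_R|c^R|\le\bigl(\tfrac{1}{3}\dist(s,t)+2\bigr)\sum_R\kappa_R$, which is the third bullet of Definition~\ref{def:graded_system}.

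For gluability I would in fact prove the stronger statement that $\C_R$ is 1-gluable. Given $c_1,c_2\in\C_R$ satisfying the hypothesis of Definition~\ref{def:gluable}, fix underlying curtain chains $(\hh_i)_{i\le-1}$ and $(\hh_j)_{j\ge1}$; because the induced bipartitions form a chain, the combined sequence $(\hh_i)_{i\ne0}$ is itself a chain of curtains realising $c_1\cup c_2$. The key observation is that any pair $(\hh_i,\hh_j)$ with $i\le-2$ and $j\ge1$ is already $R$-ball-separated: the ball $B$ witnessing $R$-ball-separation of $(\hh_{-2},\hh_{-1})$ satisfies $B\subset\hh_{-2}^+\cap\hh_{-1}^-\subset\hh_i^+\cap\hh_j^-$ via the chain nestings $\hh_{-2}^+\subset\hh_i^+$ and $\hh_{-1}^-\subset\hh_j^-$, and every geodesic from $\hh_i^-\subset\hh_{-2}^-$ to $\hh_j^+\subset\hh_{-1}^+$ is in particular a geodesic from $\hh_{-2}^-$ to $\hh_{-1}^+$, hence meets $B$. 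Symmetrically the ball witnessing $R$-ball-separation of $(\hh_1,\hh_2)$ handles pairs with $i\le-1,\,j\ge2$. Thus the only pair that can fail $R$-ball-separation is $(\hh_{-1},\hh_1)$, and removing $h_{-1}$ (or, in the edge case $c_1=\{h_{-1}\}$, removing $h_1$; either one if both $c_1$ and $c_2$ are singletons) yields an element of $\C_R$ of the form required by Definition~\ref{def:gluable}, so $\C_R$ is 1-gluable and in particular 3-gluable.

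The main obstacle is the halfspace-nesting bookkeeping in the ball-propagation step of the gluability argument: it is purely combinatorial but requires keeping track of the orientation of each $\hh_i$ in the combined chain. Once this is set up, both the ball-propagation and the $\dist(B_i,B_{i+1})\ge3$ estimate used in the summability step become immediate from the definition of $R$-ball-separation and the fact that curtains have thickness at least $3$.
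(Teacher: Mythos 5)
Your verification of the nesting $\C_R\subset\C_{R+1}$, the $L_R$--separation via Lemma~\ref{lem:ball_relative_separation}, and the summability bound (curtains have definite thickness, so a chain separating $s$ from $t$ has length controlled by $\dist(s,t)$, uniformly in $R$) all match the paper's proof; the choice $\kappa_R=2^{-R}$ versus the paper's $1/R^2$ is immaterial.

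The gluability step, however, has a genuine gap, and it is located exactly where you flagged the "halfspace-nesting bookkeeping" as the main obstacle. Your claim of 1--gluability rests on the containments $\hh_{-1}^-\subset\kk_j^-$ (for $j\ge1$) and $\kk_1^+\subset\hh_i^+$ "via the chain nestings". But the chain hypothesis in Definition~\ref{def:gluable} is a condition on the induced bipartitions $h_{-1},k_1\in P$, and each bipartition absorbs its curtain into one of its halfspaces: from $h_{-1}^-\subset k_1^-$ one can only deduce $\hh_{-1}^-\subset\kk_1\cup\kk_1^-$, not $\hh_{-1}^-\subset\kk_1^-$. So the ball $B$ witnessing separation of $(\hh_{-2},\hh_{-1})$, which lives in $\hh_{-1}^-$, may meet the curtain $\kk_1$ itself and hence fail to lie in $\kk_1^-$ as Definition~\ref{def:ball_sep} requires; symmetrically, a geodesic terminating in $\kk_1^+$ may end inside the curtain $\hh_{-1}$ without ever reaching $\hh_{-1}^+$, so it need not meet $B$. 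Worse, $\hh_{-1}$ and $\kk_1$ need not be disjoint curtains at all --- they can literally be the same curtain inducing its two distinct bipartitions, since $(\hh^-,\hh\cup\hh^+)$ and $(\hh^-\cup\hh,\hh^+)$ form a chain in $P$ --- so the combined sequence $(\hh_i)_{i\ne0}$ need not even be a chain of curtains. This is precisely why the paper removes three elements $\{h_{-1},k_1,k_2\}$ rather than one: it uses the ball witnessing separation of $(\kk_2,\kk_3)$, which sits in $\kk_2^+\cap\kk_3^-$ and is insulated from the junction by a full curtain's worth of ambiguity on each side, so that $\kk_2^+\subset k_1^+\subset h_{-1}^+\subset\hh_{-1}\cup\hh_{-1}^+\subset\hh_{-2}^+$ and $\hh_{-2}^-\subset h_{-1}^-\subset k_1^-\subset\kk_1\cup\kk_1^-\subset\kk_2^-$ both go through. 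To repair your argument you would need to either establish these insulating containments (which forces you to discard the boundary elements, landing you back at 3--gluability), or prove 1--gluability by some other means; as written, the claimed containments are false in general.
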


\begin{proof}
Every element of $\C_R$ is induced by a chain of curtains, and is therefore itself a chain from $P$. If a pair of curtains is $R$--ball-separated then it is $(R+1)$--ball-separated, so $\C_R\subset\C_{R+1}$. Lemma~\ref{lem:ball_relative_separation} provides separation of the $\C_R$. It remains to check that $\C_R$ is 3--gluable and to find an appropriate sequence $(\kappa_R)$ as in Definition~\ref{def:graded_system}.

For the latter, note that since $\dist(\hh^-,\hh^+)\ge1$ for every $h\in\cur S$, if $c\in\C_R$ separates $s\in S$ from $t\in S$, then $|c|\le\dist(s,t)+2$. This means we can take $\kappa_R=\frac1{R^2}$, for instance.

To show that $\C_R$ is 3--gluable, suppose that $c_1,c_2\in\C_R$ are such that $c_1\cup c_2$ is a chain with $c_2\subset c_1^+$ and $c_1\subset c_2^-$. Let $h_{-1},h_{-2}$ be the two maximal elements of $c_1$, and let $k_1,k_2,k_3$ be the three minimal elements of $c_2$. Let $(\hh_i)$ be an $R$--chain inducing $c_1$, and let $(\kk_i)$ be an $R$--chain inducing $c_2$. Let $B$ be a ball of radius at most $R$ such that every geodesic from $\kk_2^-$ to $\kk_3^+$ meets~$B$. 

Since $(\kk_i)$ is a chain of curtains, $\kk_2\subset\kk_1^+$. Also, the fact that $\{h_{-1},k_1\}$ is a chain in $P$ implies that $\hh_{-1}^-\subset\kk_1\cup\kk_1^-$. Hence $\kk_2$ is disjoint from $\hh_{-1}^-$. In particular, $\kk_2$ is disjoint from $\hh_{-2}$. Moreover, any geodesic from $\hh_{-2}^-$ to $\kk_3^+$ is a geodesic from $\kk_2^-$ to $\kk_3^+$, so any such geodesic meets $B$. That is, $\hh_{-2}$ and $\kk_3$ are $R$--ball-separated curtains. We conclude that $(\hh_i)_{i\le-2}\cup(\kk_j)_{j\ge3}$ is an $R$--chain, and hence $c_1\cup c_2\ssm\{h_{-1},k_1,k_2\}\in\C_R$.
\end{proof}

In view of Propositions~\ref{prop:graded_hyperbolic} and~\ref{prop:graded_wrg}, we therefore have that the hyperbolic core is a roughly geodesic hyperbolic space (see also Remark~\ref{rem:graded_roughly_geodesic}). According to the following proposition, one could also work in the roughly geodesic hyperbolic space $(S,\Dist)$ if preferred.

\begin{proposition} \label{prop:subspace_wrg}
With the subspace metric $\Dist$, the set $S\subset X$ is weakly roughly geodesic. Moreover, geodesics of $(S,\dist)$ are uniform unparametrised rough geodesics of $(X,\Dist)$. 
\end{proposition}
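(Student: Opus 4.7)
The plan is to reduce both assertions to analyzing how the $\Dist$-distance behaves along a $(S,\dist)$-geodesic $\alpha$ from $s$ to $t$; once we know that $\alpha$, suitably reparametrized, is a uniform rough geodesic in $(X,\Dist)$, the second statement is immediate and the first follows by using points of $\alpha$ as witnesses. First I would establish a coarse Lipschitz bound $\Dist(u,v)\le A\dist(u,v)+B$ for $u,v\in S$. Any $c\in\C_R$ separating $u$ from $v$ is induced by an $R$-chain of curtains $(\hh_i)$ with $\hh_{i-1}\subset\hh_i^-$ and $\hh_{i+1}\subset\hh_i^+$, so non-consecutive curtains satisfy $\dist(\hh_{i-1},\hh_{i+1})\ge\dist(\hh_i^-,\hh_i^+)\ge 3$; tracing a $\dist$-geodesic between $u$ and $v$ and picking a point in each $\hh_i$ gives $|c|\le\tfrac23\dist(u,v)+1$. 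Summing against $\lambda_R$ yields $A,B=O(\Lambda)$, and in particular $\Dist(\alpha(\sigma),\alpha(\tau))$ has bounded jumps controlled by $|\sigma-\tau|$.

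The heart of the proof is level-$R$ approximate additivity: for $s,u,t\in\alpha$ in order,
\[
\dist_R(s,u)+\dist_R(u,t)\,\le\,\dist_R(s,t)+C(L_R+m_R+1),
\]
which, summed against $\lambda_R$, gives $\Dist(s,u)+\Dist(u,t)\le\Dist(s,t)+K$ with $K=O(\Lambda)$ using the convergent series defining $\Lambda$. Take chains $c_1\in\C_R$ realising $\dist_R(s,u)$ and $c_2\in\C_R$ realising $\dist_R(u,t)$, and split $c_1=\bar c_1\sqcup c_1^*$, $c_2=c_2^*\sqcup\bar c_2$, where $\bar c_1,\bar c_2$ consist of walls additionally separating $s$ from $t$ and $c_1^*,c_2^*$ are ``wiggle'' walls separating $u$ from $\{s,t\}$; by chain monotonicity $\bar c_1$ is an initial sub-chain of $c_1$ and $\bar c_2$ a terminal sub-chain of $c_2$. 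For $h\in\bar c_1$ and $k\in\bar c_2$, the quarterspaces $h^-\cap k^-$, $h^+\cap k^+$, $h^+\cap k^-$ are nonempty (they contain $s$, $t$, $u$ respectively), so either $h^-\subset k^-$ (giving the chain order $h<k$) or $h$ and $k$ cross. Applying the $L_R$-separation condition to pairs in $\bar c_1$ bounds the walls in $\bar c_2$ that cross by $O(L_R)$; after discarding these exceptions and applying $3$-gluability (Lemma~\ref{lem:gluing}, since $m_R=3$), the result is an element of $\C_R$ separating $s$ from $t$ of length at least $|\bar c_1|+|\bar c_2|-O(L_R+m_R)$.

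The remaining task is to absorb the wiggle parts $c_1^*,c_2^*$ into the estimate. Each wall in $c_1^*$ corresponds to a curtain that $\alpha$ crosses twice (once on the segment from $s$ to $u$, once on the segment from $u$ to $t$), and the curtain-thickness computation used for the Lipschitz bound forces a chain of $n$ wiggle walls to charge roughly $3n$ to the $\dist$-length of $\alpha$. I would use this, coupled with $R$-ball-separation, to extract from each wiggle chain an equally long contribution to a chain in $\C_R$ that does separate $s$ from $t$, closing the level-$R$ estimate. The main obstacle will be making this bookkeeping uniform in $R$: the wiggles can a priori be numerous, and converting the geometric fact that they cost $\dist$-length into an extra $O(L_R+m_R)$ loss (rather than a loss that scales with $\dist(s,t)$) requires care in aligning the $R$-ball-separated chain structure of $c_1^*,c_2^*$ with the $\dist$-geodesic $\alpha$. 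Once level-$R$ additivity is established, parametrising $\alpha$ by $\Dist$-distance from $s$ gives the uniform unparametrised rough-geodesic statement, and selecting the corresponding witness points on $\alpha$ establishes weak rough geodesicity of $S\subset X$.
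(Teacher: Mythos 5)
Your skeleton is the right one and matches the paper's: reduce everything to a midpoint-defect estimate $\Dist(r,s)+\Dist(s,t)\le\Dist(r,t)+O(\Lambda)$ for $s$ on a $\dist$--geodesic $\alpha$ from $r$ to $t$, prove it level by level, and at each level split a realising chain into the walls that separate the two endpoints and the ``wiggle'' walls that separate the midpoint from both endpoints, gluing the former via Lemma~\ref{lem:gluing}. But there is a genuine gap exactly where you flag it: you never actually bound the wiggle part, and the mechanism you sketch for doing so --- extracting from each wiggle chain ``an equally long contribution'' to a chain separating $r$ from $t$ --- is not the right one and I do not see how to make it work. Curtain thickness alone only says the wiggles cost $\dist$--length, which, as you observe, permits a loss scaling with $\dist(r,t)$ rather than with $L_R+m_R$.

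The step you are missing is that the wiggle walls at level $R$ are bounded in number by $R+3$, outright, with no compensation needed. Order the wiggle curtains $\hh_1,\dots,\hh_n$ by increasing distance from the midpoint $s$, so $s\in\hh_2^-$ while $r,t\in\hh_{n-1}^+$. These form an $R$--chain, so there is a ball $B$ of radius at most $R$, sitting in $\hh_{n-2}^+\cap\hh_{n-1}^-$, that every geodesic from $\hh_{n-2}^-$ to $\hh_{n-1}^+$ must meet. Both halves $\alpha[r,s]$ and $\alpha[s,t]$ are such geodesics, so both meet $B$; since they are the two halves of a single geodesic through $s$ and $\diam B\le 2R$, one of them enters $B$ within distance roughly $R$ of $s$. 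But the curtains $\hh_2,\dots,\hh_{n-2}$ all separate $s$ from $B$ and each has $\dist(\hh_i^-,\hh_i^+)\ge1$, forcing $n\le R+3$. Since $L_R\ge R$, summing $\lambda_R(R+3)$ against the grading gives a total loss of at most $\Lambda$, which is precisely the uniformity in $R$ you were worried about. (A cosmetic difference: the paper applies this to a single perichain realising $\Dist(r,s)$ rather than to two chains at once, but that does not change the substance.) With that bound in hand, discarding the wiggles and applying Lemma~\ref{lem:gluing} at each level, exactly as you propose for $\bar c_1,\bar c_2$, finishes the proof.
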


\begin{proof}
If $s_1,s_2\in S$ have $\dist(s_1,s_2)\le 2$, then $\Dist(s_1,s_2)\le\Lambda$, where $\Lambda$ is as in Section~\ref{subsec:unification}. Hence it suffices, given $r,t\in S$ and $s$ lying on an $(S,\dist)$--geodesic $\alpha$ from $r$ to $t$, to upper bound $\Dist(r,s)+\Dist(s,t)-\Dist(r,t)$. This will also imply the ``moreover'' statement. Let $\bar c=(c^R)$ be a perichain realising $\Dist(r,s)$. Let $c^R_s$ be the subset of $c^R$ that does not separate $r$ from $t$. We first show that $|c^R_s|\le R+3$. 

For this, fix $R$, let $(\hh_i)$ be an $R$--chain of curtains inducing $c^R$, and let $\hh_1,\dots,\hh_n$ be the curtains inducing $c^R_s$, with increasing index signifying increasing distance from $s$. Whilst we may have $s\in\hh_1$, we certainly have $s\in\hh_2^-$. Similarly, $r,t\in\hh_{n-1}^+$. Let $B$ be a ball of radius at most $R$ such that every geodesic from $\hh_{n-1}^+$ to $\hh_{n-2}^-$ must meet $B$. In particular, $\alpha[r,s]$ and $\alpha[s,t]$ both meet $B$. Because $\hh_2,\dots,\hh_{n-2}$ all separate $B$ from $s$, and $\dist(\hh_i^-,\hh_i^+)\ge1$, we therefore have $n\le R+3$, as desired.

Because of this, we can find perichains $\bar b_1$ and $\bar b_2$ such that $\bar b_1$ separates $r$ from $\{s,t\}$ and $\bar b_2$ separates $t$ from $\{s,r\}$, such that $\Dist(r,s)\le\|\bar b_1\|+\Lambda$ and $\Dist(s,t)\le\|\bar b_2\|+\Lambda$. Applying Lemma~\ref{lem:gluing} in each $\C_R$, we have $\Dist(r,t)\ge\|\bar b_1\|+\|\bar b_2\|-\Lambda$, and hence $\Dist(r,t)\ge\Dist(r,s)+\Dist(r,t)-3\Lambda$, which completes the proof.
\end{proof}



The following theorem characterises strongly contracting quasigeodesics of $S$ as those that quasiisometrically embed in the hyperbolic core. In particular, it shows that the existence of a strongly contracting ray in $(S,\dist)$ is sufficient for $(S,\Dist)\subset X$ to be unbounded.

\begin{theorem} \label{thm:universal_contracting_characterisation}
For each $q,D\ge1$ there exists $\nu\ge1$ such that if $\alpha$ is a $D$--strongly contracting $q$--quasigeodesic in a geodesic space $S$, then $\alpha\to X$ is a $\nu$--quasiisometric embedding in its hyperbolic core.

Conversely, for each $q,\nu$ there exists $D$ such that if $\alpha$ is a $q$--quasigeodesic in a geodesic space $S$ with the property that $\alpha\to X$ is a $\nu$--quasiisometric embedding in its hyperbolic core, then $\alpha$ is $D$--strongly contracting.
\end{theorem}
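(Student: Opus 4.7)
For the forward direction, the upper bound follows from coarse Lipschitzness of $S\to X$: any element of $\C_R$ separating $s,t\in S$ is a chain of curtains whose halfspaces are at pairwise $\dist$-distance at least one, so $|c|\le\dist(s,t)+2$, and summing over $R$ gives $\Dist(s,t)\le\Lambda\dist(s,t)+2\Lambda$. Composed with the $q$-quasiisometric parametrization of $\alpha$, this gives a coarse Lipschitz bound on $\alpha\to X$. For the matching lower bound, I straighten: given $s=\alpha(a)$, $t=\alpha(b)$, choose a $\dist$-geodesic $\beta$ from $s$ to $t$. Standard Morse-type results for strongly contracting quasigeodesics give that $\beta$ is itself $D'$-strongly contracting with $D'=D'(D,q)$. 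Subdividing $\beta$ into consecutive subsegments of length $30D'$ and extracting the middle $10D'$-subintervals produces curtains $\hh_1,\dots,\hh_N$, nested along $\beta$ and thus forming a chain in $P$, with $N\gtrsim|a-b|/(qD')$. For $i<j$, Lemma~\ref{lem:contracting_iff_pass_close} forces every geodesic from $\hh_i^-$ to $\hh_j^+$ to pass within $5D'$ of a fixed midpoint of $\beta$ between the defining intervals, so $(\hh_i)$ is $5D'$-ball-separated. Hence $\dist_{5D'}(s,t)\ge N$, giving $\Dist(s,t)\ge\lambda_{5D'}N$, as desired.

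For the converse direction, I argue by contrapositive via Lemma~\ref{lem:contracting_iff_pass_close}: to show $\alpha$ is $D$-strongly contracting with $D=D(q,\nu)$, it suffices to bound $\diam\pi_\alpha(x)$ uniformly and to control fellow-travelling between $\alpha[\pi_\alpha(x),\pi_\alpha(y)]$ and $\dist$-geodesics from $x$ to $y$ when the projections are far apart in $\alpha$. The common strategy combines hyperbolicity of $X$ (Proposition~\ref{prop:graded_hyperbolic}) with Proposition~\ref{prop:subspace_wrg}, which promotes $\dist$-geodesics to unparametrized $\Dist$-rough geodesics. This lets us treat both $\alpha[\pi_\alpha(x),\pi_\alpha(y)]$ and the $\dist$-geodesic from $x$ to $y$ as $X$-quasigeodesics with endpoints at bounded $\Dist$-discrepancy (controlled by $\Lambda\dist(x,\alpha)$ and $\Lambda\dist(y,\alpha)$), and Morse in the hyperbolic $X$ then forces them to $\Dist$-fellow-travel uniformly.

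The principal obstacle is converting this $\Dist$-fellow-travelling back to the $\dist$-fellow-travelling demanded by Lemma~\ref{lem:contracting_iff_pass_close}, since $\Dist$ can be arbitrarily smaller than $\dist$. I would bridge it by running the forward curtain construction on transverse geodesics: if a point $w$ on the $\dist$-geodesic lay at $\dist$-distance $M$ from $\alpha$, a $\dist$-geodesic from $w$ to its $S$-nearest point on $\alpha$ would, by the same Morse-in-$X$ reasoning, be an $X$-quasigeodesic of $\Dist$-length at least $M/\Lambda$ up to constants. After verifying that this transverse geodesic is uniformly strongly contracting in $S$ (the delicate quantitative step, hinging on the fact that the forward-direction construction only needs curtains drawn from a uniform family of strongly contracting geodesics of $S$), the forward construction produces a long $R$-chain of transverse curtains separating $w$ from $\alpha$. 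Gluing these (Lemma~\ref{lem:gluing}) with the $\dist_R$-realising chain between the endpoints of $\alpha[\pi_\alpha(x),\pi_\alpha(y)]$ yields a chain exceeding the length compatible with the $\nu$-quasiisometric embedding.

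The bound on $\diam\pi_\alpha(x)$ is handled by a closely related argument: given $p_1,p_2\in\pi_\alpha(x)$ with $\dist(p_1,p_2)$ large, the piecewise-geodesic path $p_1\to x\to p_2$ plays the role of the $\dist$-geodesic above, and one similarly constructs and glues curtain chains to contradict the quasi-isometric embedding of $\alpha$. The uniformity of the transverse-geodesic strong-contraction estimates, independent of any a priori hypothesis on the contraction of $\alpha$ itself, is the key quantitative difficulty in the converse and is where the graded structure of $(\C_R)$ enters essentially.
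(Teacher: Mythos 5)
Your forward direction is essentially the paper's argument: straighten $\alpha$ to a geodesic between the two points, note it is uniformly strongly contracting by Lemma~\ref{lem:contracting_iff_pass_close}, cut it into uniform pieces to produce a chain of dual curtains, and use the second bullet of Lemma~\ref{lem:contracting_iff_pass_close} to exhibit the bottleneck balls witnessing $R$--ball-separation for $R=R(D,q)$; together with the trivial upper bound coming from $\dist(\hh^-,\hh^+)\ge1$, this gives a quasiisometric embedding into $X_R$ and hence into $X$. Up to minor bookkeeping about the lengths of the defining subsegments, this part is correct and matches the paper.

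The converse has a genuine gap, and it sits exactly where you flag ``the delicate quantitative step.'' Your plan is to pass through hyperbolicity of $X$ to obtain $\Dist$--fellow-travelling and then convert back to $\dist$--fellow-travelling by building a long chain of curtains transverse to $\alpha$, dual to a geodesic from a putative far-away point $w$ to $\alpha$. But curtains only exist dual to strongly contracting geodesics of $S$, and there is no reason the transverse geodesic is strongly contracting: the hypothesis of the theorem constrains only $\alpha$, not geodesics leaving $\alpha$, and $S$ may contain (say) flat regions through which the transverse geodesic runs, in which case it carries no curtains and has bounded image in $X$. For the same reason your intermediate claim that the transverse geodesic is an $X$--quasigeodesic of $\Dist$--length at least $M/\Lambda$ does not follow from any Morse property in $X$; as you yourself note, $\Dist$ can collapse $\dist$ arbitrarily, and nothing rules out that collapse happening transversally to $\alpha$. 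The paper's proof avoids $X$--hyperbolicity entirely and gets $\dist$--control directly from ball-separation: since $\alpha\to X$ is a $\nu$--quasiisometric embedding, consecutive points $x_n,x_{n+1}$ on $\alpha$ at $\dist$--distance $30K^2$ are separated by a long chain in some $\C_R$; because curtain halfspaces are at $\dist$--distance at least $1$, this forces $R\le K$; gluing these chains (Lemmas~\ref{lem:gluing} and~\ref{lem:ball_relative_separation}) leaves ball-separated pairs $\hh_n,\kk_n$ whose witnessing balls $B_n$ of radius at most $K$ are metric bottlenecks in $S$ that every geodesic crossing the corresponding halfspaces must enter. Both conditions of Lemma~\ref{lem:contracting_iff_pass_close} then follow by elementary $\dist$--estimates through these balls. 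To repair your outline you should replace the transverse-curtain step by this bottleneck argument; as written, the conversion from $\Dist$ back to $\dist$ is not justified.
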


\begin{proof}
Let $\alpha:[0,T)\to S$ be a $D$--strongly contracting $q$--quasigeodesic, with $T\in[0,\infty]$. By considering a geodesic from $\alpha(0)$ to $\alpha(n)$ for each possible $n$, Lemma~\ref{lem:contracting_iff_pass_close} tells us that there is $R=R(D,q)$ such that there are $R$--chains of curtains separating points of $\alpha$, the cardinality of which is uniformly linearly lower bounded in terms of $R$ and $n$. Thus $\alpha\to X_R$ is a quasiisometric embedding. It follows from the construction of $\Dist$ that $\alpha\to X$ is also a quasiisometric embedding. This establishes the forward direction of the theorem.

Now let us consider the converse. Let $\Lambda$ be as in Section~\ref{subsec:unification}. We may assume that $q$, $\nu$, and $\Lambda$ are positive integers. Write $K=2q\nu\Lambda$. Let $\alpha$ be a $q$--quasigeodesic in $S$ such that $\alpha\to X$ is a $\nu$--quasiisometric embedding. After perturbing $\alpha$, we can coarsely cover it by a monotone sequence $(x_n)\subset\alpha$ such that $\dist(x_n,x_{n+1})=30K^2$.

As $\alpha\to X$ is a $\nu$--quasiisometric embedding, we have $\Dist(x_n,x_{n+1})\ge50K\Lambda$. By definition of $\Lambda$ and the fact that the separation constant $L_R$ of $\C_R$, coming from Lemma~\ref{lem:ball_relative_separation}, is at least $R$, there must be some $R$ such that $\dist_{\C_R}(x_n,x_{n+1})\ge50KR$. Hence there is an $R$--chain of curtains $(\hh^n_1,\dots,\hh^n_{50KR-2})$ separating $x_n$ from $x_{n+1}$. The fact that $\dist(\hh^-,\hh^+)\ge1$ for every $\hh\in\cur S$ now implies that $50KR-2\le\dist(x_n,x_{n+1})=30K^2$, and hence $R\le K$.

By repeatedly applying Lemma~\ref{lem:gluing}, and recalling that $\C_K$ is 3--gluable, we obtain an element $c\in\C_K$ by taking the union of all $(\hh^n_i)$ and removing at most $2(L_K+4)$ from each. By Lemma~\ref{lem:ball_relative_separation}, this leaves at least two elements of $(\hh^n_i)$ in $c$ for each $n$. Fix a choice of two, and label them $\hh_n,\kk_n$, with $\hh_n$ separating $x_n$ from $\kk_n$.

Because $\hh_n$ and $\kk_n$ are $K$--ball-separated, there is some ball $B_n$ of radius at most $K$ such that every geodesic from $\hh_n^-$ to $\kk_n^+$ must meet $B_n$. The subsegment of $\alpha$ from $x_n$ to $x_{n+1}$ has diameter at most $50K^2q^3$, because $\alpha$ is a $q$--quasigeodesic. Let $\gamma_n$ be a geodesic from $x_n$ to $x_{n+1}$. It meets $B_n$. Let $\gamma=\bigcup_{n\in\Z}\gamma_n$, which lies at Hausdorff-distance at most $100K^2q^3$ from $\alpha$. Because the $x_n$ formed a monotone sequence, the path $\gamma$ is therefore an unparametrised quasigeodesic. We aim to apply Lemma~\ref{lem:contracting_iff_pass_close} to find that $\gamma$, and hence $\alpha$, is strongly contracting.

\begin{figure}[ht]
\includegraphics[width=14cm, trim = 0cm 5cm 0cm 6cm, clip]{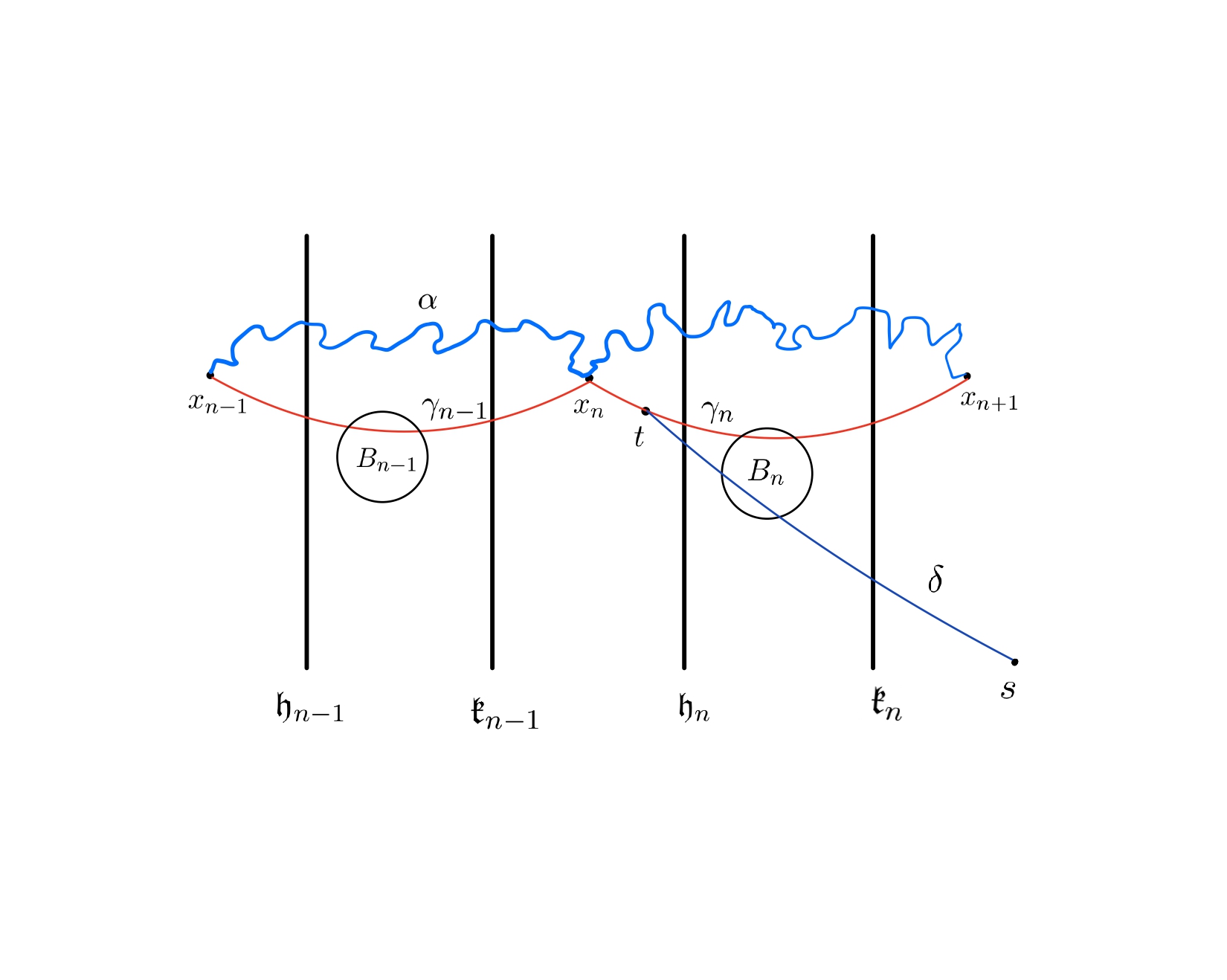}\centering
\caption{Theorem \ref{thm:universal_contracting_characterisation}. $\,x_n$ and $x_{n+1}$ are distant points along $\alpha$, separated by $\{\hh_n,\kk_n\}\in\C_K$. The path $\gamma$ is a piecewise geodesic, and any geodesic $\delta$ from $s$ to a projection $t\in\pi_\gamma(s)\cap\hh_n^-$ passes through $B_n$.} \label{fig:ball_separation}
\end{figure}

First we show that points of $S$ have uniformly bounded projection to $\gamma$. Given $s\in S$, let $n$ be maximal such that $s\in\kk_n^+$. If there is a point $t\in\pi_\gamma(s)$ lying in $\hh_n^-$, then any geodesic $\delta$ from $s$ to $t$ must meet $B_n$, so $\delta$ comes $2K$--close to $\gamma$ at some point in $B_n$, and hence $t$ is $2K$--close to $B_n$. Similarly, if $m$ is minimal such that $s\in\hh_m^-$, then any point of $\pi_\gamma(s)$ lying in $\kk_m^+$ is $2K$--close to $B_m$. The choices of $m$ and $n$ ensure that $m\in\{n+1,n+2\}$. In particular, the piecewise geodesic $\gamma_n\cup\gamma_{n+1}\cup\gamma_{n+2}$ meets both $B_n$ and $B_m$, so 
\[
\diam\pi_\gamma(s) \,\le\, 2K+\diam B_n+|\gamma_n|+|\gamma_{n+1}|+|\gamma_{n+2}|+\diam B_m+2K \,\le\, 100K^2.
\]

Moreover, as $\dist(\hh^-,\hh^+)\ge1$ for every $\hh\in\cur S$, we observe that no point of $\pi_\gamma(s)$ can be separated from $s$ by more than $2K+2$ of the $\hh_i$, because points of $\pi_\gamma(s)$ that lie in $\hh_n^-$ are $2K$--close to $B_n$, which meets $\hh_n^+$ and has diameter at most $2K$, and similarly points of $\pi_\gamma(s)$ that lie in $\hh_{m+1}^+\subset\kk_m^+$ are $2K$--close to $B_m$.

It remains to establish the second condition of Lemma~\ref{lem:contracting_iff_pass_close}. Suppose that $s_1,s_2\in S$ have $\dist(\pi_\gamma(s_1),\pi_\gamma(s_2))>(10K)^3\ge10(K+1)|\gamma_i|$, and let $\beta$ be a geodesic in $S$ from $s_1$ to $s_2$. Let $a$ be minimal such that $s_1\in\hh_a^-$, and let $b$ be maximal such that $s_2\in\kk_b^+$. Perhaps after swapping $s_1$ and $s_2$, the lower bound on $\dist(\pi_\gamma(s_1),\pi_\gamma(s_2))$ implies that $b-a>5(K+1)$, because $\gamma_i$ is a geodesic from $x_i\in\hh_i^-$ to $x_{i+1}\in\kk_i^+$. Thus the above observation yields that $\hh_{a+2K+2},\kk_{a+2K+2},\dots,\hh_{b-2K-2},\kk_{b-2K-2}$ all separate $\{s_1\}\cup\pi_\gamma(s_1)$ from $\{s_2\}\cup\pi_\gamma(s_2)$.

We deduce from this that for any subpath $\delta$ of $\gamma$ from $\pi_\gamma(s_1)$ to $\pi_\gamma(s_2)$, both $\beta$ and $\delta$ meet all of $B_{a+2K+2},\dots,B_{b-2K-2}$. In particular, the intersection of $\delta$ with the ``middle'' region $\hh_{a+2K+2}^+\cap\kk_{b-2K-2}^-$ of $S$ lies in a neighbourhood of $\beta$ of radius at most $|\gamma_i|+\diam B_i\le50K^2$.

The two ``end'' regions $S\ssm\hh_{a+2K+2}^+$ and $S\ssm\kk_{b-2K-2}^-$ are treated similarly, so let us just consider the former. By minimality of $a$, the above observation implies that every point of $\pi_\gamma(s_1)$ lies in $\hh_{a-2K-4}^+$. Since we know that $\beta$ meets $B_{a+2K+2}$, this implies that the intersection of $\delta\subset\gamma$ with this region lies in a neighbourhood of $\beta$ of radius at most $\diam B_{a+2K+2}+\sum_{i=a-2K-4}^{a+2K+2}|\gamma_i|\le(10K)^3$.

We have shown that the conditions of Lemma~\ref{lem:contracting_iff_pass_close} are met by $\gamma$, so it is strongly contracting. Since $\gamma$ and $\alpha$ lie at a bounded Hausdorff-distance, $\alpha$ is also strongly contracting.
\end{proof}

\begin{remark}
The assumption that $\alpha$ is a quasigeodesic in $S$ is essential for the converse direction of Theorem~\ref{thm:universal_contracting_characterisation}. For instance, if, as in Remark~\ref{rem:X_smaller}, $S$ is the region of the euclidean plane bounded between the $x$--axis and a sufficiently slowly growing sublinear function, then $X$ will be a quasiray even though no ray in $S$ is strongly contracting. But in this example no geodesic ray in $S$ is quasiisometrically embedded in $X$. 
\end{remark}

Next we show that, for isometries, being strongly contracting can be detected by just looking at pairs of curtains.

\begin{definition}
An isometry $g\in\isom S$ is \emph{strongly contracting} if there exist $D,q$ and $s\in S$ such that $\langle g\rangle\cdot s$ is a $D$--strongly contracting $q$--quasigeodesic in $S$.

We say that an isometry $g$ \emph{skewers} a pair of disjoint curtains $\hh_1,\hh_2$ if there exists $n$ such that $g^n\hh_1^+\subsetneq\hh_2^+\subsetneq\hh_1^+$.
\end{definition}

\begin{corollary} \label{cor:equivariant_characterisation}
If $g\in\isom S$ has quasiisometrically embedded orbits in $S$, then the following are equivalent.
\begin{enumerate}
\item   $g$ is strongly contracting. \label{item:sc}
\item   $g$ skewers a pair of ball-separated curtains. \label{item:skewer}
\item   $g$ acts loxodromically on the hyperbolic core $X$. \label{item:lox}
\end{enumerate}
\end{corollary}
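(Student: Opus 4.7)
The plan is to use Theorem~\ref{thm:universal_contracting_characterisation} to identify $(1)$ and $(3)$, and to interpolate $(2)$ between them via a hyperbolic-dynamics argument. First I observe that both $\cur S$ and the notion of $R$-ball-separation are metric invariants, so $\isom S$ preserves each $\C_R$ and hence $g$ acts by isometries on $X$ and on each $X_R$. For $(1)\Leftrightarrow(3)$, both directions reduce to Theorem~\ref{thm:universal_contracting_characterisation} applied to the orbit $\langle g\rangle\cdot s$. If $g$ is strongly contracting, the orbit is a strongly contracting quasigeodesic in $S$ which embeds quasiisometrically in $X$; combined with $g$ acting isometrically on $X$, this forces $g$ to be loxodromic. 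Conversely, loxodromicity of $g$ on $X$ gives $\Dist(s,g^ns)\asymp |n|$; since $\langle g\rangle\cdot s$ is already a quasigeodesic in $S$ by hypothesis, the inclusion $\langle g\rangle\cdot s\to X$ is a quasiisometric embedding, and the converse direction of the theorem yields strong contraction of the orbit.

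The implication $(2)\Rightarrow(3)$ is direct. If $\{\hh_1,\hh_2\}$ is $R$-ball-separated with $g^n\hh_1^+\subsetneq\hh_2^+\subsetneq\hh_1^+$, then $g$-invariance of $R$-ball-separation makes $(g^{kn}\hh_1,g^{kn}\hh_2)_{k\in\Z}$ a bi-infinite $R$-chain of curtains. For any $s\in\hh_1^-$ this gives $\dist_{\C_R}(s,g^{kn}s)\ge 2k$, hence $\Dist(s,g^{kn}s)\ge 2k\lambda_R$, so $g$ is loxodromic on the hyperbolic space $X$.

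The remaining direction $(1)\Rightarrow(2)$ is the crux, and I expect the hyperbolic nesting step to be the main obstacle. The proof of the forward direction of Theorem~\ref{thm:universal_contracting_characterisation}, applied to the $D$-strongly contracting $q$-quasigeodesic $\langle g\rangle\cdot s$, produces an $R=R(D,q)$ and a constant $C=C(D,q)>0$ such that $\dist_{\C_R}(g^is,g^js)\ge C|i-j|-O(1)$ for all $i,j$. Pick $N$ with $CN\gg L_R$ and a maximal $R$-chain of curtains $\hh_1,\ldots,\hh_M$ separating $s$ from $g^Ns$, so that $M\ge CN-O(1)$; the middle pair $\{\hh_{\lfloor M/3\rfloor},\hh_{\lfloor 2M/3\rfloor}\}$ is $R$-ball-separated by construction. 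The translated chain $g^N\hh_1,\ldots,g^N\hh_M$ separates $g^Ns$ from $g^{2N}s$ and lies along the continuation of the $g$-``axis'' in $X_R$. The hard step is to verify that $g^N$ skewers $\{\hh_{\lfloor M/3\rfloor},\hh_{\lfloor 2M/3\rfloor}\}$: combining gluability of $\C_R$ (Lemma~\ref{lem:gluing}) with $L_R$-separation in the form of Lemma~\ref{lem:ball_relative_separation}, one rules out the possibility that $g^N\hh_{\lfloor M/3\rfloor}$ crosses a long sub-chain of $\hh_{\lfloor M/3\rfloor+1},\ldots,\hh_{\lfloor 2M/3\rfloor}$, because any such crossings could be glued into an $R$-chain of length exceeding $L_R$ crossing an $R$-ball-separated pair. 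This forces the strict nesting $g^N\hh_{\lfloor M/3\rfloor}^+\subsetneq\hh_{\lfloor 2M/3\rfloor}^+\subsetneq\hh_{\lfloor M/3\rfloor}^+$, completing $(2)$.
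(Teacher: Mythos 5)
Your architecture is sound and, for two of the three implications, identical to the paper's: the equivalence \eqref{item:sc}$\Leftrightarrow$\eqref{item:lox} is exactly the intended application of Theorem~\ref{thm:universal_contracting_characterisation} (using the hypothesis that orbits are quasigeodesics in $S$ to upgrade loxodromicity to a quasiisometric embedding of the orbit), and your \eqref{item:skewer}$\Rightarrow$\eqref{item:lox} via the bi-infinite $R$--chain of $\langle g\rangle$--translates is the paper's argument verbatim. The only structural difference is that you close the cycle with \eqref{item:sc}$\Rightarrow$\eqref{item:skewer} where the paper proves \eqref{item:lox}$\Rightarrow$\eqref{item:skewer}; since both start by extracting a long chain $c\in\C_R$ separating $s$ from $g^Ns$ and then compare $c$ with $g^Nc$, this difference is immaterial.

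The gap is in your justification of the skewering step, which you correctly identify as the crux. Two problems. First, Lemma~\ref{lem:ball_relative_separation} bounds the length of an $R$--\emph{chain} all of whose elements cross a fixed ball-separated \emph{pair}; it says nothing about a single curtain $g^N\hh_{\lfloor M/3\rfloor}$ crossing a long subchain of $\hh_{\lfloor M/3\rfloor+1},\dots,\hh_{\lfloor 2M/3\rfloor}$ --- a single curtain is an $R$--chain of length one, so no contradiction arises from the configuration you describe. Second, even after ruling out crossings, disjointness of $g^N\hh_{\lfloor M/3\rfloor}$ from $\hh_{\lfloor 2M/3\rfloor}$ is compatible with the wrong nesting (e.g.\ $\hh_{\lfloor 2M/3\rfloor}^+\subset (g^N\hh_{\lfloor M/3\rfloor})^+$), so the strict inclusion you need does not follow. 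The correct mechanism is the transpose of what you wrote, and it is packaged in Lemma~\ref{lem:gluing}: if a deep wall of $g^Nc$ crosses a deep wall of $c$, then \emph{all shallower walls of $g^Nc$} cross a ball-separated pair sitting at the end of $c$, and \emph{that} long chain contradicts Lemma~\ref{lem:ball_relative_separation}. Applying Lemma~\ref{lem:gluing} (with the constants of Lemma~\ref{lem:ball_graded}) to $c$ and $g^Nc$ therefore produces an honest chain in $\C_R$ retaining all elements away from a junction region of bounded size; the total order on that chain immediately yields $(g^N\hh_a)^+\subsetneq\hh_b^+\subsetneq\hh_a^+$ for suitable indices $a<b$, which is the skewering. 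With that substitution your proof goes through.
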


\begin{proof}
Theorem~\ref{thm:universal_contracting_characterisation} shows that~\eqref{item:sc} and~\eqref{item:lox} are equivalent. 

Assume that $g$ acts loxodromically on $X$. Let $s\in S$ and let $n$ be sufficiently large that $\Dist(s,g^ns)>10\Lambda$. In particular, by definition $\Lambda\ge\sum\lambda_R(L_R+1)$, there must be some $R$ for which $\dist_R(s,g^ns)\ge L_R+9$. That is, there is some $c=\{h_1,\dots,h_k\}\in\C_R$ separating $s$ from $g^ns$, with $k\ge L_R+9$.

Consider the chain $g^nc\in\C_R$, which separates $g^ns$ from $g^{2n}s$, and recall from Lemma~\ref{lem:ball_graded} that $\C_R$ is 3--gluable. According to Lemma~\ref{lem:gluing}, there is some $b\in\C_R$ that can be obtained from $c\cup g^nc$ by removing a subset of $\{h_{k-3},h_{k-2},h_{k-1},h_k,g^nh_1,\dots,g^nh_{L_R+3}\}$ of cardinality at most $L_R+4$, and $b$ necessarily separates $s$ from $g^{2n}s$. 

Having found $b$, we can then find an element of $\C_R$ that separates $s$ from $g^{3n}s$ by considering $b\cup g^{2n}c$ in a similar way. Repeating this inductively, we find that $\bigcup_{m\in\Z}\{g^{mn}h_{L_R+4},\dots,g^{mn}h_{k-4}\}\in\C_R$, and moreover $\{g^{mn}h_{L_R+4},\dots,g^{mn}h_{k-4}\}$ separates $g^{rn}s$ from $g^{tn}s$ for all $r\le m$ and all $t>m$. In particular, if $(\hh_i)$ is a chain of curtains inducing $\{h_i\}$, then $g^{2n}$ skewers the ball-separated curtains $\hh_{k-4},g^n\hh_{k-4}$. Thus~\eqref{item:lox} implies~\eqref{item:skewer}.

Finally, suppose that $g$ skewers a pair of $R$--ball-separated curtains $\hh$ and $\kk$. Because $\hh$ and $\kk$ are $R$--ball-separated, $(g^{nm}\hh)_{m\in\Z}$ is an $R$--chain of curtains. In particular, for any $s\in S$ we have $\dist_{\C_R}(s,g^{nm}s)\ge m-2$. This shows that $g$ act loxodromically on $X$, so \eqref{item:skewer} implies~\eqref{item:lox}.
\end{proof}

More strongly, for any group $G$ acting properly on $S$, we show that the action of $G$ on $X$ is weakly proper along the axis of each strongly contracting element.

\begin{definition}[WPD]
Let $G$ be a group acting on a hyperbolic space $Y$. An element $g\in G$ is \emph{WPD} if there is a point $x\in Y$ such that for each $\eps>0$ there exists $m>0$ for which only finitely many $h\in G$ satisfy both $\dist(x,hx)<\eps$ and $\dist(g^mx,hg^mx)<\eps$.
\end{definition}

\begin{proposition} \label{prop:wpd}
Suppose that $G$ acts properly on a geodesic space $S$. If $g\in G$ is strongly contracting, then $g$ is a WPD loxodromic for the action of $G$ on the hyperbolic core $X$.
\end{proposition}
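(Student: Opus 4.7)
The plan is to take basepoint $x = s \in S \subset X$ and, for each $\eps > 0$, find $m$ such that only finitely many $h \in G$ satisfy both $\Dist(s,hs) < \eps$ and $\Dist(g^m s, hg^m s) < \eps$. The strategy is to combine the two $X$-closeness assumptions with ball-separation in $S$ and properness of $G \curvearrowright S$ to pin $h$ down up to a bounded power of $g$, and then bound that power using loxodromicity of $g$ on $X$.

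First, I would invoke Corollary~\ref{cor:equivariant_characterisation}: since $g$ is strongly contracting, it acts loxodromically on $X$ and skewers an $R$-ball-separated pair of curtains $\hh,\kk \in \cur S$. This yields an $R$-chain $(\s_j)_{j\in\Z}$ of curtains with $\s_j = g^{nj}\hh$ and balls $B_j = g^{n(j-1)}B_1$ of radius $\le R$ through which every $S$-geodesic from $\s_{j-1}^-$ to $\s_j^+$ must pass; let $R_0$ be such that this chain lies in $\C_{R_0}$. The bound $\Dist(s,hs) < \eps$ forces $\dist_{R_0}(s,hs) \le \eps/\lambda_{R_0}$; by the chain structure of $(\s_j)$, this means $hs$ is on the same side of $\s_j$ as $s$ for all but at most $M := \lceil \eps/\lambda_{R_0}\rceil$ consecutive indices, and likewise for $g^m s$ and $hg^m s$. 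Choosing $m = nN$ with $N$ much larger than $M$, any $S$-geodesic $\beta$ from $hs$ to $hg^m s$ must then pass through every $B_j$ for $j \in (M, N-M]$.

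The key step is to also realise $\beta$ as $h\gamma$ for some $S$-geodesic $\gamma$ from $s$ to $g^m s$, so that $\beta$ simultaneously passes through the $h$-translates $hB_j$. Letting $t_j, u_j$ be the parameters along $\beta$ at which it meets $B_j, hB_j$ respectively, both sequences are increasing in $[0, \dist(s,g^m s)]$ with uniform spacing $\dist(B_1, g^n B_1)$ up to an $O(R)$ error, since consecutive balls in each family are $g$-translates of each other. A pigeonhole argument then yields an index $j_0 \approx N/2$ and a shift $k_0$ such that $|t_{j_0}-u_{j_0+k_0}|$ is bounded in terms of $R$ and $n$, whence $\dist(B_{j_0}, hB_{j_0+k_0})$ is bounded. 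Rewriting this as $\dist(B_1, fB_1)$ for $f := g^{-n(j_0-1)}hg^{n(j_0+k_0-1)}$, properness of $G \curvearrowright S$ confines $f$ to a finite set $F$, and rearranging gives $h = f' g^{-nk_0}$ with $f'$ in the fixed finite set $F' := g^{n(j_0-1)}Fg^{-n(j_0-1)}$.

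Finally, I would bound $|k_0|$: since $g$ acts loxodromically on $X$ with some translation length $\tau_X > 0$, we have $\Dist(s, g^{-nk_0}s) \ge |nk_0|\tau_X - O(1)$, while $\Dist(s,f's)$ is uniformly bounded as $f'$ ranges over the finite set $F'$. The triangle inequality
\[
\eps > \Dist(s,hs) = \Dist(s, f' g^{-nk_0}s) \ge \Dist(s, g^{-nk_0}s) - \Dist(s, f's) \ge |nk_0|\tau_X - O(1)
\]
then bounds $|k_0|$, so $h$ lies in a finite set, as required. The hard part will be the pigeonhole/matching step: verifying that the two sequences of ball-crossings along $\beta$ admit a consistent alignment requires careful bookkeeping of the uniform spacing (coming from $g$-invariance of the ball family in $S$) against the diameters of the balls and the orientations of the axis walls, and choosing $N$ large enough from the outset so that the allowed range of shifts $k_0$ comfortably contains the bound coming from loxodromicity.
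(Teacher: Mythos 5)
Your argument is essentially correct, but it takes a genuinely different and considerably longer route than the paper. The paper also begins by using loxodromicity to choose $m$ with $\Dist(s,g^ms)\ge 2\eps+2\Lambda$, but then works with an arbitrary perichain $\bar c=(c^R)$ realising $\Dist(s,g^ms)$ rather than with the $g$--equivariant chain of curtains along an axis. The point is that if $\Dist(s,hs)\le\eps$ and $\Dist(g^ms,hg^ms)\le\eps$, then for some $R$ at least $2R+4$ walls of $c^R$ separate $hs$ from $hg^ms$; picking a suitable consecutive pair of the inducing curtains, one gets a \emph{single} ball $B$ of radius at most $R$ with $s,hs$ on one side and $g^ms,hg^ms$ on the other, so that $\gamma$ and $h\gamma$ are both forced through $B$. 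This gives $\dist(s,hs)\le|\gamma|+2R+|h\gamma|\le3\dist(s,g^ms)$ directly, and properness finishes in one step. So the paper entirely avoids your matching/pigeonhole step (the part you correctly identify as delicate), the decomposition $h=f'g^{-nk_0}$, and any use of $g$--equivariance of the ball family. What your approach buys is a more structured conclusion (you pin $h$ down as a bounded perturbation of a power of $g$, in the style of classical WPD arguments for pseudo-Anosovs), at the cost of the bookkeeping you flag. Two small repairs if you pursue your version: the set $F'=g^{n(j_0-1)}Fg^{-n(j_0-1)}$ is not fixed, since $j_0$ depends on $h$ — you must take the union over the finitely many admissible $j_0\in(M,N-M]$; and the final loxodromicity step is redundant, because both $j_0$ and $j'$ already range over $[1,N]$, so $|k_0|\le N$ is automatic and $h$ lies in a finite set without any translation-length estimate.
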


\begin{proof}
Fix $s\in S$ and let $\eps>0$. By definition, $g$ being strongly contracting implies that it has quasiisometrically embedded orbits in $S$, so by Corollary~\ref{cor:equivariant_characterisation}, $g$ acts loxodromically on $X$. In particular, there exists $m$ such that $\Dist(s,g^ms)\ge 2\eps+2\Lambda$. Let $\bar c=(c^R)$ be a perichain realising $\Dist(s,g^ms)$.

If $h\in G$ has the property that at most $2R+3$ elements of $c^R$ separate $hs$ from $hg^ms$ for every $R$, then for every $R$ all but at most $2R+3$ elements of $c^R$ either separate $s$ from $hs$ or separate $g^ms$ from $hg^ms$ (or both). For such an element $h$, we therefore have
\[
\Dist(s,hs)+\Dist(g^ms,hg^ms) \,\ge\, \sum_{R=1}^\infty\lambda_R(|c^R|-2R-3) \,\ge\, \|\bar c\|-2\Lambda \,\ge\, 2\eps.
\]

This shows that if $h\in G$ satisfies $\Dist(s,hs)<\eps$ and $\Dist(g^ms,hg^ms)<\eps$, then there is some $R$ for which at least $2R+4$ elements of $c^R$ separate $hs$ from $hg^ms$. Letting $(\hh_i)$ be an $R$--chain of curtains inducing the subset of $c^R$ separating $hs$ from $hg^ms$, we see that there is a ball $B\subset S$ of radius at most $R$ such that every geodesic from $\hh_{R+2}^-$ to $\hh_{R+3}^+$ must meet $B$, where $s,hs\in\hh_{R+2}^-$ and $g^ms,hg^ms\in\hh_{R+3}^+$.

Let $\gamma\subset S$ be a geodesic from $s$ to $g^ms$. We have shown that $h\gamma$ comes $2R$--close to $\gamma$. Moreover, because every $\hh\in\cur S$ has $\dist(\hh^-,\hh^+)\ge1$, we must have $\dist(s,g^ms)\ge2R+4$. Hence 
\[
\dist(s,hs) \,\le\, |\gamma|+2R+|g\gamma| \,\le\, 3\dist(s,g^ms).
\]
But the action of $G$ on $S$ is proper, so there are only finitely many such elements $h$.
\end{proof}

Next we extend Proposition~\ref{prop:wpd} by showing that, in the terminology of \cite{balasubramanyachesserkerrmangahastrin:non}, for many $S$ the hyperbolic core is a \emph{universal recognising space} for stable subgroups of groups acting geometrically on $S$.

\begin{definition}[Morse, stable]
Given a function $M:\R^2\to\R$, we say that a geodesic $\alpha$ in a metric space is \emph{$M$--Morse} if every $(\lambda,\mu)$--quasigeodesic with endpoints on $\alpha$ lies in the $M(\lambda,\mu)$--neighbourhood of $\alpha$.

A finitely generated subgroup $H$ of a finitely generated group $G$ is \emph{stable} if its inclusion map is a quasiisometric embedding and there exists $M:\R^2\to\R$ such that every geodesic in $G$ between points of $H$ is $M$--Morse.
\end{definition}

For example, strongly contracting geodesics are always Morse, but the reverse can fail \cite{rafiverberne:geodesics}. In many spaces, though, the two are equivalent \cite{charneysultan:contracting,calvezwiest:morse,sistozalloum:morse}. Stability was introduced by Durham--Taylor \cite{durhamtaylor:convex}, who showed that it reformulates \emph{convex cocompactness} for mapping class groups \cite{farbmosher:convex}. The curve graph is a universal recognising space for stable subgroups of mapping class groups \cite{kentleininger:shadows}.

\begin{theorem} \label{thm:universal_recognition}
Let $S$ be a geodesic space with the property that for each $M$ there exists $D$ such that every $M$--Morse geodesic in $S$ is $D$--strongly contracting. Let $X$ be its hyperbolic core. For any group $G$ acting properly coboundedly on $S$, a finitely generated subgroup $H<G$ is stable if and only if its orbit maps on $X$ are quasiisometric embeddings.
\end{theorem}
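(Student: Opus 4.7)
For the forward direction, assume $H$ is stable. Since $G$ acts properly coboundedly on $S$, the orbit map $G\to S$ is a quasiisometry, so the QI embedding $H\hookrightarrow G$ gives a QI-embedded orbit $Hs\subset S$. Given $h_1,h_2\in H$, let $\gamma$ be an $S$-geodesic from $h_1s$ to $h_2s$. Stability of $H$, together with the quasiisometry $G\to S$, makes $\gamma$ uniformly $M$--Morse in $S$, and the hypothesis on $S$ then provides a uniform $D$ with $\gamma$ being $D$--strongly contracting. The forward half of Theorem~\ref{thm:universal_contracting_characterisation} upgrades this to a uniform $\nu$ such that $\gamma\to X$ is a $\nu$--QI embedding, and in particular $\Dist(h_1s,h_2s)\ge\frac{1}{\nu}\dist_S(h_1s,h_2s)-\nu$. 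Combined with the (immediate) coarse Lipschitzness of $S\to X$---which follows from summability of the $\lambda_R$ and the bound $|c|\le\dist_S(s,t)+2$ used in Lemma~\ref{lem:ball_graded}---and with the QI embedding $H\to S$, this yields that $H\to X$ is a QI embedding.

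For the backward direction, assume $H\to X$ is a QI embedding. Chasing constants through coarse Lipschitzness of $S\to X$ and the QI embedding of $Hs$ in $X$ first shows that $H\to S$ (hence $H\to G$) is a QI embedding; the four-point hyperbolicity of $X$ from Proposition~\ref{prop:graded_hyperbolic} then transfers to $Hs$, and since the Cayley graph of $H$ is geodesic, $H$ is itself Gromov-hyperbolic. The core task is to show that every $S$-geodesic $\gamma$ from $h_1s$ to $h_2s$ is uniformly Morse. The natural plan of feeding $\gamma$ into the converse of Theorem~\ref{thm:universal_contracting_characterisation} does not go through directly: although Proposition~\ref{prop:subspace_wrg} makes $\gamma$ a uniform unparametrised rough geodesic of $X$, nothing prevents its $S$-arclength parametrisation from traversing $X$ non-uniformly, so $\gamma\to X$ need not be a parametrised QI embedding. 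This is the main obstacle.

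My workaround is to replace $\gamma$ by a ``Cayley-like'' piecewise geodesic $\eta$. Fix a word geodesic $h_1=g_0,g_1,\dots,g_n=h_2$ in the hyperbolic group $H$ and let $\eta$ be the concatenation of $S$-geodesics between consecutive $g_is$. Because $H\to S$ is a QI embedding and consecutive $g_i$ are at uniformly bounded $S$-distance (each moves the basepoint by at most the maximum displacement of a generator of $H$), $\eta$ is a uniform quasigeodesic in $S$. Since $(g_is)$ is QI embedded in $X$ and every intermediate point of $\eta$ lies within uniformly bounded $\Dist$-distance of some $g_is$, the path $\eta\to X$ is itself a uniform QI embedding. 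The converse of Theorem~\ref{thm:universal_contracting_characterisation} then gives a uniform $D'$ such that $\eta$ is $D'$--strongly contracting, hence uniformly Morse. Since $\gamma$ shares endpoints with $\eta$ and is a $(1,0)$-quasigeodesic, it lies in a uniform neighbourhood of $\eta$; Morseness being preserved under bounded Hausdorff-distance then makes $\gamma$ itself uniformly Morse. Transporting back via the quasiisometry $G\to S$ yields uniform Morseness of $G$-geodesics between points of $H$, establishing stability.
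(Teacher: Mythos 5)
Your proof is correct. The forward direction is essentially the paper's argument: transfer Morseness of $G$--geodesics to $S$--geodesics via the orbit quasiisometry $G\to S$, invoke the hypothesis to get uniform strong contraction, and conclude via the forward half of Theorem~\ref{thm:universal_contracting_characterisation} (the paper instead produces the uniformly ball-separated curtains directly from Lemma~\ref{lem:contracting_iff_pass_close}, landing in a single $X_R$; the two are interchangeable). The interesting divergence is in the converse. The paper's proof simply asserts that each $S$--geodesic $\alpha$ from $gs$ to $hs$ ``uniformly quasiisometrically embeds in $X$'' and feeds it to the converse of Theorem~\ref{thm:universal_contracting_characterisation}. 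You are right that this assertion is precisely the parametrised statement that does not follow from what is available at that point: Proposition~\ref{prop:subspace_wrg} only makes $\alpha$ an \emph{unparametrised} rough geodesic of $X$, and the hypothesis controls $\Dist$ only at the two orbit endpoints, so a priori a long middle portion of $\alpha$ could make no progress in $X$; meanwhile the proof of the converse of Theorem~\ref{thm:universal_contracting_characterisation} genuinely uses definite progress in $X$ at a fixed $S$--scale along the path. Your detour through the piecewise geodesic $\eta$ threaded along a word geodesic of $H$ supplies exactly the missing parametrised control (every point of $\eta$ is uniformly $S$--close, hence uniformly $\Dist$--close, to an orbit point, and the orbit is QI-embedded in $X$ by hypothesis), and the transfer of Morseness from $\eta$ back to the geodesic $\gamma$ is legitimate: $\gamma\subset N_K(\eta)$ by Morseness of $\eta$, and $\eta\subset N_K(\gamma)$ follows from the second bullet of Lemma~\ref{lem:contracting_iff_pass_close} applied to the common endpoints. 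So your argument is not merely a different route; it justifies a step that the paper's own proof leaves implicit. The only superfluous ingredient is the deduction that $H$ is Gromov-hyperbolic, which is true but not needed for stability as defined.
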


\begin{proof}
Suppose that $H$ is stable in $G$. Given $s\in S$ and $g,h\in H$, each geodesic from $gs$ to $hs$ is uniformly Morse. By assumption, such a geodesic is therefore uniformly strongly contracting, so by the construction of $\cur S$ and Lemma~\ref{lem:contracting_iff_pass_close} we obtain a sequence of uniformly ball-separated curtains separating $gs$ from $hs$ at a uniform rate. This shows that there is some $R$ such that orbit maps of $H$ on $X_R$ are uniform quasiisometric embeddings, and it follows that orbit maps of $H$ on $X$ are uniform quasiisometric embeddings.

Conversely, suppose that orbit maps of $H$ on $X$ are quasiisometric embeddings. Because $H$ is finitely generated, its orbit maps on $S$ are coarsely Lipschitz. Moreover, the fact that $\dist(\hh^-,\hh^+)\ge1$ for every $\hh\in\cur S$ implies that the identity map $(S,\dist)\to(X,\Dist)$ is coarsely Lipschitz, so orbit maps of $H$ on $S$ are in fact quasiisometric embeddings. In particular, the inclusion of $H$ in $G$ is a quasiisometric embedding. Furthermore, if $s\in S$ then for any $g,h\in H$, each geodesic $\alpha$ from $gs$ to $hs$ uniformly quasiisometrically embeds in $X$, so is uniformly strongly contracting by Theorem~\ref{thm:universal_contracting_characterisation}. This shows that $H$ is stable in $G$.
\end{proof}

\section{Strong coarse median spaces} \label{sec:scm}

Coarse median spaces were introduced by Bowditch in \cite{bowditch:coarse}, providing a general framework for studying groups that display features of median geometry up to controlled error, such as toral relatively hyperbolic groups \cite{bowditch:invariance} and mapping class groups \cite{behrstockminsky:centroids}, among many others. As discussed in the introduction, the idea is to take the tree-approximation lemma for hyperbolic spaces \cite{gromov:hyperbolic}, and use a higher-rank version of it as an axiomatisation. We do this slightly differently to \cite{bowditch:coarse}.


Given a metric space $S$ with a ternary operator $\mu$, a subset $Y\subset S$ is said to be \emph{$k$--coarsely convex} if $\mu(y_1,y_2,s)$ lies in the $k$--neighbourhood of $Y$ for all $y_1,y_2\in Y$, $s\in S$. A map $f:(S,\mu)\to(T,\nu)$ of spaces with ternary operators is called \emph{$k$--quasimedian} if $f\mu(s_1,s_2,s_3)$ lies at distance at most $k$ from $\nu(fs_1,fs_2,fs_3)$ for all $s_1,s_2,s_3\in S$.

Recall from Section~\ref{sec:background} that the (combinatorial) convex hull of a finite subset $A$ of a CAT(0) cube complex $Q$ is the smallest convex subcomplex of $Q$ that contains $A$.

\begin{definition}[Strong coarse median] \label{def:scm}
Let $S$ be a geodesic space with a ternary operator $\mu$. We say that $(S,\mu)$ is a \emph{strong coarse median space} if there exist an $n$ and a nondecreasing function $\kappa$ such that the following hold.
\begin{itemize}
\item   $\mu$ is $\kappa(1)$--coarsely Lipschitz in each parameter.
\item   For each finite subset $A\subset S$ there is a CAT(0) cube complex $Q$ of dimension at most $n$, and maps $f:A\to Q$, $g:Q\to S$ such that:
    \begin{itemize}
    \item 	$g$ is a $\kappa(|A|)$--quasimedian $\kappa(|A|)$--quasiisometric embedding.
    \item   $Q$ is the (combinatorial) convex hull of $f(A)$, and $g(Q)$ is $\kappa(|A|)$--coarsely convex.
	\item 	$\dist(a,gf(a))\le\kappa(|A|)$ for all $a\in A$.
    \end{itemize}
\end{itemize}
We say that $S$ has \emph{rank} at most $n$.
\end{definition}

The difference between this and the definition of a coarse median space of finite rank is twofold. Firstly, in coarse median spaces, the map $g$ is only required to be quasimedian, and need not include any metric information; in the terminology of \cite{bowditch:quasiflats}, we are replacing ``quasimorphisms'' by ``strong quasimorphisms''. Secondly, the complex $Q$ here is approximating the entire \emph{coarse median hull} of $A$, whereas in coarse median spaces it only has to approximate the \emph{coarse subalgebra} generated by $A$. For example, let $S$ be $\R^2$ with the $\ell^1$ median structure. If $|A|=2$, then in a coarse median structure we can always take $Q$ to be a single (unit) 1--cell, but in a strong coarse median structure, $Q$ will be the (metric) rectangle spanned by $A$.

In the rank-one case the two notions define the same objects, namely hyperbolic spaces, by \cite[Thm~2.1]{bowditch:coarse}. Both \emph{hierarchically hyperbolic} spaces \cite{behrstockhagensisto:quasiflats} and the spaces considered in \cite{bowditch:convex} are strong coarse median spaces.

The following construction appears in \cite[\S6]{bowditch:convex}, where it is considered for coarse median spaces more generally.

\begin{definition}[Median hull] \label{def:hull}
Let $S$ be a strong coarse median space of rank $n$, and let $A\subset S$. The \emph{median hull} of $A$, denoted $\hull A$, is the subset $J^n(A)\subset S$, where $J^0(A)=A$ and $J^{k+1}(A)=\{\mu(a,b,s)\,:\,a,b\in J^k(A),s\in S\}$. There is a constant $k_0=k_0(\kappa,n)$ such that $\hull A$ is $k_0$--coarsely convex for every $A\subset S$.
\end{definition}

The existence of $k_0$ as in Definition~\ref{def:hull} is given by \cite[Lem.~6.1]{bowditch:convex}.

It follows from the definition of a strong coarse median space that if $A\subset S$ is finite, then $g(Q)$ lies at a uniform Hausdorff distance from $\hull A$, in terms of $k_0$ and $|A|$. Hence, after increasing $\kappa$ by a controlled amount, we can extend $f$ to a $\kappa(|A|)$--quasimedian $\kappa(|A|)$--quasiisometry $\hat f:\hull A\to Q$ such that $\hat f$ and $g$ are $\kappa(|A|)$--quasiinverse.

We shall sometimes write $\mu_{abc}=\mu(a,b,c)$ in order to simplify some expressions. For instance, the equality $\mu(a,b,\mu_{cde})=\mu(\mu_{abc},\mu_{abd},e)$ holds in all median algebras, and therefore the same holds up to a uniform error in terms of $\kappa(5)$ in strong coarse median spaces; see \cite[\S6]{bowditch:large:mapping}, \cite[Lem~2.18]{niblowrightzhang:four}. After increasing $\kappa$ by a controlled amount, we shall therefore assume that the expressions $\mu(a,b,\mu_{cde})$ and $\mu(\mu_{abc},\mu_{abd},e)$ differ by at most $\kappa(5)$ itself inside $S$, rather than just a uniform function of it.

\subsection{Curtains} \label{subsec:scm_curtains}

Just as in Section~\ref{sec:contracting}, we shall go via a set of geometrically defined \emph{curtains} in order to define sets $P$ and $\C$ of partitions and chains on a strong coarse median space. Throughout this section, $(S,\dist,\mu)$ will be a strong coarse median space of rank $n$ with associated function $\kappa$. For each finite subset $A\subset S$, fix a choice of $g,Q$ satisfying the assumption of Definition~\ref{def:scm}, and, as discussed after Definition~\ref{def:hull}, fix a choice of $\hat f:\hull A\to Q$. Let $k_1=\max\{k_0,\kappa(2^{n+1}),\kappa(5)\}$.

\begin{definition}[Curtains] \label{def:scm_curtains}
Let $a,b\in S$, and let $A=\{a,b\}$. Let $c$ be a chain of hyperplanes in $Q$ of length $20nk_1^5$ such that $\dist^\infty_Q(c^-,c^+)=|c|$. Let 
\[
\hh^- \,=\, \{s\in S\,:\,\hat f\mu(a,b,s)\in c^-\} \quad \text{and} \quad 
\hh^+ \,=\, \{s\in S\,:\,\hat f\mu(a,b,s)\in c^+\}.
\]
The \emph{curtain} defined by $a,b,c$ is the set $\hh=S\ssm(\hh^-\cup\hh^+)$, and $\hh^-,\hh^+$ are the \emph{halfspaces} of $\hh$.
\end{definition}



A \emph{chain of curtains} is a sequence $(\hh_i)$ of curtains such that $\hh_i$ separates $\hh_{i-1}$ from $\hh_{i+1}$ for all $i$, in the sense that (up to orientation) $\hh_{i-1}\subset\hh_i^-$ and $\hh_{i+1}\subset\hh_i^+$.

\begin{remark} \label{rem:choice_of_cube_complexes}
The set of curtains we have constructed depends on the choices of cube complex approximations of pairs of points in $S$. In particular, it need not be preserved by $\isom S$. We chose this set of curtains because it is in a sense the most concrete option, and the underlying arguments of this section do not hinge on the specifics. Here are three natural alternative choices that would yield $\isom S$--equivariance with essentially no modification to any of our proofs. 
\begin{enumerate}
\item   Declare the set of curtains to be the set of all translates of those defined above.
\item   Fix a constant $\delta$, and consider all cubical approximations (of pairs of points) that have constant at most $\delta$. Define curtains as above but using all such approximations.
\item   Fix a constant $\delta$, and let $K$ be sufficiently large in terms of $\delta$ and the parameters of $X$. Define a curtain to be any $\delta$--coarsely convex subset $\hh\subset S$ with the property that $S\smallsetminus\hh$ can be written as a disjoint union of two nonempty subsets $\hh^-$ and $\hh^+$ such that $\dist(\hull\hh^-,\hull\hh^+)>K$.
\end{enumerate}
To keep matters simple, we shall proceed with the curtains of Definition~\ref{def:scm_curtains}, but in applications to groups one should use one of these above options.
%
\end{remark}

The next lemma shows that, although the two halfspaces of a curtain could fail to be coarsely convex, their hulls are well controlled by the cube complex used to define them.

\begin{lemma} \label{lem:joins_near_ends}
Let $\hh$ be the curtain defined by points $a,b$ and a chain $c$ of hyperplanes. If $x\in J^m(\hh^-)$, then at most $3m\kappa(5)^2$ elements of $c$ can separate $\hat f\mu(a,b,x)$ from $c^-$. A similar statement holds for $J^m(\hh^+)$. In particular, $\hull\hh^-\subset S\ssm\hh^+$.
\end{lemma}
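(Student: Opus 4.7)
The plan is to induct on $m$. The base case $m=0$ is immediate from Definition~\ref{def:scm_curtains}: if $x\in\hh^-$ then $\hat f\mu(a,b,x)\in c^-$, and no element of $c$ separates it from $c^-$.

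For the inductive step, write $x=\mu(x_1,x_2,s)$ with $x_1,x_2\in J^{m-1}(\hh^-)$ and $s\in S$. Set $y_i=\mu(a,b,x_i)\in\hull\{a,b\}$ and $p_i=\hat fy_i\in Q$, so that by the inductive hypothesis, at most $3(m-1)\kappa(5)^2$ elements of $c$ have $p_i$ on their positive side. The five-point condition in $S$ yields $\dist(\mu(a,b,x),\mu(y_1,y_2,s))\leq\kappa(5)$. The key coarse identity to establish next is $\mu(y_1,y_2,s)\approx\mu(y_1,y_2,\mu(a,b,s))$ in $S$: this holds exactly in any median algebra when $y_1,y_2\in[a,b]$, since then any hyperplane separating $y_1$ from $y_2$ also separates $a$ from $b$, whence $s$ and $\mu(a,b,s)$ lie on the same side of it. I would transfer this to $S$ by passing through the cubical approximation of $\{a,b,x_1,x_2,s\}$, where both sides have the same image in the cube complex, and pulling back using the $\kappa(5)$-quasiisometric embedding to bound the $S$-distance by a multiple of $\kappa(5)^2$.

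With this in hand, both $\mu(a,b,x)$ and $\mu(y_1,y_2,\mu(a,b,s))$ lie in $\hull\{a,b\}$, so applying $\hat f$ and invoking its quasimedian property gives that $\hat f\mu(a,b,x)$ is close in $Q$ to $\mu(p_1,p_2,q)$, where $q=\hat f\mu(a,b,s)$, with total accumulated error bounded by a multiple of $\kappa(5)^2$. Let $r=\mu(p_1,p_2,q)$; since it is a median in the CAT(0) cube complex $Q$, it lies exactly on the interval between $p_1$ and $p_2$. Thus for any $h\in c$ with $r$ on the positive side, at least one of $p_1,p_2$ must also be on the positive side, and since $c$ is a chain, the number of such $h$ is bounded by the maximum of the counts for $p_1$ and $p_2$, namely $3(m-1)\kappa(5)^2$. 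The $Q$-distance between $\hat f\mu(a,b,x)$ and $r$ contributes at most $3\kappa(5)^2$ further elements of $c$, completing the induction.

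Finally, the ``in particular'' statement follows by applying the lemma with $m=n$: for $x\in\hull\hh^-=J^n(\hh^-)$, the number of elements of $c$ with $\hat f\mu(a,b,x)$ on the positive side is at most $3n\kappa(5)^2$, which is much less than $|c|=20nk_1^5$ since $k_1\geq\kappa(5)$, so $\hat f\mu(a,b,x)\notin c^+$ and hence $x\notin\hh^+$. The main obstacle will be carefully tracking the cumulative constants through the coarse five-point condition, the promotion of the exact median-algebra identity $\mu(y_1,y_2,s)=\mu(y_1,y_2,\mu(a,b,s))$ to the coarse setting via the five-element cubical approximation, and the Lipschitz and quasimedian estimates for $\hat f$, so that the per-step error stays within $3\kappa(5)^2$.
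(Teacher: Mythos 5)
Your proof follows the same induction as the paper's: decompose $x=\mu(x_1,x_2,s)$, apply the coarse five-point identity to rewrite $\mu(a,b,x)$ as a median of $\mu(a,b,x_1)$, $\mu(a,b,x_2)$ and a third point, push this into $Q$ via the quasimedian map $\hat f$, and then use the majority-vote description of the cubical median together with the chain structure of $c$ to inherit the bound from the inductive hypothesis. Your extra intermediate step replacing $s$ by $\mu(a,b,s)$ (so that all three arguments of the median lie in $\hull\{a,b\}$ before $\hat f$ is applied) is a legitimate refinement of a point the paper elides by writing $\hat fs$ directly; it does introduce further error terms that, as you note, may push the per-step cost above $3\kappa(5)^2$, but this only relaxes the constant in the statement to $Cm\kappa(5)^2$ for a controlled $C$, which is harmless for the ``in particular'' clause and all downstream uses since the curtain width $20nk_1^5$ leaves ample slack.
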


\begin{proof}
We proceed by induction. If $x\in J^0(\hh^-)$, then by definition no element of $c$ separates $\hat f\mu(a,b,x)$ from $c^-$. Suppose that we have established the lemma for some value of $m$, and let $x\in J^{m+1}(\hh^-)$. We can write $x=\mu(y,z,s)$ for some $y,z\in J^m(\hh^-)$ and some $s\in S$. We then have $\dist(\mu(a,b,x),\,\mu(\mu_{aby},\mu_{abz},s))\le\kappa(5)$. Since $\hat f$ is a $\kappa(2)$--quasiisometry, we get 
\[
\dist\big(\hat f\mu(a,b,x),\,\hat f\mu(\mu_{aby},\mu_{abz},s)\big) \,\le\, \kappa(2)\kappa(5)+\kappa(2),
\]
and as $\hat f$ is $\kappa(2)$--quasimedian, the latter point lies at distance at most $\kappa(2)$ from $\mu_Q(\hat f\mu_{aby},\hat f\mu_{abz},\hat fs)$. By the inductive hypothesis, at most $3m\kappa(5)^2$ elements of $c$ separate this point from $c^-$, and so at most $3(m+1)\kappa(5)^2$ elements of $c$ separate $\hat f\mu(a,b,x)$ from $c^-$. 
\end{proof}

The following technical lemma can be viewed as a kind of weak Helly property.

\begin{lemma} \label{lem:weak_Helly_for_joins}
Let $\hh_1,\dots,\hh_m$ be curtains. If the halfspaces $\hh_i^-$ intersect pairwise, then there is a point $z\in\bigcap_{i=1}^mJ^{m-1}(\hh_i^-)$.
\end{lemma}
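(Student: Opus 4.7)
The plan is induction on $m$. The base cases $m=1$ and $m=2$ are immediate: any point of $\hh_1^-$ works when $m=1$ (noting $\hh_1^-=J^0(\hh_1^-)$ is nonempty by definition of a curtain), and any point of $\hh_1^-\cap\hh_2^-$ lies in $J^0(\hh_i^-)\subset J^1(\hh_i^-)$ for $i=1,2$ when $m=2$.

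For the inductive step, assume $m\ge 3$ and that the result holds for smaller collections. Removing $\hh_1$, $\hh_2$, or $\hh_3$ in turn gives three subcollections of $m-1$ curtains whose minus-halfspaces still pairwise intersect, so the inductive hypothesis (applied to each) supplies points $w_1,w_2,w_3$ satisfying $w_j\in\bigcap_{i\ne j}J^{m-2}(\hh_i^-)$ for $j=1,2,3$. I claim that $z:=\mu(w_1,w_2,w_3)$ is the desired point. Given any $i\in\{1,\dots,m\}$, at most one of $j\in\{1,2,3\}$ can equal $i$, so at least two of $w_1,w_2,w_3$ lie in $J^{m-2}(\hh_i^-)$. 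Applying the defining formula $J^{k+1}(A)=\{\mu(a,b,s):a,b\in J^k(A),\,s\in S\}$ with these two points playing the roles of $a,b$ and the third as $s$ gives $z\in J^{m-1}(\hh_i^-)$, as required.

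The argument is purely combinatorial and invokes no metric or coarse-median data beyond the definition of $J^k$. There is no real obstacle; the whole point is to notice that the ternary nature of $\mu$ matches the ``two out of three'' requirement built into the definition of $J^{k+1}$, so stripping three curtains at once (rather than one) is precisely what lets a single median step raise the $J$-level by one at every stage of the induction.
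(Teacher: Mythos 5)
Your proof is correct and takes essentially the same route as the paper: the paper picks $x_{ij}\in\hh_i^-\cap\hh_j^-$, notes that $\mu(x_{ij},x_{ik},x_{jk})\in J^1(\hh_i^-)\cap J^1(\hh_j^-)\cap J^1(\hh_k^-)$, and says to ``repeat this type of argument''---your induction on $m$ via the three subcollections is precisely that repetition made explicit, exploiting the same two-out-of-three matching between $\mu$ and the definition of $J^{k+1}$. The one hair worth splitting is the base case $m=2$: the inclusion $J^0(\hh_i^-)\subset J^1(\hh_i^-)$ presumes the exact identity $\mu(a,a,s)=a$, which in a \emph{coarse} median space holds only up to uniform error, so it is cleaner to take $z=\mu(x,x,s)$ for $x\in\hh_1^-\cap\hh_2^-$ and any $s\in S$, which lies in $J^1(\hh_1^-)\cap J^1(\hh_2^-)$ directly from the defining formula.
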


\begin{proof}
For each $j\ne i$, let $x_{ij}\in\hh_i^-\cap\hh_j^-$. We have $\mu(x_{ij},x_{ik},x_{jk})\in J^1(\hh^-_i)\cap J^1(\hh^-_j)\cap J^1(\hh^-_k)$. By repeating this type of argument (see \cite[Lem.~2.18]{haettelhodapetyt:coarse}, for instance), one can find a point $z\in\bigcap_{i=1}^mJ^{m-1}(\hh_i^-)$ as desired.
\end{proof}

\begin{definition}[Strong crossing]
We say that two curtains $\hh_1$ and $\hh_2$ \emph{strongly cross} if all four quarterspaces $\hh_1^\pm\cap\hh_2^\pm$ are nonempty.
\end{definition}

\begin{proposition} \label{prop:strong_cross_rank}
Let $S$ be a strong coarse median space of rank $n$. If $\hh_1,\dots,\hh_k$ are pairwise strongly crossing curtains, then $k\le n$.
\end{proposition}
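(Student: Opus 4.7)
The plan is to convert $k$ pairwise strongly crossing curtains into $k$ pairwise crossing hyperplanes inside a cubical approximation of a carefully chosen finite subset of $S$, then invoke the rank bound on the cube complex. Assume for contradiction that $k\ge n+1$; after restricting to $n+1$ of the curtains we may take $k=n+1$.

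First, I would manufacture witness points in $S$. For each sign vector $\epsilon=(\epsilon_1,\dots,\epsilon_k)\in\{+,-\}^k$, the halfspaces $\hh_1^{\epsilon_1},\dots,\hh_k^{\epsilon_k}$ intersect pairwise by strong crossing, so Lemma~\ref{lem:weak_Helly_for_joins} supplies a point $z_\epsilon\in\bigcap_{i=1}^kJ^{k-1}(\hh_i^{\epsilon_i})$. Let $A=\{z_\epsilon\}\cup\{a_i,b_i:1\le i\le k\}$, where $a_i,b_i$ are the points defining $\hh_i$, so $|A|\le 2^{n+1}+2(n+1)$. Applying Definition~\ref{def:scm} yields a CAT(0) cube complex $Q$ of dimension at most $n$, together with $f:A\to Q$, $g:Q\to S$, and an extension $\hat f:\hull A\to Q$ that is $\kappa(|A|)$-quasimedian and $\kappa(|A|)$-quasiisometric; by the definition of $k_1$ these constants are absorbed into $k_1$.

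The heart of the proof is to produce, for each $i$, a hyperplane $H_i$ of $Q$ separating $\{f(z_\epsilon):\epsilon_i=+\}$ from $\{f(z_\eta):\eta_i=-\}$. The composition $\Phi_i:=\hat f\circ g_i:Q_i\to Q$ is a uniformly quasimedian quasiisometric embedding with $\Phi_i\hat f_i(a_i)\approx f(a_i)$ and $\Phi_i\hat f_i(b_i)\approx f(b_i)$. Setting $w_\epsilon:=\mu_Q(f(a_i),f(b_i),f(z_\epsilon))$, the quasimedian property gives $w_\epsilon$ close to $\Phi_i\hat f_i\mu(a_i,b_i,z_\epsilon)$. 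By Lemma~\ref{lem:joins_near_ends} applied to $z_\epsilon\in J^{k-1}(\hh_i^{\epsilon_i})$, the point $\hat f_i\mu(a_i,b_i,z_\epsilon)$ sits on the $c_i^{\epsilon_i}$ side of all but at most $3(k-1)k_1^2$ hyperplanes of the chain $c_i$, which has length $20nk_1^5$. Let $C_i$ be the set of hyperplanes of $Q$ separating $f(a_i)$ from $f(b_i)$; the quasiisometry $\Phi_i$ gives $|C_i|\gtrsim 20nk_1^5$, and transferring the separation data through $\Phi_i$ shows that each $w_\epsilon$ lies on its $\epsilon_i$-side of all but $O(k_1^2)$ hyperplanes of $C_i$. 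Summing over the $2^k\le 2^{n+1}$ sign vectors bounds the total number of ``bad'' hyperplanes in $C_i$ by $2^{n+1}\cdot O(k_1^2)$, leaving an abundance of ``good'' ones; take $H_i$ to be any such. Since a hyperplane $H\in C_i$ separates $w_\epsilon$ from $w_\eta$ iff it separates $f(z_\epsilon)$ from $f(z_\eta)$, the hyperplane $H_i$ realises the required $\epsilon_i$-partition.

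Finally, the hyperplanes $H_1,\dots,H_k$ realise all $2^k$ sign patterns on $\{f(z_\epsilon)\}$, so every pair $H_i,H_j$ has all four quarterspaces nonempty in $Q$; hence they pairwise cross and span a $k$-cube, forcing $k\le\dim Q\le n$ and contradicting $k=n+1$. The most delicate step is the pigeonhole producing $H_i$: the generous exponent in the chain length $20nk_1^5$ of Definition~\ref{def:scm_curtains} is precisely what allows the averaging estimate to beat the $2^n$ factor once the quasi-constants of the various cubical approximations have been absorbed.
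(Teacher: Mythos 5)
Your global strategy (witness points $z_\epsilon$ from Lemma~\ref{lem:weak_Helly_for_joins}, a single cubical approximation $Q$ of all of them, one hyperplane $H_i$ per curtain realising the $i$\th{} sign pattern, then pairwise crossing hyperplanes contradicting $\dim Q\le n$) is the same as the paper's, and the outer steps are sound. The gap is in the production of $H_i$. Your pigeonhole requires that, for some fixed orientation of each $H\in C_i$, each $w_\epsilon$ lies on the ``wrong side'' of only $O(k_1^2)$ hyperplanes of $C_i$ --- equivalently, that whenever $\epsilon_i=\eta_i$ the points $w_\epsilon$ and $w_\eta$ are separated by only $O(k_1^2)$ hyperplanes of $C_i$. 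Lemma~\ref{lem:joins_near_ends} does not give this: it only controls the position of $\hat f_i\mu(a_i,b_i,z_\epsilon)$ relative to the one chain $c_i$, i.e.\ along a single ``direction'' of $Q_i$. The interval between $f_i(a_i)$ and $f_i(b_i)$ (hence between $f(a_i)$ and $f(b_i)$ in $Q$) can be an arbitrarily large $n$--dimensional box, and two points both within $3nk_1^2$ of the $c_i^+$ end of the chain can still be separated by an unbounded number of hyperplanes of $C_i$ transverse to $c_i$ (picture $Q_i=[0,N]^2$ with $c_i$ the vertical hyperplanes, and the points $(1,0)$ and $(1,N)$). So the total count of ``bad'' hyperplanes is not bounded by $2^{n+1}\cdot O(k_1^2)$, and the averaging does not isolate a single $H_i$ separating all the $+$'s from all the $-$'s. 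Note also that a quasiisometric embedding $Q_i\to Q$ induces no correspondence between hyperplanes, so ``transferring the separation data'' only yields the distance \emph{lower} bound for pairs with $\epsilon_i\ne\eta_i$; that half of your estimate is fine, but it is not the half that is missing.

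What is missing is convexity. The existence of $H_i$ amounts to disjointness of the cubical convex hulls in $Q$ of $\{f(z_\epsilon):\epsilon_i=+\}$ and $\{f(z_\eta):\eta_i=-\}$, and that cannot be read off from pairwise distances among finitely many points. The paper obtains it by noting that $\hull\hh_i^\pm=J^n(\hh_i^\pm)$ are $k_0$--coarsely convex (Definition~\ref{def:hull}), so the images $\hat f(\hull A\cap\hull\hh_i^\pm)$ are coarsely convex in $Q$ and hence, since $\dim Q\le n$, Hausdorff-close to their cubical convex hulls; the length $20nk_1^5$ of $c_i$ is then used (via Lemma~\ref{lem:joins_near_ends} and the Lipschitz property of $\mu$) to show that $\dist(\hull\hh_i^-,\hull\hh_i^+)$ is large enough that these two convex subcomplexes are disjoint, whence a hyperplane of $Q$ separates them and therefore separates $\hat f(z_p)$ from $\hat f(z_q)$ whenever $p(i)\ne q(i)$. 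If you reroute your construction of $H_i$ through this convexity argument, the remainder of your proof goes through unchanged.
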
 

\begin{proof}
Suppose that $k=n+1$. Let $p\subset\{1,\dots,n+1\}$, and write $p(i)\in\{+,-\}$ according to whether or not $i\in p$. By Lemma~\ref{lem:weak_Helly_for_joins}, for each such $p$ there is a point $z_p\in\bigcap_{i=1}^{n+1}J^n(\hh_i^{p(i)})$. Let $A=\{z_p\}$, which has cardinality $2^{n+1}$. Let $Q$ be the CAT(0) cube complex approximating $A$, with corresponding map $\hat f:\hull A\to Q$.

The sets $J^n(\hh_i^\pm)=\hull\hh_i^\pm$ are $k_0$--coarsely convex, as is $\hull A$. Hence for each $i$ and each choice of sign, say $+$, the set $B_i^+=\hull A\cap\hull\hh_i^+$ is $k_0$--coarsely convex. From this, one can compute that $\hat f(B_i^+)\subset Q$ is $(k_0\kappa(2^{n+1})+3\kappa(2^{n+1}))$--coarsely convex. Because $S$ has coarse median rank $n$, we have $\dim Q\le n$, so $\hull_Q\hat f(B_i^+)$ lies in the $4nk_1^2$--neighbourhood of $\hat f(B_i^+)$. Similarly, $\hull_Q\hat f(B_i^-)$ lies in the $4nk_1^2$--neighbourhood of $\hat f(B_i^-)$. The map $\hat f$ is a $\kappa(2^{n+1})$--quasiisometry, so 
\[
\dist_Q(\hat f(B_i^-),\hat f(B_i^+)) \,\ge\, \frac1{k_1}\dist(\hull\hh_i^-,\hull\hh_i^+)-k_1.
\]

Let $a_i,b_i$ be the points, and $c_i\subset Q_i$ the chain used to define $\hh_i$, with corresponding map $\hat f_i$. According to Lemma~\ref{lem:joins_near_ends}, all but at most $6n\kappa(5)^2$ elements of $c_i$ can fail to separate $\hat f_i\mu(a_i,b_i,\hull\hh_i^-)$ from $\hat f_i\mu(a_i,b_i,\hull\hh_i^+)$. Combining this with the fact that $\mu$ is $\kappa(1)$--coarsely Lipschitz in each parameter and $\hat f_i$ is $\kappa(2)$--coarsely Lipschitz, it follows that 
\[
\kappa(1)(\kappa(2)\dist(\hull\hh_i^-,\hull\hh_i^+)+\kappa(2))+\kappa(1) \,\ge\, |c_i|-6n\kappa(5)^2,
\]
and hence $\dist(\hull\hh_i^-,\hull\hh_i^+)\ge\frac1{k_1^2}|c_i|-6n-2\ge12nk_1^3$. Together with the above lower bounds, we find that 
\[
\dist_Q(\hull_Q\hat f(B_i^-),\hull_Q\hat f(B_i^+)) \,\ge\, \frac1{k_1}(12nk_1^3)- k_1-2(4nk_1^2)
\]
is positive. As the sets $\hull_Q\hat f(B_i^\pm)$ are convex in the CAT(0) cube complex $Q$, there is a hyperplane $w_i$ of $Q$ separating the two. In particular, $w_i$ separates $\hat f(z_p)$ from $\hat f(z_q)$ whenever $p(i)\ne q(i)$. But then the hyperplanes $w_1,\dots,w_{n+1}$ must pairwise cross, which is impossible because $\dim Q\le n$.
\end{proof}

\subsection{The injective dual} \label{subsec:scm_injective}

Let $(S,\dist,\mu)$ be a strong coarse median space of rank $n$ with associated function $\kappa$. We go from curtains to a set of partitions of $S$ as in Section~\ref{sec:contracting}. More precisely, each curtain $\hh$ induces two natural bipartitions of $S$, namely $(\hh^-\cup\hh,\hh^+)$ and $(\hh^-,\hh\cup\hh^+)$. Let $P$ be the \emph{set} of all bipartitions induced in this way.

There are multiple dualisable systems that one can define using $P$. Here we shall consider what is perhaps the largest reasonable choice; we shall see more in Section~\ref{subsec:scm_hyp}. That is, we let $\C$ be the set of all chains $\{h_i\}\subset P$ such that there is a chain of curtains $(\hh_i)$ with $\hh_i$ inducing $h_i$. Throughout this section, $(X,\dist_\C)$ will denote the $\C$--dual of $S$. Because we are allowing all chains of curtains, it is easy to see that $\C$ is gluable.

\begin{lemma} \label{lem:scm_gluable}
$\C$ is a 2--gluable system of chains.
\end{lemma}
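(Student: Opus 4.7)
The plan is to exhibit the explicit choice $b=\{h_{-1},k_1\}$ (the pair of walls flanking the junction) and show $(c_1\cup c_2)\ssm b\in\C$. Fix chains of curtains $(\hh_i)_{i\le -1}$ and $(\kk_j)_{j\ge 1}$ witnessing $c_1,c_2\in\C$ respectively. I claim that the concatenated sub-sequence $(\hh_i)_{i\le -2}\cup(\kk_j)_{j\ge 2}$ is itself a chain of curtains. Since each $\hh_i$ induces $h_i$ and each $\kk_j$ induces $k_j$, it will then induce $(c_1\cup c_2)\ssm\{h_{-1},k_1\}$, which therefore lies in $\C$. That $\C$ is a system of chains is immediate from its definition.

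The only non-obvious check is that the junction pair $(\hh_{-2},\kk_2)$ is chain-compatible, i.e.\ $\hh_{-2}\subseteq\kk_2^-$ and, by the symmetric argument, $\kk_2\subseteq\hh_{-2}^+$. I focus on the first. Since $(\hh_i)$ is a chain of curtains, $\hh_{-2}\subseteq\hh_{-1}^-$. The wall $h_{-1}$ satisfies $h_{-1}^-\in\{\hh_{-1}^-,\,\hh_{-1}^-\cup\hh_{-1}\}$, so $\hh_{-1}^-\subseteq h_{-1}^-$. The chain hypothesis on $c_1\cup c_2$ gives $h_{-1}^-\subseteq k_1^-$, and similarly $k_1^-\subseteq\kk_1^-\cup\kk_1$. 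Combining, $\hh_{-2}\subseteq\kk_1^-\cup\kk_1$.

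To conclude $\hh_{-2}\subseteq\kk_2^-$ it suffices to show $\kk_1^-\cup\kk_1\subseteq\kk_2^-$. The inclusion $\kk_1\subseteq\kk_2^-$ is part of the chain-of-curtains data for $(\kk_j)$. For $\kk_1^-\subseteq\kk_2^-$, the wall inclusion $k_1^-\subseteq k_2^-$ combined with $\kk_1^-\subseteq k_1^-$ and $k_2^-\subseteq\kk_2^-\cup\kk_2$ yields $\kk_1^-\subseteq\kk_2^-\cup\kk_2$; since $\kk_2\subseteq\kk_1^+$ is disjoint from $\kk_1^-$, this forces $\kk_1^-\subseteq\kk_2^-$. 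The symmetric argument yields $\kk_2\subseteq\hh_{-2}^+$, completing the verification.

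The main subtlety I expect is purely bookkeeping: each wall in $P$ coming from a curtain $\hh$ can have the thick region $\hh$ lying on either the $-$ or the $+$ side of the wall, creating several sub-cases that must be absorbed uniformly via the sandwich $\hh^-\subseteq h^-\subseteq\hh^-\cup\hh$. The choice $|b|=2$ is essential rather than wasteful: the adjacent curtains $\hh_{-1}$ and $\kk_1$ can genuinely intersect (the thick region of one may meet the other), so $(\hh_{-1},\kk_1)$ need not be chain-compatible on its own; stepping one curtain further out in each chain is exactly what lets the curtain thickness get absorbed into the wall containments.
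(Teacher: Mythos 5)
Your proof is correct and follows essentially the same route as the paper: remove the junction pair $b=\{h_{-1},k_1\}$ and verify that $(\hh_i)_{i\le-2}\cup(\kk_j)_{j\ge2}$ is a chain of curtains, which the paper does via the same inclusion chain $\kk_2\subset\kk_1^+\subset k_1^+\subset h_{-1}^+\subset\hh_{-2}^+$ (and symmetrically). Your version just spells out the wall-versus-curtain sandwich $\hh^-\subseteq h^-\subseteq\hh^-\cup\hh$ more explicitly.
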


\begin{proof}
Let $c_1=\{\dots,h_{-2},h_{-1}\}$ and $c_2=\{k_1,k_2,\dots\}$ be elements of $\C$ such that $\{\dots,h_{-1},k_1,\dots\}$ is a chain, $c_2\subset h_{-1}^+$, and $c_1\subset k_1^-$. Let $(\hh_i)$ induce $c_1$ and let $(\kk_i)$ induce $c_2$. We have $\kk_2\subset\kk_1^+\subset k_1^+\subset h_{-1}^+\subset\hh_{-2}^+$, and similarly $\hh_{-2}\subset\kk_2^-$. Hence $c_1\cup c_2\ssm\{h_{-1},k_1\}\in\C$.
\end{proof}

The results of Section~\ref{sec:properties} therefore show that $X$ is a coarsely injective space with a good bicombing by normal wall paths. Normal wall paths are median paths, so it is desirable to relate the median on $X$ given by Lemma~\ref{lem:dual_median} to the coarse median on $S$.

\begin{lemma} \label{lem:scm_qmqie}
The map $S\to X$ is a quasiisometric embedding that is 3--quasimedian.
\end{lemma}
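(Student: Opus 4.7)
The plan is to establish the two conclusions of the lemma separately.

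For the quasiisometric embedding, I would prove two matching inequalities. The \emph{upper bound} $\dist_\C(\phi_s,\phi_t)\le C\dist(s,t)+C$ follows from the thickness of curtains. Given a chain in $\C$ induced by a chain of curtains $(\hh_k)_{k=1}^M$ separating $s$ from $t$, I would first observe that the $\hh_k$ are pairwise disjoint: adjacent curtains are disjoint because $\hh_{k-1}\subset\hh_k^-$, and iterating this forces $\hh_1^-\subset\dots\subset\hh_M^-$. A geodesic $\gamma$ from $s$ to $t$ then enters each curtain at a first-entry time $\tau_k$; using $\gamma(\tau_k)\in\hh_k\subset\hh_{k+1}^-$ together with $\gamma(\tau_{k+2})\in\hh_{k+2}\subset\hh_{k+1}^+$ and the thickness bound $\dist(\hh_{k+1}^-,\hh_{k+1}^+)\ge 12nk_1^3$ (which was established inside the proof of Proposition~\ref{prop:strong_cross_rank}), telescoping over alternate indices yields $\dist(s,t)\ge 6nk_1^3(M-2)$, and hence the upper bound.

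The \emph{lower bound} $\dist(s,t)\le C'\dist_\C(\phi_s,\phi_t)+C'$ proceeds instead by producing many separating curtains from the cubical approximation. Applying Definition~\ref{def:scm} to $A=\{s,t\}$ gives a CAT(0) cube complex $Q$ of dimension at most $n$ and a $\kappa(2)$--quasiisometry $\hat f:\hull A\to Q$. In $Q$, the $\ell^\infty$--distance between $\hat fs$ and $\hat ft$ is realised by a chain of hyperplanes, which I would partition into consecutive sub-chains of length $20nk_1^5$, separated by a bounded number of buffer hyperplanes to absorb the slack coming from $\hat f\mu(s,t,s)$ being only coarsely equal to $\hat fs$. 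Each sub-chain defines a curtain via Definition~\ref{def:scm_curtains} separating $s$ from $t$, and these assemble into a chain of curtains because the underlying hyperplanes in $Q$ are nested. The resulting chain in $\C$ has length comparable to $\dist_Q^\infty(\hat fs,\hat ft)\ge\dist_Q^1(\hat fs,\hat ft)/n\ge\dist(s,t)/(n\kappa(2))-O(1)$, which gives the bound.

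For the 3--quasimedian property, let $\phi=\phi_{\mu(a,b,c)}$ and $\psi=\mu_X(\phi_a,\phi_b,\phi_c)$. Suppose a chain $\{h_i\}_{i=1}^N\in\C$ separates $\phi$ from $\psi$, induced by a chain of curtains $(\hh_k)$, which are again pairwise disjoint. The key input is Lemma~\ref{lem:joins_near_ends}: if two of $\{a,b,c\}$ lie in $\hh_k^-$, then $\mu(a,b,c)\in J^1(\hh_k^-)\subset\hull\hh_k^-\subset S\ssm\hh_k^+$, so $\mu(a,b,c)\in\hh_k^-\cup\hh_k$; symmetrically for $\hh_k^+$. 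Consequently, for a wall induced by $\hh_k$ to contribute to the separating chain, $\hh_k$ must satisfy either $\mu(a,b,c)\in\hh_k$ or ``at least two of $\{a,b,c\}$ lie in $\hh_k$''. A short case analysis over the two possible orientations of $h$ and the positions of $a,b,c,\mu(a,b,c)$ relative to $\hh_k$ shows that each such ``bad'' curtain contributes at most one disagreement wall. Pairwise disjointness of the $\hh_k$ then bounds the number of bad curtains: $\mu(a,b,c)$ lies in at most one, and ``two of $\{a,b,c\}$ in $\hh_k$'' also holds for at most one (since each of $a,b,c$ lies in at most one curtain of the chain). Combining these bounds forces $N\le 3$.

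The hardest step is verifying that Lemma~\ref{lem:joins_near_ends} is strong enough to replace the ``median preserves majority'' property that would hold exactly in a CAT(0) cube complex. The subtlety is that the projection $\hat f\mu(a'_k,b'_k,\cdot):S\to Q_k$ is not a genuine median homomorphism, but it controls hull-type subsets sharply enough that the chain length $20nk_1^5$ dominates all the accumulated $O(\kappa(5)^2)$ error terms, both in the buffer argument of the lower bound and in the disagreement-counting argument of the quasimedian statement.
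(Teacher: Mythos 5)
Your proposal follows essentially the same route as the paper: the upper bound on $\dist_\C$ comes from disjointness and definite thickness of the curtains in a chain, the lower bound from extracting a long chain of curtains out of the cubical approximation of $\hull\{s,t\}$, and the quasimedian property from Lemma~\ref{lem:joins_near_ends} (the paper's proof is exactly this, just stated more tersely, using only $\dist(\hh^-,\hh^+)\ge1$ for the upper bound). One point in your quasimedian step is imprecise as stated: the dichotomy ``either $\mu(a,b,c)\in\hh_k$ or at least two of $\{a,b,c\}$ lie in $\hh_k$'' is not what Lemma~\ref{lem:joins_near_ends} gives you. If the separating wall is $(\hh_k^-\cup\hh_k,\hh_k^+)$ with majority pair $a,b$ in the negative side, the lemma only forces \emph{one} of $a,b$ to lie in $\hh_k$ itself (e.g.\ $a\in\hh_k$, $b\in\hh_k^-$, $\mu(a,b,c)\in\hh_k^+$ is not excluded). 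This is repairable: the nesting of the chain forces a single majority pair to serve for every wall in it, so the bad curtains are those containing $\mu(a,b,c)$ or one of the two majority points, of which there are at most three by pairwise disjointness, still giving the bound $3$. The paper's implicit argument is slightly cleaner here: for a chain of length three, both majority points are pushed into the open negative halfspace of the \emph{middle} curtain and $\mu(a,b,c)$ into its open positive halfspace, contradicting Lemma~\ref{lem:joins_near_ends} directly.
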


\begin{proof}
Let $s,t\in S$. From the CAT(0) cube complex $Q$ approximating $\hull\{s,t\}$, we obtain a chain of curtains separating $s$ from $t$ whose length is linearly lower bounded by $\dist(s,t)$. Conversely, because $\dist(\hh^-,\hh^+)\ge1$ for every curtain $\hh$, any element of $\C$ separating $s$ from $t$ can have length at most $\dist(s,t)+2$. Thus $S\to X$ is a quasiisometric embedding.

To show that $S\to X$ is quasimedian, let $s_1,s_2,s_3\in S$, and suppose that $\hh$ is a curtain with $s_1,s_2\in\hh^-$. Because $\mu_S(s_1,s_2,s_3)\in J^1(\hh^-)$, Lemma~\ref{lem:joins_near_ends} tells us that $\mu_S(s_1,s_2,s_3)\not\in\hh^+$. This shows that any chain of curtains separating $\mu_S(s_1,s_2,s_3)$ from the majority of $\{s_1,s_2,s_3\}$ has length at most one. Hence $S\to X$ is 3--quasimedian.
\end{proof}

\begin{proposition} \label{prop:scm_dense}
Let $S$ be a strong coarse median space of rank $n$, and let $X$ be the dual space described above. The image of $S$ in $X$ is coarsely dense.
\end{proposition}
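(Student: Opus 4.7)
Fix $x \in X$ and any $s_0 \in S$; set $N = \dist_\C(\phi_{s_0},x)$. The plan is to produce $s \in S$ with $\dist_\C(\phi_s,x)$ bounded by a constant depending only on the rank $n$ and function $\kappa$ of the strong coarse median structure on $S$. First I would fix a chain of curtains $(\hh_1,\dots,\hh_N)$ inducing a realiser of $N$ in $\C$, with $s_0\in\hh_i^-$ and $x$ oriented into $\hh_i^+$ for every $i$. Since this is a chain of curtains, the halfspaces $\hh_i^+$ are nested, $\hh_N^+\subset\cdots\subset\hh_1^+$, so any $t\in\hh_N^+$ gives $\phi_t$ agreeing with $x$ on every wall induced by this chain.

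Rather than taking $t$ arbitrarily, I would construct it via the cube-complex approximation so as to control its placement. Pick any $t_0\in\hh_N^+$ (for instance $t_0=b_N$ after relabelling) and let $Q$ approximate $\hull\{s_0,t_0\}$ via $\hat f$; the image $\hat f(t_0)$ determines a natural convex subcomplex $Q^+\subset Q$ lying on the $\hh_N^+$ side of the hyperplanes of $Q$ responsible for $\hh_N$. I then take $t=g(v)$, where $v$ is the gate of $\hat f(s_0)$ to $Q^+$. By Lemma~\ref{lem:gate_via_medians} the gate is an iterated median in $Q$, and the quasi-mediality of $g$ (Definition~\ref{def:scm} and Lemma~\ref{lem:scm_qmqie}) then transfers this into an iterated-median expression for $t$ in $S$, pinning $t$ to a controlled position relative to the chain $(\hh_i)$.

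The heart of the proof is to bound $M=\dist_\C(\phi_t,x)$. Let $(\kk_j)_{j=1}^M$ realise it. Since $\phi_t$ agrees with $x$ on every wall of the chain, no $\kk_j$ can coincide with a wall induced by any $\hh_i$. I would then argue that every $\kk_j$ must strongly cross at least one $\hh_i$: if instead $\kk_j$ were comparable (nested) with every $\hh_i$, the orientation data $\phi_{s_0}(\kk_j),\phi_t(\kk_j),x(\kk_j)$ would force a chain in $\C$ separating $\phi_{s_0}$ from $x$ of length at least $N+1$, contradicting the maximality of $N$. A pigeonhole argument sorting the $\kk_j$ by which $\hh_i$ they strongly cross, combined with Proposition~\ref{prop:strong_cross_rank} (which bounds pairwise strong-crossings by $n$), then bounds $M$ in terms of $n$ and $\kappa$. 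The main obstacle I expect is ruling out ``boundary'' curtains $\kk_j$ that engage only partially with the chain; this is precisely where the iterated-median construction of $t$, together with Lemma~\ref{lem:joins_near_ends} to control behaviour on joins of halfspaces, should close the argument.
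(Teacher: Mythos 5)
There is a genuine gap in the final counting step, and it is the crux of the whole argument. After constructing $t$, you let $(\kk_j)_{j=1}^M$ realise $\dist_\C(\phi_t,x)$ and claim that a pigeonhole argument over which $\hh_i$ each $\kk_j$ strongly crosses, combined with Proposition~\ref{prop:strong_cross_rank}, bounds $M$. But the $\kk_j$ form a \emph{chain}: they are pairwise nested, not pairwise crossing, so Proposition~\ref{prop:strong_cross_rank} says nothing about how many of them there can be. What that proposition forbids is a family of $n+1$ curtains that \emph{pairwise} strongly cross; a long chain all of whose elements cross one fixed $\hh_i$ is perfectly consistent with rank $n\ge2$ (think of $\R^2$ with the $\ell^1$ median: after projecting $x$ to a ``horizontal'' hull, the new realising chain is a long vertical chain every element of which crosses the horizontal one). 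In this section there is no $L$--separation hypothesis on $\C$ to exclude such configurations — separation only enters for the hyperbolic models of Section~\ref{subsec:scm_hyp} — so nothing bounds $M$ after a single projection step.

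The repair is to iterate, which is what the paper does. One shows that after each projection step the \emph{tail} (last few elements) of the new realising chain $c_{k+1}$ must strongly cross the tails of \emph{all} previous chains $c_1,\dots,c_k$; here gluability (Lemma~\ref{lem:scm_gluable}) is used to show that otherwise some earlier $c_i$ could have been lengthened, contradicting its maximality. Choosing one curtain from each tail then yields, if the process has not already terminated, $n+1$ pairwise strongly crossing curtains after $n+1$ steps — and \emph{that} is the configuration Proposition~\ref{prop:strong_cross_rank} forbids, so $\dist_\C(s_i,x)\le C$ for some $i\le n+1$. A secondary issue: your $t$ is gated only to the $\hh_N^+$ side of the approximating complex, so it need not agree with $x$ on the walls separating $s_0$ from $t_0$ that do not belong to the chain $(\hh_i)$. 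The paper instead collects all walls separating points of the current finite set on which $x$ is decisively oriented, checks that the corresponding convex subcomplexes pairwise intersect (because $x$ is a filter), and applies Helly's theorem in the finite cube complex to choose the next point; without this, the claim that all but boundedly many $\kk_j$ separate $x$ from both $s_0$ and $t$ does not go through.
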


The proof of this proposition is similar in spirit to that of Proposition~\ref{prop:sufficient_dense_L}, but is more complicated for two reasons. Firstly, we only have a coarse median, rather than points that roughly realise distances. Secondly, the separation property provided by Proposition~\ref{prop:strong_cross_rank} is weaker than $L$--separation.

Here is an outline of the argument. Given $x\in X$, we find a sequence of points $s_i\in S$ with the property that for any $c\in\C$ separating $s_i$ from $x$, only a uniform number of elements of $c$ can fail to separate $x$ from all $s_j$ with $j\le i$. From this it will follow that if $c_i\in\C$ realises $\dist_\C(s_i,x)$, then the tail of $c_i$ either is empty or crosses the tails of all $c_j$ with $j<i$. Proposition~\ref{prop:strong_cross_rank} then shows that the tail of $c_{n+1}$ must be empty, which bounds $\dist_\C(s_{n+1},x)$.

The most difficult part is the construction of the $s_i$. The reason for requiring the above property involving all $j\le i$ is that we wish to avoid a situation where the chains $c_i$ ``face'' each other, as in that case there would be no way to make the process terminate. Informally, we need to ensure that $s_{i+1}$ is not ``on the opposite side'' of $x$ to $s_i$. To do this, we use an auxiliary point $t_i\in S$ and ``project'' $x$ to the hull of $\{s_i,t_i\}$ to obtain $s_{i+1}$. This is reminiscent of the fact that in a CAT(0) cube complex, every wall separating a point $z$ from its gate to a convex set $C$ actually separates $z$ from all of $C$. Again, we have to be slightly careful with this step, because $x$ is not an element of $S$, so its gate in the sense of Section~\ref{subsec:gate} might not be an element of $S$ either. 

\begin{proof}[Proof of Proposition~\ref{prop:scm_dense}]
Fix a sufficiently large constant $C$, which could be explicitly determined from the below arguments in terms of $n$ and the parameters of the strong coarse median space $S$. 

Let $x\in X$, and choose an arbitrary point $s_1\in S$. Let $c_1\in\C$ realise $\dist_\C(s_1,x)$. If $|c_1|\le C$ then we are done. Otherwise there exists some point $t_1\in S$ such that $t_1\in c_1^+$, along with $x$. Write $A_1=\{s_1,t_1\}$ and consider the corresponding finite CAT(0) cube complex $Q_1=Q(A_1)$. 

\medskip

Given a curtain $\hh_i$ separating $s_1$ from $t_1$ (not necessarily arising from $Q_1$), there are two convex subcomplexes $H_i^-=\hull_{Q_1}(\hat f_1\mu(s_1,\hh_i^-,t_1))$ and $H_i^+=\hull_{Q_1}(\hat f_1\mu(s_1,\hh_i^+,t_1))$, which may overlap. Let $h_1,h_2\in P$ be walls coming from curtains $\hh_1$ and $\hh_2$ that separate $s_1$ from $t_1$, and suppose that the halfspace $x(h_i)$ corresponds to either $\hh_i^+$ or $\hh_i^-$ for both $i$ (as opposed to, say, $\hh_i^+\cup\hh_i$). After relabelling the orientations of the $h_i$, let us assume for concreteness that $x(h_i)=h_i^-$. Since $x$ is a filter, there must be some $p\in S$ such that $p\in\hh_1^-\cap\hh_2^-$. We then have $\hat f_1\mu(p,s_1,t_1)\in H_1^-\cap H_2^-$. That is, the convex subsets of $Q_1$ corresponding to $\hh_1^-$ and $\hh_2^-$ intersect.

Now consider the set $P_1\subset P$ consisting of all walls $h$ coming from curtains $\hh$ that separate $s_1$ from $t_1$ and such that $x(h)$ corresponds to either $\hh^-$ or $\hh^+$. By the previous paragraph, the pair $(x,P_1)$ determines a set of pairwise intersecting convex subcomplexes of the finite CAT(0) cube complex $Q_1$. By Helly's theorem, there is a point $\sigma_2$ in the total intersection of those subcomplexes. Let $s_2=g_1(\sigma_2)\in\hull(A_1)$, where $g_1:Q_1\to\hull A_1$. 

\medskip

Consider a chain $c\in\C$ separating $s_2$ from $x$. We claim that only a uniformly bounded number of elements of $c$ can separate $s_2$ from either $s_1$ or $t_1$. Firstly, boundedly many can separate $s_2$ from both $s_1$ and $t_1$. Indeed, $\sigma_2=\mu_{Q_1}(\hat f_1(s_1),\hat f_1(t_1),\sigma_2)$ and $g_1$ is quasimedian, so $s_2$ is uniformly close to $\mu(s_1,s_2,t_1)$, and Lemma~\ref{lem:scm_qmqie} shows that $S\to X$ is 3--quasimedian. Secondly, any elements of $c$ that separate $s_2$ from, say, $s_1$ but not $t_1$ must separate $\{t_1,s_2\}$ from $\{x,s_1\}$. Thus if there were many such elements then we would have a large number of curtains separating $\{t_1,s_2\}$ from $\{x,s_1\}$. But this would lead to a long chain of elements of $P_1$ separating $x$ from $s_2$, contradicting the construction of $s_2$.

Thus, if we let $c_2\in\C$ realise $\dist_\C(s_2,x)$, then all but uniformly many elements of $c_2$ separate $\{s_1,t_1,s_2\}$ from $x$. Moreover, any elements of $c_2$ not separating $\{s_1,t_1,s_2\}$ from $x$ have to occur at the end of $c_2$ that is closest to $s_2$: they all separate $s_2$ from all elements of $c_2$ that do separate $\{s_1,t_1,s_2\}$ from $x$. Let $c_2'$ denote the subset of $c_2$ consisting of all elements that separate $\{s_1,t_1,s_2\}$ from $x$. We have that $|c_2\ssm c_2'|$ is uniformly bounded.

Recall that every element of $c_1$ separates $s_1$ from $\{x,t_1\}$. From this it follows that if $h_1\in c_1$ and $h_2\in c_2'$ do not cross then $h_2^+\subset h_1^+$. For $r\ge4$, if the final $r$ elements of $c_2'$ were all contained in the positive halfspace of the $(r-3)^\mathrm{th}$-last element of $c_1$, then Lemma~\ref{lem:scm_gluable} would imply the existence of an element of $\C$ longer than $c_1$ that separates $s_1$ from $x$, contradicting the choice of $c_1$. Hence the $r^\mathrm{th}$-last element of $c_2'$ crosses the $(r-3)^\mathrm{th}$-last element of $c_1$ for each $r\ge4$. Note that if $h_2\in c_2'$ crosses $h_1\in c_1$, then $h_2$ must cross every wall after $h_1$ in $c_1$. Thus, either $|c_2|\le C$, in which case we are done, or $|c_2'|\ge12$, and its eighth-last to twelfth-last elements all cross the final five elements of $c_1$.

\medskip

From here we proceed inductively. Let $k\le n$ and suppose that we have constructed $s_1,\dots,s_k$, $t_1,\dots,t_{k-1}$, such that:
\begin{itemize}
\item   if $c_i$ and $c_j$ realise $\dist_\C(s_i,x)$ and $\dist_\C(s_j,x)$, respectively, with $i<j$, then the $(7j-6)^\mathrm{th}$-last to $(7j-2)^\mathrm{th}$-last elements of $c_j$ all cross the final $7j-2$ elements of $c_i$;
\item   all but a uniformly bounded number of elements of $c_i$ separate $\{s_1,\dots,s_{i-1},t_1,\dots,t_{i-1}\}$ from $x$.
\end{itemize}

Let $c_k\in\C$ realise $\dist_\C(s_k,x)$. Since $c_k$ is finite, there exists $t_k\in S$ such that $t_k\in c_k^+$. Write $A_k=\{s_1,\dots,s_k,t_1,\dots,t_k\}$ and let $Q_k=Q(A_k)$ be the corresponding finite CAT(0) cube complex, with $g_k:Q_k\to\hull A_k$. Referring to Lemma~\ref{lem:gate_via_medians}, write $\g_k$ for the map 
\[
z \,\mapsto\, \mu(s_k,z,\mu(t_k,z,\mu(s_{k-1},z,\mu(t_{k-1},z,\mu(s_{k-2},\dots,\mu(s_1,z,t_1)))\dots).
\] 
If $z\in S$, then up to a uniform error we have $\g_k(z)\in\hull(A_k)$ for all $z\in X$. Let $Q'=Q(A_k\cup\{z\})$, with $\hat f':\hull(A_k\cup\{z\})\to Q'$. In view of Lemma~\ref{lem:gate_via_medians}, we see that $\g_k(z)$ is uniformly close to the image in $\hull(A_k\cup\{z\})$ of the gate of $\hat f'(z)$ to $\hull_{Q'}(\hat f'(A_k))$ in $Q'$. The constants here depend on $k$, but since $k$ is bounded by $n$ we view them as being uniform in $n$.

Given a curtain $\hh$ that separates some pair of elements of $A_k$, there are two convex subcomplexes $H^-=\hull_{Q_k}(\hat f_k\g_k(\hh^-))$ and $H^+=\hull_{Q_k}(\hat f_k\g_k(\hh^+))$. Let $P_k\subset P$ be the set of walls $h$ coming from curtains $\hh$ that separate a pair of elements of $A_k$ and such that $x(h)$ corresponds to either $\hh^+$ or $\hh^-$. For each $h\in P_k$, let $\sign_x(h)\in\{-,+\}$ be such that $x(h)=h^{\sign_x(h)}$. Given $h_1,h_2\in P_k$, there is a point $p\in S$ lying in $\hh_1^{\sign_x(h_1)}\cap\hh_2^{\sign_x(h_2)}$, and hence the convex subcomplexes $H_1^{\sign_x(h_1)}$ and $H_2^{\sign_x(h_2)}$ of the finite CAT(0) cube complex $Q_k$ intersect. From this pairwise intersection, Helly's theorem provides a point $\sigma_{k+1}\in Q_k$ that lies in $H^{\sign_x(h)}$ for all $h\in P_k$. Let $s_{k+1}=g_k(\sigma_{k+1})$.

\medskip

Consider an arbitrary chain $c\in\C$ separating $s_{k+1}$ from $x$. As $\sigma_{k+1}\in Q_k$, Lemma~\ref{lem:gate_via_medians} tells us that $s_{k+1}$ lies at a uniform distance from $\g_k(s_{k+1})$, and hence only uniformly many elements of $c$ can separate $s_{k+1}$ from $A_k$. Furthermore, any subchain of $c$ separating $s_{k+1}$ from a subset $B\subset A_k$ yields a slightly shorter chain of elements of $P_k$ separating $s_{k+1}$ from $B$. But $g_k$ is quasimedian, and Lemma~\ref{lem:scm_qmqie} states that $S\to X$ is 3--quasimedian, so chains in $P_k$ that separate $s_{k+1}$ from $x$ have uniformly bounded length. This shows that, for any chain in $\C$ separating $s_{k+1}$ from $x$, all but a uniformly finite number (depending on $k\le n$) of elements must separate $x$ from $s_{k+1}\cup A_k$.

Let $c_{k+1}\in\C$ realise $\dist_\C(s_{k+1},x)$. If $|c_{k+1}|\le C$ then we are done, so suppose otherwise. For $r>4$, if the final $r$ elements of $c_{k+1}$ were contained in the positive halfspace of the $(r-3)^\mathrm{th}$-last element of $c_i$ for some $i\le k$, then Lemma~\ref{lem:scm_gluable} would tell us that we could have made a longer choice of $c_i$, which is impossible. Hence the $r^\mathrm{th}$-last element of $c_{k+1}$ crosses the $(r-3)^\mathrm{th}$-last element of $c_i$ for every $i\le k$. It follows that the $(7k+1)^\mathrm{th}$-last to $(7k+5)^\mathrm{th}$-last elements of $c_{k+1}$ all cross the final $7k-2$ elements of $c_i$ for all $i\le k$.

\medskip

Suppose that the above process has not terminated before the final step $k=n$, because $\dist_\C(s_i,x)>C$ for all $i\le n$. If it is also the case that $\dist_\C(s_{n+1},x)>C$, then for each $i\le n+1$ let $h^i_1,\dots,h^i_5$ be the $(7i-6)^\mathrm{th}$-last to $(7i-2)^\mathrm{th}$-last elements of $c_i$. If $i\ne j$, then every $h^i_l$ crosses every $h^j_l$. This implies that the curtains $\hh^1_3,\dots,\hh^{n+1}_3$ pairwise strongly cross, which contradicts Proposition~\ref{prop:strong_cross_rank}. So we must have $\dist_\C(s_i,x)\le C$ for some $i\le n+1$.
\end{proof}

\begin{remark}
One can tweak the definition of $\g_k$ in the above proof to provide a notion of gate map to hulls of finite subsets of general coarse median spaces of finite rank, with constants independent of the cardinality of the finite subset.
\end{remark}

The following is the main result of this section. It includes the notion of \emph{$(n,\delta)$--hyperbolicity}, which was introduced in \cite{jorgensenlang:combinatorial} as a higher-rank form of negative curvature. We shall not discuss this notion in detail here, but it replaces the four-point condition for hyperbolicity by a $(2n+2)$--point condition.

\begin{theorem} \label{thm:scm_injective}
Let $S$ be a strong coarse median space of rank $n$, and let $X$ be as constructed above. The map $S\to X$ is a quasimedian quasiisometry to a coarsely injective space. Moreover, $X$ is $(n,\delta)$--hyperbolic. 
\end{theorem}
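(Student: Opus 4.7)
The first two assertions can be assembled from earlier results. Lemma~\ref{lem:scm_qmqie} already shows that $S\to X$ is a $3$-quasimedian quasiisometric embedding, and Proposition~\ref{prop:scm_dense} shows that $S$ is coarsely dense in $X$, so the map is a quasimedian quasiisometry. Because $\C$ is a $2$-gluable system of chains by Lemma~\ref{lem:scm_gluable}, coarse injectivity of $X$ is then immediate from Theorem~\ref{thm:coarsely_injective}. Consequently $X$ is also roughly geodesic (as noted in Corollary~\ref{cor:rough_geodesic_hyperbolic} and Remark~\ref{rem:graded_roughly_geodesic}), so it makes sense to ask about $(n,\delta)$-hyperbolicity in the usual geodesic-space formulation.

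The substance of the theorem is therefore the $(n,\delta)$-hyperbolicity. My plan is to adapt the $\times$-chain bookkeeping of Proposition~\ref{prop:four-point} from four points to $2n+2$ points, swapping the role of $L$-separation for the rank bound provided by Proposition~\ref{prop:strong_cross_rank}. More precisely, given points $x_1,\dots,x_{2n+2}\in X$ and any bipartition $I\sqcup I^c$ of the index set into two halves of size $n+1$, consider the ``separation cost'' measured by the largest element of $\C$ separating $\{x_i\}_{i\in I}$ from $\{x_j\}_{j\in I^c}$. I would introduce a maximal $\times$-perichain-style family consisting of ``corner'' subchains $\chi_J$ indexed by subsets $J\subset\{1,\dots,2n+2\}$, where each $\chi_J\in\C$ consists of walls separating $\{x_j\}_{j\in J}$ from its complement and where compatible corner pairs can be combined. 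Lemma~\ref{lem:scm_gluable} lets one fuse compatible pieces into a single element of $\C$ with a bounded loss, as in Lemma~\ref{lem:cross_vs_L}.

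The key point is the rank bound. If chains realising the costs for $n+1$ ``incompatible'' bipartitions were all mutually large, then selecting a single underlying curtain from deep inside each would yield $n+1$ pairwise strongly crossing curtains, contradicting Proposition~\ref{prop:strong_cross_rank}. Thus at most $n$ of the relevant bipartitions can carry a genuinely large cost, and the remaining ones must admit a common ``gluing'' into one of the surviving classes. The main obstacle, and the part that requires real care, is converting this rank-based dichotomy into a sharp inequality between the sums of pairwise distances over two matchings of $\{x_1,\dots,x_{2n+2}\}$ into $n+1$ pairs, which is the shape of the $(n,\delta)$-hyperbolicity condition of Jørgensen--Lang. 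This is done by estimating each pairwise distance $\dist_\C(x_i,x_j)$ in terms of the corner sums $\|\chi_J\|$, exactly as in Proposition~\ref{prop:four-point}, but summed over all pairs in the two matchings and controlled by the ``bad'' bipartitions identified above. Gluability takes care of the additive error, producing a constant $\delta$ depending only on $n$ and the parameters $\kappa$ of the strong coarse median structure on $S$.
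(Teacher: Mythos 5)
Your treatment of the first two assertions is exactly the paper's: Lemma~\ref{lem:scm_qmqie} plus Proposition~\ref{prop:scm_dense} give the quasimedian quasiisometry, and Lemma~\ref{lem:scm_gluable} plus Theorem~\ref{thm:coarsely_injective} give coarse injectivity. That part is fine.

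The $(n,\delta)$--hyperbolicity is where there is a genuine gap. The paper does not run a combinatorial $(2n+2)$--point argument at all: it deduces $(n,\delta)$--hyperbolicity \emph{a posteriori} by combining \cite[Thms~2.2,~2.3]{bowditch:coarse}, which bound the asymptotic rank of a coarse median space of rank $n$ (hence of $X$, via the quasimedian quasiisometry), with \cite[Thm~1.4]{jorgensenlang:combinatorial}, which converts coarse injectivity plus an asymptotic rank bound into $(n,\delta)$--hyperbolicity. So the logical order is: first establish that $X$ is coarsely injective and quasiisometric to $S$, then invoke these two external results. Your route --- generalising the $\times$--chain bookkeeping of Proposition~\ref{prop:four-point} with Proposition~\ref{prop:strong_cross_rank} standing in for $L$--separation --- is precisely the question the authors pose and leave open in Section~\ref{sec:future} under ``Higher-rank hyperbolicity'': they ask whether gluability together with $(n,L)$--separation implies $(n,\delta)$--hyperbolicity of the dual, note that $(n,L)$--separation does hold here by Proposition~\ref{prop:strong_cross_rank}, and explicitly say that in this setting the conclusion instead follows from coarse injectivity and coarse density. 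Your sketch defers exactly the step that makes this hard: turning the rank-based dichotomy (at most $n$ of the ``bipartition costs'' can be large) into the sharp inequality between the two matching sums in the Jørgensen--Lang condition. In the four-point case that conversion rests on $L$--separation giving quantitative control of \emph{every} crossing chain against a fixed pair $\{h_1,h_2\}\in\C$, whereas Proposition~\ref{prop:strong_cross_rank} only forbids $n+1$ pairwise strongly crossing curtains and gives no length bound on chains crossing fewer than $n+1$ configurations; the bookkeeping over all $\binom{2n+2}{n+1}$ bipartitions and the cancellation between matchings is not routine and is not supplied. As written, your proof of the ``moreover'' clause is a plan, not an argument; the safe fix is to replace it with the citation route above, which your first part already sets up.
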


\begin{proof}
$X$ is coarsely injective by Theorem~\ref{thm:coarsely_injective}, and Lemma~\ref{lem:scm_qmqie} together with Proposition~\ref{prop:scm_dense} shows that $S\to X$ is a quasimedian quasiisometry. The fact that $X$ is $(n,\delta)$--hyperbolic is a combination of \cite[Thms~2.2,~2.3]{bowditch:coarse}, which control the \emph{asymptotic rank} of $X$, and then \cite[Thm~1.4]{jorgensenlang:combinatorial}.
\end{proof}

By a \emph{strong-coarse-median group}, we mean a finitely generated group $G$ with a $G$--equivariant ternary operator making it a strong coarse median space. We refer to \cite{jorgensenlang:combinatorial} for the notion of having \emph{slim simplices}, which generalises that of having slim triangles as in a hyperbolic space.

\begin{corollary} \label{cor:scm_group_injective}
If $G$ is a strong-coarse-median group of rank $n$, then $G$ acts properly coboundedly on an injective metric space with slim $n$--simplices.
\end{corollary}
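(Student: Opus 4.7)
The plan is to apply Theorem~\ref{thm:scm_injective} with $S$ taken to be a Cayley graph of $G$, and then upgrade the conclusion by (i) arranging for the curtain construction to be $G$--equivariant, and (ii) replacing the coarsely injective dual by a genuinely injective metric space via the injective hull.

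To arrange equivariance, the obstacle is that the curtains of Definition~\ref{def:scm_curtains} involve a choice of cubical approximation for each pair of points, so the resulting collection is not obviously $\isom S$--invariant. I would fix this using one of the variants described in Remark~\ref{rem:choice_of_cube_complexes}---for instance, fix $\delta$ and declare a curtain to be one coming from \emph{every} cubical approximation of a pair of points with constant at most $\delta$. Since $G$ acts on $S$ by median-preserving isometries (this is built into the definition of a strong-coarse-median group), such a collection of curtains is $G$--invariant, and therefore so are the wall set $P$ and chain system $\C$ built in Section~\ref{subsec:scm_injective}; the arguments of Sections~\ref{subsec:scm_curtains}--\ref{subsec:scm_injective} go through unchanged. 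By the observation in Section~\ref{sec:construction} that preservation of $\C$ yields an isometric action on the dual, $G$ acts isometrically on $X := X_\C(S)$.

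Next I would extract a proper cobounded action on an injective space. Theorem~\ref{thm:scm_injective} provides a quasi-isometry $S \to X$, which is $G$--equivariant by the above. Since $G$ acts properly cocompactly on $S$, this transfers to a proper cobounded action of $G$ on $X$. The space $X$ is coarsely injective but not necessarily injective, so I pass to the injective hull $E(X)$ in the sense of Isbell: $X$ embeds isometrically in the injective metric space $E(X)$, and since $X$ is coarsely injective it lies at finite Hausdorff distance from all of $E(X)$. By functoriality every isometry of $X$ extends uniquely to an isometry of $E(X)$, so the $G$--action extends, and $G \to X \hookrightarrow E(X)$ remains a quasi-isometry. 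Consequently $G$ acts properly coboundedly on the injective space $E(X)$.

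Finally, for the slim $n$--simplex property, Theorem~\ref{thm:scm_injective} asserts that $X$ is $(n,\delta)$--hyperbolic in the sense of Jørgensen-Lang, and this property passes to $E(X)$ with only a controlled change of constant (asymptotic rank is quasi-isometry invariant, and $E(X)$ lies at finite Hausdorff distance from $X$). For an injective $(n,\delta')$--hyperbolic metric space the slim $n$--simplex property then follows from \cite[Thm~1.4]{jorgensenlang:combinatorial}. The step I expect to require the most care is the passage from $X$ to $E(X)$ cleanly preserving $(n,\delta)$--hyperbolicity with constants depending only on $n$ and the original $\delta$; this should be standard within the Jørgensen-Lang framework but is the place where one has to be explicit.
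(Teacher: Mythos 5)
Your proposal is correct and follows essentially the same route as the paper: equivariance via Remark~\ref{rem:choice_of_cube_complexes}, Theorem~\ref{thm:scm_injective} for the quasiisometry to the coarsely injective dual, passage to the injective hull (the paper cites \cite[Prop.~1.1]{haettelhodapetyt:coarse} and \cite[Prop.~3.7]{lang:injective} for exactly the properties you describe), and Jørgensen--Lang for slim simplices. The only cosmetic difference is that the paper invokes \cite[Thm~1.3]{jorgensenlang:combinatorial} directly for the injective hull rather than transferring $(n,\delta)$--hyperbolicity from $X$ to $E(X)$ by hand, which disposes of the step you flagged as delicate.
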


\begin{proof}
Referring to Remark~\ref{rem:choice_of_cube_complexes}, the construction of $X$ can be done in a $G$--equivariant way. In view of Theorem~\ref{thm:scm_injective}, it then follows from \cite[Prop.~1.1]{haettelhodapetyt:coarse} and \cite[Prop.~3.7]{lang:injective} that $G$ acts properly coboundedly on the injective hull of $X$, which has the slim simplex property by \cite[Thm~1.3]{jorgensenlang:combinatorial}.
\end{proof}

Theorem~\ref{thm:scm_injective} should be compared with \cite[Cor.~3.6]{haettelhodapetyt:coarse}. The construction of the metric in that paper works for any strong coarse median space, but the argument that it is coarsely injective requires the space to be hierarchically hyperbolic, and relies on the hierarchy structure. Additionally, there is no equivalent of normal wall paths in that setting. In any case, since hierarchically hyperbolic groups are strong-coarse-median groups \cite{behrstockhagensisto:quasiflats}, Corollary~\ref{cor:scm_group_injective} recovers the main result of \cite{haettelhodapetyt:coarse}, that hierarchically hyperbolic groups act properly coboundedly on injective spaces. Corollary~\ref{cor:scm_group_injective} generalises \cite[Cor.~1.6]{jorgensenlang:combinatorial}.

\subsection{Hyperbolic models} \label{subsec:scm_hyp}

One can also construct hyperbolic spaces from the set $P$ of partitions constructed at the beginning of Section~\ref{subsec:scm_injective}, in a similar manner to Section~\ref{sec:contracting}. Let us say that two disjoint curtains $\hh_1$ and $\hh_2$ are \emph{$R$--separated} if there is no chain $c$ of curtains of cardinality greater than $R$ whose elements all strongly cross both $\hh_1$ and $\hh_2$. By an \emph{$R$--chain of curtains}, we mean a chain of curtains such that each pair is $R$--separated. 

Let $\C$ be as in Section~\ref{subsec:scm_injective}. For each natural number $R$, we define a dualisable system $\C_R$ on $S$ by letting $\C_R\subset\C$ consist of all elements that are induced by $R$--chains of curtains. It is routine to check the following (\emph{cf.} Lemma~\ref{lem:ball_graded}).

\begin{lemma} \label{lem:scm:graded}
The sequence $(\C_R)$ is a graded system on $(S,P)$, and each $\C_R$ is 3--gluable.
\end{lemma}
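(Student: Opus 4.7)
The plan is to verify the three conditions of Definition~\ref{def:graded_system} and then 3--gluability, closely following the proof of Lemma~\ref{lem:ball_graded}. The nesting $\C_R\subset\C_{R+1}$ is immediate since an $R$--separated pair of curtains is automatically $(R+1)$--separated, and every element of $\C_R$ is a chain in $P$ by construction. For $L_R$--separation, take $L_R=R$: if $\{h_1,h_2\}\in\C_R$ is induced by $R$--separated curtains $\hh_1,\hh_2$, and $c\in\C_R$ has every wall crossing both $h_1$ and $h_2$, then $c$ is induced by a chain of curtains each strongly crossing $\hh_1$ and $\hh_2$, hence of length at most $R$. For the summability clause, each curtain $\hh$ has $\dist(\hh^-,\hh^+)\ge1$, so every chain in $\C\supset\C_R$ separating $s$ from $t$ has length at most $\dist(s,t)+2$; taking $\kappa_R=1/R^2$ then gives a uniform bound on $\sum_R\kappa_R|c^R|$ for any sequence $(c^R)$ as in the definition.

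The substantive step is 3--gluability. Given $c_1=\{\dots,h_{-2},h_{-1}\}$ and $c_2=\{k_1,k_2,\dots\}$ in $\C_R$ satisfying the gluing hypotheses, choose $R$--chains of curtains $(\hh_i)_{i\le-1}$ and $(\kk_j)_{j\ge1}$ inducing them. The goal is to show that $(\hh_i)_{i\le-2}\cup(\kk_j)_{j\ge3}$ is an $R$--chain of curtains, which will give $(c_1\cup c_2)\setminus\{h_{-1},k_1,k_2\}\in\C_R$. That it is a chain of curtains follows from the nestings $\hh_i\subset\hh_{-1}^-\subset k_3^-$ for $i\le-2$ and $\kk_j\subset\kk_1^+\subset\hh_{-1}^+$ for $j\ge3$, which are read off from the chain structure of $c_1\cup c_2$ in $P$, together with disjointness of the middle regions from the complementary halfspaces. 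Pairwise $R$--separation within each half is inherited, so the only remaining content is $R$--separation of a mixed pair $\hh_i$ (with $i\le-2$) and $\kk_j$ (with $j\ge3$).

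To establish this, suppose $(\mm_l)$ is a chain of curtains each strongly crossing both $\hh_i$ and $\kk_j$. Then each $\mm_l$ automatically strongly crosses both $\hh_{-2}$ and $\hh_{-1}$: the inclusions $\hh_i^-\subset\hh_{-2}^-\subset\hh_{-1}^-$ turn $\mm_l^\pm\cap\hh_i^-\ne\varnothing$ into the ``$-$''--quarterspaces of both, while $\kk_j^+\subset\hh_{-1}^+\subset\hh_{-2}^+$ combined with $\mm_l^\pm\cap\kk_j^+\ne\varnothing$ supplies the ``$+$''--quarterspaces. Since $\{\hh_{-2},\hh_{-1}\}$ is a pair in the $R$--chain $(\hh_i)_{i\le-1}$, it is $R$--separated, forcing $|(\mm_l)|\le R$. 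Hence $\hh_i$ and $\kk_j$ are $R$--separated, and 3--gluability follows. The main obstacle is purely bookkeeping: one must remove enough walls from the junction so that the strong crossings on the far sides force strong crossings of a pair already known to be $R$--separated, and the block $\{h_{-1},k_1,k_2\}$ is exactly what this requires.
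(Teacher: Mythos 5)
Your overall strategy is the intended one: the nesting $\C_R\subset\C_{R+1}$, the choice $\kappa_R=1/R^2$ via $\dist(\hh^-,\hh^+)\ge1$, the $L_R$--separation with $L_R=R$, and the reduction of 3--gluability to $R$--separation of a mixed pair $(\hh_i,\kk_j)$ with $i\le-2$, $j\ge3$ all track the proof of Lemma~\ref{lem:ball_graded}. The gap is in the key step, where you upgrade strong crossings of $\hh_i$ and $\kk_j$ to strong crossings of the pair $\{\hh_{-2},\hh_{-1}\}$: you assert $\kk_j^+\subset\hh_{-1}^+$, and this does not follow from the hypotheses. What the wall chain actually gives is $\kk_j^+\subset k_j^+\subset k_1^+\subset h_{-1}^+$, and the wall $h_{-1}$ induced by $\hh_{-1}$ may be the bipartition $(\hh_{-1}^-,\,\hh_{-1}\cup\hh_{-1}^+)$, in which case $h_{-1}^+$ contains the curtain $\hh_{-1}$ itself and nothing prevents $\kk_j^+$ from meeting $\hh_{-1}$. (This is exactly why the proof of Lemma~\ref{lem:scm_gluable} jumps from $h_{-1}^+$ straight to $\hh_{-2}^+$, never passing through $\hh_{-1}^+$.) If $\kk_j^+\cap\hh_{-1}\ne\varnothing$, then a curtain $\mathfrak m$ with $\mathfrak m^\pm\cap\kk_j^+\ne\varnothing$ need not meet $\hh_{-1}^+$ at all, so you have not shown that your chain strongly crosses $\hh_{-1}$, and the appeal to $R$--separation of $\{\hh_{-2},\hh_{-1}\}$ is unjustified. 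The symmetric choice $\{\kk_1,\kk_2\}$ fails for the mirror-image reason ($\hh_i^-\subset k_1^-$ need not avoid the curtain $\kk_1$).

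The fix is local: sandwich the pair $\{\kk_2,\kk_3\}$, which is $R$--separated as a pair in the original $R$--chain $(\kk_j)$, and for which both required inclusions hold because there is a full wall of buffer on each side. On the minus side, $\hh_i^-\subset h_{-1}^-\subset k_1^-$ for $i\le-2$; since $\kk_2\cup\kk_3\subset\kk_1^+\subset k_1^+$ and $k_1^-\subset k_2^-\subset k_3^-$, this gives $\hh_i^-\subset\kk_2^-\cap\kk_3^-$. On the plus side, $\kk_j^+\subset k_j^+\subset k_3^+$ for $j\ge3$, and $k_3^+$ is disjoint from $\kk_2\cup\kk_2^-\subset\kk_3^-$, so $\kk_j^+\subset\kk_3^+\cap\kk_2^+$. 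Hence any chain of curtains strongly crossing both $\hh_i$ and $\kk_j$ strongly crosses both $\kk_2$ and $\kk_3$ and so has length at most $R$. This also explains why three walls must be removed rather than two: with only $\{h_{-1},k_1\}$ deleted, the mixed pair $(\hh_{-2},\kk_2)$ has no already-$R$--separated pair shielded from the junction in this way. With that substitution your argument goes through.
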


Let $Y$ be the graded dual of $S$ with respect to $(\C_R)$. By Propositions~\ref{prop:graded_hyperbolic} and~\ref{prop:graded_wrg}, $Y$ is a roughly geodesic hyperbolic space. We already described a way to construct a hyperbolic space associated to $S$ in Section~\ref{sec:contracting}, but $Y$ has the advantage that it can more easily be compared with existing hyperbolic spaces in known examples, as we now briefly discuss.

In \cite{abbottbehrstockdurham:largest}, Abbott--Behrstock--Durham constructed, for each hierarchically hyperbolic group $G$, a hyperbolic space $Z$ witnessing a largest acylindrical action of $G$. A detailed discussion is beyond our scope, so we refer the reader to \cite{behrstockhagensisto:hierarchically:2} for background on hierarchical hyperbolicity, and to \cite[\S3]{abbottbehrstockdurham:largest} for details about the construction of $Z$.

\begin{proposition} \label{prop:abd}
Let $G$ be a hierarchically hyperbolic group. Let $Y$ be the hyperbolic space constructed above, and let $Z$ be the hyperbolic space constructed in \cite{abbottbehrstockdurham:largest}. The two are $G$--equivariantly quasiisometric.
\end{proposition}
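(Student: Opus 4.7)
The plan is to show both $Y$ and $Z$ are $G$--equivariantly quasiisometric to the hyperbolic space $\C S^*$ associated to the $\nest$--maximal domain $S^*$ in the hierarchy structure on $G$, so that the result follows by transitivity of quasiisometry. Since hierarchically hyperbolic groups are strong-coarse-median groups by \cite{behrstockhagensisto:quasiflats}, the construction of Section~\ref{subsec:scm_hyp} applies to $G$, and the cubical approximations witnessing the strong coarse median structure can be chosen to be the standard cubical models coming from the hierarchy. In particular, each hyperplane of such a cube complex ``registers'' in a specific domain $U$ of the HHS structure, and hence every curtain $\hh$ in the sense of Definition~\ref{def:scm_curtains} has an associated domain $U(\hh)$, well defined up to a uniformly bounded set of domains.

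The key step is the equivalence, up to uniform constants depending only on the HHS constants of $G$, between the following two conditions on a pair of disjoint curtains $\hh_1,\hh_2$: (i) $\hh_1$ and $\hh_2$ are $R$--separated for some uniform $R=R(G)$; (ii) both $U(\hh_1)$ and $U(\hh_2)$ are coarsely equal to $S^*$. The implication (ii)$\Rightarrow$(i) follows from the bounded geodesic image axiom together with acylindricity of the $G$--action on $\C S^*$: any long chain of curtains strongly crossing $\hh_1,\hh_2$ must register in domains orthogonal or nested into $S^*$, and the projections to $\C S^*$ of such a chain remain bounded, forcing the chain length to be controlled. Conversely, if $U(\hh_1)$ lies in a proper domain $U\pnest S^*$, then the product region associated with $U$ contains arbitrarily long chains of curtains registering in a domain orthogonal to $U$, and these necessarily strongly cross both $\hh_1$ and $\hh_2$, so $\hh_1,\hh_2$ cannot be $R$--separated for any fixed $R$.

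Given this correspondence, I would show that the graded dual $Y$ is $G$--equivariantly quasiisometric to the subspace spanned by the $S^*$--registering partitions, which via Lemma~\ref{lem:scm_qmqie}, Proposition~\ref{prop:scm_dense}, and the subsurface projection map $G\to\C S^*$ is $G$--equivariantly quasiisometric to $\C S^*$ itself (one uses that the contribution of non-$S^*$ partitions is absorbed into the weighting $\lambda_R$ because they only appear in $\C_R$ for small $R$). Since \cite{abbottbehrstockdurham:largest} identifies $Z$ with $\C S^*$ up to $G$--equivariant quasiisometry, combining these yields the claim. The main obstacle is the quantitative form of the correspondence in the previous paragraph; in particular, the implication (i)$\Rightarrow$(ii) requires one to produce long chains of strongly crossing curtains in an orthogonal direction uniformly in terms of HHS data, which is where the bulk of the technical work lies, and where the specific definition of curtains via the cubical approximations of Definition~\ref{def:scm_curtains} interacts most subtly with the hierarchy structure.
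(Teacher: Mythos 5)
There is a genuine gap, and it sits in the central claim that $Z$ is $G$--equivariantly quasiisometric to $\C S^*$ for the $\nest$--maximal domain of the given hierarchy structure. That identification holds for mapping class groups (where $Z$ is the curve graph), but not for general hierarchically hyperbolic groups: the space of \cite{abbottbehrstockdurham:largest} is a cone-off of $G$ over the product regions of a carefully chosen collection of domains, namely those that are factors of nontrivial (unbounded) products, and in general it strictly dominates the action on $\C S^*$. A proper domain $U\pnest S^*$ whose orthogonal complement is bounded does \emph{not} get coned off in $Z$, so progress in the $U$--direction contributes to $\dist_Z$ even though it contributes nothing to $\dist_{\C S^*}$. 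Your equivalence (i)$\Leftrightarrow$(ii) breaks at exactly this point: in your argument for (i)$\Rightarrow$(ii) you produce "arbitrarily long chains of curtains registering in a domain orthogonal to $U$", which tacitly assumes $U^\perp$ contains a domain with unbounded associated hyperbolic space. When it does not, a pair of curtains registering in such a $U$ can perfectly well be $R$--separated, so $R$--separation does not force registration in $S^*$, and the subspace of "$S^*$--registering partitions" undercounts $\dist_Y$. The correct dichotomy is not "maximal versus proper" but "factor of a nontrivial product versus not", and this is precisely what the paper extracts from \cite[Thm~3.7]{abbottbehrstockdurham:largest}: a relevant domain at which the normal wall path stalls in $Z$ must be a factor of a nontrivial product, and it is the product structure (an honest unbounded orthogonal direction) that kills separation.

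Architecturally you also route through a third space, whereas the paper compares the two metrics directly on $G$: both $Y$ and $Z$ are images of $G$ under natural equivariant maps, and one checks that the identity on $G$ is a quasiisometry by showing (a) elements of a chain of curtains whose $\pi_Z$--images are far apart are $R$--separated (so $Y\to Z$ is coarsely Lipschitz), and (b) along a subpath of a normal wall path where $\pi_Z$ stalls, either no relevant domain is nearby (so the subpath is short in $G$) or the relevant domain is a product factor and contributes no separated curtains (so $Z\to Y$ is coarsely Lipschitz). If you want to salvage your approach, you would need to replace $\C S^*$ by the cone-off $Z$ itself (equivalently, the top-level space of the \emph{modified} HHS structure of \cite{abbottbehrstockdurham:largest}) and replace "registers in $S^*$" by "does not register in a factor of a nontrivial product", at which point the argument essentially becomes the paper's.
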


\begin{proof}[Outline of proof.]
$Z$ is a cone-off of $G$, and $Y$ is the metric quotient of an equivariant pseudometric on $G$. We show that the natural maps $G\to Z$ and $G\to Y$ yield a quasiisometry between $Y$ and $Z$, and this map is automatically $G$--equivariant. Let $s,t\in G$ and let $c$ be a chain of curtains obtained from the cube complex approximating $\hull\{s,t\}$. If two elements of $c$ are such that their images under the projection map $\pi_Z:G\to Z$ are far apart, then those curtains are $R$--separated for sufficiently large $R$. This shows that $Y\to Z$ is coarsely Lipschitz.

For the other direction, consider a subpath $\sigma'$ of the normal wall path $\sigma$ from $s$ to $t$ such that $\pi_Z\sigma'$ has small diameter. There are two cases. The first is that there is no \emph{relevant domain} $U$ for $\{s,t\}$ with $\rho^U_Z$ near $\pi_Z\sigma'$ (see \cite{behrstockhagensisto:hierarchically:2}). In this case, $\sigma'$ itself has small diameter in $G$. Otherwise there is such a $U$, and then \cite[Thm~3.7]{abbottbehrstockdurham:largest} shows that $U$ is one factor of a nontrivial product. This product structure prevents $U$ from contributing any separated curtains. Thus separated curtains can only arise when $\sigma$ makes progress in $Z$, so $Z\to Y$ is coarsely Lipschitz.
\end{proof}

In particular, Proposition~\ref{prop:abd} shows that if $S$ is the mapping class group of a surface, then $Y$ is quasiisometric to the curve graph of that surface.




\section{Other directions} \label{sec:future}

We believe that there should be many situations where the construction of Section~\ref{sec:construction} can be applied. Here we suggest a few possible avenues. 

\ubsh{CAT(0) spaces and cube complexes}
Much of this article stems from constructions in CAT(0) spaces \cite{petytsprianozalloum:hyperbolic}, though the combinatorial perspective in terms of ultrafilters used here is rather different. That said, if one uses the curtains constructed in \cite{petytsprianozalloum:hyperbolic} to induce a set of partitions as in Sections~\ref{sec:contracting} and~\ref{sec:scm}, then the graded dual with respect to the systems of $L$--separated curtains should be essentially equivalent to the \emph{curtain model}.

If one starts with $S$ a CAT(0) cube complex and $P$ the set of hyperplanes, then, as discussed in Example~\ref{eg:sageevable}, taking $\C$ to be the set of all chains simply makes $X$ the Helly thickening of $S$. Letting $\C_L$ be the set of chains of pairwise $L$--separated hyperplanes, one obtains a space very similar to that of \cite[\S6.6]{genevois:hyperbolicities} (the difference being in the precise definition of separation). The sequence $(\C_L)$ is a graded system on $S$, so one could investigate the graded dual, which is a more natural hyperbolic space than the curtain model in this setting.
\uesh

\ubsh{The Bestvina--Bromberg--Fujiwara construction}
In influential work, Bestvina--Bromberg--Fujiwara gave a general method for assembling a collection of metric spaces in a quasitree-like fashion \cite{bestvinabrombergfujiwara:constructing}, by gluing them at bounded sets. This is often used when the spaces in question are hyperbolic, quasitrees, or even quasilines \cite{sisto:contracting,bestvinabrombergfujiwara:proper,behrstockhagensisto:asymptotic}. 

Since geodesic hyperbolic spaces are quasiisometric to the dual space of some dualisable system, in many cases one can interpret the assumptions in the BBF construction in terms of extensions of the partitions of the component spaces. Whilst this alternative axiomatisation may not be directly useful, it seems plausible that it could open the door to generalisations that allow for gluings along larger subspaces: a coarse point could perhaps be replaced by a subset that is approximately gated in the sense of Section~\ref{subsec:gate}.
\uesh

\ubsh{Curve graph of a surface}
Let $\MCG(\Sigma)$ be the mapping class group of a finite-type surface $\Sigma$. One corollary of Proposition~\ref{prop:abd} is that the hyperbolic space constructed for $\MCG(\Sigma)$ in Section~\ref{subsec:scm_hyp} is equivariantly quasiisometric to the curve graph of $\Sigma$. The mapping class group also admits a proper cobounded action on an injective space $S$, either via \cite{haettelhodapetyt:coarse} or Theorem~\ref{thm:scm_injective}. According to \cite[Thm~A]{sistozalloum:morse}, Morse subsets of injective metric spaces are strongly contracting, so orbits of pseudo-Anosovs on $S$ are strongly contracting. Is the hyperbolic core also quasiisometric to the curve graph of $\Sigma$? This seems especially likely in view of Theorem~\ref{thm:universal_recognition} and \cite{durhamtaylor:convex}.
\uesh

\ubsh{Hyperbolic models for other groups}
There are various ``nonpositively curved'' groups for which hyperbolic models have been constructed, such as $\Out F_n$ \cite{hatchervogtmann:complex,kapovichlustig:geometric,handelmosher:free:1} and various Artin--Tits groups \cite{kimkoberda:embedability,calvezwiest:hyperbolic,morriswright:parabolic,martinprzytycki:acylindrical}. It would be interesting to know whether some of these models can be (coarsely) reconstructed from a suitable set of walls. For instance, can one find a natural set of curtains in Culler--Vogtmann outer space, and does this reproduce, say, the free factor complex? How about for the Deligne complex?

Moreover, in view of Theorems~\ref{introthm:stable} and~\ref{introthm:quasimorphisms}, it would also be desirable to have more examples of metric spaces where all Morse geodesics are strongly contracting.
\uesh

\ubsh{Higher-rank hyperbolicity}
In Section~\ref{subsec:relative_separation}, we saw that if a dualisable system is separated and gluable then fairly straightforward combinatorial arguments show four-point hyperbolicity of the dual. In \cite{jorgensenlang:combinatorial}, Jørgensen--Lang introduced a family of higher-rank generalisations of the four-point inequality, and a space satisfying their $(2n+2)$--point inequality is said to be $(n,\delta)$--hyperbolic.

For a higher-rank version of the $L$--separation condition, let us say that a dualisable system of chains is \emph{$(n,L)$--separated} if, whenever $c_1,\dots,c_{n+1}$ are elements of $\C$ such that every element of $c_i$ crosses every element of $c_j$ for all $i,j$, we necessarily have $|c_k|\le L$ for some $k$. Is it true that if $\C$ is gluable and $(n,L)$--separated then $X$ is $(n,\delta)$--hyperbolic? Does an analogue of Proposition~\ref{prop:sufficient_dense_L} hold? Note that this $(n,L)$--separation assumption holds in the setting of strong coarse median spaces, by Proposition~\ref{prop:strong_cross_rank}. In that setting, though, $(n,\delta)$--hyperbolicity follows \emph{a posteriori} from coarse injectivity of $X$ (Theorem~\ref{thm:coarsely_injective}) and coarse density of $S$ (Proposition~\ref{prop:scm_dense}). 
\uesh

\ubsh{Metric quotients}
One very useful consequence of the duality between CAT(0) cube complexes and discrete wallspaces \cite{chatterjiniblo:from,nica:cubulating} is a simple trick for producing quotients of a given CAT(0) cube complex $Q$. Namely, one takes a subset $P'$ of the walls of $Q$, and lets $Q'$ be the cube complex dual to $P'$. This procedure, known as a \emph{restriction quotient}, was introduced to CAT(0) cube complexes in \cite{capracesageev:rank}, though it appeared earlier for median graphs \cite{mulder:structure}.

It is easy to see that restriction quotients can be taken in the generality of Section~\ref{sec:construction}. More precisely, let $\C$ be a dualisable system for a set with walls $(S,P)$, and let $X$ be the $\C$--dual. Given a subset $P'\subset P$, the set $\C'$ consisting of all elements of $\C$ supported on $P'$ (i.e., elements $\{h_1 \cdots , h_n \} \in \C$ with each $h_i \in P'$) is a dualisable system for $(S,P')$, and there is a natural quotient map from $X$ to the $\C'$--dual of $S$.

Many desirable properties of a dualisable system, such as gluability and the property of being a system of chains, are preserved by this restriction, so often the quotient $X'$ will have similar properties to $X$. For instance, if $\C$ is the set of all chains, so that $X$ is a Helly graph, then $X'$ will also be a Helly graph.
\uesh

\ubsh{Random walks}
A common application for producing actions of a finitely generated group $G$ on hyperbolic spaces is that it can yield information about random walks on $G$ \cite{kaimanovich:poissonformula,mahertiozzo:random,qingrafitiozzo:sublinearly:2}. In the very general setting where $G$ acts properly with a strongly contracting element on some geodesic space, a combination of Theorem~\ref{thm:universal_contracting_characterisation} and Proposition~\ref{prop:wpd} with \cite[Thm~1.2]{chawlaforghanifrischtiozzo:poisson} shows that if $\mu$ is a generating probability measure on $G$ with finite entropy, then the Poisson boundary of $G$ is modelled by $(\partial X,\nu)$, where $X$ is the hyperbolic core and $\nu$ is the hitting measure on the Gromov boundary $\partial X$ of the random walk driven by $\mu$.

For more precise information about the limiting behaviour of a random walk, one can ask whether a central limit theorem holds \cite{bjorklund:central,horbez:central,fernoslecureuxmatheus:contact}. A new approach to this type of problem was introduced by Benoist--Quint \cite{benoistquint:central:linear,benoistquint:central:hyperbolic}, and this was used together with \cite{petytsprianozalloum:hyperbolic} by Le~Bars to establish a central limit theorem for random walks on groups acting on CAT(0) spaces \cite{lebars:central,lebars:marches}. Since the construction of the hyperbolic core bears many geometric similarities to that of \cite{petytsprianozalloum:hyperbolic}, it is natural to ask whether one can work along similar lines to prove a central limit theorem for random walks on groups with a contracting element.
\uesh

\ubsh{A continuous variant}
Spaces with walls can be generalised to spaces with \emph{measured walls} \cite{cherixmartinvalette:spaces}, and these still exhibit a duality with median metric spaces \cite{chatterjidrutuhaglund:kazhdan,fioravanti:roller}. One could consider a continuous generalisation of the constructions of this article. For this, measures cease to be appropriate, because, for instance, the union of two chains need not be a chain, and so the set $\C$ will generally not be a $\sigma$--algebra. However, the property of being a measure is not really an essential feature for the construction, and one can just request a function $\nu:\C\to[0,\infty]$ satisfying certain compatibility criteria. 

We believe that many of the statements in the present article would then admit continuous formulations. One concrete question would be: is there an alternative construction of the injective hull of a metric space $S$ that can be achieved by letting $P$ be the set of all balls in $S$ and taking an appropriate function $\nu$?
\uesh
\appendix \addtocounter{section}{1}
\part*{Appendix. {} Quasimorphisms \textmd{(with Davide Spriano)}}

The purpose of this appendix is to study the vector space $\widetilde{\QM}(\Gamma)$ of (nontrivial, homogeneous) quasimorphisms of groups $\Gamma$ acting coboundedly on spaces with strongly contracting geodesics. We use the hyperbolic core together with the Bestvina--Fujiwara criterion \cite{bestvinafujiwara:bounded}, which we now recall.

Given $q,\delta\ge1$, let $B=B(q,\delta)$ be a sufficiently large constant, as in \cite[p.72]{bestvinafujiwara:bounded}. Let $X$ be a $\delta$--hyperbolic space on which a group $\Gamma$ acts. Suppose that $g_1$ and $g_2$ are loxodromic isometries with $q$--quasiaxes $A_1$ and $A_2$, respectively. Write $g_1\sim g_2$ if there are arbitrarily long segments $J\subset A_1$ and elements $g_J\in\Gamma$ such that $g_J(J)$ is contained within the $B$--neighbourhood of $A_2$. For instance, if $A_1$ and $A_2$ lie at finite Hausdorff-distance, then $g_1\sim g_2$.

\begin{theorem}[{\cite{bestvinafujiwara:bounded}}] \label{thm:bestfuj}
If a group $\Gamma$ acts on a hyperbolic space $X$ and has loxodromic elements $g_1$ and $g_2$ such that $g_1\not\sim g_2$, then $\widetilde{\QM}(\Gamma)$ is infinite-dimensional.
\end{theorem}

The first step in our argument is the following, which may be of independent interest. The main novelty of the statement lies in the fact that it is strongly contracting \emph{axes} that are produced, rather than just strongly contracting geodesics, but it is also noteworthy that there is no properness assumption involved. Previously it was known that if a proper non-hyperbolic space contains some Morse ray then it contains Morse rays with arbitrarily bad Morse gauge \cite[Cor.~1.17]{cordesdurham:boundary}.

\begin{proposition} \label{prop:bad_contracting_element}
Let $S$ be a non-hyperbolic geodesic space with the property that for each Morse gauge $M$ there exists $D$ such that every $M$--Morse geodesic in $S$ is $D$--strongly contracting. Suppose that $S$ contains some biinfinite strongly contracting geodesic. For every group $\Gamma<\isom S$ acting coboundedly on $S$ there exists $q$ such that for each $D$, there is an element $g\in\Gamma$ with a $q$--quasiaxis that is strongly contracting but not $D$--strongly contracting.
\end{proposition}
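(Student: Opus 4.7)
The plan is to produce a uniformly qualitatively controlled sequence $(g_n)$ of strongly contracting elements in $\Gamma$ whose contracting constants diverge. The crucial input will be the non-hyperbolicity of $S$ together with the quantitative Morse-equals-strongly-contracting hypothesis.

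First, I would set up the basic building block. Using the hypothesis that $S$ has a biinfinite strongly contracting geodesic $\alpha$ and that $\Gamma$ acts coboundedly, a standard argument (translating a basepoint a long distance along $\alpha$ via an appropriate element of $\Gamma$ and invoking the Morse lemma for strongly contracting geodesics) produces an element $h_0\in\Gamma$ with a $q_0$--quasiaxis that shadows $\alpha$. Here $q_0$ depends only on the coboundedness constant and the strongly contracting constant of $\alpha$. The value $q$ in the statement will be obtained from $q_0$ after a controlled increase to accommodate the ping-pong combinations of Step~3.

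Second, I would argue that strongly contracting constants in $S$ are genuinely unbounded. Suppose for contradiction that every strongly contracting geodesic in $S$ were $D_0$--strongly contracting for some uniform $D_0$. By the quantitative Morse-equals-strongly-contracting hypothesis, every Morse geodesic in $S$ would have Morse gauge bounded by some $M_0 = M_0(D_0)$. Combining this uniform Morse bound with coboundedness of $\Gamma$ on $S$ and the existence of at least one Morse direction (from $\alpha$), a standard thin-triangles argument forces $S$ to be hyperbolic, contradicting our assumption. Hence for each $D$ there exists a biinfinite strongly contracting geodesic $\alpha_D\subset S$ whose contracting constant exceeds $D$.

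Third, I would manufacture $g_D$. By coboundedness, conjugate so that $\alpha_D$ passes within bounded distance of the basepoint $o$, and choose $\gamma_D\in\Gamma$ translating $o$ a large distance along $\alpha_D$ (the distance chosen uniformly large, independent of $D$, large enough for the Morse lemma to apply to any strongly contracting geodesic). The element $g_D=\gamma_D$ has a $q$--quasiaxis that fellow-travels $\alpha_D$, where $q$ depends only on the cobounded constant and the uniform translation distance, \emph{not} on the strongly contracting constant of $\alpha_D$. Since the quasiaxis of $g_D$ shadows $\alpha_D$ with a uniform Hausdorff-constant, and since projection to $g_D$'s quasiaxis and to $\alpha_D$ coarsely agree, the failure of $\alpha_D$ to be $D$--strongly contracting transfers to $g_D$, up to a controlled additive error that can be absorbed by choosing $D$ sufficiently large at the start.

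The main obstacle is the transfer step: separating the quasiaxis quality (which we want to be uniform in $D$) from the contracting constant (which we want to blow up) requires that the Morse lemma produce a quasiaxis whose quasigeodesic constants depend on the translation length and cobounded constant alone, while the contracting behaviour is inherited from $\alpha_D$ through a Hausdorff-close tracking. This is the standard content of the Morse lemma for strongly contracting geodesics, but one must be careful to apply it so that the contracting constant appears only additively, not multiplicatively, in the control on the quasiaxis.
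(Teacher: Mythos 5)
Your high-level goal (elements with uniform-quality quasiaxes but diverging contracting constants) matches the paper's, but both load-bearing steps of your outline are gaps, and the paper's proof is organised precisely to avoid them. First, Step 2: the claim that a uniform Morse gauge for all Morse geodesics, together with coboundedness and one Morse direction, ``forces $S$ to be hyperbolic by a standard thin-triangles argument'' is not standard and is essentially the hardest part of the proposition. A uniform gauge for the \emph{Morse} geodesics says nothing about the non-Morse ones, so no thin-triangles argument applies; compare $\mathbf H^2\vee\R^2$, where all Morse geodesics are uniformly Morse but the space is not hyperbolic. What the paper actually uses here is \cite[Cor.~3.6]{goldsboroughhagenpetytsisto:induced}: non-hyperbolicity (exploited via the coarsely Lipschitz map to the hyperbolic space $X$ of Section~\ref{sec:contracting}) yields, for each gauge $M$, a Morse ray $\beta$ of some worse gauge $M'$ whose mutual projections with \emph{every} $M$--Morse ray are uniformly bounded. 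This divergent ray is the concrete source of ``badness''; it cannot be waved into existence.

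Second, the transfer in Step 3 does not work as described. Choosing $\gamma_D$ to translate the basepoint a distance that is ``uniformly large, independent of $D$'' along $\alpha_D$ gives no control: such an element need not be loxodromic, need not be strongly contracting, and its axis (if it has one) need not track $\alpha_D$. To force the axis to track $\alpha_D$ you would need a local-to-global or ping-pong argument whose input constants depend on the contracting constant of $\alpha_D$, i.e.\ on $D$ --- which destroys exactly the uniformity of $q$ you are trying to preserve. Even granted a quasiaxis fellow-travelling a long segment of $\alpha_D$, the failure of $D$--contraction of $\alpha_D$ is witnessed by some ball projecting somewhere along $\alpha_D$, not necessarily inside the fellow-travelled window, so it does not automatically transfer to the element. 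The paper sidesteps both problems at once: it builds the axis of the bad element as a periodic concatenation of a long segment $\beta'$ of the divergent ray with a segment of a translate of the given contracting geodesic $A$. The Morse local-to-global property of $S$ (\cite[Prop.~4.7]{sistozalloum:morse}) certifies that this concatenation is a Morse, hence strongly contracting, quasigeodesic with constants independent of $M$, while the bounded projections of $\beta$ onto all $M$--Morse geodesics force the axis not to be $M$--Morse, hence not $D$--strongly contracting. Your proposal contains no analogue of either ingredient, and without them the argument does not go through.
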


\begin{proof}
Let $X$ be the hyperbolic space constructed in Section~\ref{sec:contracting}. The change-of-metric map $(S,\dist)\to(X,\Dist)$ is coarsely Lipschitz and $\isom S$--equivariant, and Theorem~\ref{thm:universal_contracting_characterisation} shows that every contracting geodesic in $S$ is quantitatively quasiisometrically embedded in $X$. Let $D_0$ be such that there is a $D_0$--strongly contracting biinfinite geodesic $A\subset S$. Let $s_0\in A$, and fix any number $D>D_0$. 

According to \cite[Cor.~3.6]{goldsboroughhagenpetytsisto:induced}, for each Morse gauge $M$ there is a Morse gauge $M'$, a constant $r$, and an $M'$--Morse ray $\beta\subset S$ emanating from $s_0$, such that for any $M$--Morse ray $\alpha$ emanating from $s_0$, we have 
\[
\diam(\pi_\alpha\beta)\le r \quad \text{and} \quad \diam(\pi_\beta\alpha)\le r,
\]
where $\pi_\alpha:S\to\alpha$ is a map such that $\Dist(s,\pi_\alpha(s))=\Dist(s,\alpha)$, and $\pi_\beta:S\to\beta$ satisfies $\Dist(s,\pi_\beta(s))=\Dist(s,\beta)$. Note that although the statement in \cite{goldsboroughhagenpetytsisto:induced} assumes that $S$ is a group, that property is not used in the proof: all that is required is a coarsely Lipschitz map $S\to X$.

Let $M$ be a Morse gauge that is sufficiently large compared to the constant $D$. In particular, choose $M$ such that every $D$--contracting geodesic is $M$--Morse. Let $M'$, $r$, and $\beta$ be obtained from $M$ as above. Let $K$ be such that the orbits of $\Gamma$ are $K$--coarsely dense in $S$. By \cite[Prop.~4.7]{sistozalloum:morse}, the space $S$ is \emph{Morse local-to-global} in the sense of \cite{russellsprianotran:local}. Let $R$ be sufficiently large in terms of $M'$, $r$, $K$, and the parameters of the Morse-local-to-global property for $S$.

Let $s_1$ be a point in $\beta$ with $\dist(s_1,s_0)>R$, and let $\beta'$ be the subsegment of $\beta$ from $s_0$ to $s_1$. By coboundedness of $\Gamma$, there is a translate $A'$ of $A$ passing within distance $K$ of $s_1$. Because $A$ is a geodesic, only one direction of $A'$ can $K$--fellow-travel $\beta'$ for a distance of more than $2K$. 
Let $s_2$ be a point of $A'$ in the other direction that has $\dist(s_2,s_1)>R$. By coboundedness of $\Gamma$, there exists $h\in\Gamma$ such that $\dist(hs_0,s_2)\le K$. We claim that if $M$ and $R$ were chosen large enough, then $h$ is strongly contracting, but not $D$--strongly contracting.

Let $\gamma_0$ be the following path, which connects $s_0$ to $hs_0$. It begins by following $\beta'$ from $s_0$ to $s_1$. Then it has a segment of length at most $K$ from $s_1$ to $A'$. It then follows $A'$ until it comes to $s_2$. Finally, it has a segment of length at most $K$ connecting $s_2$ to $hs_0$. Let $\gamma$ be the $\langle h\rangle$--invariant path $\gamma=\bigcup_{n\in\Z}h^n\gamma_0$. 

By construction, $\beta'$ and $A$ fellow-travel for a distance of at most $r$, the pair $\beta'$ and $A'$ fellow-travel for a distance of at most $K$, and $A'$ and $h\beta'$ fellow-travel for a distance of at most $r$. The same goes for every $h^n$--translate of these pairs. Since $R$ was chosen to be large compared to $K$ and $r$, it now follows, for instance by \cite[Lem.~4.3]{hagenwise:cubulating:irreducible}, that $\gamma$ is a $q$--quasigeodesic, where $q$ depends only on $K$, the bound on the lengths of the unnamed segments making up $\gamma_0$. In particular, $\gamma$ is an axis for $h$.

Furthermore, all of $h^nA$, $h^n\beta'$, and $h^nA'$ are $M'$--Morse. By the choice of $R$, it now follows from the Morse-local-to-global property that $\gamma$ is an $M$--Morse quasigeodesic. However, $\gamma$ contains an initial subsegment of $\beta$ of length $R$, so $\gamma$ cannot be $M$--Morse. By the choice of $M$, we see that $\gamma$ cannot be $D$--strongly contracting.
\end{proof}

\begin{theorem} \label{thm:quasimorphisms}
Let $S$ be a non-hyperbolic geodesic space with the property that for each $M$ there exists $D$ such that every $M$--Morse geodesic in $S$ is $D$--strongly contracting. Suppose that $S$ contains a biinfinite Morse geodesic. For every group $\Gamma<\isom S$ acting coboundedly on $S$, the space $\widetilde{\QM}(\Gamma)$ is infinite-dimensional.
\end{theorem}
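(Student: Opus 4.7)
The plan is to apply a Bestvina--Fujiwara-style counting construction to the hyperbolic space $X$ built in Section~\ref{sec:contracting}, using elements supplied by Proposition~\ref{prop:bad_contracting_element}. First I would fix a sequence $(g_n)\subset\Gamma$ of strongly contracting elements with a common quasiaxis constant $q$ but contracting constants $D_n\to\infty$. By Theorem~\ref{thm:universal_contracting_characterisation}, each $g_n$ acts as a loxodromic isometry on $X$, with its quasiaxis embedding as a uniform $X$-quasigeodesic. Passing to a subsequence, I would arrange that the $g_n$'s are pairwise independent in $\isom X$, i.e.\ no pair shares a limit point on $\partial X$; this is possible because two axes sharing a limit point would, after suitable conjugation and powering, have to fellow-travel in $X$ over arbitrarily long stretches, forcing via the converse half of Theorem~\ref{thm:universal_contracting_characterisation} a uniform upper bound on their contracting constants in $S$, contradicting $D_n\to\infty$.

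Having obtained independent loxodromics $g_n$ on $X$, I would produce linearly independent classes $f_n\in\widetilde{\QM}(\Gamma)$ by counting oriented occurrences of a sufficiently long power $w_n=g_n^{k_n}$ along geodesic representatives of group elements in $X$, in the spirit of Brooks and Bestvina--Fujiwara. The pairwise independence of the axes of the $g_n$'s, together with the fact that $f_n$ evaluates differently on high powers of $g_n$ than on powers of $g_m^{\pm1}$ for $m\ne n$, should ensure that no nontrivial linear combination of the $f_n$'s modulo bounded functions is a homomorphism, giving the required infinite-dimensional subspace of $\widetilde{\QM}(\Gamma)$.

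The main obstacle is the absence of properness: $\Gamma$ acts only coboundedly on $S$ and hence on $X$, so Proposition~\ref{prop:wpd} is unavailable and the classical Bestvina--Fujiwara theorem cannot be quoted as a black box. I would address this by extracting a WWPD-type property directly from the combinatorics of curtains. Concretely, Corollary~\ref{cor:equivariant_characterisation} realises each $g_n$ as skewering a pair of ball-separated curtains at some scale $R_n$, and Lemma~\ref{lem:ball_relative_separation} bounds the number of chains that can cross any such pair. These curtain-combinatorial bounds replace the role of properness in controlling the coarse stabilisers of long axis segments of $g_n$, which is what Bestvina--Fujiwara's construction actually uses to guarantee unboundedness of the counting quasimorphism on powers of $g_n$ while keeping it bounded on powers of the other $g_m$. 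The most delicate step will be this alignment lemma: verifying that a group element whose geodesic $X$-representative contains many copies of $w_n$ cannot simultaneously push those copies off the axis of $g_n$ in a uniformly bounded way, which I expect to follow from the ball-separation estimate applied to translates of the defining curtains of $g_n$.
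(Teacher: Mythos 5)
Your overall skeleton (the space $X$ from Section~\ref{sec:contracting}, elements supplied by Proposition~\ref{prop:bad_contracting_element}, Bestvina--Fujiwara counting quasimorphisms) matches the paper's, but there are two intertwined problems at the decisive step. First, the ``main obstacle'' you identify is not an obstacle: Theorem~1 of \cite{bestvinafujiwara:bounded} requires only an isometric action on a hyperbolic space together with two loxodromic elements $g,h$ whose quasiaxes are \emph{non-equivalent} in their sense --- WPD appears in that paper only as a tool for verifying non-equivalence when the action has some properness. So the programme of extracting a WWPD-type property from curtain combinatorics is unnecessary, and the criterion can be quoted as a black box once non-equivalence is established. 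Second, and more seriously, the condition you actually propose to verify --- pairwise independence, i.e.\ distinct fixed-point pairs on $\partial X$ --- is strictly weaker than what the criterion needs. Non-equivalence of $g$ and $h$ means that no $\Gamma$-translate of a quasiaxis of $h$ comes $B$-close to arbitrarily long oriented segments of a quasiaxis of $g$; since every element is equivalent to all of its conjugates, this is a statement about the whole group action and cannot be arranged merely by choosing axes with distinct endpoints at infinity, nor by passing to a subsequence.

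The verification of non-equivalence is where the real work lies, and it is what the paper's proof is devoted to. One takes $g$ that is $D$-strongly contracting and, via Proposition~\ref{prop:bad_contracting_element}, an $h$ that is \emph{not} $nD$-strongly contracting for $n$ large in terms of the Bestvina--Fujiwara constant $B(q,\delta)$ (with $q$ from Proposition~\ref{prop:subspace_wrg}); only two elements are needed, not a sequence with $D_n\to\infty$. If a length-$R$ segment of $A_g$ lay in the $B$-neighbourhood of some $kA_h$ in $X$, one pulls the endpoints back to $S$, takes an $S$-geodesic $\alpha$ between them (uniformly strongly contracting in terms of $D$), extracts from it a chain of ball-separated curtains of length comparable to $R/D$ separating the corresponding points of $kA_h$, and applies the gluability of Lemma~\ref{lem:ball_graded} to the $khk^{-1}$-translates of this chain to bound the strong-contraction constant of $h$ by a uniform multiple of $D$ --- contradicting the choice of $h$. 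Your closing ``alignment lemma'' gestures at this mechanism but leaves it as an expectation and frames it inside the unnecessary WWPD detour; as written, the proposal has a genuine gap at the one step that carries the content of the theorem.
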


\begin{proof} 
Let $X$ be the hyperbolic core of $S$, with natural map $\pi:S\to X$ and hyperbolicity constant $\delta$. According to Proposition~\ref{prop:subspace_wrg}, there is a constant $q_1$ such that every geodesic in $S$ defines an unparametrised $q$--quasigeodesic in $X$. Let $q_2$ denote the quasigeodesic constant given by Proposition~\ref{prop:bad_contracting_element}, and let $q=\max\{q_1,q_2\}$. Let $B=B(q,\delta)$ be the constant of \cite[p.72]{bestvinafujiwara:bounded}, and let $n$ be a sufficiently large constant, defined in terms of $B$.

By Proposition~\ref{prop:bad_contracting_element}, there is some element $g\in\Gamma$ with a $q$--quasiaxis $\gamma_g$ that is strongly contracting. Let $D$ be such that $\gamma_g$ is $D$--strongly contracting. By that same proposition, there is another element $h\in\Gamma$ with a $q$--quasiaxis $\gamma_h$ that is strongly contracting but not $nD$--strongly contracting. 

By Theorem~\ref{thm:universal_contracting_characterisation}, both $g$ and $h$ act loxodromically on $X$. Let $A_g=\pi\gamma_g$ and $A_h=\pi\gamma_h$ be the projections to $X$, which are uniform-quality quasigeodesics. Given a constant $R$, suppose that there is some $k\in\Gamma$ such that some subsegment $I\subset A_g$ of length at least $R$ lies in the $B$--neighbourhood of $kA_h$ in $X$.

Let $s,t\in S$ be the endpoints of $I$, and let $s',t'\in kA_h$ have $\Dist(s,s'),\Dist(t,t')\le B$. Let $\alpha$ be a geodesic in $S$ from $s$ to $t$, which is uniformly strongly contracting in terms of $D$. Moreover, since $S\to X$ is coarsely Lipschitz, the length of $\alpha$ is lower-bounded in terms of $R$. It follows that there is a chain of curtains dual to $\alpha$ of length lower-bounded in terms of $\frac RD$ that separate $s'$ from $t'$. If $R$ is chosen to be sufficiently large in terms of the $S$--translation-length of $h$, then we can apply Lemma~\ref{lem:ball_graded} to the $khk^{-1}$--translates of that chain to obtain a lower bound on the $X$--translation-length of $h$ that is a uniform multiple of $D$. According to Theorem~\ref{thm:universal_contracting_characterisation}, that gives an upper-bound on the strong-contracting constant of $\gamma_h$ that is a uniform multiple of $D$. The choice of $n$ ensures that this is a contradiction of the fact that $h$ is not $nD$--strongly contracting.

Thus there is an upper bound on the length of subsegments of $A_g$ that can lie in the $B$--neighbourhoods of $\Gamma$--translates of $A_h$. We have shown that $g$ and $h$ are loxodromic elements of $\Gamma$ with $g\not\sim h$. According to Theorem~\ref{thm:bestfuj}, this implies that $\widetilde{\QM}(\Gamma)$ is infinite-dimensional.
\end{proof}
%



\bibliographystyle{alpha}
\bibliography{bibtex}

\end{document}